\let\footnote=\endnote
\newif\ifdraft
\newenvironment{renumerate}{%
	\begin{enumerate}[label=(\roman{*}), ref=(\roman{*})]
}{%
	\end{enumerate}%
}
\newenvironment{aenumerate}{%
	\begin{enumerate}[label=(\alph{*}), ref=(\alph{*})]
}{%
	\end{enumerate}%
}
\definecolor{labelkey}{gray}{0.5}
\tikzset{commutative diagrams/arrow style=math font}
\newlength{\myarrowsize} 
\newenvironment{diagram*}[2]{%
\[%
\begin{tikzpicture}[>=cmto,baseline=(current bounding box.center),%
	to/.style={->,font=\scriptsize,cap=round},%
	into/.style={cmhook->,font=\scriptsize,cap=round},%
	onto/.style={-cmonto,font=\scriptsize,cap=round},%
	math/.style={matrix of math nodes, row sep=#2, column sep=#1,%
		text height=1.5ex, text depth=0.25ex}]%
}{%
\end{tikzpicture}%
\]%
\ignorespacesafterend%
}
\newcommand{\MHM}{\operatorname{MHM}}
\newcommand{\pt}{\mathit{pt}}
\newcommand{\Dmod}{\mathscr{D}}
\newcommand{\Mmod}{\mathcal{M}}
\newcommand{\Nmod}{\mathcal{N}}
\newcommand{\shT}{\mathscr{T}}
\newcommand{\derR}{\mathbf{R}}
\newcommand{\derL}{\mathbf{L}}
\newcommand{\Ltensor}{\overset{\derL}{\tensor}}
\newcommand{\decal}[1]{\lbrack #1 \rbrack}
\newcommand{\shH}{\mathcal{H}}
\newcommand{\abs}[1]{\lvert #1 \rvert}
\newcommand{\eps}{\varepsilon}
\newcommand{\tensor}{\otimes}
\newcommand{\del}{\partial}
\renewcommand{\dbar}{\bar{\partial}}
\newcommand{\shHom}{\mathcal{H}\hspace{-1pt}\mathit{om}}
\newcommand{\ZZ}{\mathbb{Z}}
\newcommand{\QQ}{\mathbb{Q}}
\newcommand{\RR}{\mathbb{R}}
\newcommand{\CC}{\mathbb{C}}
\newcommand{\HH}{\mathbb{H}}
\newcommand{\menge}[2]{\bigl\{ \thinspace #1 \thinspace\thinspace \big\vert%
\thinspace\thinspace #2 \thinspace \bigr\}}
\newcommand{\Menge}[2]{\Bigl\{ \thinspace #1 \thinspace\thinspace \Big\vert%
\thinspace\thinspace #2 \thinspace \Bigr\}}
\newcommand{\mengge}[2]{\biggl\{ \thinspace #1 \thinspace\thinspace \bigg\vert%
\thinspace\thinspace #2 \thinspace \biggr\}}
\DeclareMathOperator{\im}{im}
\DeclareMathOperator{\rk}{rk}
\DeclareMathOperator{\Spec}{Spec}
\DeclareMathOperator{\id}{id}
\DeclareMathOperator{\Supp}{Supp}
\DeclareMathOperator{\codim}{codim}
\DeclareMathOperator{\rat}{rat}
\DeclareMathOperator{\Sym}{Sym}
\DeclareMathOperator{\gr}{gr}
\DeclareMathOperator{\DR}{DR}
\DeclareMathOperator{\Hom}{Hom}
\DeclareMathOperator{\Ext}{Ext}
\DeclareMathOperator{\GL}{GL}
\DeclareMathOperator{\Ch}{Ch}
\DeclareMathOperator{\Pic}{Pic}
\newcommand{\define}[1]{\emph{#1}}
\newcommand{\lie}[2]{\lbrack #1, #2 \rbrack}
\newcommand{\SU}[1]{\mathrm{SU}(#1)}
\newcommand{\shf}[1]{\mathscr{#1}}
\newcommand{\OX}{\shf{O}_X}
\newcommand{\OmX}{\Omega_X}
\newcommand{\argbl}{-}
\def\overbar#1#2#3{{%
	\setbox0=\hbox{\displaystyle{#1}}%
	\dimen0=\wd0
	\advance\dimen0 by -#2 
	\vbox {\nointerlineskip \moveright #3 \vbox{\hrule height 0.3pt width \dimen0}%
		\nointerlineskip \vskip 1.5pt \box0}%
}}
\newcommand{\into}{\hookrightarrow}
\newcommand{\pil}{\pi_{\ast}}
\newcommand{\jl}{j_{\ast}}
\newcommand{\ju}{j^{\ast}}
\newcommand{\fu}{f^{\ast}}
\newcommand{\fl}{f_{\ast}}
\newcommand{\iu}{i^{\ast}}
\newcommand{\il}{i_{\ast}}
\newcommand{\pl}{p_{\ast}}
\newcommand{\piu}{\pi^{\ast}}
\newcommand{\fp}{f_{+}}
\newcommand{\shF}{\shf{F}}
\newcommand{\shE}{\shf{E}}
\newcommand{\shO}{\shf{O}}
\let\@@seccntformat\@seccntformat
\renewcommand*{\@seccntformat}[1]{%
  \expandafter\ifx\csname @seccntformat@#1\endcsname\relax
    \expandafter\@@seccntformat
  \else
    \expandafter
      \csname @seccntformat@#1\expandafter\endcsname
  \fi
    {#1}%
}
\newcommand*{\@seccntformat@subsection}[1]{%
  \textbf{\csname the#1\endcsname.}
}
\let\@paragraph\paragraph
\renewcommand*{\paragraph}[1]{%
	\vspace{0.3\baselineskip}%
	\@paragraph{\textit{#1}}%
}
\newtheorem{theorem}[equation]{Theorem}
\newtheorem*{theorem*}{Theorem}
\newtheorem{lemma}[equation]{Lemma}
\newtheorem*{lemma*}{Lemma}
\newtheorem{corollary}[equation]{Corollary}
\newtheorem{proposition}[equation]{Proposition}
\newtheorem*{proposition*}{Proposition}
\newtheorem{conjecture}[equation]{Conjecture}
\theoremstyle{definition}
\newtheorem{definition}[equation]{Definition}
\newtheorem*{definition*}{Definition}
\theoremstyle{remark}
\newtheorem*{question}{Question}
\newtheorem{example}[equation]{Example}
\newtheorem*{example*}{Example}
\newtheorem*{problem*}{Problem}
\newtheorem*{note}{Note}
\theoremstyle{plain}
\newcommand{\theoremref}[1]{\hyperref[#1]{Theorem~\ref*{#1}}}
\newcommand{\lemmaref}[1]{\hyperref[#1]{Lemma~\ref*{#1}}}
\newcommand{\definitionref}[1]{\hyperref[#1]{Definition~\ref*{#1}}}
\newcommand{\propositionref}[1]{\hyperref[#1]{Proposition~\ref*{#1}}}
\newcommand{\conjectureref}[1]{\hyperref[#1]{Conjecture~\ref*{#1}}}
\newcommand{\corollaryref}[1]{\hyperref[#1]{Corollary~\ref*{#1}}}
\newcommand{\exampleref}[1]{\hyperref[#1]{Example~\ref*{#1}}}
\let\old@caption\caption
\renewcommand*{\caption}[1]{%
	\setcounter{figure}{\value{equation}}%
	\stepcounter{equation}%
	\old@caption{#1}\relax%
}
\newcounter{intro}
\newtheorem{intro-conjecture}[intro]{Conjecture}
\newtheorem{intro-corollary}[intro]{Corollary}
\newtheorem{intro-theorem}[intro]{Theorem}
\newcommand{\OA}{\mathscr{O}_A}
\newcommand{\OmA}{\Omega_A}
\newcommand{\Ah}{\hat{A}}
\newcommand{\parref}[1]{\hyperref[#1]{\S\ref*{#1}}}
\newcommand*\if@single[3]{%
  \setbox0\hbox{${\mathaccent"0362{#1}}^H$}%
  \setbox2\hbox{${\mathaccent"0362{\kern0pt#1}}^H$}%
  \ifdim\ht0=\ht2 #3\else #2\fi
  }
\newcommand*\rel@kern[1]{\kern#1\dimexpr\macc@kerna}
\newcommand*\widebar[1]{\@ifnextchar^{{\wide@bar{#1}{0}}}{\wide@bar{#1}{1}}}
\newcommand*\wide@bar[2]{\if@single{#1}{\wide@bar@{#1}{#2}{1}}{\wide@bar@{#1}{#2}{2}}}
\newcommand*\wide@bar@[3]{%
  \begingroup
  \def\mathaccent##1##2{%
    \if#32 \let\macc@nucleus\first@char \fi
    \setbox\z@\hbox{$\macc@style{\macc@nucleus}_{}$}%
    \setbox\tw@\hbox{$\macc@style{\macc@nucleus}{}_{}$}%
    \dimen@\wd\tw@
    \advance\dimen@-\wd\z@
    \divide\dimen@ 3
    \@tempdima\wd\tw@
    \advance\@tempdima-\scriptspace
    \divide\@tempdima 10
    \advance\dimen@-\@tempdima
    \ifdim\dimen@>\z@ \dimen@0pt\fi
    \rel@kern{0.6}\kern-\dimen@
    \if#31
      \overline{\rel@kern{-0.6}\kern\dimen@\macc@nucleus\rel@kern{0.4}\kern\dimen@}%
      \advance\dimen@0.4\dimexpr\macc@kerna
      \let\final@kern#2%
      \ifdim\dimen@<\z@ \let\final@kern1\fi
      \if\final@kern1 \kern-\dimen@\fi
    \else
      \overline{\rel@kern{-0.6}\kern\dimen@#1}%
    \fi
  }%
  \macc@depth\@ne
  \let\math@bgroup\@empty \let\math@egroup\macc@set@skewchar
  \mathsurround\z@ \frozen@everymath{\mathgroup\macc@group\relax}%
  \macc@set@skewchar\relax
  \let\mathaccentV\macc@nested@a
  \if#31
    \macc@nested@a\relax111{#1}%
  \else
    \def\gobble@till@marker##1\endmarker{}%
    \futurelet\first@char\gobble@till@marker#1\endmarker
    \ifcat\noexpand\first@char A\else
      \def\first@char{}%
    \fi
    \macc@nested@a\relax111{\first@char}%
  \fi
  \endgroup
}
\newcommand{\subsecref}[1]{\hyperref[#1]{Section~\ref*{#1}}}
\newcommand{\OAh}{\shf{O}_{\Ah}}
\newcommand{\OAsh}{\shf{O}_{\Ash}}
\newcommand{\OBsh}{\shf{O}_{\Bsh}}
\newcommand{\Ash}{A^{\natural}}
\newcommand{\Bsh}{B^{\natural}}
\newcommand{\fsh}{f^{\natural}}
\newcommand{\fsi}{f^{+}}
\newcommand{\isi}{i^{+}}
\newcommand{\Dbcoh}{\mathrm{D}_{\mathit{coh}}^{\mathit{b}}}
\newcommand{\Db}{\mathrm{D}^{\mathit{b}}}
\newcommand{\Dtcoh}[1]{\mathrm{D}_{\mathit{coh}}^{#1}}
\newcommand{\Coh}{\operatorname{Coh}}
\newcommand{\pDtcoh}[2]{ {^{#1}} \Dtcoh{#2}}
\newcommand{\pCoh}[2]{ {^{#1}} \mathrm{Coh}{#2}}
\newcommand{\piDtc}[1]{ {^\pi} \mathrm{D}_c^{#1}}
\newcommand{\mCoh}{ {^m} \mathrm{Coh}}
\newcommand{\mDtcoh}[1]{ {^m} \Dtcoh{#1}}
\newcommand{\mhCoh}{ {^{\mh}} \mathrm{Coh}}
\newcommand{\mhDtcoh}[1]{ {^{\mh}} \Dtcoh{#1}}
\newcommand{\mh}{\hat{m}}
\newcommand{\Dbh}{\Db_{\mathit{h}}}
\newcommand{\Dbc}{\Db_{\mathit{c}}}
\newcommand{\Dbrh}{\Db_{\mathit{rh}}}
\newcommand{\Modh}{\operatorname{Mod}_{\mathit{h}}}
\newcommand{\Dth}[1]{\mathrm{D}_h^{#1}}
\renewcommand{\Hom}{\operatorname{Hom}}
\renewcommand{\Ext}{\operatorname{Ext}}
\renewcommand{\shHom}{\mathcal{H}\mathit{om}}
\DeclareMathOperator{\IC}{IC}
\newcommand{\omX}{\omega_X}
\newcommand{\CH}{\operatorname{\mathit{CH}}}
\newcommand{\shA}{\mathcal{A}}
\newcommand{\mm}{\mathfrak{m}}
\renewcommand{\dbar}{\bar{\partial}}
\DeclareMathOperator{\FMt}{\widetilde{FM}}
\newcommand{\shL}{\mathcal{L}}
\newcommand{\shLC}{\shL_{\CLambda}}
\newcommand{\shLk}{\shL_{\kLambda}}
\newcommand{\shLCc}{\shL_{\CcLambda}}
\newcommand{\mmrho}{\mm_{\rho}}
\renewcommand{\shH}{\mathcal{H}}
\newcommand{\derH}{\mathbf{H}}
\newcommand{\shI}{\mathcal{I}}
\renewcommand{\Dmod}{\mathscr{D}}
\newcommand{\Rmod}{\mathscr{R}}
\newcommand{\Pg}{\widetilde{P}}
\newcommand{\nablag}{\widetilde{\nabla}}
\DeclareMathOperator{\Aut}{Aut}
\newcommand{\Asig}{A^{\sigma}}
\newcommand{\Asigsh}{(A^{\sigma})^{\natural}}
\newcommand{\Lsig}{L^{\sigma}}
\newcommand{\nablasig}{\nabla^{\sigma}}
\newcommand{\csig}{c_{\sigma}}
\newcommand{\Msig}{M_{\sigma}}
\newcommand{\Mmodsig}{\Mmod_{\sigma}}
\newcommand{\Mmodt}{\widetilde{M}}
\newcommand{\ReesM}{\widetilde{R_F \Mmod}}
\newcommand{\CLambda}{\CC \lbrack \Lambda \rbrack}
\newcommand{\kLambda}{k \lbrack \Lambda \rbrack}
\newcommand{\CcLambda}{\CC \{ \Lambda \}}
\newcommand{\cMmod}{\Mmod^{\bullet}}
\newcommand{\shP}{\mathcal{P}}
\renewcommand{\derR}{\mathbf{R}}
\renewcommand{\derL}{\mathbf{L}}
\renewcommand{\Ltensor}{\stackrel{\derL}{\tensor}}
\renewcommand{\argbl}{-}
\renewcommand{\Hom}{\operatorname{Hom}}
\renewcommand{\Ext}{\operatorname{Ext}}
\renewcommand{\shHom}{\mathcal{H}\mathit{om}}
\newcommand{\DA}{\mathbf{D}_A}
\DeclareMathOperator{\Char}{Char}
\newcommand{\CCrho}{\CC_{\rho}}
\newcommand{\Psh}{P^{\natural}}
\newcommand{\nablash}{\nabla^{\natural}}
\DeclareMathOperator{\FM}{FM}
\newcommand{\CCst}{\CC^{\ast}}
\renewcommand{\shH}{\mathcal{H}}
\renewcommand{\derH}{\mathbf{H}}
\newcommand{\opp}{\mathit{opp}}
\newcommand{\QQb}{\bar{\QQ}}
\DeclareMathOperator{\Hol}{Hol}
\begin{document}

\title{Holonomic D-modules on abelian varieties}
\author{Christian Schnell}
\address{Department of Mathematics, Stony Brook University,
Stony Brook, NY 11794, USA}
\email{cschnell@math.sunysb.edu}

\subjclass[2010]{Primary 14F10; Secondary 14K99, 32S60}

\keywords{Abelian variety, holonomic D-module, constructible complex,
Fourier-Mukai transform, cohomology support loci, perverse coherent sheaf,
intersection complex}

\begin{abstract}
We study the Fourier-Mukai transform for holonomic $\Dmod$-modules on 
complex abelian varieties. Among other things, we show that the cohomo-logy support loci
of a holonomic $\Dmod$-module are finite unions of linear subvarieties, which go through
points of finite order for objects of geometric origin; that the
standard t-structure on the derived category of holonomic complexes corresponds,
under the Fourier-Mukai transform, to a certain perverse coherent t-structure in the
sense of Kashiwara and Arinkin-Bezrukavnikov; and that Fourier-Mukai transforms
of simple holonomic $\Dmod$-modules are intersection complexes in this t-structure.
This supports the conjecture that Fourier-Mukai transforms of holonomic
$\Dmod$-modules are ``hyperk\"ahler perverse sheaves''.
\end{abstract}
\maketitle

\section{Introduction}

\subsection{Agenda}

In this paper, we begin a systematic study of holonomic $\Dmod$-modules on complex
abelian varieties; recall that a $\Dmod$-module is said to be \define{holonomic} if
its characteristic variety is a Lagrangian subset of the cotangent bundle.  Regular
holonomic $\Dmod$-modules, which correspond to perverse sheaves under the
Riemann-Hilbert correspondence, are familiar objects in complex algebraic geometry.
Due to recent breakthroughs by Kedlaya, Mochizuki, and Sabbah (summarized in
\cite{Sabbah}), we now have an almost equally good understanding of irregular holonomic
$\Dmod$-modules, and many important results from the regular case (such as the
Decomposition Theorem or the Hard Lefschetz Theorem) have been extended to the
irregular case. A careful study of an important special case, namely that of complex
abelian varieties, may therefore be of some interest.

The original motivation for this project comes from a sequence of papers by Green and
Lazarsfeld \cites{GL1,GL2}, Arapura \cite{Arapura}, and Simpson \cite{Simpson}. In
their work on the \define{generic vanishing theorem}, these authors analyzed the loci
\[
	S_m^{p,q}(X) = \menge{L \in \Pic^0(X)}{\dim H^q \bigl( X, \OmX^p \tensor L \bigr) \geq m}
		\subseteq \Pic^0(X),
\]
for $X$ a projective (or compact K\"ahler) complex manifold. Among other things, they
showed that each irreducible component of $S_m^{p,q}(X)$ is a translate of a subtorus
by a point of finite order; and they obtained bounds on the codimension in the most
interesting cases ($p=0$ and $p = \dim X$). These bounds imply for example that when the
Albanese mapping of $X$ is generically finite over its image, all higher cohomology
groups of $\omX \tensor L$ vanish for a generic line bundle $L \in \Pic^0(X)$. 

Hacon pointed out that the codimension bounds can be interpreted as properties of
certain coherent sheaves on abelian varieties, and then reproved them using the
Fourier-Mukai transform \cite{Hacon}. His method applies particularly well to those
coherent sheaves that occur in the Hodge filtration of a mixed Hodge module; based on
this observation, Popa and I generalized all of the results connected with the
generic vanishing theorem (in the projective case) to Hodge modules of geometric
origin on abelian varieties \cite{PS}. As a by-product, we also obtained results
about certain regular holonomic $\Dmod$-modules on abelian varieties, namely those
that can be realized as direct images of structure sheaves of smooth projective
varieties with nontrivial first Betti number. A pretty application of the
$\Dmod$-module theory to the geometry of varieties of general type can be found in
\cite{PS-Kodaira}.

As we shall see below, all of the results about $\Dmod$-modules in \cite{PS} remain
true for \emph{arbitrary} holonomic $\Dmod$-modules on abelian varieties. In most
cases, the proof in the general case turns out to be simpler; we shall also
discover that certain statements -- such as the codimension bounds -- only reveal their true
meaning in this broader context. 

The principal results about holonomic $\Dmod$-modules are summarized in
\subsecref{subsec:intro-structure} to \subsecref{subsec:intro-simple}; for the
convenience of the reader, we also translate everything into the language of perverse
sheaves in \subsecref{subsec:intro-perverse}.  Our main technical tool will be the
Fourier-Mukai transform for algebraic $\Dmod$-modules, introduced by Laumon
\cite{Laumon} and Rothstein \cite{Rothstein}, and our results suggest a conjecture
about the structure of Fourier-Mukai transforms of (regular or irregular) holonomic
$\Dmod$-modules. This conjecture, together with some evidence for it, is described in
\subsecref{subsec:intro-conjecture}.

\subsection{The structure theorem}
\label{subsec:intro-structure}

Let $A$ be a complex abelian variety, and let $\Dmod_A$ be the sheaf of linear
differential operators of finite order. The simplest examples of left
$\Dmod_A$-modules are line bundles $L$ with integrable connection $\nabla
\colon L \to \Omega_A^1 \tensor L$.  Because $A$ is an abelian variety, the moduli
space $\Ash$ of such pairs $(L, \nabla)$ is a quasi-projective algebraic variety of
dimension $2 \dim A$.  The basic idea in the study of $\Dmod_A$-modules is to exploit
the fact that $\Ash$ is so big. 

One approach is to consider, for a left $\Dmod_A$-module $\Mmod$, the cohomology
groups (in the sense of $\Dmod$-modules) of the various twists $\Mmod \tensor_{\OA}
(L, \nabla)$; we use this symbol to denote the natural $\Dmod_A$-module structure on
the tensor product $\Mmod \tensor_{\OA} L$. That information is contained in
the \define{cohomology support loci} of $\Mmod$, which are the sets
\begin{equation} \label{eq:CSL-h}
	S_m^k(A, \Mmod) = \Menge{(L, \nabla) \in \Ash}{\dim \derH^k \Bigl( A, 
		\DR_A \bigl( \Mmod \tensor_{\OA} (L, \nabla) \bigr) \Bigr) \geq m}.
\end{equation}
The definition works more generally for complexes of $\Dmod$-modules; we are
especially interested in the case of a \define{holonomic complex} $\Mmod \in
\Dbh(\Dmod_A)$, that is to say, a cohomologically bounded complex of
$\Dmod_A$-modules with holonomic cohomology sheaves. Our first result is the
following structure theorem.

\begin{theorem} \label{thm:h-linear}
Let $\Mmod \in \Dbh(\Dmod_A)$ be a holonomic complex.
\begin{aenumerate}
\item Each $S_m^k(A, \Mmod)$ is a finite union of linear subvarieties of $\Ash$.
\item If $\Mmod$ is a semisimple regular holonomic $\Dmod_A$-module of geometric
origin, in the sense of \cite{BBD}*{6.2.4}, then these linear subvarieties are
arithmetic.  
\end{aenumerate}
\end{theorem}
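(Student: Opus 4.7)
The plan is to use the Laumon--Rothstein Fourier--Mukai transform
\[
	\FM \colon \Dbh(\Dmod_A) \to \Dbcoh(\OAsh),
\]
which carries holonomic complexes on $A$ to coherent complexes on $\Ash$. First I would verify, by a base-change computation, that the fiber of $\FM(\Mmod)$ at a point $(L, \nabla) \in \Ash$ recovers, up to a fixed shift, the graded pieces of the hypercohomology $\derH^\bullet\bigl(A, \DR(\Mmod \tensor_{\OA} (L, \nabla))\bigr)$. This identifies each $S_m^k(A, \Mmod)$ with a Fitting-type degeneracy locus for the coherent cohomology sheaves of $\FM(\Mmod)$, and in particular exhibits it as a closed algebraic subset of $\Ash$.

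For part (a) I would then upgrade ``closed algebraic'' to ``finite union of linear subvarieties'' by a d\'evissage argument. The truncation spectral sequence reduces the problem to a single holonomic $\Dmod_A$-module, and induction on length reduces further to the simple case. For a simple $\Mmod$ I would induct on $\dim A$: if the support $Z$ of $\Mmod$ generates a proper sub-abelian variety $B \subsetneq A$, I would push $\Mmod$ forward along $A \to A/B$ and invoke the inductive hypothesis, using that images and preimages of linear subvarieties under the induced homomorphisms between dual sides remain linear. When $Z$ generates all of $A$, I would apply the Kedlaya--Mochizuki--Sabbah structure theorems to decompose $\Mmod$, after passing to a suitable cover, into tensor products of regular holonomic modules with rank-one irregular connections, thereby reducing the analysis of the jumping loci to the classical regular setting of Simpson and Arapura.

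For part (b), if $\Mmod$ is semisimple regular holonomic and of geometric origin, then $A$ together with all morphisms and varieties witnessing the geometric origin can be chosen to be defined over $\QQb$, so that each $S_m^k(A, \Mmod) \subset \Ash$ is itself defined over $\QQb$. Its irreducible components, already known to be linear subvarieties by part (a), are then permuted by $\mathrm{Gal}(\QQb/\QQ)$, and the standard density argument for torsion in commutative algebraic groups (in the form used by Simpson in the regular case) forces each component to be a translate of a sub-abelian variety by a torsion point, i.e.\ to be arithmetic.

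The hard part will be the linearity statement in (a) when $\Mmod$ is simple, genuinely irregular, and supported on all of $A$. In the regular case, linearity goes back to Simpson and Arapura and rests on a rigidity argument for semisimple local systems on $A$; the irregular case requires using the full Kedlaya--Mochizuki--Sabbah theory to replace ``local systems'' by their good formal models, and I expect the bulk of the technical work to lie in carrying out this reduction without losing either the holonomicity of $\Mmod$ or the algebraic structure on $\Ash$.
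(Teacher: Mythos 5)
Your first step (base change identifying $S_m^k(A,\Mmod)$ with degeneracy loci of $\FM_A(\Mmod)$, hence a closed algebraic subset of $\Ash$) is correct and is exactly Proposition~\ref{prop:alg-h} of the paper. But from that point on there is a genuine gap: you have no mechanism that actually produces \emph{linearity}. The paper's key idea, which your proposal never introduces, is to prove algebraicity of the same locus a second time, on the Betti side: the complex $\derR \pl\bigl(\DR_A(\Mmod)\tensor_{\CC}\shLC\bigr)$ is a perfect complex over the group ring $\CLambda$, so $S_m^k\bigl(A,\DR_A(\Mmod)\bigr)$ is a closed algebraic subset of $\Char(A)=\Spec\CLambda$; since $\Phi\colon\Ash\to\Char(A)$ is biholomorphic but not algebraic, Simpson's Theorem~3.1 then forces any subset that is algebraic in both structures to be a finite union of linear subvarieties. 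Without this bialgebraicity input, ``closed algebraic in $\Ash$'' is far from linear, and your proposed substitute does not close the gap. Concretely: (i) the d\'evissage to simple modules is not valid for the loci $S_m^k$, because for an extension $0\to\Mmod'\to\Mmod\to\Mmod''\to 0$ the jump loci of $\Mmod$ are not determined by those of $\Mmod'$ and $\Mmod''$ (the connecting maps in the long exact sequence vary with $(L,\nabla)$); (ii) the claimed decomposition of a simple irregular holonomic module, after a cover, into tensor products of regular modules with rank-one irregular connections is not what the Kedlaya--Mochizuki--Sabbah theory provides (it gives good formal models along divisors after blow-ups, not a global splitting); and (iii) even in the regular base case, the linearity of these loci \emph{is} Simpson's theorem, whose proof is precisely the two-algebraic-structures argument you would still need to set up. So the reduction, even if it worked, would be circular at its endpoint.

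For part (b) your argument is also incomplete for a related reason: knowing that $S_m^k(A,\Mmod)$ is defined over $\QQb$ as a subvariety of $\Ash$ and that its components are permuted by $\mathrm{Gal}(\QQb/\QQ)$ does not identify the translation points as torsion. Torsion points of $\Ash$ are characterized by being defined over $\QQb$ \emph{simultaneously} in $\Ash$ and in $\Char(A)$, and Simpson's Theorem~6.1 requires the full ``absolute'' hypothesis: for every $\sigma\in\Aut(\CC/\QQ)$ the conjugated locus $\Phi\bigl(\csig(Z)\bigr)\subseteq\Char(\Asig)$ must be closed and defined over $\QQb$. The paper verifies exactly this by conjugating the geometric construction of $\Mmod$ and using the $\QQ$-structure on the underlying perverse sheaf of a mixed Hodge module; you would need to add that step.
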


Here we are using the new term \define{(arithmetic) linear subvarieties} for what Simpson
called \define{(torsion) translates of triple tori} in \cite{Simpson}*{p.~365};
the definition is as follows.

\begin{definition} \label{def:linear}
A \define{linear subvariety} of $\Ash$ is any subset of the form 
\begin{equation} \label{eq:lin-Ash}
	(L, \nabla) \tensor \im \bigl( \fsh \colon \Bsh \to \Ash \bigr),
\end{equation}
for a surjective morphism of abelian varieties $f \colon A \to B$ with connected
fibers, and a line bundle with integrable connection $(L, \nabla) \in \Ash$. We say
that a linear subvariety is \define{arithmetic} if $(L, \nabla)$ can be taken to be torsion
point.%
\footnote{
We use the word \define{linear} because the linear subvarieties of $\Ash$ are
precisely those whose preimage in the universal cover are linear subspaces.
The reason for the term \define{arithmetic} is as follows. Let $\Char(A)$ be the
space of characters of the fundamental group of $A$; it is also a complex
algebraic variety, biholomorphic to $\Ash$, but with a different algebraic
structure. When $A$ is defined over a number field, the torsion points are precisely
those points on the algebraic varieties $\Ash$ and $\Char(A)$ that are simultaneously
defined over a number field in both \cite{Simpson}*{Proposition~3.4}.
}
\end{definition}

To prove \theoremref{thm:h-linear}, we use the Riemann-Hilbert
correspondence. If $\Mmod$ is a holonomic $\Dmod_A$-module, then according to a
fundamental theorem by Kashiwara \cite{HTT}*{Theorem~4.6.6}, its de Rham complex
\[
	\DR_A(\Mmod) = \Bigl\lbrack \Mmod \to \Omega_A^1 \tensor \Mmod \to 
		\dotsb \to \Omega_A^{\dim A} \tensor \Mmod \Bigr\rbrack \decal{\dim A},
\]
placed in degrees $-\dim A, \dotsc, 0$, is a perverse sheaf on $A$. More generally,
$\DR_A(\Mmod)$ is a constructible complex for any $\Mmod \in \Dbh(\Dmod_A)$
\cite{HTT}*{Theorem~4.6.3}, and the Riemann-Hilbert correspondence
\cite{HTT}*{Theorem~7.2.1} asserts that the functor
\[
	\DR_A \colon \Dbrh(\Dmod_A) \to \Dbc(\CC_A)
\]
from regular holonomic complexes to constructible complexes is an equivalence
of categories. 

Now let $\Char(A)$ be the space of characters of the fundamental group of $A$; any
character $\rho \colon \pi_1(A, 0) \to \CCst$ determines a local system $\CCrho$ of
rank one on $A$. We define the cohomology support loci of a constructible
complex $K \in \Dbc(\CC_A)$ as
\[
	S_m^k(A, K) = \Menge{\rho \in \Char(A)}{\dim \derH^k \bigl( A, K \tensor_{\CC}
		\CCrho \bigr) \geq m}.
\]
The well-known correspondence between vector bundles with integrable connection and
representations of the fundamental group gives a biholomorphic mapping
\begin{equation} \label{eq:Phi}
	\Phi \colon \Ash \to \Char(A), \quad (L, \nabla) \mapsto \Hol(L, \nabla),
\end{equation}
and it is very easy to show -- see \theoremref{thm:relationship} below -- that the
cohomology support loci for $\Mmod$ and $\DR_A(\Mmod)$ are related by the formula 
\begin{equation} \label{eq:relation}
	\Phi \bigl( A, S_m^k(A, \Mmod) \bigr) = S_m^k \bigl( A, \DR_A(\Mmod) \bigr).
\end{equation}

The proof of \theoremref{thm:h-linear} is based on the fact that $\Char(A)$ and
$\Ash$, while isomorphic as complex manifolds, are not isomorphic as complex
algebraic varieties. According to a nontrivial theorem by Simpson, a closed algebraic
subset $Z \subseteq \Ash$ is a finite union of linear subvarieties if and only if its
image $\Phi(Z) \subseteq \Char(A)$ remains algebraic \cite{Simpson}*{Theorem~3.1}. We
show (in \theoremref{thm:alg-c} and \propositionref{prop:alg-h}) that cohomology
support loci are algebraic subsets of $\Ash$ and $\Char(A)$; this is enough to prove
the first half of \theoremref{thm:h-linear}. To prove the second half, we show (in
\subsecref{subsec:geometric}) that the cohomology support loci of an object of
geometric origin are stable under the action of $\Aut(\CC/\QQ)$; we can then apply
another result by Simpson, namely that every ``absolute closed'' subset of $\Ash$ is
a finite union of arithmetic linear subvarieties. Another proof, completely different
but also based on the Fourier-Mukai transform, is explained in \cite{Schnell}.

\subsection{The Fourier-Mukai transform}
\label{subsec:intro-FM}

A second way to present the information about cohomology of twists of $\Mmod$ is
through the \define{Fourier-Mukai transform} for algebraic $\Dmod_A$-modules, 
introduced and studied by Laumon \cite{Laumon} and Rothstein \cite{Rothstein}. It is
an exact functor
\begin{equation} \label{eq:FM-intro}
	\FM_A \colon \Dbcoh(\Dmod_A) \to \Dbcoh(\OAsh),
\end{equation}
defined as the integral transform with kernel $(\Psh, \nablash)$, the tautological line
bundle with relative integrable connection on $A \times \Ash$.
As shown by Laumon and Rothstein, $\FM_A$ is an equivalence between the bounded
derived category
of coherent algebraic $\Dmod_A$-modules, and that of coherent algebraic sheaves on
$\Ash$. In essence, this means that an algebraic $\Dmod$-module on an
abelian variety can be recovered from the cohomology of its twists by line bundles
with integrable connection. 

The support of the complex of coherent sheaves $\FM_A(\Mmod)$ is related to the
cohomology support loci of $\Mmod$: by the base change theorem, one has
\[
	\Supp \FM_A(\Mmod) = \bigcup_{k \in \ZZ} S_1^k(A, \Mmod).
\]
In particular, the support is a finite union of linear subvarieties.
But the Fourier-Mukai transform of a holonomic complex actually satisfies a much
stronger version of \theoremref{thm:h-linear}. We shall say that a subset of $\Ash$
is \define{definable in terms of $\FM_A(\Mmod)$} if can be obtained by applying various
sheaf-theoretic operations -- such as $\derR \shHom(\argbl, \OAsh)$, truncation, or
restriction to a linear subvariety -- to $\FM_A(\Mmod)$, and then taking the support of
the resulting complex of coherent sheaves. 

\begin{theorem} \label{thm:FM-linear}
Let $\Mmod \in \Dbh(\Dmod_A)$ be a holonomic complex on an abelian variety. If a
subset of $\Ash$ is definable in terms of $\FM_A(\Mmod)$, then it is a finite
union of linear subvarieties. These linear subvarieties are arithmetic whenever
$\Mmod$ is a semisimple regular holonomic $\Dmod_A$-module of geometric origin.
\end{theorem}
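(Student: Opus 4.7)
The plan is to argue by induction on the length of the operation sequence, showing that any complex obtained from $\FM_A(\Mmod)$ by iterating the listed sheaf-theoretic operations is again of the form $t^*_{(L_0,\nabla_0)} \FM_B(\Nmod)$ for some surjective morphism $f \colon A \to B$ of abelian varieties with connected fibers, some $(L_0, \nabla_0) \in \Bsh$, and some holonomic complex $\Nmod \in \Dbh(\Dmod_B)$. Once this closure is in hand, \theoremref{thm:h-linear} applied on $B$ gives that
$$\Supp \FM_B(\Nmod) = \bigcup_{k \in \ZZ} S_1^k(B, \Nmod)$$
is a finite union of linear subvarieties of $\Bsh$; composing with the closed embedding $\fsh \colon \Bsh \into \Ash$ and translating by $(L_0, \nabla_0)$ then produces the desired linear subvarieties of $\Ash$.

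Closure under dualization and restriction is relatively routine. For the first, I plan to appeal to a Laumon--Rothstein identity of the form
$$\derR\shHom(\FM_A(\Mmod), \OAsh) \cong [-1_{\Ash}]^* \FM_A(\derD_A \Mmod)[g], \quad g = \dim A,$$
using that $\derD_A$ preserves $\Dbh(\Dmod_A)$ and that $[-1]$ permutes linear subvarieties. For restriction to a linear subvariety $(L_0, \nabla_0) \otimes \im(\fsh)$, I would factor the operation as a translation---which on the $\Dmod$-side amounts to tensoring with the line bundle with connection $(L_0, \nabla_0)^{-1}$, preserving holonomicity---followed by the pullback along $\fsh$. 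The essential input is the base-change identity $\fsh^* \FM_A(\Mmod) \cong \FM_B(f_+ \Mmod)$, which follows from the universal property of the tautological pair $(\Psh, \nablash)$; the functor $f_+$ preserves $\Dbh$ because $f$ is proper.

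The main obstacle is closure under the standard truncations $\tau^{\leq k}, \tau^{\geq k}$ on $\Dbcoh(\OAsh)$, since a coherent sheaf supported on a finite union of linear subvarieties might in principle fail to lie in the essential image of $\FM_A \vert_{\Dbh}$. For this I would invoke the t-structure comparison announced in the abstract: the standard t-structure on $\Dbh(\Dmod_A)$ corresponds, via $\FM_A$, to a perverse coherent t-structure on $\Dbcoh(\OAsh)$. Because $\FM_A$ is an equivalence, the standard t-structure on $\Dbcoh(\OAsh)$ then pulls back to a new t-structure on $\Dbh(\Dmod_A)$, so each cohomology sheaf $\cohH^j \FM_A(\Mmod)$ must itself be of the form $\FM_A(\Nmod_j)$ with $\Nmod_j$ holonomic, and hence so are all standard truncations. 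The delicate part is logical sequencing: the t-structure comparison has to be proved earlier in the paper---using \theoremref{thm:h-linear} together with the duality and base-change identities above, but \emph{without} invoking the present theorem---so that it is available at this step.

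For the arithmetic refinement, I would restrict attention to operation sequences in which every restriction uses an \emph{arithmetic} linear subvariety (so that $(L_0, \nabla_0)$ above is torsion). Each of the three operations then preserves the subclass of Fourier-Mukai transforms of semisimple regular holonomic complexes of geometric origin: dualization and proper pushforward preserve geometric origin by \cite{BBD}*{6.2.4}; tensoring with a torsion line bundle with integrable connection preserves it after passage to the corresponding finite \'etale cover, since such a local system is a direct summand of $\pi_{\ast} \CC$ for that cover; and truncation in the relevant t-structure preserves it via the decomposition theorem. The arithmetic conclusion for the final support then follows from \theoremref{thm:h-linear}(b) applied on $B$.
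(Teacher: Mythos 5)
Your reduction to \theoremref{thm:h-linear} via a closure property of the class $\FM_A\bigl(\Dbh(\Dmod_A)\bigr)$ does not work as sequenced, and the obstruction is exactly at the step you flag as delicate. The statement that the supports of the individual cohomology sheaves $\shH^{\ell}\FM_A(\Mmod)$ are finite unions of linear subvarieties is \emph{not} a consequence of \theoremref{thm:h-linear}: those supports are not cohomology support loci (base change only controls the total support $\bigcup_k S_1^k(A,\Mmod)$), and an irreducible closed subset of a finite union of linear subvarieties need not itself be linear. In the paper, the t-structure comparison (\lemmaref{lem:t-structure1}) is proved by an induction whose very first step invokes \theoremref{thm:FM-linear} to know that each component of $\Supp \shH^{\ell}\FM_A(\Mmod)$ is a linear subvariety, and the truncation-stability you need (\theoremref{thm:truncation}) rests in turn on the t-structure comparison; so your proposed dependency order is circular, and no way of breaking the circle is offered. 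Moreover, even granting the t-structure comparison, the inference ``$\FM_A$ is an equivalence, hence each $\shH^j\FM_A(\Mmod)$ equals $\FM_A(\Nmod_j)$ with $\Nmod_j$ \emph{holonomic}'' is a non sequitur: the equivalence only produces a coherent complex $\Nmod_j\in\Dbcoh(\Dmod_A)$, and showing that it is holonomic is precisely the content of \theoremref{thm:truncation}, which requires its own nontrivial argument.

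The paper's route is genuinely different and avoids closure properties altogether. \theoremref{thm:relationship-FM} shows that $\derR\Phi_{\ast}\FM_A(\Mmod)$ is, as a complex of coherent analytic sheaves, the analytification of the algebraic complex $\derR\pl\bigl(\DR_A(\Mmod)\tensor_{\CC}\shLC\bigr)$ on $\Char(A)=\Spec\CLambda$. Each allowed operation (duality, truncation, restriction to a linear subvariety, the latter being algebraic in both models) can be performed compatibly on the algebraic complex on $\Ash$ and on the one on $\Char(A)$, and commutes with analytification; hence the support of any definable complex is a closed algebraic subset of $\Ash$ whose image under $\Phi$ is again a closed algebraic subset of $\Char(A)$. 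Simpson's \theoremref{thm:simpson} then gives linearity directly, and the arithmetic refinement follows from the absolute-closedness argument of \subsecref{subsec:geometric}. To salvage your approach you would need an independent proof of \theoremref{thm:truncation}, which does not seem accessible without this bi-algebraicity input.
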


The proof of \theoremref{thm:FM-linear} is based on an analogue of the Fourier-Mukai
transform for constructible complexes $K \in \Dbc(\CC_A)$ (explained in
\subsecref{subsec:constructible}). The main point is that the group ring $R = \CC
\lbrack \pi_1(A, 0) \rbrack$ is a representation of the fundamental group, and
therefore determines a local system of $R$-modules $\shL_R$ on the abelian variety.
Because $K$ is constructible and $p \colon A \to \pt$ is proper, the direct image
$\derR \pl(K \tensor_{\CC} \shL_R)$ therefore belongs to $\Dbcoh(R)$ and gives rise
to a complex of coherent algebraic sheaves on the affine algebraic variety $\Char(A)
= \Spec R$. When $K = \DR_A(\Mmod)$, we show that the resulting complex of 
coherent analytic sheaves, pulled back along $\Phi \colon \Ash \to \Char(A)$, becomes
canonically isomorphic to $\FM_A(\Mmod)$. Both assertions in
\theoremref{thm:FM-linear} then follow as before from Simpson's theorems.

\subsection{Codimension bounds and perverse coherent sheaves}
\label{subsec:intro-codimension}

Inequalities for the codimension of cohomology support loci first appeared in the
work of Green and Lazarsfeld on the generic vanishing theorem \cite{GL1}. For
example, when $X$ is a projective complex manifold whose Albanese mapping
is generically finite over its image, Green and Lazarsfeld proved that
\[
	\codim_{\Pic^0(X)} \menge{L \in \Pic^0(X)}{H^k(X, \omX \tensor L) \neq 0} \geq k
\]
for every $k \geq 0$. More recently, Popa \cite{Popa} noticed that such codimension
bounds can be expressed in terms of a certain nonstandard t-structure on the
derived category, introduced by Kashiwara \cite{Kashiwara} and Arinkin and
Bezrukavnikov \cite{AB} in their work on ``perverse coherent sheaves''. 

In our context, codimension bounds and t-structures are related on a much deeper
level. The first result is that the position of a holonomic complex with
respect to the standard t-structure on the category $\Dbh(\Dmod_A)$ is detected by the
codimension of its cohomology support loci. 

\begin{theorem}  \label{thm:h-t}
Let $\Mmod \in \Dbh(\Dmod_A)$ be a holonomic complex. Then one has
\begin{align*}
\Mmod \in \Dth{\leq 0}(\Dmod_A) \quad &\Longleftrightarrow \quad
		\text{$\codim S_1^k(A, \Mmod) \geq 2 k$ for every $k \in \ZZ$,} \\
\Mmod \in \Dth{\geq 0}(\Dmod_A) \quad &\Longleftrightarrow \quad
		\text{$\codim S_1^k(A, \Mmod) \geq -2k$ for every $k \in \ZZ$.}
\end{align*}
In particular, $\Mmod$ is a single holonomic $\Dmod_A$-module if and only if its
cohomology support loci satisfy $\codim S_1^k(A, \Mmod) \geq \abs{2k}$ for every $k
\in \ZZ$.
\end{theorem}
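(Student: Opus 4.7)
The plan is to pass through the Fourier-Mukai transform $F = \FM_A(\Mmod) \in \Dbcoh(\OAsh)$: base change applied to the integral transform defining $\FM_A$ identifies $\Supp \cohH^k(F)$ with $S_1^k(A, \Mmod)$, which is already closed by \theoremref{thm:h-linear}. The codimension bounds appearing in the theorem are then precisely the conditions defining the perverse coherent t-structure on $\Dbcoh(\OAsh)$ associated to middle perversity on the smooth $2\dim A$-dimensional variety $\Ash$, in the sense of Kashiwara and Arinkin--Bezrukavnikov. The theorem therefore amounts to the assertion that $\FM_A$ is t-exact between the standard t-structure on $\Dbh(\Dmod_A)$ and this perverse coherent t-structure on $\Dbcoh(\OAsh)$.

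The crucial input is the single-module bound: $\codim S_1^k(A, \Nmod) \geq 2\lvert k \rvert$ for every holonomic $\Dmod_A$-module $\Nmod$. By \theoremref{thm:FM-linear} each component of $S_1^k(A, \Nmod)$ is a linear subvariety $(L,\nabla) + \im(\fsh \colon \Bsh \to \Ash)$ coming from a surjection $f \colon A \to B$ with connected fibers; such a component is contained in $S_1^k$ only when the holonomic direct image $f_+ \Nmod$ on $B$ has nonvanishing $k$-th hypercohomology after twisting by a Zariski-dense family of rank-one local systems. By induction on $\dim A$ together with Kashiwara's theorem that $f_+$ preserves holonomicity, this forces $\dim B \leq \dim A - \lvert k \rvert$, i.e.\ $\codim(\im \fsh) \geq 2\lvert k \rvert$; Verdier duality covers the case $k < 0$.

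Given this bound, the implication $\Mmod \in \Dth{\leq 0}(\Dmod_A) \Rightarrow \codim S_1^k(A, \Mmod) \geq 2k$ follows from the convergent spectral sequence
\[
E_2^{p,q} = H^p\bigl(A, \DR(\cohH^q(\Mmod) \otimes (L,\nabla))\bigr) \Longrightarrow H^{p+q}\bigl(A, \DR(\Mmod \otimes (L,\nabla))\bigr);
\]
indeed, only $q \leq 0$ contributes, so any nonvanishing term in total degree $k > 0$ satisfies $p \geq k$ and occurs on a set of codimension at least $2p \geq 2k$ by the single-module estimate applied to $\cohH^q(\Mmod)$. The same spectral sequence with $q \geq 0$ handles the $\Dth{\geq 0}$ direction. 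For the converses, I would use that $\FM_A$ is an equivalence: the two forward implications together show that $\FM_A$ sends $\Dth{\leq 0}(\Dmod_A)$ into the perverse coherent $\leq 0$ part and $\Dth{\geq 1}(\Dmod_A)$ into the perverse coherent $\geq 1$ part. Hence the triangle $\FM_A(\tau^{\leq 0}\Mmod) \to \FM_A(\Mmod) \to \FM_A(\tau^{>0}\Mmod)$ is the perverse coherent truncation triangle of $\FM_A(\Mmod)$, and when the codimension bounds force $\FM_A(\Mmod)$ into the perverse coherent $\leq 0$ part, $\FM_A(\tau^{>0}\Mmod) = 0$, whence $\Mmod \in \Dth{\leq 0}(\Dmod_A)$; the $\Dth{\geq 0}$ converse is symmetric.

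The main obstacle is the single-module estimate $\codim S_1^k(A, \Nmod) \geq 2\lvert k \rvert$, which is the generic vanishing theorem for arbitrary holonomic $\Dmod$-modules on an abelian variety. In the regular, geometric case this is the theorem of Popa--Schnell; extending it to irregular holonomic modules rests on the Kedlaya--Mochizuki--Sabbah structure theory and constitutes the deepest technical ingredient of the proof.
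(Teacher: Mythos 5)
Your architecture is the same as the paper's: pass to $F=\FM_A(\Mmod)$, observe via base change that the codimension conditions on the $S_1^k$ are exactly Kashiwara's pointwise definition of the $m$-perverse t-structure on $\Dbcoh(\OAsh)$, prove the forward implications by induction on $\dim A$ using that every component of a cohomology support locus is a linear subvariety $t_{(L,\nabla)}(\im\fsh)$ and that restriction of $\FM_A$ to it is $\FM_B\circ\fp$, and get the converses from the fact that $\FM_A$ is an equivalence (the paper instead inverts the transform explicitly, but your truncation-triangle argument is fine, and duality handles $\Dth{\geq 0}$ in both treatments). One small imprecision: base change identifies the \emph{fiber} $\derL i_{(L,\nabla)}^{\ast}F$ with $\derR\Gamma(A,\DR(\Mmod\tensor(L,\nabla)))$, so $S_1^k(A,\Mmod)$ is the locus where $\shH^k$ of the fiber is nonzero; this contains $\Supp\shH^k F$ but can be strictly larger. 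The two families of codimension bounds are still equivalent (Kashiwara's Lemma~5.5), so nothing breaks, but the identification is not the one you wrote.

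The genuine gap is the "single-module estimate," and in particular its weakest consequence: that $S_1^k(A,\Nmod)\subsetneq\Ash$ for $k\neq 0$ when $\Nmod$ is a single holonomic module. Your induction needs this as its base: if a component of $S_1^k$ corresponded to $B=A$ (i.e.\ were all of $\Ash$), your inductive step would only return the statement you are trying to prove, so the argument is circular precisely at the point where it must rule out a dense cohomology support locus in positive degree. You flag this as the "deepest technical ingredient" and defer it to Popa--Schnell in the regular geometric case and to Kedlaya--Mochizuki--Sabbah in general, but neither reference supplies the statement for arbitrary (irregular) holonomic modules, and the paper does not use them. Instead it gives an elementary proof (\propositionref{prop:KW}): choose a good filtration, form the Rees module $R_F\Mmod$ over $\Rmod_A$, and extend the Fourier--Mukai transform to a $\CC^{\ast}$-equivariant complex on the bundle $E(A)$ of line bundles with $\lambda$-connection; over $\lambda=1$ one recovers $\FM_A(\Mmod)$, while over $\lambda=0$ one gets the classical transform of $\gr^F\!\!\Mmod$ on $T^{\ast}A$, whose support is the characteristic variety and hence of dimension $\dim A$, forcing the higher direct images to be torsion. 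Without some such argument -- degeneration to the characteristic variety, or an equivalent replacement -- your proof does not close.
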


The natural setting for this result is the theory of \define{perverse coherent
sheaves}, developed by Kashiwara and by Arinkin and Bezrukavnikov. As a matter of
fact, there is a perverse t-structure on $\Dbcoh(\OAsh)$ with the property that
\[
	\mDtcoh{\leq 0}(\OAsh) = \menge{F \in \Dbcoh(\OAsh)}%
		{\text{$\codim \Supp \shH^k F \geq 2k$ for every $k \in \ZZ$}};
\]
it corresponds to the supporting function $m = \left\lfloor \tfrac{1}{2} \codim
\right\rfloor$ on the topological space of the scheme $\Ash$, in Kashiwara's
terminology. Its heart $\mCoh(\OAsh)$ is the abelian category of \define{$m$-perverse
coherent sheaves} (see \subsecref{subsec:perverse}).

Now \theoremref{thm:h-t} is a consequence of the following better result, which says
that the Fourier-Mukai transform interchanges the standard t-structure
on $\Dbh(\Dmod_A)$ and the $m$-perverse t-structure on $\Dbcoh(\OAsh)$.%
\footnote{%
This is very surprising at first glance, because one expects the Fourier-Mukai
transform to interchange ``local'' and ``global'' data. But both t-structures
in this result are in fact defined by local conditions.%
}

\begin{theorem} \label{thm:h-t-FM}
Let $\Mmod \in \Dbh(\Dmod_A)$ be a holonomic complex on $A$. The one has
\begin{align*}
\Mmod \in \Dth{\leq k}(\Dmod_A) \quad &\Longleftrightarrow \quad
		\text{$\FM_A(\Mmod) \in \mDtcoh{\leq k}(\OAsh)$,} \\
\Mmod \in \Dth{\geq k}(\Dmod_A) \quad &\Longleftrightarrow \quad
		\text{$\FM_A(\Mmod) \in \mDtcoh{\geq k}(\OAsh)$.}
\end{align*}
In particular, $\Mmod$ is a single holonomic $\Dmod_A$-module if and only if
its Fourier-Mukai transform $\FM_A(\Mmod)$ is an $m$-perverse coherent sheaf on $\Ash$.
\end{theorem}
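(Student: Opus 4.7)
The plan is to exploit the codimension characterization of $\Dth{\leq 0}(\Dmod_A)$ provided by \theoremref{thm:h-t} and match it against the analogous codimension definition of $\mDtcoh{\leq 0}(\OAsh)$. Since $\FM_A$ commutes with shifts and both t-structures shift in the obvious way, it suffices to prove the biconditional for $k=0$. The key geometric input is the Laumon--Rothstein base-change isomorphism: for each $(L,\nabla)\in\Ash$,
\[
  \FM_A(\Mmod)\Ltensor_{\OAsh}k(L,\nabla)\;\simeq\;\derR\Gamma\bigl(A,\DR_A(\Mmod\tensor_{\OA}(L,\nabla))\bigr)
\]
up to a universal degree shift, so that $H^j$ of the derived fiber is nonzero precisely when $(L,\nabla)\in S_1^j(A,\Mmod)$.

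For the direction $\FM_A(\Mmod)\in\mDtcoh{\leq 0}\Rightarrow\Mmod\in\Dth{\leq 0}$, the hyper-Tor spectral sequence
\[
  E_2^{p,q}=\Tor^{\OAsh}_{-p}\bigl(\shH^q\FM_A(\Mmod),k(L,\nabla)\bigr)\Longrightarrow H^{p+q}\bigl(\FM_A(\Mmod)\Ltensor k(L,\nabla)\bigr),
\]
together with the constraint $p\leq 0$, yields the set-theoretic inclusion $S_1^j(A,\Mmod)\subseteq\bigcup_{q\geq j}\Supp\shH^q\FM_A(\Mmod)$. The codimension hypothesis on the right-hand side forces $\codim S_1^j(A,\Mmod)\geq 2j$ for all $j$, so \theoremref{thm:h-t} gives $\Mmod\in\Dth{\leq 0}$.

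The converse $\Mmod\in\Dth{\leq 0}\Rightarrow\FM_A(\Mmod)\in\mDtcoh{\leq 0}$ is the main technical content, and I would prove it by contraposition. Assuming $\codim\Supp\shH^q\FM_A(\Mmod)<2q$ for some $q$, take $q$ maximal with this property and let $W$ be an irreducible component of $\Supp\shH^q\FM_A(\Mmod)$ of that minimal codimension. \theoremref{thm:FM-linear} ensures $W$ is a linear subvariety, and maximality of $q$ means every $\shH^{q'}\FM_A(\Mmod)$ with $q'>q$ has support whose intersection with $W$ is strictly lower-dimensional. Choosing a Zariski open $U\subseteq W$ avoiding those intersections, the spectral sequence at any point of $U$ has $E_2^{p,q'}=0$ for $q'>q$ while $E_2^{0,q}\ne 0$ by Nakayama, and all differentials into or out of $E_2^{0,q}$ vanish for degree reasons. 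Hence $H^q$ of the derived fiber is nonzero on $U$, so by semicontinuity $W\subseteq S_1^q(A,\Mmod)$, contradicting \theoremref{thm:h-t}.

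The two $\geq 0$ equivalences are then obtained by duality: the Grothendieck dualizing functor $\derR\shHom(\argbl,\OAsh)$ (with appropriate shift by $\dim\Ash$) swaps $\mDtcoh{\leq 0}$ and $\mDtcoh{\geq 0}$ for the middle perversity, the holonomic duality $\DA$ swaps $\Dth{\leq 0}$ and $\Dth{\geq 0}$, and $\FM_A$ intertwines these two dualities up to sign. The main obstacle is the spectral-sequence bookkeeping in the contrapositive argument, where one must rule out cancelling Tor contributions at the generic point of $W$; this is precisely where \theoremref{thm:FM-linear} enters, since knowing all supports are linear subvarieties lets us control their codimensions explicitly by picking $U$ carefully. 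The final ``single module'' sentence is then immediate: a single holonomic $\Dmod_A$-module lies in both $\Dth{\leq 0}$ and $\Dth{\geq 0}$, placing $\FM_A(\Mmod)$ in the heart $\mCoh(\OAsh)$.
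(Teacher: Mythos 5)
Your argument is circular as it stands. You take \theoremref{thm:h-t} -- the characterization of $\Dth{\leq 0}(\Dmod_A)$ by the codimensions of the loci $S_1^k(A,\Mmod)$ -- as the given input, and then translate between $S_1^k(A,\Mmod)$ and $\Supp \shH^q \FM_A(\Mmod)$ via base change and the hyper-Tor spectral sequence. That translation is correct (and your use of \theoremref{thm:FM-linear} to control the generic point of a component $W$ in the contrapositive step is fine), but in the paper \theoremref{thm:h-t} is explicitly stated to be a \emph{consequence} of \theoremref{thm:h-t-FM}; it has no independent proof. What you have really shown is that the two theorems are equivalent to one another, which is the easy part. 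All of the actual content lives in the implication ``$\Mmod \in \Dth{\leq 0}$ implies $\codim S_1^k \geq 2k$,'' and none of the ingredients needed for it appear in your proposal.

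Concretely, the paper's proof of \lemmaref{lem:t-structure1} requires two inputs you are missing. First, the generic vanishing statement \propositionref{prop:KW}: for a holonomic $\Dmod_A$-module, $\Supp \shH^i \FM_A(\Mmod)$ is a \emph{proper} subset of $\Ash$ for $i>0$. This is proved by passing to the Rees module $R_F\Mmod$, whose extended transform $\FMt_A(R_F\Mmod)$ is $\CC^{\ast}$-equivariant on $E(A)$ and degenerates over $\lambda^{-1}(0)$ to the transform of $\gr^F\!\!\Mmod$; holonomicity forces $\Ch(\Mmod) \to H^0(A,\OmA^1)$ to be generically finite, which kills the higher direct images generically. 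Second, an induction on $\dim A$: a positive-codimension component $Z$ of $\Supp\shH^{\ell}\FM_A(\Mmod)$ is a linear subvariety $t_{(L,\nabla)}(\im \fsh)$ with $\dim B < \dim A$, and by \theoremref{thm:Laumon} the restriction of $\FM_A(\Mmod)$ to $Z$ is $\FM_B\bigl(\fp(\Mmod\tensor(L,\nabla))\bigr)$ with $\fp(\Mmod\tensor(L,\nabla)) \in \Dth{\leq r}(\Dmod_B)$, whence $\ell \leq r = \tfrac{1}{2}\codim Z$ by the inductive hypothesis. The converse direction is proved by the Fourier inversion formula and pushing down along the affine map $\pi\colon \Ash\to\Ah$, not by a fiberwise spectral sequence. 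Finally, a small but real point in your duality step: $\derR\shHom(\argbl,\OAsh)$ (with no shift) exchanges the $m$-perversity with its dual $\mh$, not with $m$ itself; the paper gets the $\geq k$ statement from \lemmaref{lem:m-structure} together with \theoremref{thm:Laumon}\ref{en:Laumon4}, and your phrasing would need to be corrected accordingly.
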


The proofs of both theorems can be found in \subsecref{subsec:codimension}. The first
step in the proof is to show that when $\Mmod$ is a holonomic $\Dmod_A$-module, the
cohomology sheaves $\shH^i \FM_A(\Mmod)$ are torsion sheaves for $i > 0$. Here the
crucial point is that the characteristic variety $\Ch(\Mmod)$ inside $T^{\ast} A = A
\times H^0(A, \OmA^1)$ has the same dimension as $A$ itself; this
makes the second projection
\[
	\Ch(\Mmod) \to H^0(A, \OmA^1)
\]
finite over a general point of $H^0(A, \OmA^1)$. To deduce results about
$\FM_A(\Mmod)$, we use an extension of the Fourier-Mukai transform to
$\Rmod_A$-modules, where $\Rmod_A = R_F \Dmod_A$ is the Rees algebra. Choose a good
filtration $F_{\bullet} \Mmod$, and consider the coherent sheaf $\gr^F \!\! \Mmod$ on
$T^{\ast} A$ determined by the graded $\Sym \shT_A$-module $\gr_{\bullet}^F \!
\Mmod$; its support is precisely $\Ch(\Mmod)$. The extended Fourier-Mukai transform
of the Rees module $R_F \Mmod$ then interpolates between $\FM_A(\Mmod)$ and the
complex
\[
	\derR (p_{23})_{\ast} \Bigl( p_{12}^{\ast} P \tensor 
		p_{13}^{\ast}(\gr^F \!\! \Mmod) \Bigr),
\]
and because the higher cohomology sheaves of the latter are torsion, we obtain the
result for $\FM_A(\Mmod)$. This ``generic vanishing theorem'' implies also that the
cohomology support loci $S_1^k(A, \Mmod)$ are proper subvarieties for $k \neq 0$;
in the regular case, this result is due to Kr\"amer and Weissauer
\cite{KW}*{Theorem~2}.

Once the generic vanishing theorem has been established, \theoremref{thm:FM-linear} implies
that $\shH^i \FM_A(\Mmod)$ is supported in a finite union of linear subvarieties of
lower dimension; because of the functoriality of the Fourier-Mukai transform,
\theoremref{thm:h-t-FM} can then be deduced very easily by induction on the
dimension. 

From there, the basic properties of the $m$-perverse t-structure quickly lead to the
following result about the Fourier-Mukai transform.

\begin{corollary} \label{cor:FM-summary}
Let $\Mmod$ be a holonomic $\Dmod_A$-module. The only potentially nonzero
cohomology sheaves of the Fourier-Mukai transform $\FM_A(\Mmod)$ are
\[
	\text{$\shH^0 \FM_A(\Mmod)$, $\shH^1 \FM_A(\Mmod)$, \dots, $\shH^{\dim A} \FM_A(\Mmod)$}.
\]
Their supports satisfy $\codim \Supp \shH^i \FM_A(\Mmod) \geq 2i$, and if $r \geq 0$
is the least integer for which $\shH^r \FM_A(\Mmod) \neq 0$, then $\codim \Supp
\shH^r \FM_A(\Mmod) = 2r$.
\end{corollary}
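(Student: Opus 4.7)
The plan is to derive the entire corollary from \theoremref{thm:h-t-FM}. Setting $F = \FM_A(\Mmod)$, the hypothesis that $\Mmod$ is a holonomic $\Dmod_A$-module places $\Mmod$ in $\Dth{\leq 0}(\Dmod_A) \cap \Dth{\geq 0}(\Dmod_A)$, so the theorem yields $F \in \mDtcoh{\leq 0}(\OAsh) \cap \mDtcoh{\geq 0}(\OAsh) = \mCoh(\OAsh)$: the Fourier-Mukai transform is an $m$-perverse coherent sheaf on $\Ash$. All three assertions of the corollary are meant to fall out of this membership in the heart.

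The codimension inequality $\codim \Supp \shH^i F \geq 2i$ is, by the paper's definition, precisely the content of $F \in \mDtcoh{\leq 0}$. It immediately forces $\shH^i F = 0$ whenever $i > \dim A$, because no closed subset of $\Ash$ can have codimension greater than $\dim \Ash = 2 \dim A$. For the vanishing $\shH^i F = 0$ in negative degrees I would appeal to the complementary half $F \in \mDtcoh{\geq 0}$ in Arinkin-Bezrukavnikov form, namely $\shH^k(i_x^! F) = 0$ for $k$ less than a nonnegative function of $\codim(\overline{\{x\}})$ at every scheme-theoretic point $x$. A direct Koszul computation at a generic point of the support of a hypothetical $\shH^{-1} F \neq 0$ shows that this class propagates into $\shH^{-1}(i_x^! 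F)$, contradicting the condition since the threshold is nonnegative; iterating downward disposes of all $\shH^i F$ with $i < 0$.

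For the sharp equality $\codim \Supp \shH^r F = 2r$ at the lowest nonzero degree $r$, localize at a generic point $\eta_Z$ of a codim-$c$ component $Z$ of $\Supp \shH^r F$. Truncating $F$ at degree $r$ via the distinguished triangle $\shH^r F[-r] \to F \to \tau_{>r} F \to$ and applying $i_{\eta_Z}^!$, the Koszul resolution of the residue field of the regular local ring $\shO_{\Ash,\eta_Z}$ of dimension $c$ identifies the lowest nonzero cohomology of $i_{\eta_Z}^! F$ as $(\shH^r F)_{\eta_Z} \otimes \det N_{Z/\Ash}$, placed in degree $r$ (the contributions from $\tau_{>r} F$ all sit in strictly higher degrees by the same Koszul shift, so they do not affect the bottom piece). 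The $\mDtcoh{\geq 0}$ condition at $\eta_Z$ then demands $r \geq \lceil c/2 \rceil$, while $\mDtcoh{\leq 0}$ gives $r \leq \lfloor c/2 \rfloor$; the inequalities $\lceil c/2 \rceil \leq r \leq \lfloor c/2 \rfloor$ pinch $c$ to be even with $c = 2r$, ruling out in particular the near-miss $c = 2r + 1$. I expect the main technical hurdle to be the careful Koszul/local-duality computation of $i_{\eta_Z}^!$ at the generic point, together with pinning down which perversity ($m$ or $\bar m = \lceil \codim/2 \rceil$) appears in the $\mDtcoh{\geq 0}$ condition; this is a standard exercise once the Arinkin-Bezrukavnikov conventions are fixed, but it does the actual work of the corollary.
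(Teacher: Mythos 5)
The first two thirds of your argument are fine and essentially coincide with the paper's route: $F=\FM_A(\Mmod)\in\mCoh(\OAsh)$ by \theoremref{thm:h-t-FM}, the inequality $\codim\Supp\shH^iF\geq 2i$ is \lemmaref{lem:m-structure}, the vanishing for $i>\dim A$ follows because $2i$ would exceed $\dim\Ash$, and your $i_x^!$ computation for the vanishing in negative degrees is just a hands-on re-derivation of \lemmaref{lem:m-geq} ($\mDtcoh{\geq 0}\subseteq\Dtcoh{\geq 0}$, immediate from $m\geq 0$).

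The last step, however, has a genuine gap, and it sits exactly at the point you flagged as "pinning down which perversity appears". The supporting function for the $m$-t-structure is $m(x)=\lfloor\tfrac12\codim(x)\rfloor$ on \emph{both} sides: the condition $F\in\mDtcoh{\geq 0}$ at the generic point $\eta_Z$ of a codimension-$c$ component only demands $\derR i_{\eta_Z}^!F\in\Dtcoh{\geq\lfloor c/2\rfloor}$, and since the socle of $(\shH^rF)_{\eta_Z}$ survives into $\shH^r(i_{\eta_Z}^!F)$ this yields $r\geq\lfloor c/2\rfloor$, not $r\geq\lceil c/2\rceil$. Combined with $c\geq 2r$ you only get $c\in\{2r,2r+1\}$; the near-miss $c=2r+1$ is \emph{not} excluded by membership in $\mCoh(\OAsh)$ alone. (Concretely: a skyscraper sheaf on $\AA^1$ lies in the heart for $m$ but has $r=0$ and $c=1$.) The bound $r\geq\lceil c/2\rceil$ is the condition for $\mhDtcoh{\geq 0}$, which is \emph{not} implied by $F\in\mCoh(\OAsh)$ --- the paper is explicit that duality exchanges the $m$- and $\mh$-t-structures and that $\mCoh$ is not self-dual. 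The missing input is \theoremref{thm:Laumon}\ref{en:Laumon4}: $\derR\shHom(F,\OAsh)\simeq\langle -1_{\Ash}\rangle^{\ast}\FM_A(\DA\Mmod)$ is again the Fourier--Mukai transform of a single holonomic module, hence lies in $\mDtcoh{\leq 0}(\OAsh)$; equivalently $F\in\mhDtcoh{\geq 0}(\OAsh)$, which is precisely what forces $r\geq\lceil c/2\rceil$ and pinches $c=2r$. This is the content of \propositionref{prop:surprise}, whose hypotheses require both $F$ and its dual to lie in $\mDtcoh{\leq 0}$. (A small additional inaccuracy: the bottom cohomology of $i_{\eta_Z}^!F$ is the socle $\Hom(\kappa(\eta_Z),(\shH^rF)_{\eta_Z})$ in degree $r$, not $(\shH^rF)_{\eta_Z}\tensor\det N_{Z/\Ash}$; but only its nonvanishing matters.)
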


\subsection{Results about simple holonomic D-modules}
\label{subsec:intro-simple}

According to \theoremref{thm:FM-linear}, the Fourier-Mukai transform of a holonomic
$\Dmod_A$-module is supported in a finite union of linear subvarieties. For
\emph{simple} holonomic $\Dmod_A$-modules, one can say more: the support of the
Fourier-Mukai transform is always irreducible, and if it is not equal to $\Ash$, then
the $\Dmod_A$-module is -- up to tensoring by a line bundle with integrable
connection -- pulled back from an abelian variety of lower dimension.

\begin{theorem} \label{thm:h-simple}
Let $\Mmod$ be a simple holonomic $\Dmod_A$-module. Then 
\[
	\Supp \FM_A(\Mmod) 
		= (L, \nabla) \tensor \im \bigl( \fsh \colon \Bsh \to \Ash \bigr)
\]
is a linear subvariety of $\Ash$ (in the sense of \definitionref{def:linear}), and we have
\[
	\Mmod \tensor_{\OA} (L, \nabla) \simeq \fu \Nmod 
\]
for a simple holonomic $\Dmod_B$-module $\Nmod$ with $\Supp \FM_B(\Nmod) = \Bsh$.
\end{theorem}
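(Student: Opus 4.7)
The plan is to combine the functoriality of the Fourier--Mukai transform with the equivalence of t-structures in \theoremref{thm:h-t-FM}. That theorem identifies the heart of the standard t-structure on $\Dbh(\Dmod_A)$ with the heart $\mCoh(\OAsh)$ of the $m$-perverse t-structure on $\Dbcoh(\OAsh)$, so a simple holonomic $\Dmod_A$-module $\Mmod$ corresponds under $\FM_A$ to a simple $m$-perverse coherent sheaf on $\Ash$. In the Kashiwara/Arinkin--Bezrukavnikov framework, simple perverse coherent sheaves are intersection complexes of irreducible closed subvarieties, so $\Supp \FM_A(\Mmod)$ is irreducible. Combined with \theoremref{thm:FM-linear}, which forces the support to be a finite union of linear subvarieties, we conclude it is a \emph{single} such:
\[
	\Supp \FM_A(\Mmod) = (L, \nabla) \tensor \im \bigl( \fsh \colon \Bsh \to \Ash \bigr)
\]
for a surjective $f \colon A \to B$ with connected fibers and some $(L, \nabla) \in \Ash$.

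Next, I translate so that the support passes through the identity: because tensoring by a point of $\Ash$ on the D-module side corresponds (via $\FM_A$) to a translation on $\Ash$, replacing $\Mmod$ by $\Mmod \tensor_{\OA} (L,\nabla)$ (with a choice of sign dictated by the Laumon normalization) produces a simple holonomic $\Dmod_A$-module whose Fourier--Mukai transform has support exactly $\im \fsh$. Since $f$ has connected fibers, $\fsh$ is a closed embedding, so there is a unique $G \in \Dbcoh(\OBsh)$ with $\FM_A(\Mmod \tensor (L,\nabla)) \cong \fsh_{\ast} G$. I then invoke the Laumon--Rothstein compatibility of $\FM$ with pullback,
\[
	\FM_A \circ \fu \cong \fsh_{\ast} \circ \FM_B
\]
on $\Dbh(\Dmod_B)$; setting $\Nmod := \FM_B^{-1}(G)$ yields $\Mmod \tensor_{\OA} (L,\nabla) \cong \fu \Nmod$.

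To conclude I must check that $\Nmod$ is a simple holonomic $\Dmod_B$-module and that $\Supp \FM_B(\Nmod) = \Bsh$. Because $f$ is smooth surjective with connected fibers, $\fu$ is exact and faithful on holonomic D-modules: $\fu \Nmod$ being concentrated in a single degree forces $\Nmod$ to be a single module, and any nontrivial submodule $\Nmod' \subsetneq \Nmod$ would produce $\fu \Nmod' \subsetneq \fu \Nmod \cong \Mmod \tensor (L,\nabla)$, contradicting simplicity of $\Mmod$. The support equality $\Supp \FM_B(\Nmod) = \Supp G = \Bsh$ is automatic from the injectivity of $\fsh$ and $\fsh(\Supp G) = \Supp \fsh_{\ast} G = \im \fsh$. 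The principal obstacle lies in the inputs to paragraph~1: one needs the structure theorem identifying simple $m$-perverse coherent sheaves with intersection complexes (enough at least to force irreducible support) in the Kashiwara/Arinkin--Bezrukavnikov setting, and one must pin down the precise Laumon--Rothstein comparison $\FM_A \circ \fu \cong \fsh_{\ast} \circ \FM_B$ in the normalization of this paper, possibly up to a cohomological shift that is absorbed into the definition of $G$.
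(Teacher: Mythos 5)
Your overall strategy---identify the support as a single linear subvariety, descend to $\Bsh$, and finish with adjointness plus simplicity---has the right shape, and your closing verifications (exactness and faithfulness of $\fu$, simplicity of $\Nmod$, the computation of $\Supp \FM_B(\Nmod)$) are fine. But the two steps you flag as ``the principal obstacle'' are genuine gaps, not normalization issues. First, \theoremref{thm:h-t-FM} only identifies $\Modh(\Dmod_A)$ with a \emph{full subcategory} of $\mCoh(\OAsh)$, namely $\mCoh(\OAsh) \cap \FM_A\bigl(\Dbh(\Dmod_A)\bigr)$, and this subcategory is not known to be closed under subobjects taken in $\mCoh(\OAsh)$: a subobject of $\FM_A(\Mmod)$ is $\FM_A(\Nmod)$ for some coherent complex $\Nmod$, but nothing forces $\Nmod$ to be holonomic. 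So simplicity of $\Mmod$ does not yield simplicity of $\FM_A(\Mmod)$ as an object of $\mCoh(\OAsh)$. Moreover, the Arinkin--Bezrukavnikov classification of simple perverse coherent sheaves as IC-extensions with irreducible support is proved for strictly monotone and comonotone perversities, and $m = \lfloor \tfrac{1}{2}\codim \rfloor$ is not one; the paper's footnote in \parref{subsec:IC} explicitly declines to relate its intersection complexes to theirs. In the paper, irreducibility of $\Supp \FM_A(\Mmod)$ is a \emph{conclusion}, obtained only after the pullback structure is established; the argument needs only the much weaker input that \emph{some} irreducible component of $\Supp \shH^r \FM_A(\Mmod)$ has codimension exactly $2r$ and is a linear subvariety, which follows from \propositionref{prop:surprise} together with \theoremref{thm:FM-linear}.

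Second, even granting $\Supp F \subseteq \im \fsh$, the claim that $F \cong \fsh_{\ast} G$ for some $G \in \Dbcoh(\OBsh)$ fails: set-theoretic support on a closed subvariety does not give scheme-theoretic support (consider $\shO_{\Ash}/\shI^2$ with $\shI$ the ideal of $\im \fsh$), so there is no such $G$ to feed into $\FM_B^{-1}$. The paper circumvents this by working on the $\Dmod$-module side: the restriction $\derL(\fsh)^{\ast} F \simeq \FM_B(\fp \Mmod)$ and the adjunction morphism $\Mmod \to \fsi \fp \Mmod$ produce, via \lemmaref{lem:support} (a Nakayama argument at the generic point of $\im \fsh$, using that $\im\fsh$ is an irreducible component of $\Supp\shH^r F$ but not of any higher cohomology sheaf), a \emph{nonzero} morphism $\Mmod \to \fu \Nmod$ for some simple subquotient $\Nmod$ of $\shH^r \fp \Mmod$; simplicity of $\Mmod$ and of $\fu \Nmod$ (\lemmaref{lem:inverse-simple}) then force this morphism to be an isomorphism, and the support statement follows afterwards from $\FM_A(\fu \Nmod) \simeq \derR \fsh_{\ast} \FM_B(\Nmod) \decal{-r}$. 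To salvage your outline you would need to replace both inputs of your first paragraph by an argument of this kind.
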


The idea of the proof is that for some $r \geq 0$, the support of $\shH^r
\FM_A(\Mmod)$ has to contain a linear subvariety $(L, \nabla) \tensor \im \fsh$ of
codimension $2r$. Because of the functoriality of the Fourier-Mukai transform,
restricting $\FM_A(\Mmod)$ to this subvariety corresponds to taking the direct image
$\fp \bigl( \Mmod \tensor (L, \nabla) \bigr)$. We then use adjointness and the
fact that $\Mmod$ is simple to conclude that $\Mmod \tensor (L, \nabla)$ is
pulled back from $B$.

One application of \theoremref{thm:h-simple} is to classify simple holonomic
$\Dmod_A$-modules with Euler characteristic zero. Recall that the \define{Euler
characteristic} of a coherent algebraic $\Dmod_A$-module $\Mmod$ is the integer
\[
	\chi(A, \Mmod) = \sum_{k \in \ZZ} (-1)^k \dim \derH^k \bigl( A, \DR_A(\Mmod) \bigr).
\]
When $\Mmod$ is holonomic, we have $\chi(A, \Mmod) \geq 0$ as a consequence of
\theoremref{thm:h-t-FM} and the deformation invariance of the Euler characteristic.
In the regular case, the following result has been proved in a different way by Weissauer
\cite{Weissauer}*{Theorem~2}.
	
\begin{corollary} \label{cor:h-simple}
Let $\Mmod$ be a simple holonomic $\Dmod_A$-module. If $\chi(A, \Mmod) = 0$, then
there exists an abelian variety $B$, a surjective morphism $f \colon A
\to B$ with connected fibers, and a simple holonomic $\Dmod_B$-module $\Nmod$ with
$\chi(B, \Nmod) > 0$, such that
\[	
	\Mmod \tensor_{\OA} (L, \nabla) \simeq \fu \Nmod
\]
for a suitable point $(L, \nabla) \in \Ash$. 
\end{corollary}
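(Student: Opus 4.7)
The plan is to reduce the corollary to \theoremref{thm:h-simple} via the following Fourier-theoretic characterization, which I would establish first: for any holonomic $\Dmod_A$-module $\Mmod$,
\[
	\chi(A, \Mmod) = 0 \quad \Longleftrightarrow \quad \Supp \FM_A(\Mmod) \subsetneq \Ash.
\]
I would prove the forward direction by contrapositive. If $\Supp \FM_A(\Mmod) = \Ash$, then \corollaryref{cor:FM-summary} forces the minimal index $r$ with $\shH^r \FM_A(\Mmod) \neq 0$ to equal zero, and all higher cohomology sheaves $\shH^i \FM_A(\Mmod)$ have supports of codimension at least $2i \geq 2$. Consequently, at a generic point $(L, \nabla) \in \Ash$ the derived fiber $\FM_A(\Mmod) \Ltensor_{\OAsh} k(L, \nabla)$ reduces to the nonzero vector space $\shH^0 \FM_A(\Mmod) \tensor k(L, \nabla)$ concentrated in a single cohomological degree, and therefore has nonzero Euler characteristic. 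By the base change property of the Laumon--Rothstein transform, this Euler characteristic agrees, up to sign, with $\chi(A, \Mmod \tensor (L, \nabla))$; combined with the deformation invariance of the Euler characteristic on $\Ash$, this yields $\chi(A, \Mmod) \neq 0$. The converse is immediate: if the support is a proper subvariety, the derived fiber vanishes at a generic point, so $\chi(A, \Mmod) = 0$.

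With this in hand, the corollary becomes a straightforward packaging of \theoremref{thm:h-simple}. Assuming $\chi(A, \Mmod) = 0$, I apply \theoremref{thm:h-simple} to produce $(L, \nabla) \in \Ash$, a surjective morphism $f \colon A \to B$ with connected fibers, and a simple holonomic $\Dmod_B$-module $\Nmod$ with $\Supp \FM_B(\Nmod) = \Bsh$ such that
\[
	\Mmod \tensor_{\OA} (L, \nabla) \simeq \fu \Nmod
	\quad\text{and}\quad
	\Supp \FM_A(\Mmod) = (L, \nabla) \tensor \im \fsh.
\]
By the characterization above, the right-hand side is strictly contained in $\Ash$, hence $\im \fsh \neq \Ash$, so $\dim B < \dim A$. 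Applying the same characterization to $\Nmod$ on $B$: since $\Supp \FM_B(\Nmod) = \Bsh$, we conclude that $\chi(B, \Nmod) \neq 0$, and combined with the non-negativity $\chi(B, \Nmod) \geq 0$ (a consequence of \theoremref{thm:h-t-FM} and deformation invariance, already recorded in the text), this upgrades to $\chi(B, \Nmod) > 0$.

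The main technical obstacle is making the Euler characteristic computation in the first paragraph fully rigorous, in particular pinning down the precise base change isomorphism for the Laumon--Rothstein transform at a generic point of $\Ash$ and tracking the cohomological degree shift between $\FM_A(\Mmod)$ and $R\Gamma \bigl( A, \DR_A(\Mmod \tensor (L, \nabla)) \bigr)$. Once this bookkeeping is settled, everything else is a direct assembly of \theoremref{thm:h-simple} together with the structural information provided by \corollaryref{cor:FM-summary}.
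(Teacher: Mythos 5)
Your argument is correct and follows essentially the same route as the paper: both proofs rest on the fact that $\chi(A, \Mmod)$ equals the generic rank of $\shH^0 \FM_A(\Mmod)$ (deformation invariance of the Euler characteristic combined with the codimension bounds of \corollaryref{cor:FM-summary}), and then invoke \theoremref{thm:h-simple}. The only cosmetic difference is that the paper records the intermediate step as the vanishing $\shH^0 \FM_A(\Mmod) = 0$ via \propositionref{prop:surprise}, whereas you package it as the equivalence between $\chi(A, \Mmod) = 0$ and properness of $\Supp \FM_A(\Mmod)$, which you then reuse on $B$ to get $\chi(B, \Nmod) > 0$.
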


Now suppose that $\Mmod$ is a simple holonomic $\Dmod$-module with $\shH^0
\FM_A(\Mmod) \neq 0$. In that case, the proof of \theoremref{thm:h-simple} actually
gives the stronger inequalities
\[
	\codim \Supp \shH^i \FM_A(\Mmod) \geq 2i+2 \quad
		\text{for every $i \geq 1$.}
\]
We deduce from this that $\shH^0 \FM_A(\Mmod)$ is a reflexive sheaf, 
locally free on the complement of a finite union of linear subvarieties of
codimension $\geq 4$. This fact allows us to reconstruct (in \corollaryref{cor:IC})
the entire complex $\FM_A(\Mmod)$ from the locally free sheaf $\ju \shH^0
\FM_A(\Mmod)$ by applying the functor
\[
	\tau_{\leq \ell(A)-1} \circ \derR \shHom(\argbl, \shO) \circ \dotsb \circ
		\tau_{\leq 2} \circ \derR \shHom(\argbl, \shO) \circ \tau_{\leq 1} \circ 
		\derR \shHom(\argbl, \shO) \circ \jl.
\]
Here $\ell(A)$ is the smallest odd integer $\geq \dim A$, and $j$ is the inclusion
of the open set where $\shH^0 \FM_A(\Mmod)$ is locally free. 

This formula looks a bit like Deligne's formula for the intersection complex of a
local system \cite{BBD}*{Proposition~2.1.11}. We investigate this analogy in
\subsecref{subsec:IC}, where we show that the same formula can be used to define an
\define{intersection complex}
\[
	\IC_X(\shE) \in \mCoh(\OX),
\]
where $j \colon U \into X$ is an open subset of a smooth complex algebraic variety
$X$ with $\codim(X \setminus U) \geq 2$, and $\shE$ is a locally free coherent sheaf on
$U$. This complex has some of the same properties as its cousin in \cite{BBD}. In that sense, 
\[
	\FM_A(\Mmod) \simeq \IC_{\Ash} \bigl( \ju \shH^0 \FM_A(\Mmod) \bigr)
\]
is indeed the intersection complex of a locally free sheaf. When $\shH^0 \FM_A(\Mmod)
= 0$, \theoremref{thm:h-simple} shows that $\FM_A(\Mmod)$ is still the intersection
complex of a locally free sheaf, but now on a linear subvariety of $\Ash$ of lower
dimension.

\subsection{A conjecture}
\label{subsec:intro-conjecture}

By now, it will have become clear that Fourier-Mukai transforms of holonomic
$\Dmod_A$-modules are very special complexes of coherent sheaves on the moduli space
$\Ash$. Because the Fourier-Mukai transform
\[
	\FM_A \colon \Dbcoh(\Dmod_A) \to \Dbcoh(\OAsh)
\] 
is an equivalence of categories, this suggests the following general question.

\begin{question}
Let $\Dbh(\Dmod_A)$ denote the full subcategory of $\Dbcoh(\Dmod_A)$, consisting of
complexes with holonomic cohomology sheaves. What is the image of $\Dbh(\Dmod_A)$
under the Fourier-Mukai transform? In particular, which complexes of coherent sheaves
on $\Ash$ are Fourier-Mukai transforms of holonomic $\Dmod_A$-modules?
\end{question}

In this section, I would like to propose a conjectural answer to this question.
Roughly speaking, the answer seems to be the following:
\begin{align*}
 	\FM_A \bigl( \Dbh(\Dmod_A) \bigr) 
		&= \text{derived category of \define{hyperk\"ahler constructible complexes},} \\
	\FM_A \bigl( \Modh(\Dmod_A) \bigr) 
		&= \text{abelian category of \define{hyperk\"ahler perverse sheaves}.}
\end{align*}
Recall that the space of line bundles with connection is a hyperk\"ahler manifold: as
complex manifolds, one has $\Ash \simeq H^1(A, \CC) / H^1(A, \ZZ)$, and any
polarization of the Hodge structure on $H^1(A, \CC)$ gives rise to a flat hyperk\"ahler
metric on $\Ash$. Here is some evidence for this point of view:

\begin{enumerate}
\item Finite unions of linear subvarieties of $\Ash$ are precisely those algebraic
subvarieties that are also hyperk\"ahler subvarieties.
\item Given a holonomic complex $\Mmod \in \Dbh(\Dmod_A)$, there is a finite
stratification of $\Ash$ by hyperk\"ahler subvarieties such that the restriction of
$\FM_A(\Mmod)$ to each stratum has locally free cohomology sheaves.
\item We prove in \subsecref{subsec:truncation} that a complex of coherent sheaves
lies in the subcategory $\FM_A \bigl( \Dbh(\Dmod_A) \bigr)$ if and only if all
of its cohomology sheaves do. This gives some justification for using the term
``constructible complex''.
\item If we use \define{quaternionic} dimension, \theoremref{thm:h-t-FM} becomes
\[
	\dim_{\HH} \Supp \shH^i \FM_A(\Mmod) \leq \dim_{\HH} \Ash - i = \dim A - i
\]
for a holonomic $\Dmod_A$-module $\Mmod$; this says that the complex $\FM_A(\Mmod)
\decal{\dim A}$ is perverse for the usual middle perversity \cite{BBD}*{Chapter~2}
over $\HH$.
\item For a simple holonomic $\Dmod$-module $\Mmod$, the Fourier-Mukai transform
$\FM_A(\Mmod)$ is the intersection complex of a locally free sheaf.
\end{enumerate}

Unfortunately, nobody has yet defined a category of hyperk\"ahler perverse sheaves,
even in the case of compact hyperk\"ahler manifolds; and our situation presents the
additional difficulty that $\Ash$ is not compact. Nevertheless, I believe that, based
on the work of Mochizuki on twistor $\Dmod$-modules \cite{Mochizuki-wild}, it is
possible to make an educated guess, at least in the case of simple
holonomic $\Dmod$-modules.%

\begin{conjecture}
Let $\shF$ be a reflexive coherent algebraic sheaf on $\Ash$. Then there exists a
simple holonomic $\Dmod_A$-module $\Mmod$ with the property that $\shF \simeq \shH^0
\FM_A(\Mmod)$ if and only if the following conditions are satisfied:
\begin{aenumerate}
\item $\shF$ is locally free on the complement of a finite union of linear
subvarieties of codimension at least four.
\item \label{en:conj-b}
The resulting locally free sheaf admits a Hermitian metric $h$ whose
curvature tensor $\Theta_h$ is $\SU{2}$-invariant and locally square-integrable on $\Ash$.
\item \label{en:conj-c}
The pointwise norm of $\Theta_h$, taken with respect to $h$, is in $O \bigl( d^{-(1+\eps)}
\bigr)$, where $d$ is the distance to the origin in $\Ash$.
\end{aenumerate}
Moreover, $\Mmod$ is regular if and only if the pointwise norm of $\Theta_h$ is
in $O(d^{-2})$.
\end{conjecture}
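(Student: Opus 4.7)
The plan is to combine Mochizuki's theory of pure polarized wild twistor $\Dmod$-modules with the hyperk\"ahler structure on $\Ash$. Every simple holonomic $\Dmod_A$-module $\Mmod$ underlies (essentially uniquely) a pure polarized wild twistor $\Dmod$-module, and therefore carries a tame or wild harmonic metric on its smooth locus. On the other side, $\Ash \simeq H^1(A, \CC)/H^1(A, \ZZ)$ inherits its flat hyperk\"ahler metric from a polarization of $H^1(A, \CC)$, and the Laumon--Rothstein kernel $(\Psh, \nablash)$ on $A \times \Ash$ is naturally hyperholomorphic with respect to the product structure. Thus the Fourier--Mukai transform can be enhanced to a transform of twistor data: it should take pure twistor $\Dmod$-modules on $A$ to ``twistor coherent sheaves'' on $\Ash$, and this compatibility is the technical heart of both directions.

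For the forward direction, granting this compatibility, the metric $h$ on $\shH^0 \FM_A(\Mmod)$ is produced by transporting the harmonic metric on $\Mmod$, and its curvature is $\SU{2}$-invariant simply by functoriality, which is exactly Verbitsky's hyperholomorphic condition. Condition (a) has already been established in \corollaryref{cor:FM-summary} together with \theoremref{thm:h-simple}. Conditions (b) and (c) should come from Mochizuki's asymptotic analysis of wild harmonic bundles near the divisor at infinity of $\Ash$, with the distinction between the $O(d^{-2})$ behaviour and the $O(d^{-(1+\eps)})$ behaviour corresponding to the tame/wild (equivalently regular/irregular) dichotomy for $\Mmod$.

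For the reverse direction, suppose $(\shF, h)$ satisfies (a)--(c). Verbitsky's theory interprets the $\SU{2}$-invariance of $\Theta_h$ as saying that $\shF$ is a reflexive hyperholomorphic sheaf on $\Ash$, equivalently that it extends (away from a hyperk\"ahler subvariety of codimension $\geq 4$, by (a)) to a coherent sheaf on the twistor space of $\Ash$ which is flat over the twistor line $\PP^1$. Applying an inverse Fourier--Mukai transform fibrewise then produces a coherent $\Dmod_A$-module $\Mmod$; the $\SU{2}$-invariance glues these fibrewise objects into a twistor $\Dmod$-module, forcing $\Mmod$ to be polarizable and hence semisimple. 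The local square-integrability in (b) is what guarantees holonomicity (i.e.\ a Lagrangian characteristic variety) rather than mere coherence, since on the Dolbeault fibre it controls the singular set of the associated Higgs bundle; the decay in (c) then distinguishes the regular from the irregular case as in the forward direction, and reflexivity corresponds to simplicity via irreducibility of the underlying harmonic bundle.

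The main obstacle is the analytic content of the reverse direction. Both Simpson's tame harmonic bundle theory and Mochizuki's wild extension are formulated for algebraic or compact settings, whereas $\Ash$ is non-compact and $h$ is only locally square-integrable. One would need a hyperk\"ahler analogue of the Bando--Siu and Uhlenbeck--Yau theorems on $\Ash$, with boundary behaviour controlled by (c), in order to guarantee that $(\shF, h)$ really arises from a polarized wild twistor object; and then a careful analysis of the fibrewise inverse Fourier--Mukai transform under these bounds, to show that the result is a genuinely \emph{simple} holonomic $\Dmod_A$-module rather than only a coherent one. Making the threshold between $O(d^{-(1+\eps)})$ and $O(d^{-2})$ correspond precisely to the regular/irregular dichotomy for $\Mmod$ is likely the most delicate point, and is where one would expect the conjecture in its stated form to have to be refined.
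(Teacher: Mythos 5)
The statement you are trying to prove is stated in the paper as a \emph{conjecture}: the author explicitly offers no proof, only heuristic evidence (the lift of simple holonomic $\Dmod$-modules to polarized wild pure twistor $\Dmod$-modules, the Bando--Siu/Mochizuki extension theory explaining the redundancy among the conditions, the elliptic-curve case from Mochizuki's Nahm-transform paper, and Jardim's work suggesting the quadratic-decay criterion for regularity). Your proposal follows essentially the same roadmap the author sketches, so as a statement of strategy it is well aligned with the paper --- but it is not a proof, and you correctly identify this yourself in your final paragraph.

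The concrete gaps are these. First, the ``enhancement of the Fourier--Mukai transform to a transform of twistor data'' that both directions of your argument rest on does not exist in the paper or in the literature: the extended transform $\FMt_A$ constructed here lives over the $\CC^{\ast}$-equivariant family $E(A) \to \CC$ (interpolating between the de Rham fibre $\lambda = 1$ and the Dolbeault fibre $\lambda = 0$), which is not the twistor family over $\PP^1$, and no compatibility with the $\SU{2}$-action is established; so ``$\SU{2}$-invariance by functoriality'' is an assertion, not an argument. Second, in the reverse direction you invoke Verbitsky-style hyperholomorphic sheaf theory and a hyperk\"ahler Uhlenbeck--Yau/Bando--Siu theorem on $\Ash$; these are developed for compact hyperk\"ahler manifolds, whereas $\Ash$ is noncompact and the hypothesis only gives local square-integrability of $\Theta_h$, so the existence of the required twistor object is exactly the open analytic problem, not a step one can grant. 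Third, the claims that local square-integrability forces holonomicity of the inverse transform, that reflexivity corresponds to simplicity, and that the $O(d^{-2})$ versus $O(d^{-(1+\eps)})$ threshold detects regularity are each unproved (the last is known only for elliptic curves). In short, your write-up is a faithful expansion of the author's own motivating picture, but every step that would turn the conjecture into a theorem remains missing.
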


There is a certain amount of redundancy in the conditions. In fact, we could start from a
holomorphic vector bundle $\shE$ on the complement of a finite union of linear
subvarieties of codimension $\geq 2$, and assume that it admits a Hermitian metric
$h$ for which \ref{en:conj-b} and \ref{en:conj-c} are true. Then $h$ is admissible in
the sense of Bando and Siu \cite{BS}, and therefore extends uniquely to a reflexive
coherent analytic sheaf on $\Ash$; by virtue of \ref{en:conj-c}, the extension is
acceptable in the sense of \cite{Mochizuki-wild}*{Chapter~21}, and therefore
algebraic.%
\footnote{%
This is just to give an idea; in reality, the extension is typically not locally
free, and so  it is more difficult to prove that is is algebraic.%
}
In particular, $\shE$ itself is algebraic, and the discussion at the end of
\subsecref{subsec:intro-simple} shows that the simple holonomic $\Dmod_A$-module must
be
\[
	\FM_A^{-1} \Bigl( \IC_{\Ash}(\shE) \Bigr),
\]
the inverse Fourier-Mukai transform of the intersection complex of $\shE$.

The paper \cite{Mochizuki-N} establishes an equivalent result in the case of elliptic
curves. The reason for believing that regularity should correspond to quadratic decay
in the curvature is the work of Jardim \cite{Jardim}. In general, the existence of
the metric, and the $\SU{2}$-invariance of its curvature, should be consequences of
the fact that every simple holonomic $\Dmod$-module lifts to a polarized wild
pure twistor $\Dmod$-module. The remaining points will probably require methods from
analysis. 

Note that the conjecture is consistent with the result (in \corollaryref{cor:Chern})
that all Chern classes of $\FM_A(\Mmod)$ are zero in cohomology. Another interesting
question is whether the existence of the metric in \ref{en:conj-b} is equivalent to
an algebraic condition such as stability. If that was the case, then I would guess
that the semistable objects are what corresponds to Fourier-Mukai transforms of
holonomic $\Dmod_A$-modules.

\subsection{Results about perverse sheaves}
\label{subsec:intro-perverse}

For the convenience of those readers who are more familiar with constructible
complexes and perverse sheaves, we shall now translate the main results into that
language.  In the sequel, a \define{constructible complex} on the abelian variety $A$
means a complex $K$ of sheaves of $\CC$-vector spaces, whose cohomology sheaves
$\shH^i K$ are constructible with respect to an algebraic stratification of
$A$, and vanish for $i$ outside some bounded interval. We denote by $\Dbc(\CC_A)$ the
bounded derived category of constructible complexes. It is a basic fact
\cite{HTT}*{Section~4.5} that the hypercohomology groups $\derH^i(A, K)$ are
finite-dimensional complex vector spaces for any $K \in \Dbc(\CC_A)$.

Now let $\Char(A)$ be the space of characters of the fundamental group; it is
also the moduli space for local systems of rank one. For any character $\rho \colon
\pi_1(A,0) \to \CCst$, we denote the corresponding local system on $A$ by the symbol
$\CCrho$. It is easy to see that $K \tensor_{\CC} \CCrho$ is again constructible for
any $K \in \Dbc(\CC_A)$; we may therefore define the \define{cohomology support loci}
of $K \in \Dbc(\CC_A)$ to be the subsets
\begin{equation} \label{eq:CSL-c}
	S_m^k(A, K) = \Menge{\rho \in \Char(A)}{\dim \derH^k \bigl( A, K \tensor_{\CC}
		\CCrho \bigr) \geq m},
\end{equation}
for any pair of integers $k,m \in \ZZ$. Since the space of characters is
very large -- its dimension is equal to $2 \dim A$ -- these loci
should contain a lot of information about the original constructible complex $K$, and
indeed they do.

Our first result is a structure theorem for cohomology support loci.

\begin{definition}
A \define{linear subvariety} of $\Char(A)$ is any subset of the form 
\[
	\rho \cdot \im \bigl( \Char(f) \colon \Char(B) \to \Char(A) \bigr),
\]
for a surjective morphism of abelian varieties $f \colon A \to B$ with connected
fibers, and a character $\rho \in \Char(A)$. We say that a linear subvariety is
\define{arithmetic} if $\rho$ can be taken to be torsion point of $\Char(A)$.
\end{definition}

\begin{theorem} \label{thm:c-linear}
Let $K \in \Dbc(\CC_A)$ be a constructible complex.
\begin{aenumerate}
\item Each $S_m^k(A, K)$ is a finite union of linear subvarieties of $\Char(A)$.
\label{en:c-linear-a}
\item If $K$ is a semisimple perverse sheaf of geometric origin \cite{BBD}*{6.2.4},
then these linear subvarieties are arithmetic.
\label{en:c-linear-b}
\end{aenumerate}
\end{theorem}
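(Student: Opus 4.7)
The plan is to deduce \theoremref{thm:c-linear} from \theoremref{thm:h-linear} by transporting both conclusions through the Riemann--Hilbert equivalence $\DR_A \colon \Dbrh(\Dmod_A) \to \Dbc(\CC_A)$. Given $K \in \Dbc(\CC_A)$, choose a regular holonomic complex $\Mmod \in \Dbrh(\Dmod_A)$ with $\DR_A(\Mmod) \simeq K$. Since $\DR_A$ intertwines the twisting operation $\Mmod \mapsto \Mmod \tensor_{\OA} (L, \nabla)$ with $K \mapsto K \tensor_{\CC} \CCrho$ for $\rho = \Phi(L, \nabla)$, the identity $\Phi \bigl( S_m^k(A, \Mmod) \bigr) = S_m^k(A, K)$ recorded in \eqref{eq:relation} is tautological. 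So both parts will follow by combining the corresponding parts of \theoremref{thm:h-linear} with a dictionary between linear subvarieties of $\Ash$ and of $\Char(A)$.

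For part \ref{en:c-linear-a}, \theoremref{thm:h-linear}\ref{en:c-linear-a} already expresses $S_m^k(A, \Mmod)$ as a finite union of linear subvarieties of $\Ash$ in the sense of \definitionref{def:linear}. It remains to verify that $\Phi$ carries each such piece $(L, \nabla) \tensor \im \fsh \subseteq \Ash$ onto $\Phi(L, \nabla) \cdot \im \Char(f) \subseteq \Char(A)$, for every surjective morphism $f \colon A \to B$ with connected fibers. This is pure functoriality of the monodromy correspondence: $\Phi$ is a homomorphism of complex Lie groups (tensor product of flat line bundles corresponds to product of holonomy characters), and the square formed by $\fsh$ and $\Char(f)$ commutes with $\Phi$ on both sides, because pullback of a flat connection along $f$ yields the local system one obtains by composing the monodromy representation with $\fl \colon \pi_1(A, 0) \to \pi_1(B, 0)$.

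For part \ref{en:c-linear-b}, two further ingredients suffice. First, whenever $K$ is a semisimple perverse sheaf of geometric origin, the associated $\Mmod$ is a semisimple regular holonomic $\Dmod_A$-module of geometric origin; this holds because Riemann--Hilbert is an equivalence of abelian categories on the hearts of the perverse t-structures and intertwines all six operations used in the definition of \cite{BBD}*{6.2.4}. Second, $\Phi$ carries torsion points of $\Ash$ to torsion points of $\Char(A)$, which is immediate because it is a Lie group homomorphism (alternatively, this is the content of the footnote to \definitionref{def:linear}). Combining these observations with \theoremref{thm:h-linear}\ref{en:c-linear-b} and the compatibility of $\Phi$ with linear subvarieties established in the previous paragraph yields the arithmetic refinement.

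The heavy lifting has been done in proving \theoremref{thm:h-linear}; the present theorem is essentially a formal consequence of it, and the only step that warrants any real care is matching the notion ``of geometric origin'' across the Riemann--Hilbert correspondence, which one handles by tracking the six functors.
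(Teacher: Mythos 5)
Your proposal is correct and follows essentially the same route as the paper: reduce to \theoremref{thm:h-linear} (and, for the arithmetic refinement, to \theoremref{thm:geom-origin}) via the Riemann--Hilbert correspondence and the identity $\Phi\bigl(S_m^k(A,\Mmod)\bigr)=S_m^k(A,K)$ from \theoremref{thm:relationship}. The extra details you supply -- that $\Phi$ matches linear subvarieties and torsion points on the two sides, and that ``geometric origin'' transfers across Riemann--Hilbert -- are exactly the (largely definitional) compatibilities the paper leaves implicit.
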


\begin{proof}
For \ref{en:c-linear-a}, we use the Riemann-Hilbert correspondence to find
a regular holonomic complex $\Mmod \in \Dbrh(\Dmod_A)$ with $\DR_A(\Mmod) \simeq
K$. Since $S_m^k(A, K) = \Phi \bigl( S_m^k(A, \Mmod) \bigr)$ by
\theoremref{thm:relationship}, the assertion follows from \theoremref{thm:h-linear}.
The statement in \ref{en:c-linear-b} may be deduced from \theoremref{thm:geom-origin} by a
similar argument.
\end{proof}

The next result has to do with the codimension of the cohomology support loci.
Recall that the category $\Dbc(\CC_A)$ has a nonstandard t-structure
\[
	\Bigl( \piDtc{\leq 0}(\CC_A), \piDtc{\geq 0}(\CC_A) \Bigr),
\]
called the \define{perverse t-structure}, whose heart is the abelian category of
perverse sheaves \cite{BBD}. We show that the position of a constructible
complex with respect to this t-structure can be read off from its cohomology
support loci.

\begin{theorem} \label{thm:c-t}
Let $K \in \Dbc(\CC_A)$ be a constructible complex. Then one has
\begin{align*}
	K \in \piDtc{\leq 0}(\CC_A) \quad &\Longleftrightarrow \quad
		\text{$\codim S_1^k(A, K) \geq 2 k$ for every $k \in \ZZ$,} \\
	K \in \piDtc{\geq 0}(\CC_A) \quad &\Longleftrightarrow \quad
		\text{$\codim S_1^k(A, K) \geq -2k$ for every $k \in \ZZ$.}
\end{align*}
Thus $K$ is a perverse sheaf if and only if $\codim S_1^k(A, K) \geq
\abs{2k}$ for every $k \in \ZZ$.
\end{theorem}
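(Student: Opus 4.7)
The plan is to reduce Theorem~\ref{thm:c-t} to its $\Dmod$-module counterpart Theorem~\ref{thm:h-t} via the Riemann-Hilbert correspondence. Given $K \in \Dbc(\CC_A)$, I first invoke the equivalence $\DR_A \colon \Dbrh(\Dmod_A) \to \Dbc(\CC_A)$ to choose a regular holonomic complex $\Mmod$ with $\DR_A(\Mmod) \simeq K$. The key structural fact I will use is that this equivalence interchanges the standard t-structure on $\Dbrh(\Dmod_A)$ with the perverse t-structure on $\Dbc(\CC_A)$: one has $\Mmod \in \Dth{\leq 0}(\Dmod_A)$ if and only if $K \in \piDtc{\leq 0}(\CC_A)$, and similarly for $\geq 0$. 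This is a standard consequence of Kashiwara's theorem that $\DR_A$ sends a single holonomic $\Dmod_A$-module to a perverse sheaf.

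Next, I translate the codimension conditions. By \theoremref{thm:relationship}, the biholomorphism $\Phi \colon \Ash \to \Char(A)$ of \eqref{eq:Phi} identifies the two versions of cohomology support loci, so that $\Phi \bigl( S_1^k(A, \Mmod) \bigr) = S_1^k(A, K)$. Even though $\Ash$ and $\Char(A)$ carry inequivalent algebraic structures, $\Phi$ is a biholomorphism of complex manifolds, and hence preserves the codimension of closed analytic subsets. Since each cohomology support locus is algebraic on either side (see Theorem~\ref{thm:alg-c} and Proposition~\ref{prop:alg-h}), its codimension in $\Char(A)$ agrees with its codimension in $\Ash$.

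With these two translations in place, the desired pair of equivalences for $K$ follows immediately from Theorem~\ref{thm:h-t} applied to $\Mmod$. The final assertion, that $K$ is a perverse sheaf exactly when $\codim S_1^k(A, K) \geq \abs{2k}$ for every $k \in \ZZ$, is then obtained by intersecting the two conditions, since the heart of the perverse t-structure is $\piDtc{\leq 0}(\CC_A) \cap \piDtc{\geq 0}(\CC_A)$. The heavy lifting having already been carried out in Theorem~\ref{thm:h-t}, no genuine obstacle remains here; the only point requiring any care is the compatibility of Riemann-Hilbert with the relevant t-structures, and this is well documented in the literature (for instance, \cite{HTT}*{Theorem~7.2.5}).
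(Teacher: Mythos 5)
Your argument is correct and follows essentially the same route as the paper: Riemann--Hilbert to pass to a regular holonomic $\Mmod$ with $\DR_A(\Mmod)\simeq K$, the identification $S_1^k(A,K)=\Phi\bigl(S_1^k(A,\Mmod)\bigr)$ from \theoremref{thm:relationship}, and then the $\Dmod$-module codimension criterion. The only cosmetic difference is that the paper invokes \theoremref{thm:t-structure} and handles the $\piDtc{\geq 0}$ direction by Verdier duality on the constructible side, whereas you cite \theoremref{thm:h-t}, which already packages both inequalities; since \theoremref{thm:h-t} is established independently via the Fourier--Mukai transform, there is no circularity.
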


\begin{proof}
Let $\Mmod \in \Dbrh(\Dmod_A)$ be a regular holonomic complex such that $K \simeq
\DR_A(\Mmod)$. Since $S_m^k(A, K) = \Phi \bigl( S_m^k(A, \Mmod) \bigr)$, the first
assertion is a consequence of \theoremref{thm:t-structure}. Now let $\DA \colon
\Dbc(\CC_A) \to \Dbc(\CC_A)$ be the Verdier duality functor; then
\[
	S_m^k(A, K) = \langle -1_{\Char(A)} \rangle \, S_m^{-k}(A, \DA K)
\]
by Verdier duality. Since $K \in \piDtc{\geq 0}(\CC_A)$ if and only if $\DA K \in
\piDtc{\leq 0}(\CC_A)$, the second assertion follows. The final assertion is clear
from the definition of perverse sheaves as the heart of the perverse t-structure on
$\Dbc(\CC_A)$.
\end{proof}

A consequence is the following ``generic vanishing theorem'' for perverse sheaves; a
similar -- but less precise -- statement has been proved some time ago by Kr\"amer
and Weissauer \cite{KW}*{Theorem~2}.

\begin{corollary}
Let $K \in \Dbc(\CC_A)$ be a perverse sheaf on a complex abelian variety. Then 
the cohomology support loci $S_m^k(A, K)$ are finite unions of linear subvarieties of
$\Char(A)$ of codimension at least $\abs{2k}$. In particular, one has
\[
	\derH^k \bigl( A, K \tensor_{\CC} \CCrho \bigr) = 0 
\]
for general $\rho \in \Char(A)$ and $k \neq 0$.
\end{corollary}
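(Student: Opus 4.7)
The plan is to obtain this corollary as a direct combination of Theorems~\ref{thm:c-linear} and~\ref{thm:c-t}, with no new input. Since Theorem~\ref{thm:c-linear}\ref{en:c-linear-a} already applies to any constructible complex, the assertion that each $S_m^k(A, K)$ is a finite union of linear subvarieties of $\Char(A)$ is automatic. It remains only to bound the codimension.

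First I would observe the trivial inclusion $S_m^k(A,K) \subseteq S_1^k(A,K)$ for $m \geq 1$, since the condition $\dim \derH^k(A, K \tensor_{\CC} \CCrho) \geq m$ implies $\dim \derH^k(A, K \tensor_{\CC} \CCrho) \geq 1$. Consequently $\codim S_m^k(A,K) \geq \codim S_1^k(A,K)$, and it suffices to bound $\codim S_1^k$. Because $K$ is perverse, it lies in $\piDtc{\leq 0}(\CC_A) \cap \piDtc{\geq 0}(\CC_A)$, so Theorem~\ref{thm:c-t} applied to both halves of the t-structure yields
\[
	\codim S_1^k(A, K) \geq 2k \quad \text{and} \quad \codim S_1^k(A, K) \geq -2k
\]
for every $k \in \ZZ$; together these give $\codim S_1^k(A, K) \geq \abs{2k}$, hence the same bound for $S_m^k(A, K)$.

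For the ``in particular'' clause, fix $k \neq 0$. The bound above gives $\codim S_1^k(A, K) \geq 2 \abs{k} \geq 2$, so $S_1^k(A, K)$ is a proper closed subvariety of $\Char(A)$. For any $\rho$ in its (nonempty Zariski open) complement one has $\derH^k(A, K \tensor_{\CC} \CCrho) = 0$, which is precisely the asserted generic vanishing.

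There is essentially no obstacle here: the work was done in establishing Theorems~\ref{thm:c-linear} and~\ref{thm:c-t}, and the corollary is a bookkeeping step. The only subtlety worth flagging is that one must apply Theorem~\ref{thm:c-t} in \emph{both} directions of the perverse t-structure to get the symmetric bound $\abs{2k}$, rather than just $2k$ or $-2k$ on its own.
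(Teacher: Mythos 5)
Your proof is correct and follows exactly the route the paper intends: the corollary is stated there without a separate proof, as an immediate consequence of Theorem~\ref{thm:c-linear} (linearity) and Theorem~\ref{thm:c-t} applied to both halves of the perverse t-structure (the codimension bound $\abs{2k}$), with the reduction from $S_m^k$ to $S_1^k$ and the generic vanishing being the same bookkeeping you carry out.
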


The generic vanishing theorem implies that the \define{Euler characteristic}
\[
	\chi(A, K) = \sum_{k \in \ZZ} (-1)^k \dim \derH^k \bigl( A, K \bigr)
\]
of a perverse sheaf on an abelian variety is always nonnegative, a result originally
due to Franecki and Kapranov \cite{FK}*{Corollary~1.4}. Indeed, from the deformation
invariance of the Euler characteristic, we get
\[
	\chi(A, K) = \chi \bigl( A, K \tensor_{\CC} \CCrho \bigr) = 
		\dim \derH^0 \bigl( A, K \tensor_{\CC} \CCrho \bigr) \geq 0
\]
for a general character $\rho \in \Char(A)$. For \emph{simple} perverse
sheaves with $\chi(A, K) = 0$, we have the following structure theorem
\cite{Weissauer}*{Theorem~2}.

\begin{theorem} \label{thm:c-simple}
Let $K \in \Dbc(\CC_A)$ be a simple perverse sheaf. If $\chi(A, K) = 0$, then there
exists an abelian variety $B$, a surjective morphism $f \colon A \to B$ with connected
fibers, and a simple perverse sheaf $K' \in \Dbc(\CC_B)$ with $\chi(B, K') > 0$, such that
\[
	K \simeq \fu K' \tensor_{\CC} \CCrho
\]
for some character $\rho \in \Char(A)$.
\end{theorem}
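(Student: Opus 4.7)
The plan is to pass to the D-module side via Riemann--Hilbert and invoke \corollaryref{cor:h-simple}. Let $\Mmod \in \Dbrh(\Dmod_A)$ be the unique regular holonomic D-module with $\DR_A(\Mmod) \simeq K$; since $\DR_A$ is an equivalence sending simple objects to simple objects, $\Mmod$ is a simple regular holonomic $\Dmod_A$-module, and the compatibility of $\DR$ with hypercohomology gives $\chi(A, \Mmod) = \chi(A, K) = 0$. Applying \corollaryref{cor:h-simple} produces an abelian variety $B$, a surjective morphism $f \colon A \to B$ with connected fibers, a point $(L, \nabla) \in \Ash$, and a simple holonomic D-module $\Nmod$ on $B$ with $\chi(B, \Nmod) > 0$, such that $\Mmod \tensor_{\OA} (L, \nabla) \simeq \fu \Nmod$.

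The main obstacle is to verify that $\Nmod$ is \emph{regular} holonomic, so that under Riemann--Hilbert it corresponds to a simple perverse sheaf on $B$. Here is how I would handle it. Since $A$ is proper, every integrable connection on $A$ has regular singularities, so $(L, \nabla)$ is regular holonomic; hence $\Mmod \tensor (L, \nabla) \simeq \fu \Nmod$ is regular as well. The morphism $f$ is a surjective homomorphism of abelian varieties with connected kernel, hence is smooth and proper, realizing $A$ as a principal bundle over $B$ with fiber the abelian subvariety $\ker f$. The K\"unneth decomposition for this fibration writes $\fp \fu \Nmod$ as a direct sum of shifts of $\Nmod$ indexed by the de Rham cohomology of $\ker f$, and in particular exhibits $\Nmod$ itself as a direct summand. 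Since $f$ is proper, $\fp$ preserves regular holonomicity, so $\fp \fu \Nmod$ is regular, and therefore so is its summand $\Nmod$.

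Having secured regularity, I would set $K' = \DR_B(\Nmod)$; this is a simple perverse sheaf on $B$ with $\chi(B, K') = \chi(B, \Nmod) > 0$. Compatibility of $\DR$ with smooth pullback and with tensor products then yields
\[
\fu K' \simeq \DR_A(\fu \Nmod) \simeq \DR_A \bigl( \Mmod \tensor_{\OA} (L, \nabla) \bigr) \simeq K \tensor_{\CC} \CCrho,
\]
where $\rho = \Phi(L, \nabla) \in \Char(A)$ is the character associated to $(L, \nabla)$. Tensoring both sides with the inverse character and relabeling yields $K \simeq \fu K' \tensor_{\CC} \CCrho$ for some $\rho \in \Char(A)$, which is precisely the conclusion.
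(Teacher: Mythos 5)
Your proof is correct and follows the same route as the paper, which simply reduces \theoremref{thm:c-simple} to \corollaryref{cor:h-simple} via the Riemann--Hilbert correspondence. The one point you elaborate that the paper leaves implicit -- checking that $\Nmod$ is regular so that $\DR_B(\Nmod)$ is perverse -- is handled correctly by your K\"unneth/direct-summand argument, since proper direct images preserve regularity.
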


\begin{proof}
This again follows from the Riemann-Hilbert correspondence and the analogous 
result for simple holonomic $\Dmod_A$-modules in \corollaryref{cor:h-simple}.
\end{proof}

\subsection{Acknowledgement}

This work was supported by the World Premier International Research Center Initiative
(WPI Initiative), MEXT, Japan, and by NSF grant DMS-1331641.  I thank Mihnea Popa and
Pierre Schapira for their comments about the paper, and Takuro Mochizuki, Kiyoshi
Takeuchi, Giovanni Morando, and Kentaro Hori for useful discussions. I am also very
grateful to my parents-in-law for their hospitality while I was writing the 
first version of this paper.

\section{The Fourier-Mukai transform}

In this chapter, we recall Laumon's construction of the Fourier-Mukai
transform for algebraic $\Dmod$-modules on a complex abelian variety \cite{Laumon}.
Using a different approach, Rothstein obtained the same results in \cite{Rothstein}.

\subsection{Operations on D-modules}

Let $A$ be a complex abelian variety; we usually put $g = \dim A$. Before introducing the
Fourier-Mukai transform,
it may be helpful to say a few words about $\Dmod_A$, the sheaf of linear
differential operators of finite order. Recall that the tangent bundle of $A$ is
trivial; $\Dmod_A$ is therefore generated, as an $\OA$-algebra, by any basis
$\partial_1, \dotsc, \partial_g \in H^0(A, \shT_A)$, subject to the relations
\[
	\text{$\lie{\partial_i}{\partial_j} = 0$ and $\lie{\partial_i}{f} = \partial_i f$,} 
		\qquad \text{for $1 \leq i,j \leq g$ and $f \in \Gamma(U, \OA)$.}
\]
By an \define{algebraic $\Dmod_A$-module}, we mean a sheaf of left
$\Dmod_A$-modules that is quasi-coherent as a sheaf of $\OA$-modules; a
$\Dmod_A$-module is \define{holonomic} if its characteristic variety, as a subset of
the cotangent bundle $T^{\ast} A$, has dimension equal to $\dim A$ (and is therefore
a finite union of conical Lagrangian subvarieties). Finally, a \define{holonomic
complex} is a complex of $\Dmod_A$-modules $\Mmod$, whose cohomology sheaves $\shH^i
\Mmod$ are holonomic, and vanish for $i$ outside some bounded interval. We denote by
$\Dbcoh(\Dmod_A)$ the derived category of cohomologically bounded and coherent
$\Dmod_A$-modules, and by $\Dbh(\Dmod_A)$ the full subcategory of all holonomic
complexes. We refer the reader to \cite{HTT}*{Chapter~3} for additional details.

\begin{note}
Because $A$ is projective, a coherent analytic $\Dmod_A$-module is algebraic if and
only if it contains a \define{lattice}, that is to say, a coherent $\OA$-submodule
that generates it as a $\Dmod_A$-module. By a theorem of Malgrange
\cite{Malgrange}*{Theorem~3.1}, this is always the case for holonomic
$\Dmod_A$-modules; thus there is no difference between holonomic complexes of analytic
and algebraic $\Dmod_A$-modules.
\end{note}

Because it will play such an important role below, we briefly discuss the definition of the
\define{de Rham complex}, and especially the conventions about signs. For a single algebraic
$\Dmod_A$-module $\Mmod$, we define 
\[
	\DR_A(\Mmod) = \Bigl\lbrack \Mmod \to \OmA^1 \tensor \Mmod \to \dotsb \to
		\OmA^g \tensor \Mmod \Bigr\rbrack \decal{g},
\]
which we view as a complex of sheaves of $\CC$-vector spaces in the analytic
topology, placed in degrees $-g, \dotsc, 0$. The differential is given by $(-1)^g
\nabla_{\!\Mmod}$, where
\[
	\nabla_{\!\Mmod} \colon 	
		\OmA^p \tensor \Mmod \to \OmA^{p+1} \tensor \Mmod, \quad
	\omega \tensor m \mapsto d\omega \tensor m 
		+ \sum_{j=1}^g (dz_j \wedge \omega) \tensor \partial_j m;
\]
here $dz_1, \dotsc, dz_g \in H^0(A, \OmA^1)$ is the basis dual to $\partial_1,
\dotsc, \partial_g \in H^0(A, \shT_A)$. Given a complex of
algebraic $\Dmod_A$-modules $(\Mmod^{\bullet}, d)$, we define
\[
	\DR_A(\Mmod^{\bullet})
\]
to be the single complex determined by the double complex $\bigl( D^{\bullet, \bullet},
d_1, d_2 \bigr)$, whose term in bidegree $(i,j)$ is equal to
\[
	D^{i,j} = \OmA^{g+i} \tensor \Mmod^j,
\]
and whose differentials are given by the formulas
\[
	d_1 = (-1)^g \nabla_{\!\Mmod^j} \quad \text{and} \quad d_2 = \id \tensor d.
\]
Note that, according to the sign rules introduced by Deligne, the differential in the
total complex acts as $d_1 + (-1)^i d_2$ on the summand $D^{i,j}$.

The fundamental operations on algebraic $\Dmod$-modules -- such as direct and inverse
images or duality -- are described in \cite{HTT}*{Part~I}. Here we only recall the
notation. Let $f \colon A \to B$ be a morphism of abelian varieties. On the one hand,
one has the \define{direct image functor}
\[
	\fp \colon \Dbcoh(\Dmod_A) \to \Dbcoh(\Dmod_B);
\]
in case $f$ is surjective (and hence smooth), $\fp$ is given by the formula
\[
	\fp \cMmod = \derR \fl \DR_{A/B}(\cMmod),
\]
where $\DR_{A/B}(\cMmod)$ denotes the relative de Rham complex, defined
in a similar way as above, but with $g = \dim A$ replaced by the relative dimension $r =
\dim A - \dim B$. For holonomic complexes, we have an induced functor
\[
	\fp \colon \Dbh(\Dmod_A) \to \Dbh(\Dmod_B)
\]
since direct images by algebraic morphisms preserve holonomicity
\cite{HTT}*{Theorem~3.2.3}.  We also use the \define{shifted inverse image functor}
\[
	\fsi = \derL \fu \decal{\dim A - \dim B} \colon \Db(\Dmod_B) \to \Db(\Dmod_A);
\]
in general, it only preserves coherence when $f$ is surjective (and hence smooth).
According to \cite{HTT}*{Theorem~3.2.3}, we get an induced functor
\[
	\fsi \colon \Dbh(\Dmod_B) \to \Dbh(\Dmod_A).
\]
Finally, a very important role will be played by the \define{duality functor}
\[
	\DA \colon \Dbcoh(\Dmod_A) \to \Dbcoh(\Dmod_A)^{\opp}, \quad
		\DA(\Mmod^{\bullet}) = \derR \shHom_{\Dmod_A}(\Mmod^{\bullet}, \Dmod_A)
		\tensor (\Omega_A^{g})^{-1} \decal{g}.
\]
Note that a $\Dmod_A$-module $\Mmod$ is holonomic exactly when its dual
$\DA(\Mmod)$ is again a single $\Dmod_A$-module (viewed as a complex concentrated in
degree zero).

\subsection{Definition and basic properties}

We now come to the definition of the Fourier-Mukai transform. Following Mazur-Messing
\cite{MM}, we let $\Ash$ denote the moduli space of algebraic line bundles with
integrable connection on the abelian variety $A$. It is naturally a
quasi-projective algebraic variety: on the dual abelian variety $\Ah = \Pic^0(A)$,
there is a canonical extension of vector bundles 
\begin{equation} \label{eq:extension}
	0 \to \Ah \times H^0(A, \OmA^1) \to E(A) \to \Ah \times \CC \to 0,
\end{equation}
whose extension class in
\begin{align*}
	\Ext^1 \bigl( \OAh, \OAh \times H^0(A, \OmA^1) \bigr)
		&\simeq H^1(\Ah, \OAh) \tensor H^0(A, \OmA^1) \\
		&\simeq H^0(A, \shT_A) \tensor H^0(A, \OmA^1)
\end{align*}
is represented by $\sum_j \partial_j \tensor dz_j$. Then $\Ash$ is isomorphic to the
preimage of $\Ah \times \{1\}$ inside of $E(A)$, and the projection
\[
	\pi \colon \Ash \to \Ah,  \quad (L, \nabla) \mapsto L,
\] 
is a torsor for the trivial bundle $\Ah \times H^0(A, \OmA^1)$. This corresponds to
the fact that $\nabla + \omega$ is again an integrable connection for any $\omega \in
H^0(A, \OmA^1)$. Note that $\Ash$ is a group under tensor product, with
unit element $(\OA, d)$.

The generalized Fourier-Mukai transform takes bounded complexes of algebraic
$\Dmod_A$-modules to bounded complexes of quasi-coherent sheaves on $\Ash$;
we briefly describe it following the presentation in \cite{Laumon}*{\S3}. Let $P$
denote the normalized Poincar\'e bundle on the product $A \times \Ah$.
Since $\Ash$ is the moduli space of line bundles with integrable connection on $A$, the
pullback $\Psh = (\id_A \times \pi)^{\ast} P$ of the Poincar\'e bundle to the product
$A \times \Ash$ is endowed with a universal integrable connection 
\[
	\nablash \colon \Psh \to \Omega_{A \times \Ash / \Ash}^1 \tensor \Psh 
\]
relative to $\Ash$. The construction of $\nablash$ can be found in
\cite{MM}*{Chapter~I}.  An algebraic left $\Dmod_A$-module $\Mmod$ may be
interpreted as a quasi-coherent sheaf of $\OA$-modules with integrable connection
$\nabla \colon \Mmod \to \Omega_A^1 \tensor \Mmod$; then 
\[
	p_1^{\ast} \nabla \tensor \id + \id \tensor \nablash
\]
defines a relative integrable connection on the tensor product $p_1^* \Mmod
\tensor_{\OA} \Psh$, and we denote the resulting algebraic $\Dmod_{A \times \Ash /
\Ash}$-module by the symbol $p_1^{\ast} \Mmod \tensor (\Psh, \nablash)$.
Given a complex of algebraic $\Dmod_A$-modules $(\Mmod^{\bullet}, d)$, we define
\[
	\DR_{A \times \Ash / \Ash} 
		\Bigl( p_1^{\ast} \Mmod^{\bullet} \tensor (\Psh, \nablash) \Bigr)
\]
to be the single complex determined by the double complex $\bigl( D^{\bullet, \bullet},
d_1, d_2 \bigr)$, whose term in bidegree $(i,j)$ is equal to
\[
	D^{i,j} = \Omega_{A \times \Ash / \Ash}^{g+i} \tensor p_1^{\ast} \Mmod^j \tensor \Psh,
\]
and whose differentials are given by the formulas
\[
	d_1 = (-1)^g \bigl( p_1^{\ast} \nabla_{\!\Mmod^j} \tensor \id + \id \tensor \nablash \bigr) 
		\quad \text{and} \quad
	d_2 = \id \tensor d \tensor \id.
\]
We then define the \define{Fourier-Mukai transform} of the complex $\Mmod^{\bullet}$
by the formula
\begin{equation} \label{eqn:laumon}
	\FM_A(\Mmod^{\bullet}) = \derR (p_2)_{\ast} \DR_{A \times \Ash / \Ash}
		 \Bigl( p_1^{\ast} \Mmod^{\bullet} \tensor (\Psh, \nablash) \Bigr)
\end{equation}
Because every differential in the relative de Rham complex is $\OAsh$-linear,
$\FM_A(\Mmod^{\bullet})$ is naturally a complex of algebraic quasi-coherent sheaves
on $\Ash$.  The following fundamental theorem was proved by Laumon 
\cite{Laumon}*{Th\'eor\`em~3.2.1 and Corollaire~3.2.5}, and, using a totally different
method, by Rothstein \cite{Rothstein}*{Theorem~6.2}.

\begin{theorem}[Laumon, Rothstein] \label{thm:Laumon-Rothstein}
The Fourier-Mukai transform gives rise to an equivalence of categories
\begin{equation} \label{eq:FM}
	\FM_A \colon \Dbcoh(\Dmod_A) \to \Dbcoh(\OAsh)
\end{equation}
between the bounded derived category of coherent algebraic $\Dmod_A$-modules and 
the bounded derived category of coherent algebraic sheaves on $\Ash$. 
\end{theorem}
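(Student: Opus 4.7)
The plan is to construct an explicit quasi-inverse functor and to verify that both compositions are naturally isomorphic to the identity by a kernel-convolution argument that reduces to Mukai's original theorem for the Poincar\'e bundle.

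First, I define a candidate inverse
$$G \colon \Dbcoh(\OAsh) \to \Dbcoh(\Dmod_A), \qquad G(F^{\bullet}) = \derR (p_1)_{\ast}\bigl(p_2^{\ast} F^{\bullet} \tensor_{\OAsh} \Psh^{-1}\bigr)\decal{g},$$
where the $\Dmod_A$-module structure on the right-hand side comes from the fact that $\Psh$ is a line bundle on $A \times \Ash$ equipped with an integrable \emph{absolute} connection, of which $\nablash$ is only the component relative to $\Ash$; the component in the $A$-direction turns $\Psh^{-1}$ into a sheaf with integrable connection along $A$ relative to $\Ash$, which is precisely what is needed to put a $\Dmod_A$-module structure on the pushforward.

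Second, to verify $G \circ \FM_A \simeq \id_{\Dbcoh(\Dmod_A)}$, I would use flat base change and the projection formula to identify $G(\FM_A(\Mmod^{\bullet}))$ with convolution of $\Mmod^{\bullet}$ against a kernel
$$K := \derR (q_{13})_{\ast} \Bigl( q_{12}^{\ast}(\Psh, \nablash) \tensor q_{23}^{\ast}(\Psh, \nablash)^{-1} \Bigr)\decal{2g}$$
on $A \times A$, where $q_i$ and $q_{ij}$ denote the projections from $A \times \Ash \times A$. The central task is then to show that $K \simeq \Delta_{\ast} \Dmod_A$, with $\Delta \colon A \to A \times A$ the diagonal; once established, convolution with $K$ is the identity.

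Third, to identify $K$, I would factor the pushforward along $q_{13}$ through the smooth surjection $A \times \Ash \times A \to A \times \Ah \times A$. Because $\Ash \to \Ah$ is a torsor for the affine space $H^0(A, \OmA^1)$, integrating the relative de Rham complex of the universal connection along this fibre yields a delta-function factor forcing the two Poincar\'e connections to cancel; what remains is the ordinary Fourier-Mukai convolution of $P$ with $P^{-1}$ on $A \times \Ah \times A$, which by Mukai's theorem equals $\Delta_{\ast}\OA \decal{-g}$. Tracing through the shifts and the residual $\Dmod$-module structures upgrades this identification to $K \simeq \Delta_{\ast} \Dmod_A$ as bimodules.

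The hardest part is the rigorous bookkeeping of the two components of the absolute connection on $\Psh$ — the one along $\Ash$ used to build $\FM_A$, and the one along $A$ used to put a $\Dmod_A$-structure on $G(F^{\bullet})$ — together with the sign conventions in $\DR_{A \times \Ash / \Ash}$, so that their combination after convolution really does reproduce the natural bimodule structure on $\Delta_{\ast} \Dmod_A$ and not a twist. Once that is settled, the composition $\FM_A \circ G \simeq \id$ follows by the same argument with the roles of $A$ and $\Ash$ interchanged, exploiting the self-dual nature of the Poincar\'e kernel.
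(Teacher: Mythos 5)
The paper does not prove this statement: it is quoted from Laumon (Th\'eor\`eme~3.2.1 and Corollaire~3.2.5 of \cite{Laumon}) and Rothstein \cite{Rothstein}, so your attempt has to be judged against the literature rather than against an argument in the text. Your outline does have the general shape of Laumon's proof (an inverse integral transform with kernel $\Psh$, reduced to Mukai's inversion theorem via the affine fibration $\pi \colon \Ash \to \Ah$), but as written it contains a false assertion and leaves the two genuinely hard points untouched.

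First, $\Psh$ does \emph{not} carry an integrable absolute connection: since $\pi$ is an affine-space bundle, $c_1(\Psh)$ is the pullback of $c_1(P) \neq 0$ in de Rham cohomology, which obstructs any flat connection on all of $A \times \Ash$. Fortunately you do not need one --- $\nablash$ is by definition a connection relative to $\Ash$, i.e.\ it already differentiates in the $A$-direction, and this (acting trivially on $p_2^{\ast} F^{\bullet}$) is exactly what makes $\derR (p_1)_{\ast}$ of your kernel a complex of $\Dmod_A$-modules. Second, the heart of the proof, your third step, is asserted rather than carried out, and the mechanism described does not parse: the fibres of $\Ash \to \Ah$ lie in the $\Ash$-direction, so pushing forward along them is a plain affine $\shO$-module pushforward (producing the infinite-rank sheaf $\pil \shO_{\Ash}$ via the projection formula), while the relative de Rham complex --- which your formula for $K$ omits entirely --- lives along $A$. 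The actual content is that these two structures combine into a Koszul-type resolution; this is the substance of Laumon's Proposition~3.1.2 (quoted as part \ref{en:Laumon5} of \theoremref{thm:Laumon}) and, at the level of fibres of the kernel, of the computation $\derR \Gamma(\Ash, \shO_{\Ash}) \simeq \CC$, which uses the nondegeneracy of the extension class $\sum_j \partial_j \tensor dz_j$ defining $\Ash$. Without this your ``delta-function factor'' is a name, not an argument. Finally, the theorem asserts an equivalence of \emph{bounded coherent} derived categories, and you never address why either transform preserves coherence and boundedness; this requires a d\'evissage to induced modules $\Dmod_A \tensor_{\OA} \shF$, for which the transform is $\derL \piu \derR \Phi_P(\shF)$, and is precisely what Laumon's Corollaire~3.2.5 supplies on top of the inversion formula.
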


The Fourier-Mukai transform is compatible with various operations on $\Dmod$-modules;
here, taken from Laumon's paper, is a list of the basic properties that we will use.

\begin{theorem}[Laumon] \label{thm:Laumon}
The Fourier-Mukai transform for algebraic $\Dmod$-modules on abelian
varieties enjoys the following properties.
\begin{aenumerate}
\item \label{en:Laumon1}
For $(L, \nabla) \in \Ash$, denote by $t_{(L, \nabla)} \colon \Ash \to \Ash$ the
translation morphism. Then one has a canonical and functorial isomorphism
\[
	\FM_A \bigl( \argbl \tensor_{\OA} (L, \nabla) \bigr) =
		\derL (t_{(L, \nabla)})^{\ast} \circ \FM_A.
\]
\item \label{en:Laumon4}
One has a canonical and functorial isomorphism
\[
	\FM_A \circ \DA = \langle -1_{\Ash} \rangle^{\ast} \, 
		\derR \shHom \bigl( \FM_A(\argbl), \OAsh \bigr).
\]
\item \label{en:Laumon2}
For a morphism $f \colon A \to B$ of abelian varieties, denote by $\fsh \colon \Bsh \to \Ash$
the induced morphism. Then one has canonical and functorial isomorphisms
\begin{align*}
	\derL (\fsh)^{\ast} \circ \FM_A &= \FM_B \circ \fp, \\
	\derR \fsh_{\ast} \circ \FM_B &= \FM_A \circ \fsi.
\end{align*}
(Note that $\fsi$ only preserves coherence when $f$ is smooth.)
\item \label{en:Laumon5}
One has a canonical and functorial isomorphism
\[
	\FM_A \circ \bigl( \Dmod_A \tensor_{\OA} (\argbl) \bigr) =
		\derL \piu \circ \derR \Phi_P,
\]
where $\pi \colon \Ash \to \Ah$ denotes the projection, and $\derR \Phi_P \colon
\Dbcoh(\OA) \to \Dbcoh(\OAh)$ is the usual Fourier-Mukai transform for coherent
sheaves from \cite{Mukai}.
\end{aenumerate}
\end{theorem}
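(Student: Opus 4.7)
The plan is to verify the four compatibilities by reducing each to a formal manipulation involving the universal kernel $(\Psh, \nablash)$, combined with base change, the projection formula, and the universal property of $\Ash$ as a moduli space. I would treat them in the order (a), (c), (b), (d), since the first two amount to direct see-saw computations, the duality statement requires the most delicate bookkeeping, and the last has a rather different flavor.

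For part (a), the geometric input is the see-saw identity
\[
	(\id_A \times t_{(L, \nabla)})^{\ast} (\Psh, \nablash) \simeq
		(\Psh, \nablash) \otimes_{\OA} p_1^{\ast} (L, \nabla)
\]
as $\Dmod_{A \times \Ash/\Ash}$-modules, an immediate consequence of the universal property of $\Ash$. Applying $\derL t_{(L, \nabla)}^{\ast}$ to \eqref{eqn:laumon} and using flat base change along the isomorphism $t_{(L, \nabla)}$ moves the pullback past $\derR (p_2)_{\ast}$; substituting the see-saw identity then yields (a). For part (c), one considers the diagram in which $A \times \Bsh$ maps to $A \times \Ash$ via $\id_A \times \fsh$ and to $B \times \Bsh$ via $f \times \id_{\Bsh}$. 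The universal property supplies
\[
	(\id_A \times \fsh)^{\ast} (\Psh_A, \nablash_A) \simeq (f \times \id_{\Bsh})^{\ast} (\Psh_B, \nablash_B),
\]
and starting from $\derL (\fsh)^{\ast} \FM_A(\Mmod)$ one applies flat base change to move $(\fsh)^{\ast}$ inside $\derR (p_2)_{\ast}$, substitutes this identity, then uses the projection formula for $f \times \id_{\Bsh}$ together with base change for $\fp$ to obtain $\FM_B(\fp \Mmod)$. The assertion about $\fsi$ follows by adjointness.

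Part (b) is the hardest. The main geometric input is that the Poincar\'e kernel with connection is essentially self-dual: the relative $\Dmod$-module dual of $(\Psh, \nablash)$ over $\Ash$ is canonically isomorphic to the pullback of $(\Psh, \nablash)$ along $\id_A \times \langle -1_{\Ash} \rangle$. This reflects the fact that the dual connection on $\Psh^{-1}$ is $-\nablash$ and that $\langle -1 \rangle^{\ast}$ inverts the Poincar\'e bundle. Combining this self-duality with relative Grothendieck-Serre duality for the smooth proper morphism $p_2 \colon A \times \Ash \to \Ash$, and the compatibility of $\DA$ with the relative de Rham functor (up to a shift by $g$), yields the asserted formula. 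The main technical obstacle is the careful bookkeeping of shifts and signs arising from $\Dmod$-module duality, relative Serre duality, and the dualization of the connection on $\Psh$.

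Finally, for part (d), the key computation is that the Spencer resolution of $\OA$ by induced $\Dmod_A$-modules identifies the relative de Rham complex
\[
	\DR_{A \times \Ash/\Ash} \bigl( p_1^{\ast} (\Dmod_A \otimes_{\OA} F) \otimes (\Psh, \nablash) \bigr)
\]
with $(\id_A \times \pi)^{\ast} \bigl( p_1^{\ast} F \otimes P \bigr)$; in other words, the de Rham construction strips off the induced $\Dmod_A$-module structure and returns the underlying $\OA$-module tensored with the Poincar\'e kernel. Since $\Psh = (\id_A \times \pi)^{\ast} P$ and $\pi$ is flat, flat base change applied to the Cartesian square relating the two projections $A \times \Ash \to \Ash$ and $A \times \Ah \to \Ah$ then produces $\derL \pi^{\ast} \derR \Phi_P(F)$, as required.
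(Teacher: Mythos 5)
Your proposal is correct in outline, but it is worth saying that the paper does not actually prove this theorem: its ``proof'' consists of citations to Laumon's paper (Proposition~3.1.2 for \ref{en:Laumon5}, Proposition~3.3.2 for \ref{en:Laumon2}, Proposition~3.3.4 for \ref{en:Laumon4}, plus the remark that \ref{en:Laumon1} is immediate from the see-saw property of $(\Psh,\nablash)$). What you have written is essentially a reconstruction of Laumon's arguments: the see-saw identity for (a), the compatibility $(\id_A \times \fsh)^{\ast}(\Psh_A,\nablash_A) \simeq (f \times \id_{\Bsh})^{\ast}(\Psh_B,\nablash_B)$ plus base change and projection formula for (c), self-duality of the kernel under $\id_A \times \langle -1_{\Ash}\rangle$ combined with relative Grothendieck--Serre duality for (b), and the Spencer/Koszul resolution of the induced module for (d). So the approach is the intended one, just carried out rather than cited.

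Two steps deserve more care than your sketch gives them. First, in (c), deducing the second formula ``by adjointness'' from the first requires knowing that $\fp$ is left adjoint to $\fsi = \derL\fu\decal{\dim A - \dim B}$ with \emph{no further shift}; this is true here because a surjective morphism of abelian varieties is smooth and proper with trivial relative canonical bundle, but with the wrong normalization of $f^{!}$ one picks up a spurious shift by the relative dimension, so either verify the adjunction in this normalization or prove the second formula by the same direct base-change/projection-formula computation as the first (which is what Laumon does). Second, in (d), the relative de Rham complex of $p_1^{\ast}(\Dmod_A \tensor_{\OA} F) \tensor (\Psh,\nablash)$ resolves $p_1^{\ast}(\Omega_A^g \tensor F) \tensor \Psh$ in degree zero, not $p_1^{\ast}F \tensor \Psh$; the statement as printed implicitly uses the triviality of $\Omega_A^g$ on an abelian variety to suppress this twist, and you should say so explicitly.
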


\begin{proof}
\ref{en:Laumon1} is immediate from the properties of the normalized Poincar\'e bundle on
$A \times \Ash$. \ref{en:Laumon2} is proved in
\cite{Laumon}*{Proposition~3.3.2}; note that ``$g-1-g_2$'' should read ``$g_1 -
g_2$.'' The compatibility of the Fourier-Mukai transform with duality in
\ref{en:Laumon4} can be found in \cite{Laumon}*{Proposition~3.3.4}. Lastly,
\ref{en:Laumon5} is proved in \cite{Laumon}*{Proposition~3.1.2}.
\end{proof}

\subsection{The space of generalized connections}

During the proof of \theoremref{thm:h-t-FM} in \subsecref{subsec:codimension} below, it
will be necessary to compare the Fourier-Mukai transform of a $\Dmod$-module to that
of the associated graded object $\gr_{\bullet}^F \! \Mmod$, for some choice of good
filtration $F_{\bullet} \Mmod$.  It is convenient to introduce the Rees algebra
\[
	\Rmod_A = \bigoplus_{k \in \ZZ} F_k \Dmod_A \tensor z^k
		\subseteq \Dmod_A \lbrack z \rbrack,
\]
and to replace a filtered $\Dmod$-module by the associated graded $\Rmod$-module
\[
	R_F \Mmod = \bigoplus_{k \in \ZZ} F_k \Mmod \tensor z^k
		\subseteq \Mmod \tensor_{\OA} \OA \lbrack z, z^{-1} \rbrack.
\]
We shall extend the Fourier-Mukai transform to this setting; the role of $\Ash$ is
played by $E(A)$, the moduli space of line bundles with generalized connection.
Recall that $E(A)$ was defined by the extension in \eqref{eq:extension}; we begin by
explaining another construction, whose idea is originally due to Deligne and Simpson
(see \cite{Bonsdorff}).

\begin{definition}
Let $X$ be a complex manifold, and $\lambda \colon X \to \CC$ a holomorphic function. 
A \define{generalized connection with parameter $\lambda$}, or more briefly a
\define{$\lambda$-connection}, on a locally free sheaf of $\OX$-modules $\shE$ is a
$\CC$-linear morphism of sheaves 
\[
	\nabla \colon \shE \to \Omega_X^1 \tensor_{\OX} \shE
\]
that satisfies the Leibniz rule with parameter $\lambda$, which is to say that
\[
	\nabla(f \cdot s) = f \cdot \nabla s + df \tensor \lambda s
\]
for local sections $f \in \Gamma(U, \OX)$ and $s \in \Gamma(U, \shE)$. A
$\lambda$-connection is called
\define{integrable} if its $\OX$-linear curvature operator $\nabla \circ \nabla
\colon \shE \to \Omega_X^2 \tensor_{\OX} \shE$ is equal to zero.
\end{definition}

\begin{example}
An integrable $1$-connection is an integrable connection in the usual sense; an
integrable $0$-connection is the structure of a Higgs bundle on
$\shE$.
\end{example}

On an abelian variety $A$, the moduli space $E(A)$ of line bundles with
integrable $\lambda$-connection (for all $\lambda \in \CC$) may be constructed 
as follows. Observe first that any $\lambda$-connection on a line bundle $L \in
\Pic^0(A)$ is automatically integrable. To construct the moduli
space, let $\mm_A \subseteq \OA$ denote the ideal sheaf of the unit element
$0_A \in A$.  Restriction of differential forms induces an isomorphism
\[
	\mm_A / \mm_A^2 \simeq H^0(A, \Omega_A^1) \tensor \OA / \mm_A,
\]
and therefore determines an extension of coherent sheaves
\[
	0 \to H^0(A, \Omega_A^1) \tensor \OA / \mm_A \to \OA / \mm_A^2 
		\to \OA / \mm_A \to 0.
\]
Let $P$ be the normalized Poincar\'e bundle on the product $A \times \Ah$, and denote
by $\derR \Phi_P \colon \Dbcoh(\OA) \to \Dbcoh(\OAh)$ the Fourier-Mukai transform.
Then $\derR \Phi_P \bigl( \OA / \mm_A^2 \bigr)$ is a locally free sheaf $\shE(A)$,
and so we obtain an extension of locally free sheaves 
\begin{equation} \label{eq:ext-sheaf}
	0 \to H^0(A, \Omega_A^1) \tensor \OAh \to \shE(A) \to \OAh \to 0
\end{equation}
on the dual abelian variety $\Ah$. The corresponding extension of vector bundles is
the one in \eqref{eq:extension}. By construction, $E(A)$ comes with two algebraic
morphisms $\pi \colon E(A) \to \Ah$ and $\lambda \colon E(A) \to \CC$.
The following lemma shows that there is a universal line bundle with generalized
connection on $A \times E(A)$.

\begin{lemma} \label{lem:connection}
Let $\Pg = (\id \times \pi)^{\ast} P$ denote the pullback of the Poincar\'e bundle to
$A \times E(A)$. Then there is a canonical generalized relative connection
\[
	\nablag \colon \Pg \to \Omega_{A \times E(A) / E(A)}^1 \tensor \Pg
\]
that satisfies the Leibniz rule $\nablag(f \cdot s) = f \cdot \nablag s
+ d_{A \times E(A) / E(A)} f \tensor \lambda s$.
\end{lemma}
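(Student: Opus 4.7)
My plan is to extend the Mazur-Messing construction of the universal connection $\nablash$ on $\Psh$ from ordinary connections to $\lambda$-connections, by exhibiting a moduli interpretation of $E(A)$. The key claim I would establish is that $E(A)$ is the fine moduli space of triples $(L, \lambda, \nabla)$, where $L \in \Pic^0(A)$, $\lambda \in \CC$, and $\nabla$ is a $\lambda$-connection on $L$. Once this universal property is in place, applying it to the identity morphism $\id_{E(A)} \colon E(A) \to E(A)$ produces the universal pair $(\Pg, \nablag)$ on $A \times E(A)$, and the Leibniz rule with parameter $\lambda$ is built into the definition.

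To set up the universal property, I would first work fiber by fiber over $\Ah$. For a fixed $L \in \Ah$, any $\lambda \in \CC$ admits a $\lambda$-connection on $L$ (rescale an ordinary connection by $\lambda$), and the space of such $\lambda$-connections is a torsor for $H^0(A, \OmA^1)$, since two $\lambda$-connections differ by an $\OA$-linear map $L \to \OmA^1 \tensor L$, i.e., a global holomorphic $1$-form. Consequently, the set of pairs $(\lambda, \nabla)$ on $L$ fits into a natural extension of $\CC$ by $H^0(A, \OmA^1)$. The core of the argument is to match this extension, functorially in $L \in \Ah$, with the extension \eqref{eq:ext-sheaf}. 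For $\lambda = 1$ this is exactly the Mazur-Messing description of $\Ash$, and for $\lambda = 0$ it identifies $H^0(A, \OmA^1) \tensor \OAh$ with the sheaf of Higgs fields on the Poincar\'e bundle via Serre duality.

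The main obstacle is making the identification functorial in $L$, not merely pointwise. My approach is to realize both extensions through the short exact sequence
\[
    0 \to \mm_A / \mm_A^2 \to \OA / \mm_A^2 \to \OA / \mm_A \to 0
\]
on $A$: applying $\derR \Phi_P$ produces the extension \eqref{eq:ext-sheaf} on the nose, while an adjunction argument interprets a section of $\shE(A)$ as the restriction of $P$ to the first-order thickening of $\{0_A\} \times \Ah$ inside $A \times \Ah$, which is precisely the data of a relative $\lambda$-connection via the standard principal-parts description of connections. The tautological section of $\piu \shE(A)$ on $E(A)$ then produces $\nablag$ on $\Pg$, with the required $\lambda$-Leibniz rule inherited from the extension structure.
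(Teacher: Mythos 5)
Your proposal follows the same route as the paper: both arguments rest on the principal-parts description of a $\lambda$-connection, the identification of the extension \eqref{eq:ext-sheaf} as $\derR \Phi_P(\OA/\mm_A^2)$ applied to $0 \to \mm_A/\mm_A^2 \to \OA/\mm_A^2 \to \OA/\mm_A \to 0$, and the tautological section of $\piu \shE(A)$, whose composition with the projection to $\shO_{E(A)}$ is multiplication by $\lambda$, as the source of $\nablag$. The moduli-theoretic framing is a harmless repackaging; the paper simply constructs $\nablag$ directly rather than first proving $E(A)$ is a fine moduli space (which it explicitly declines to do).

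The one place where your write-up is too quick is the sentence asserting that the restriction of $P$ to the first-order thickening of $\{0_A\} \times \Ah$ ``is precisely the data of a relative $\lambda$-connection via the standard principal-parts description.'' It is not, on the nose: a relative $\lambda$-connection on $\Pg$ is a splitting (up to the factor $\lambda$) of the sequence $0 \to \Pg \tensor H^0(A,\OmA^1) \to (p_{13})_{\ast}(\shO_Z \tensor p_{23}^{\ast}\Pg) \to \Pg \to 0$, where $Z$ is the first infinitesimal neighborhood of the \emph{diagonal} in $A \times A \times E(A)$; what $\derR\Phi_P(\OA/\mm_A^2)$ records is only the first-order jet of $P$ at the single point $0_A$ of each fiber. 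Bridging the two requires propagating that jet along the whole diagonal, and this is exactly where the paper uses the translation automorphism $f(a,b) = (a, a+b)$ together with the theorem of the square $(m \times \id)^{\ast}P \simeq p_{13}^{\ast}P \tensor p_{23}^{\ast}P$ to identify $(p_{13})_{\ast}(\shO_Z \tensor p_{23}^{\ast}\Pg)$ with $\Pg \tensor p_2^{\ast}\piu\shE(A)$. That identification is the actual content of the lemma, so you should supply it explicitly; once it is in place, the rest of your argument goes through verbatim.
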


\begin{proof}
Let $\shI$ denote the ideal sheaf of the diagonal in $A \times A$. Let $Z$ be the
non-reduced subscheme of $A \times A \times E(A)$ defined by the ideal sheaf
$\shI^2 \cdot \shO_{A \times A \times E(A)}$. We have a natural exact sequence
\begin{equation} \label{eq:seq-connection}
	0 \to \Pg \tensor H^0(A, \Omega_A^1) \to (p_{13})_{\ast} \bigl( 
		\shO_Z \tensor p_{23}^{\ast} \Pg \bigr) \to \Pg \to 0,
\end{equation}
and a generalized relative connection is the same thing as a morphism of sheaves
\[
	\Pg \to (p_{13})_{\ast} \bigl( \shO_Z \tensor p_{23}^{\ast} \Pg \bigr)
\]
whose composition with the morphism to $\Pg$ acts as multiplication by $\lambda$. In
fact, there is a canonical choice, which we shall now describe.
Consider the morphism
\[
	f \colon A \times A \to A \times A, \quad f(a, b) = (a, a+b).
\]
Since $f \times \id_{E(A)}$ induces an isomorphism between the first infinitesimal
neighborhood of $A \times \{0_A\} \times E(A)$ and the subscheme $Z$, we have
\begin{align*}
	(f \times \id_{E(A)})^{\ast} \bigl( \shO_Z \tensor p_{23}^{\ast} \Pg \bigr)
		&= p_2^{\ast}(\OA / \mm_A^2) \tensor (m \times \id_{E(A)})^{\ast} \Pg \\
		&= p_2^{\ast}(\OA / \mm_A^2) \tensor 
			p_{13}^{\ast} \Pg \tensor p_{23}^{\ast} \Pg,
\end{align*}
due to the well-known fact that the Poincar\'e bundle satisfies
\[
	(m \times \id_{\Ah})^{\ast} P = p_{13}^{\ast} P \tensor p_{23}^{\ast} P
\]
on $A \times A \times \Ah$. Since $p_{13} \circ (f \times \id_{E(A)}) = p_{13}$, we
conclude that we have
\[
	(p_{13})_{\ast} \bigl( \shO_Z \tensor p_{23}^{\ast} \Pg \bigr)
	= \Pg \tensor p_2^{\ast} \pi^{\ast} \derR \Phi_P \bigl( \OA / \mm_A^2 \bigr)
	= \Pg \tensor p_2^{\ast} \pi^{\ast} \shE(A)
\]
on $A \times E(A)$; more precisely, \eqref{eq:seq-connection} is isomorphic to the
tensor product of $\Pg$ and the pullback of \eqref{eq:ext-sheaf} by $\pi \circ p_2$.

Now the pullback of the exact sequence \eqref{eq:ext-sheaf} to $E(A)$ obviously has a
splitting of the type we are looking for: indeed, the tautological section of
$\pi^{\ast} \shE(A)$ gives a morphism $\shO_{E(A)} \to \pi^{\ast} \shE(A)$ whose
composition with the projection to $\shO_{E(A)}$ is multiplication by $\lambda$. Thus
we obtain a canonical morphism $\Pg \to \Pg \tensor p_2^{\ast} \pi^{\ast} \shE(A)$
and hence, by the above, the desired generalized relative connection.
\end{proof}

At any point $e \in E(A)$, we thus obtain a $\lambda(e)$-connection
on the line bundle corresponding to $\pi(e) \in \Pic^0(A)$. This shows that $E(A)$ is
the moduli space of (topologically trivial) line bundles with integrable generalized
connection.  Using the properties of the Picard scheme, one can show that $E(A)$ is a
fine moduli space in the obvious sense; but since we do not need this fact below, we
shall not present the proof.

We close this section with two simple lemmas that describe how $E(A)$ and $(\Pg,
\nablag)$ behave under restriction to the fibers of $\lambda \colon E(A) \to \CC$.

\begin{lemma} \label{lem:compatible-1}
We have $\lambda^{-1}(1) = \Ash$, and the restriction of $(\Pg, \nablag)$ to
$A \times \Ash$ is equal to $(\Psh, \nablash)$.
\end{lemma}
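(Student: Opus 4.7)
The plan is to read off both statements from the construction of $E(A)$ and $(\Pg, \nablag)$ given just before the lemma. For the identification $\lambda^{-1}(1) = \Ash$: by the text preceding the lemma, the locally free sheaf $\shE(A) = \derR \Phi_P(\OA / \mm_A^2)$ on $\Ah$ fits into the extension \eqref{eq:ext-sheaf}, and this extension is identified with the Mazur-Messing extension \eqref{eq:extension} whose total space is $E(A)$. Under this identification, $\lambda \colon E(A) \to \CC$ is simply the composition $E(A) \to \Ah \times \CC \to \CC$, so $\lambda^{-1}(1)$ is the preimage of $\Ah \times \{1\}$, which is $\Ash$ by definition.

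For the second statement, the restriction of $\pi \colon E(A) \to \Ah$ to $\Ash \subset E(A)$ coincides with the structural projection $\Ash \to \Ah$, so $\Pg \restr{A \times \Ash} = (\id_A \times \pi)^{\ast} P = \Psh$ as line bundles. For the connections, I would trace through the construction of $\nablag$ in the proof of the previous lemma: it arises from a morphism $\Pg \to \Pg \tensor p_2^{\ast} \pi^{\ast} \shE(A)$ built from the tautological section $\shO_{E(A)} \to \pi^{\ast} \shE(A)$, whose composition with the quotient $\pi^{\ast} \shE(A) \to \OAsh$ is multiplication by $\lambda$. Restricting to $\lambda^{-1}(1) = \Ash$, this section projects to $1 \in \OAsh$, hence furnishes an honest splitting of the pulled-back sequence \eqref{eq:ext-sheaf}, and one checks from the formula for $\nablag$ that the resulting relative connection on $\Psh$ satisfies the ordinary Leibniz rule (a $1$-connection). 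By the universal property of $\Ash$ as the fine moduli space of line bundles with integrable connection on $A$, this connection must agree with $\nablash$.

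The main technical input behind both parts is the asserted identification \eqref{eq:extension} $=$ \eqref{eq:ext-sheaf}, used implicitly throughout. To verify it, I would apply $\derR \Phi_P$ to the short exact sequence
\[
0 \to \mm_A / \mm_A^2 \to \OA / \mm_A^2 \to \OA / \mm_A \to 0,
\]
using the standard identities $\derR \Phi_P(\OA / \mm_A) = \OAh$ and $\mm_A / \mm_A^2 \simeq H^0(A, \Omega_A^1) \tensor \OA / \mm_A$, and then check that the resulting extension class matches $\sum_j \partial_j \tensor dz_j$. Once that comparison is in hand, everything else in the lemma is a matter of unwinding the definitions and tracking the tautological section across the identification.
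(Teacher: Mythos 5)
Your proposal is correct, and it is more self-contained than the paper's own proof, which consists of the single sentence that the claim ``follows from the construction of $\Ash$ and $\nablash$ in Mazur--Messing, Chapter~I.'' You instead unwind the paper's internal construction: the identification $\lambda^{-1}(1) = \Ash$ is indeed immediate from the definition of $\Ash$ as the preimage of $\Ah \times \{1\}$ in $E(A)$ together with the stated identification of \eqref{eq:extension} with \eqref{eq:ext-sheaf}, and the equality of line bundles is just $\Pg\restr{A \times \Ash} = (\id_A \times \pi)^{\ast}P = \Psh$. The one step I would tighten is the appeal to the universal property of $\Ash$ at the end: a relative integrable connection on $\Psh$ over $\Ash$ only determines a classifying map $c \colon \Ash \to \Ash$ lying over the identity of $\Ah$, and ruling out a fiberwise translation by a one-form requires knowing which connection sits over each point $(L,\nabla)$ --- which is essentially the statement being proved. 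The cleaner conclusion is that Mazur--Messing's construction of $\nablash$ is literally the $\lambda = 1$ instance of the tautological-splitting recipe in the proof of \lemmaref{lem:connection}, so the two connections coincide by direct comparison of the constructions rather than by a moduli argument; with that substitution your argument matches the intent of the paper's citation exactly.
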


\begin{proof}
This follows from the construction of $\Ash$ and $\nablash$ in \cite{MM}*{Chapter~I}.
\end{proof}

Recall that the cotangent bundle of $A$ satisfies $T^{\ast} A = A \times H^0(A,
\Omega_A^1)$, and consider the following diagram:
\begin{equation} \label{eq:diag-cotangent}
\begin{tikzcd}
	A \times H^0(A, \Omega_A^1) & 
	A \times \Ah \times H^0(A, \Omega_A^1) 
		\arrow{l}[above]{p_{13}} \arrow{r}{p_{23}} \arrow{d}{p_{12}} &
	\Ah \times H^0(A, \Omega_A^1) \\
	& A \times \Ah 
\end{tikzcd}
\end{equation}

\begin{lemma} \label{lem:compatible-0}
We have $\lambda^{-1}(0) = \Ah \times H^0(A, \Omega_A^1)$, and the restriction of
$(\Pg, \nablag)$ to $A \times \Ah \times H^0(A, \Omega_A^1)$ is equal to the
Higgs bundle
\[
	\bigl( p_{12}^{\ast} P, \, p_{23}^{\ast} \theta_A \bigr),
\]
where $\theta_A$ denotes the tautological holomorphic one-form on $T^{\ast} A$.
\end{lemma}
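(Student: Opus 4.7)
The first assertion is immediate from \eqref{eq:extension}: since $\lambda$ factors as $E(A) \to \Ah \times \CC \to \CC$, its zero fiber is the kernel $\Ah \times H^0(A, \Omega_A^1) \subset E(A)$. Under this identification the restriction of $\pi$ becomes the first projection, so $\Pg = (\id_A \times \pi)^{\ast} P$ restricts to $p_{12}^{\ast} P$.

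For the Higgs field, my plan is to trace through the construction of $\nablag$ given in Lemma~\ref{lem:connection}, restricting each ingredient to $\lambda^{-1}(0)$. Recall that $\nablag$ was built from the canonical morphism $\Pg \to \Pg \tensor p_2^{\ast} \pi^{\ast} \shE(A)$ obtained by tensoring $\id_{\Pg}$ with the tautological section $s \colon \shO_{E(A)} \to \pi^{\ast} \shE(A)$, whose composition with the splitting projection $\pi^{\ast} \shE(A) \to \shO_{E(A)}$ equals multiplication by $\lambda$. On $\lambda^{-1}(0)$ this composition vanishes, so $s$ factors through the subsheaf $\pi^{\ast}\bigl(H^0(A, \Omega_A^1) \tensor \OAh\bigr)$.

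The key observation is that at a point $(\hat a, \eta) \in \Ah \times H^0(A, \Omega_A^1) = \lambda^{-1}(0)$, the restricted section $s$ evaluates to $\eta$ itself; in other words, $s|_{\lambda^{-1}(0)}$ is the tautological section of the trivial bundle $H^0(A, \Omega_A^1) \tensor \shO_{\Ah \times H^0(A, \Omega_A^1)}$. After pulling back to $A \times \Ah \times H^0(A, \Omega_A^1)$ via the projection used in the construction of $\nablag$, and invoking the canonical identification of $H^0(A, \Omega_A^1) \tensor \shO$ with the pullback of $\Omega_A^1$ from $A$ (from triviality of the cotangent bundle of $A$), this tautological section corresponds precisely to the pullback of the tautological one-form $\theta_A$ on $T^{\ast} A = A \times H^0(A, \Omega_A^1)$.

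To conclude, I will interpret this via \eqref{eq:seq-connection}: a generalized relative $\lambda$-connection is by definition a splitting whose projection to $\Pg$ is multiplication by $\lambda$, so when $\lambda = 0$ it becomes simply an $\OA$-linear Higgs field taking values in $\Pg \tensor H^0(A, \Omega_A^1)$. The construction above then produces exactly $\id_{p_{12}^{\ast} P} \tensor \theta_A$. The only real obstacle is the bookkeeping required to match the canonical identifications (most notably that of $(p_{13})_{\ast}(\shO_Z \tensor p_{23}^{\ast} \Pg)$ with $\Pg \tensor p_2^{\ast} \pi^{\ast} \shE(A)$, and that of $p_1^{\ast} \Omega_A^1$ with $H^0(A, \Omega_A^1) \tensor \shO$); I foresee no substantive analytic or geometric difficulty.
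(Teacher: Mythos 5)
Your proposal is correct and follows essentially the same route as the paper, whose entire proof is the remark that the statement ``follows easily from the proof of Lemma~\ref{lem:connection}''; you have simply carried out that tracing explicitly, correctly identifying that for $\lambda=0$ the tautological section of $\pi^{\ast}\shE(A)$ lands in the subbundle $H^0(A,\Omega_A^1)\tensor\shO$ and evaluates at $(\hat a,\eta)$ to $\eta$, which is exactly the tautological one-form.
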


\begin{proof}
This follows easily from the proof of \lemmaref{lem:connection}.
\end{proof}

\subsection{The extended Fourier-Mukai transform}

We shall now describe an extension of the Fourier-Mukai transform to the case of 
algebraic $\Rmod_A$-modules. Since we only need a very special case in this paper, we
leave a more careful discussion to a future publication. Recall that $\Dmod_A$ is
generated as an $\OA$-algebra by a basis of vector fields $\partial_1, \dotsc,
\partial_g \in H^0(A, \shT_A)$, subject to the relations
\[
	\lie{\partial_i}{\partial_j} = 0 \quad \text{and} \quad
	\lie{\partial_i}{f} = \partial_i f.
\]
Likewise, $\Rmod_A$ is generated as an $\OA \lbrack z \rbrack$-algebra by $z
\partial_1, \dotsc, z \partial_g$, subject to 
\[
	\lie{z \partial_i}{z \partial_j} = 0 \quad \text{and} \quad
	\lie{z \partial_i}{f} = z \cdot \partial_i f. 
\]
It is not hard to show that $\Rmod_A$ is isomorphic to $(p_1)_{\ast} \Rmod_{A \times
\CC / \CC}$, where $\Rmod_{A \times \CC / \CC}$ denotes the $\shO_{A \times
\CC}$-subalgebra of $\Dmod_{A \times \CC / \CC}$ generated by $z \shT_{A \times \CC /
\CC}$. If $M$ is an algebraic $\Rmod_A$-module, then the associated quasi-coherent
sheaf $\Mmodt$ on $A \times \CC$ is naturally a module over $\Rmod_{A \times \CC /
\CC}$, and vice versa.
	
Now fix an algebraic $\Rmod_A$-module $M$. Let $\Rmod_{A \times E(A) / E(A)}$ denote
the subalgebra of $\Dmod_{A \times E(A) / E(A)}$ generated by $\lambda \shT_{A \times
E(A) / E(A)}$. The tensor product
\[
	(\id \times \lambda)^{\ast} \Mmodt \tensor_{\shO_{A \times E(A)}} \Pg
\]
naturally has the structure of $\Rmod_{A \times E(A) / E(A)}$-module on $A \times
E(A)$: concretely, the module structure is given by $\lambda(m \tensor s) = (\lambda
m) \tensor s = m \tensor \lambda s$ and $\lambda \partial_i(m \tensor s) = (z
\partial_i m) \tensor s + m \tensor \nablag_{\partial_i}(s)$.
We may therefore consider the relative de Rham complex
\[
	\DR_{A \times E(A) / E(A)} \Bigl( (\id \times \lambda)^{\ast} \Mmodt
		\tensor_{\shO_{A \times E(A)}} \bigl( \Pg, \nablag \bigr) \Bigr),
\]
which is defined just as in the case of $\Dmod$-modules.

\begin{definition} \label{def:FM-R}
The \define{Fourier-Mukai transform} of an algebraic $\Rmod_A$-module $M$ is
\[
	\FMt_A(M) = \derR (p_2)_{\ast} 
		\DR_{A \times E(A) / E(A)} \Bigl( (\id \times \lambda)^{\ast} \Mmodt 
			\tensor_{\shO_{A \times E(A)}} \Pg \Bigr);
\]
it is an object of $\Db \bigl( \shO_{E(A)} \bigr)$, the bounded derived category of
algebraic quasi-coherent sheaves on $E(A)$.
\end{definition}

\begin{note}
Using the general formalism in \cite{PR}, one can show that the Fourier-Mukai transform
induces an equivalence of categories
\[
	\FMt_A \colon \Db(\Rmod_A) \to \Db \bigl( \shO_{E(A)} \bigr),
\]
Since this fact will not be used below, we shall omit the proof.
\end{note}

\begin{lemma}
If $M$ is a coherent algebraic $\Rmod_A$-module, $\FMt_A(M) \in \Dbcoh(\shO_{E(A)})$. 
\end{lemma}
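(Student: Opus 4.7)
The plan is to adapt Laumon's proof of coherence in \theoremref{thm:Laumon-Rothstein} to the Rees setting. Since $A$ is projective, the projection $p_2 \colon A \times E(A) \to E(A)$ is proper, so by Grothendieck's direct-image theorem it is enough to show that the relative de Rham complex
\[
K_M = \DR_{A \times E(A)/E(A)}\Bigl((\id \times \lambda)^{\ast} \Mmodt \tensor_{\shO_{A \times E(A)}} \Pg \Bigr)
\]
is bounded and has $\shO_{A \times E(A)}$-coherent cohomology sheaves. The question is local on both $A$ and $E(A)$.

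First I would replace $M$ by a good filtration $F_\bullet M$: working locally on $A$, the coherent $\Rmod_A$-module $M$ admits an exhaustive filtration by coherent $\shO_A$-submodules with $F_k \Rmod_A \cdot F_\ell M = F_{k+\ell} M$ for $\ell \gg 0$. Pulling back along $\id \times \lambda$ induces a filtration on $K_M$, and the associated graded $\gr K_M$ is, by the computation underlying \lemmaref{lem:compatible-0}, identified with a Koszul-type complex on $A \times \Ah \times H^0(A, \Omega_A^1)$ twisted by the Poincar\'e bundle, with differential obtained by contracting the tautological one-form scaled by $\lambda$ against $\gr^F M$. Every term of this graded complex is a finite direct sum of coherent $\shO_{A \times E(A)}$-modules, so a standard spectral-sequence argument yields coherence of the cohomology of $K_M$ itself.

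An alternative and perhaps cleaner route is as follows. The functor $\FMt_A$ takes short exact sequences of $\Rmod_A$-modules to distinguished triangles (by $\OAsh$-flatness of $\Pg$ and the usual computation with de Rham complexes). If $M$ is presented locally on $A$ as a cokernel $\Rmod_A^{\oplus a} \to \Rmod_A^{\oplus b} \to M \to 0$, then coherence of $\FMt_A(M)$ reduces to coherence of $\FMt_A(\Rmod_A)$. The latter can be computed directly in the spirit of \theoremref{thm:Laumon}\ref{en:Laumon5}: the restriction to $\lambda = 1$ agrees, by \lemmaref{lem:compatible-1}, with Laumon's $\FM_A(\Dmod_A) = \derL \pi^{\ast} \derR \Phi_P(\OA)$, which is coherent; the restriction to $\lambda = 0$ agrees, by \lemmaref{lem:compatible-0}, with the Fourier-Mukai transform along $A$ of the structure sheaf of the zero section of $T^{\ast} A$, which is again coherent. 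Coherence at the two fibers together with properness of $p_2$ suffices to propagate coherence across $E(A)$.

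The main obstacle is the convergence of the filtration-based spectral sequence: since the good filtration on $M$ is only exhaustive and not finite, one has to check that the induced filtration on $K_M$ is both exhaustive and complete, so that coherence of the graded cohomology sheaves implies coherence for $K_M$. This is the technical analogue of the classical verification for $\Dmod_A$-modules, slightly complicated here by the presence of the extra parameter $\lambda$ over all of $E(A)$, but otherwise formal.
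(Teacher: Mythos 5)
The paper disposes of this lemma in one line: the proof is ``the same as in the case of $\Dmod_A$-modules,'' with a pointer to Laumon's Proposition~3.1.2 and Corollaire~3.1.3. Concretely, that argument has two ingredients: (a) on the smooth projective $A$, a coherent $\Rmod_A$-module admits a \emph{finite} resolution by induced modules $\Rmod_A \tensor_{\OA} \shF$ with $\shF$ coherent; and (b) the relative de Rham complex of an induced module is, via the relative Spencer resolution, quasi-isomorphic to the single coherent sheaf $p_1^{\ast}\bigl(\Omega_A^g \tensor \shF\bigr) \tensor \Pg$ on $A \times E(A)$, so its image under $\derR (p_2)_{\ast}$ is coherent by Grauert--Grothendieck, $p_2$ being proper. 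Your second route has the right skeleton (reduce to free, hence induced, modules), but the decisive step is replaced by an argument that does not work: coherence of $\FMt_A(\Rmod_A)$ cannot be deduced from coherence of its restrictions to the two fibres $\lambda^{-1}(1)$ and $\lambda^{-1}(0)$. Fibrewise coherence does not ``propagate'': already on $\AA^1 \times Y$ the quasi-coherent sheaf $\bigoplus_{n \in \ZZ} \shO_{\{n\} \times Y}$ restricts coherently to every fibre without being coherent, and properness of $p_2$ supplies no uniformity in $\lambda$. What does supply it is precisely the computation (b), carried out over all of $E(A)$ at once. Separately, a two-term presentation $\Rmod_A^{\oplus a} \to \Rmod_A^{\oplus b} \to M \to 0$ only controls part of the cohomology of $\FMt_A(M)$; you need the full finite resolution, or a syzygy induction using the bounded cohomological amplitude of $\derR(p_2)_{\ast}$.

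Your first route has the same gap in a different guise. The convergence problem you flag at the end is not a formality but the entire content: a good filtration is merely exhaustive, so the induced filtration on the de Rham complex has infinitely many nonzero graded pieces; each fixed graded degree contributes coherent terms, but the total associated graded involves all of $\gr^F M$, which is coherent over $\Sym \shT_A$ and not over $\OA$, so the $E_1$ page is not a priori bounded in the filtration direction and the spectral sequence does not close without an additional finiteness input --- which is again what the Spencer-resolution computation provides. As written, neither route establishes the lemma.
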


\begin{proof}
The proof is the same as in the case of $\Dmod_A$-modules; for more details, refer to 
\cite{Laumon}*{Proposition~3.1.2 and Corollaire~3.1.3}.
\end{proof}

\subsection{Compatibility}
\label{subsec:compatible}

Just as $\Rmod_A$-modules interpolate between $\Dmod_A$-modules and quasi-coherent
sheaves on the cotangent bundle $T^{\ast} A$, the extended Fourier-Mukai transform in
\definitionref{def:FM-R} interpolates between the Fourier-Mukai transform for
$\Dmod_A$-modules and the usual Fourier-Mukai transform for quasi-coherent sheaves.
The purpose of this section is to make that relationship precise. 

Throughout the discussion, let $\Mmod$ be a coherent algebraic $\Dmod_A$-module and
$F_{\bullet} \Mmod$ a good filtration of $\Mmod$ by $\OA$-coherent subsheaves. The
graded $\Sym \shT_A$-module 
\[
	\gr_{\bullet}^F \! \Mmod = \bigoplus_{k \in \ZZ} F_k \Mmod / F_{k-1} \Mmod
\]
is then coherent over $\Sym \shT_A$, and therefore defines a coherent sheaf on the
cotangent bundle $T^{\ast} A$ that we shall denote by the symbol $\gr^F \!\! \Mmod$.
Now consider once more the Rees module
\[
	R_F \Mmod = \bigoplus_{k \in \ZZ} F_k \Mmod \cdot z^k 
		\subseteq \Mmod \tensor_{\OA} \OA \lbrack z, z^{-1} \rbrack,
\]
which is a graded $\Rmod_A$-module, coherent over $\Rmod_A$. The associated quasi-coherent
sheaf on $A \times \CC$, which we shall denote by the symbol $\ReesM$, is equivariant
for the natural $\CC^{\ast}$-action on the product.  Moreover, it is easy to see that
the restriction of $\ReesM$ to $A \times \{1\}$ is a $\Dmod_A$-module isomorphic to
$\Mmod$, while the restriction to $A \times \{0\}$ is a graded $\Sym
\shT_A$-module isomorphic to $\gr_{\bullet}^F \! \Mmod$.

\begin{proposition} \label{prop:compatible}
Let $\Mmod$ be a coherent algebraic $\Dmod_A$-module with good filtration
$F_{\bullet} \Mmod$.  Then the extended Fourier-Mukai transform $\FMt_A(R_F \Mmod)
\in \Dbcoh(\shO_{E(A)})$ of the associated graded $\Rmod_A$-module has the following
properties:
\begin{renumerate}
\item It is equivariant for the natural $\CC^{\ast}$-action on the
vector bundle $E(A)$.
\label{en:FM-R-i}
\item Its restriction to $\Ash = \lambda^{-1}(1)$ is canonically
isomorphic to $\FM_A(\Mmod)$.
\label{en:FM-R-ii}
\item Its restriction to $A \times H^0(A, \Omega_A^1) =
\lambda^{-1}(0)$ is canonically isomorphic to
\[
	\derR (p_{23})_{\ast} \Bigl( p_{12}^{\ast} P \tensor p_1^{\ast} \Omega_A^g
			\tensor p_{13}^{\ast} \gr^F \!\! \Mmod \Bigr),
\]
where the notation is as in the diagram in \eqref{eq:diag-cotangent} above.
\label{en:FM-R-iii}
\end{renumerate}
\end{proposition}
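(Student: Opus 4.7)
The strategy is to compute the restriction of $\FMt_A(R_F \Mmod)$ to each of the distinguished fibers $\lambda^{-1}(1) = \Ash$ and $\lambda^{-1}(0) = \Ah \times V$ (where $V = H^0(A, \Omega_A^1)$) by restricting the integrand upstairs on $A \times E(A)$ and then invoking proper base change along the smooth proper morphism $p_2$.

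Claim (i) is essentially formal. The Rees sheaf $\ReesM$ on $A \times \CC$ is $\CC^*$-equivariant because $R_F \Mmod$ is a graded $\Rmod_A$-module, and the vector bundle $E(A) \to \Ah$ carries a natural $\CC^*$-action under which $\lambda$ is a weight-one coordinate. The pair $(\Pg, \nablag)$ is equivariant for this action: the Leibniz rule $\nablag(fs) = f\nablag s + df \tensor \lambda s$ is compatible with the scaling precisely because the correction term is linear in $\lambda$. Formation of the relative de Rham complex and pushforward along the equivariant morphism $p_2$ then preserve equivariance.

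Claim (ii) follows from \lemmaref{lem:compatible-1}: under that lemma $(\Pg, \nablag)|_{A \times \Ash} = (\Psh, \nablash)$, and by construction $\ReesM|_{A \times \{1\}} \simeq \Mmod$ as a $\Dmod_A$-module, so $(\id \times \lambda)^* \ReesM|_{A \times \Ash} \simeq p_1^* \Mmod$. The restricted relative de Rham complex therefore coincides with the one defining $\FM_A(\Mmod)$ in \eqref{eqn:laumon}, and proper base change concludes.

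Claim (iii) is the main computation. By \lemmaref{lem:compatible-0}, $(\Pg, \nablag)|_{A \times \Ah \times V}$ is the Higgs bundle $(p_{12}^* P, p_{23}^* \theta_A)$. At $\lambda = 0$ the relative $\lambda$-connection becomes $\OA$-linear, and $(\id \times \lambda)^* \ReesM \tensor \Pg$ specializes to a Higgs bundle on $A \times \Ah \times V$ with underlying sheaf $p_{12}^* P \tensor p_{13}^* \gr^F \!\! \Mmod$ — using the standard correspondence between graded $\Sym \shT_A$-modules and quasi-coherent sheaves on $T^*A = A \times V$ to re-express the $\OA$-module $\gr_{\bullet}^F \! \Mmod$ — with Higgs field equal to the sum of $p_{23}^* \theta_A$ and the tautological Higgs field on $p_{13}^* \gr^F \!\! \Mmod$. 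The relative de Rham complex reduces to the relative Higgs complex of this sheaf on $A \times \Ah \times V$, and its $\derR(p_{23})_*$ gives, after absorbing the $\decal{g}$-shift into the trivializing factor $p_1^* \Omega_A^g$, the displayed formula. The principal technical obstacle is this last identification, which amounts to the standard statement that the Higgs complex of a sheaf on $T^*A$ (pulled back to $A$) computes its pushforward to $A$ up to a twist by $\Omega_A^g$, together with careful bookkeeping of the $\decal{g}$-shift of the de Rham convention.
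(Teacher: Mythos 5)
Your proposal is correct and follows essentially the same route as the paper: formal equivariance of all the ingredients for (i), base change along $\lambda$ together with \lemmaref{lem:compatible-1} for (ii), and base change plus \lemmaref{lem:compatible-0} together with the identification of the restricted relative de Rham complex as a Koszul-type resolution of $p_{12}^{\ast}P \tensor p_1^{\ast}\Omega_A^g \tensor p_{13}^{\ast}\gr^F\!\!\Mmod$ for (iii). The paper handles the step you flag as the principal technical obstacle by writing out the explicit differential on the complex \eqref{eq:cx-cotangent} and observing it is the Koszul resolution of $p_1^{\ast}\Omega_A^g \tensor \gr^F\!\!\Mmod$, which is exactly the identification you describe.
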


\begin{proof}
\ref{en:FM-R-i} is true because $R_F \Mmod$ is a graded $\Rmod_A$-module, and because
$(\Pg, \nablag)$ and the relative de Rham complex are obviously
$\CC^{\ast}$-equivariant. \ref{en:FM-R-ii} follows directly from the definition of the
Fourier-Mukai transform, using the base change formula for the morphism $\lambda
\colon E(A) \to \CC$ and \lemmaref{lem:compatible-1}.

The proof of \ref{en:FM-R-iii} is a little less obvious, and so we give some details.
By base change, it suffices to show that the restriction of the relative de Rham complex
\[
	\DR_{A \times E(A) / E(A)} \Bigl( (\id \times \lambda)^{\ast} \ReesM
		\tensor_{\shO_{A \times E(A)}} \bigl( \Pg, \nablag \bigr) \Bigr)
\]
to $A \times \Ah \times H^0(A, \Omega_A^1)$ is a resolution of the
coherent sheaf $p_{12}^{\ast} P \tensor p_1^{\ast} \Omega_A^g \tensor p_{13}^{\ast}
\gr^F \!\! \Mmod$. After a short computation, one finds that this restriction is
isomorphic to the tensor product of $p_{12}^{\ast} P$ and the pullback, via $p_{13}$,
of the complex
\begin{equation} \label{eq:cx-cotangent}
	\biggl\lbrack 
		p_1^{\ast} \Bigl( \gr_{\bullet}^F \! \Mmod \Bigr) \to 
			p_1^{\ast} \Bigl( \Omega_A^1 \tensor_{\OA} \gr_{\bullet}^F \! \Mmod \Bigr) 
		\to \dotsb \to 
			p_1^{\ast} \Bigl( \Omega_A^g \tensor_{\OA} \gr_{\bullet}^F \! \Mmod \Bigr)
	\biggr\rbrack \decal{g},
\end{equation}
with differential $p_1^{\ast} \Bigl( \Omega_A^k \tensor \gr_{\bullet}^F \Mmod \Bigr)
\to p_1^{\ast} \Bigl( \Omega_A^{k+1} \tensor \gr_{\bullet}^F \Mmod \Bigr)$ given by
the formula
\[
	\omega \tensor m \mapsto (-1)^g (\theta_A \wedge \omega) \tensor m + 
		(-1)^g \sum_{j=1}^g (dz_j \wedge \omega) \tensor \partial_j m .
\]
But since $\gr^F \!\! \Mmod$ is the coherent sheaf on $A \times H^0(A, \Omega_A^1)$
corresponding to the $\Sym \shT_A$-module $\gr_{\bullet}^F \! \Mmod$, the complex in
\eqref{eq:cx-cotangent} is a resolution of the coherent sheaf $p_1^{\ast} \Omega_A^g
\tensor \gr^F \!\! \Mmod$, and so we get the desired result.
\end{proof}

\section{The structure theorem}

The purpose of this chapter is to prove the structure theorems for cohomology support
loci of holonomic and constructible complexes. What relates the two sides is the
Riemann-Hilbert correspondence.

\subsection{Cohomology of constructible complexes}
\label{subsec:constructible}

In this section, we describe an analogue of the Fourier-Mukai transform for
constructible complexes on $A$, and use it to prove that cohomology support loci are
algebraic subvarieties of $\Char(A)$. We refer the reader to \cite{HTT}*{Section~4.5}
and to \cite{Dimca}*{Chapter~4} for details about constructible complexes and
perverse sheaves.

The abelian variety $A$ may be presented as a quotient $V / \Lambda$, where $V$
is a complex vector space of dimension $g$, and $\Lambda \subseteq V$ is a lattice of
rank $2g$. Note that $V$ is isomorphic to the tangent space of $A$ at the unit
element, while $\Lambda$ is isomorphic to the fundamental group $\pi_1(A, 0)$.
We shall denote by $\kLambda$ the group ring of $\Lambda$ with coefficients in
a subfield $k \subseteq \CC$; thus
\[
	\kLambda = \bigoplus_{\lambda \in \Lambda} k e_{\lambda},
\]
with $e_{\lambda} \cdot e_{\mu} = e_{\lambda + \mu}$. A choice of basis for $\Lambda$
shows that $\kLambda$ is isomorphic to the ring of Laurent polynomials in $2g$
variables.  Any character $\rho \colon \Lambda \to \CCst$
extends uniquely to a homomorphism of $\CC$-algebras
\[
	\CLambda \to \CC, \quad e_{\lambda} \mapsto \rho(\lambda),
\]
whose kernel is a maximal ideal $\mmrho \subseteq \CLambda$; concretely, $\mmrho$ is 
generated by the elements $e_{\lambda} - \rho(\lambda)$, for $\lambda \in \Lambda$.
It is easy to see that any maximal ideal of $\CLambda$ is of this form;
this means that $\Char(A)$ is the set of $\CC$-valued points of the scheme $\Spec
\CLambda$, and therefore naturally an affine complex algebraic variety.

For any $\kLambda$-module $M$, multiplication by the ring elements $e_{\lambda}$
determines a natural action of $\Lambda$ on the $k$-vector space $M$.  By the
well-known correspondence between representations of the fundamental group and
local systems, it thus gives rise to a local system on $A$.

\begin{definition}
For a $\kLambda$-module $M$, we denote by the symbol $\shL_M$ the corresponding local
system of $k$-vector spaces on $A$.
\end{definition}

Since $\kLambda$ is commutative, $\shL_M$ is actually a \emph{local system of
$\kLambda$-modules}. The most important example is $\shLC$, which
is a local system of $\CLambda$-modules of rank one; one can show that it is
isomorphic to the direct image with proper support $\pi_! \, \CC_V$ of the constant
local system on the universal covering space $\pi \colon V \to A$.
This device allows us to construct $\CLambda$-modules, and hence
quasi-coherent sheaves on $\Char(A)$, by twisting a complex of sheaves of
$\CC$-vector spaces by a local system of the form $\shL_M$, and pushing
forward along the morphism $p \colon A \to \pt$ to a point.

\begin{proposition}
Let $k$ be a field, and let $K \in \Dbc(k_A)$ be a constructible complex of sheaves
of $k$-vector spaces on $A$. Then for any finitely generated $\kLambda$-module $M$,
the direct image $\derR \pl \bigl( K \tensor_k \shL_M \bigr)$ belongs to $\Dbcoh
\bigl( \kLambda \bigr)$.
\end{proposition}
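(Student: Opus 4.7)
The plan is to verify separately (a) cohomological boundedness of $\derR \pl(K \tensor_k \shL_M)$ and (b) finite generation of each cohomology module over $\kLambda$. Part (a) is essentially automatic: the tensor product $K \tensor_k \shL_M$ has cohomology sheaves supported in the same range of degrees as $K$, and compactness of $A$ together with its finite real dimension forces the hypercohomology to vanish outside a bounded interval.

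For part (b), I would first reduce to the universal case $M = \kLambda$. Since $\kLambda \simeq k\lbrack t_1^{\pm 1},\dotsc,t_{2g}^{\pm 1} \rbrack$ is Noetherian and regular of global dimension $2g$, every finitely generated $M$ admits a finite resolution by finitely generated free $\kLambda$-modules. Because the functor $N \mapsto \shL_N$ from $\kLambda$-modules to local systems on $A$ is exact, this resolution induces a quasi-isomorphism of $\shL_M$ with a bounded complex whose terms are finite direct sums of copies of $\shLk$. Tensoring with $K$, applying $\derR \pl$, and running the associated hypercohomology spectral sequence reduces finite generation for $\shL_M$ to finite generation for $\shLk$, using that extensions of finitely generated modules over a Noetherian ring are again finitely generated.

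For the base case $M = \kLambda$, let $\pi \colon V \to A$ be the universal cover. Since $\pi$ is a local homeomorphism, a stalk computation identifies $\pi_! \, k_V$ canonically with $\shLk$ as a local system of $\kLambda$-modules, with the deck-group action providing the $\kLambda$-structure. The projection formula for $\pi_!$ combined with the equality $\derR \pl = \derR p_!$, valid since $A$ is compact, then yields
\[
	\derR \pl \bigl( K \tensor_k \shLk \bigr) \simeq \derR \Gamma_c(V, \pi^{-1} K).
\]
To evaluate the right-hand side, one chooses a finite triangulation of $A$ subordinate to an algebraic stratification on which the cohomology sheaves of $K$ are locally constant, and lifts it to a $\Lambda$-equivariant triangulation of $V$. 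The corresponding cellular cochain model for compactly supported cohomology on $V$ with coefficients in $\pi^{-1} K$ is then a finite complex whose terms are direct sums, indexed by the finitely many cells of $A$, of $\kLambda \tensor_k \shF_\sigma$, where $\shF_\sigma$ denotes the finite-dimensional stalk of $K$ along the cell $\sigma$. Its cohomology is therefore finitely generated over $\kLambda$.

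The step I expect to be the main obstacle is the last one: making the equivariant cellular cochain model precise for a constructible complex, rather than for a single local system. A cleaner alternative is dévissage along the stratification defining $K$: the distinguished triangles of the form $j_! j^{-1} K \to K \to i_\ast i^{-1} K \to$ attached to closed/open decompositions reduce the problem to the case where $K = \shF \decal{n}$ is a shifted finite-rank local system on a locally closed constructible subset, where the equivariant cellular argument becomes routine.
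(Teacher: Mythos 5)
Your argument is correct, but it takes a different route from the paper. The paper's proof is a two-line reduction: since $K$ is constructible over $k$ and $\shL_M$ is a local system of finitely generated $\kLambda$-modules, the tensor product $K \tensor_k \shL_M$ is a constructible complex of sheaves of modules over the Noetherian ring $\kLambda$, and the conclusion is then an instance of the general finiteness theorem for (proper) direct images of constructible complexes with Noetherian coefficients, cited from Dimca (Corollary 4.1.6). You instead reprove the relevant special case of that finiteness theorem from scratch: reduce to $M = \kLambda$ by a finite projective resolution (note that finite \emph{free} resolutions exist by Swan's extension of Quillen--Suslin, but projectivity of the last syzygy already suffices, since $\shL_P$ for $P$ finitely generated projective is a summand of a finite sum of copies of $\shLk$), identify $\shLk$ with $\pi_! k_V$ (an identification the paper itself records), and compute $\derR \pl(K \tensor_k \shLk) \simeq \derR\Gamma_c(V, \pi^{-1}K)$ via the projection formula. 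The step you flag as delicate -- finite generation of $\derR\Gamma_c(V,\pi^{-1}K)$ over $\kLambda$ -- is genuinely the crux, and your d\'evissage to shifted local systems on strata followed by a $\Lambda$-equivariant triangulation of (the closure of) each stratum is exactly how the cited finiteness theorem is proved; the cells are contractible, so the pulled-back local system is constant on each, and the compactly supported cellular cochains form a finite complex of finitely generated $\kLambda$-modules. What your approach buys is self-containedness and a transparent explanation of \emph{why} the group ring appears (as equivariant cochains of the $\Lambda$-cover); what the paper's approach buys is brevity and the fact that it handles arbitrary finitely generated $M$ in one stroke, with no resolution or spectral-sequence step.
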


\begin{proof}
Since $K$ is a constructible complex of sheaves of $k$-vector spaces, the tensor
product $K \tensor_k \shL_M$ is a constructible complex of sheaves of
$\kLambda$-modules. By \cite{Dimca}*{Corollary~4.1.6}, its direct image is thus an
object of $\Dbcoh \bigl( \kLambda \bigr)$.
\end{proof}

To understand how $\derR \pl \bigl( K \tensor_k \shL_M \bigr)$ depends on $M$, we
need the following auxiliary result. Recall that a \define{fine sheaf} on a
manifold is a sheaf admitting partitions of unity; such sheaves are acyclic for
direct image functors.

\begin{lemma} \label{lem:functor}
Let $\shF$ be a fine sheaf of $\CC$-vector spaces on $A$. Then the space of global
sections $H^0 \bigl( A, \shF \tensor_{\CC} \shLC \bigr)$ is a flat $\CLambda$-module,
and for every $\CLambda$-module $M$, one has 
\[
	H^0 \bigl( A, \shF \tensor_{\CC} \shL_M \bigr) \simeq
		H^0 \bigl( A, \shF \tensor_{\CC} \shLC \bigr) \tensor_{\CLambda} M,
\]
functorially in $M$.
\end{lemma}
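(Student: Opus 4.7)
The plan is to study the functor $T$ on $\CLambda$-modules defined by $T(M) = H^0(A, \shF \tensor_{\CC} \shL_M)$. At the sheaf level there is a canonical identification $\shF \tensor_{\CC} \shL_M \simeq (\shF \tensor_{\CC} \shLC) \tensor_{\CLambda} M$, which can be verified on stalks (where both sides reduce to $\shF_a \tensor_{\CC} M$ with the same monodromy). Taking global sections yields a natural transformation
\[
  \alpha_M \colon H^0 \bigl( A, \shF \tensor_{\CC} \shLC \bigr) \tensor_{\CLambda} M
    \longrightarrow H^0 \bigl( A, \shF \tensor_{\CC} \shL_M \bigr),
\]
functorial in $M$. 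Both assertions of the lemma will follow once I show that $T$ is \emph{exact} and that $\alpha_M$ is an isomorphism.

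The heart of the argument is the exactness of $T$, which I would establish by writing $T$ as a composition of three exact functors. First, $M \mapsto \shL_M$ is exact, since it is essentially the equivalence between local systems on $A$ and $\Lambda$-representations; exactness can be checked on stalks. Second, $\shF \tensor_{\CC} (\argbl)$ is exact, because $\CC$ is a field. Third, and most importantly, $H^0(A, \argbl)$ is exact on sheaves of the form $\shF \tensor_{\CC} \shL_M$: the partitions of unity for the fine sheaf $\shF$ induce, by tensoring with the identity, partitions of unity by sheaf endomorphisms of $\shF \tensor_{\CC} \shL_M$, so these sheaves are fine and all their higher cohomology vanishes.

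Once exactness of $T$ is in hand, $\alpha_M$ is manifestly an isomorphism for $M = \CLambda$ (both sides equal $H^0(A, \shF \tensor_{\CC} \shLC)$), and therefore for any finitely generated free module $\CLambda^n$ by additivity. For an arbitrary $M$, I would choose a free presentation $\CLambda^a \to \CLambda^b \to M \to 0$; the right exactness of $\tensor_{\CLambda} M$ on the left-hand side of $\alpha$, the exactness of $T$ on the right-hand side, and the five-lemma force $\alpha_M$ to be an isomorphism. Flatness of $H^0(A, \shF \tensor_{\CC} \shLC)$ then follows at once, since the exactness of $T$, rewritten through $\alpha$, says precisely that $N \mapsto H^0(A, \shF \tensor_{\CC} \shLC) \tensor_{\CLambda} N$ is exact. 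The main subtlety I expect is the sheaf-level identification $\shF \tensor_{\CC} \shL_M \simeq (\shF \tensor_{\CC} \shLC) \tensor_{\CLambda} M$, since one must give a precise meaning to tensoring a sheaf of $\CLambda$-modules over a scalar ring with an abstract module; however, this is formal and, after sheafification, reduces to a stalkwise check.
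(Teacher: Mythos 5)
Your core step coincides with the paper's: the sheaves $\shF \tensor_{\CC} \shL_M$ are fine because the partition-of-unity endomorphisms of $\shF$ tensor with the identity, hence $T(M) = H^0(A, \shF \tensor_{\CC} \shL_M)$ is exact in $M$. Where you diverge is in how you conclude. The paper observes in addition that $T$ preserves \emph{arbitrary} direct sums and then cites the Eilenberg--Watts theorem, which immediately identifies an exact, coproduct-preserving functor with $\argbl \mapsto T(\CLambda) \tensor_{\CLambda} \argbl$. You instead build the comparison map $\alpha_M$ by hand from the sheaf-level identification $\shF \tensor_{\CC} \shL_M \simeq (\shF \tensor_{\CC} \shLC) \tensor_{\CLambda} M$ and run a free-presentation/five-lemma argument; this is in effect a proof of Eilenberg--Watts in the case at hand, and it has the merit of exhibiting the isomorphism explicitly rather than abstractly.

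There is, however, one genuine gap in your finishing move: you check $\alpha$ on \emph{finitely generated} free modules and then take a presentation $\CLambda^a \to \CLambda^b \to M \to 0$, which only exists when $M$ is finitely presented. The lemma is stated for every $\CLambda$-module $M$, and it is later applied to modules such as $\CcLambda$ (the convergent Laurent series) which are very far from finitely generated over $\CLambda$. To repair this you need exactly the ingredient the paper records before invoking Eilenberg--Watts: $T$ commutes with arbitrary direct sums. This holds because $A$ is compact, so global sections commute with direct sums of sheaves; once you have it, $\alpha$ is an isomorphism on all free modules $\CLambda^{(I)}$, every module admits a (possibly infinite) free presentation, and your five-lemma argument goes through unchanged. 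With that one sentence added, your proof is complete and correct.
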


\begin{proof}
Each sheaf of the form $\shF \tensor_{\CC} \shL_M$ is clearly again a
fine sheaf. Consequently, $M \mapsto H^0 \bigl( A, \shF \tensor_{\CC} \shL_M
\bigr)$ is an exact functor from the category of $\CLambda$-modules to the category
of $\CLambda$-modules. Since the functor also preserves arbitrary direct sums, the
result follows from the Eilenberg-Watts theorem in homological algebra
\cite{Watts}*{Theorem~1}. 
\end{proof}

\begin{proposition} \label{prop:iso-c}
Let $K \in \Dbc(\CC_A)$. Then for every $\CLambda$-module $M$, one has an
isomorphism
\[
	\derR \pl \bigl( K \tensor_{\CC} \shL_M \bigr) \simeq
		\derR \pl \bigl( K \tensor_{\CC} \shLC \bigr) \Ltensor_{\CLambda} M,
\]
functorial in $M$.
\end{proposition}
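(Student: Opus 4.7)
The plan is to reduce the statement to Lemma~\ref{lem:functor} by resolving $K$ by fine sheaves and then reading off both the ordinary pushforward and the derived tensor product from a single complex.

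First I would choose a resolution $K \xrightarrow{\sim} \shF^{\bullet}$ by a bounded-below complex of fine sheaves of $\CC$-vector spaces on $A$; the Godement resolution (truncated in high degrees using that $A$ has finite cohomological dimension, and that $K$ is bounded) does the job. For any $\CLambda$-module $M$, the sheaves $\shF^i \tensor_{\CC} \shL_M$ remain fine, hence $p_{\ast}$-acyclic, so
\[
\derR \pl \bigl( K \tensor_{\CC} \shL_M \bigr) \simeq H^0 \bigl( A, \shF^{\bullet} \tensor_{\CC} \shL_M \bigr),
\]
functorially in $M$. Applying \lemmaref{lem:functor} term by term gives a natural isomorphism of complexes of $\CLambda$-modules
\[
H^0 \bigl( A, \shF^{\bullet} \tensor_{\CC} \shL_M \bigr) \simeq H^0 \bigl( A, \shF^{\bullet} \tensor_{\CC} \shLC \bigr) \tensor_{\CLambda} M.
\]

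The second step is to upgrade the ordinary tensor product on the right to a derived one. Since \lemmaref{lem:functor} also asserts that each $H^0 \bigl( A, \shF^i \tensor_{\CC} \shLC \bigr)$ is a flat $\CLambda$-module, the complex $H^0 \bigl( A, \shF^{\bullet} \tensor_{\CC} \shLC \bigr)$ is a complex of flat $\CLambda$-modules, quasi-isomorphic to $\derR \pl \bigl( K \tensor_{\CC} \shLC \bigr)$. Therefore it computes $\derR \pl \bigl( K \tensor_{\CC} \shLC \bigr) \Ltensor_{\CLambda} M$, and the naive tensor product with $M$ agrees with the derived one. Concatenating with the isomorphism of the previous paragraph yields the desired formula.

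For functoriality in $M$, everything in sight is a composition of functors in $M$ (tensor product of sheaves, global sections of fine sheaves, and tensor product over $\CLambda$), and each individual isomorphism is natural by \lemmaref{lem:functor}.

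The main technical point is really just the flatness claim, which is already handled in \lemmaref{lem:functor} via the Eilenberg–Watts theorem; the only other mild issue is ensuring that one may replace the Godement resolution of a bounded constructible complex by a bounded complex of fine sheaves, which is standard once one truncates using the finite cohomological dimension of $A$.
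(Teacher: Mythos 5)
Your proof is correct and follows essentially the same route as the paper: resolve $K$ by a bounded complex of fine sheaves, apply \lemmaref{lem:functor} termwise, and use the flatness statement of that lemma to identify the underived tensor product over $\CLambda$ with the derived one. The only difference is cosmetic --- the paper produces the fine resolution from the smooth de Rham complex of a regular holonomic $\Dmod$-module via the Riemann--Hilbert correspondence rather than from a truncated Godement resolution, which sidesteps the boundedness/fineness issue you flag at the end.
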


\begin{proof}
We begin by choosing a bounded complex $(\shF^{\bullet}, d)$ of fine sheaves
quasi-isomorphic to $K$. One way to do this is as follows. By the Riemann-Hilbert
correspondence, $K \simeq \DR_A(\Mmod)$ for some $\Mmod \in
\Dbrh(\Dmod_A)$; if we now let $\shA_A^k$ denote the sheaf of smooth $k$-forms on the
complex manifold $A$, then by the Poincar\'e lemma, 
\[
	\Bigl\lbrack \shA_A^0 \tensor_{\OA} \Mmod \to \shA_A^1 \tensor_{\OA} \Mmod 
		\to \dotsb \to \shA_A^{2g} \tensor_{\OA} \Mmod \Bigr\rbrack,
\]
with the usual degree shift by $g$, is a complex of fine sheaves quasi-isomorphic to
$K$. For any such choice, $\derR \pl \bigl( K \tensor_{\CC} \shL_M \bigr) \in \Dbcoh
\bigl( \CLambda \bigr)$ is represented by the bounded complex of $\CLambda$-modules
with terms
\[
	H^0 \bigl( A, \shF^{\bullet} \tensor_{\CC} \shL_M \bigr) \simeq
		H^0 \bigl( A, \shF^{\bullet} \tensor_{\CC} \shLC \bigr) \tensor_{\CLambda} M,
\]
and so the assertion follows from \lemmaref{lem:functor}.
\end{proof}

Now let $\rho \in \Char(A)$ be an arbitrary character; recall that
$\mmrho$ is the maximal ideal of $\CLambda$ generated by the elements $e_{\lambda} -
\rho(\lambda)$, for $\lambda \in \Lambda$. Using the notation introduced above, we
therefore have the alternative description $\CCrho \simeq \shL_{\CLambda / \mmrho}$
for the local system corresponding to $\rho$.

\begin{corollary} \label{cor:fibers}
For any character $\rho \in \Char(A)$, we have 
\[
	\derR \pl \bigl( K \tensor_{\CC} \CCrho \bigr) \simeq
	\derR \pl \bigl( K \tensor_{\CC} \shLC \bigr) 
		\Ltensor_{\CLambda} \CLambda / \mmrho
\]
as objects of $\Dbcoh(\CC)$.
\end{corollary}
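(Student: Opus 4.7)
The plan is to deduce this directly from \propositionref{prop:iso-c} by plugging in the appropriate $\CLambda$-module. The key observation, already recorded just before the statement, is that the rank-one local system $\CCrho$ associated to the character $\rho$ coincides, under the correspondence $M \mapsto \shL_M$, with the local system attached to the cyclic $\CLambda$-module $\CLambda/\mmrho$; this is because $\mmrho$ is by construction the kernel of the algebra homomorphism $\CLambda \to \CC$ extending $\rho$, so multiplication by $e_\lambda$ on $\CLambda/\mmrho$ is exactly multiplication by the scalar $\rho(\lambda)$. Hence $\CCrho \simeq \shL_{\CLambda/\mmrho}$ as local systems of $\CC$-vector spaces on $A$.

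Applying \propositionref{prop:iso-c} with $M = \CLambda/\mmrho$ then gives
\[
    \derR\pl\bigl(K \tensor_{\CC} \CCrho\bigr) \simeq \derR\pl\bigl(K \tensor_{\CC} \shL_{\CLambda/\mmrho}\bigr) \simeq \derR\pl\bigl(K \tensor_{\CC} \shLC\bigr) \Ltensor_{\CLambda} \CLambda/\mmrho,
\]
which is exactly the desired isomorphism. Functoriality in $\rho$ is inherited from the functoriality in $M$ established in the proposition.

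The only nontrivial point to verify is that the resulting object actually lies in $\Dbcoh(\CC)$, but this is automatic: the left-hand side is $\derR\pl$ of a constructible complex on the compact variety $A$, hence has finite-dimensional hypercohomology concentrated in finitely many degrees by the standard constructibility results cited in the proof of \propositionref{prop:iso-c}. There is no real obstacle here; the only care needed is to ensure that the identification $\CCrho \simeq \shL_{\CLambda/\mmrho}$ is compatible with the $\CLambda$-action used to form the derived tensor product, which follows immediately from the definitions.
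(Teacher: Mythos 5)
Your proposal is correct and is exactly the argument the paper intends: the corollary is stated as an immediate consequence of \propositionref{prop:iso-c} applied to $M = \CLambda/\mmrho$, using the identification $\CCrho \simeq \shL_{\CLambda/\mmrho}$ recorded just before the statement. Nothing is missing.
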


We may thus consider the complex $\derR \pl \bigl( K \tensor_{\CC} \shLC \bigr) \in
\Dbcoh \bigl( \CLambda \bigr)$ as being a sort of ``Fourier-Mukai transform''
of the constructible complex $K \in \Dbc(\CC_A)$. This point of view is justified also
by its relationship with the Fourier-Mukai transform for $\Dmod$-modules in
\theoremref{thm:relationship-FM} below.%
\footnote{%
In this setting, the transform does \emph{not} determine the original constructible
complex: for example, if $K$ is any constructible sheaf whose support is a finite
union of points of $A$, then $\derR \pl \bigl( K \tensor_{\CC} \shLC \bigr)$ is a
free $\CLambda$-module of finite rank.  %
}

The results above are all that is needed to prove that the cohomology support loci of
a constructible complex are algebraic subsets of $\Char(A)$.

\begin{theorem} \label{thm:alg-c}
If $K \in \Dbc(\CC_A)$, then each cohomology support locus $S_m^k(A, K)$ is an
algebraic subset of $\Char(A)$.
\end{theorem}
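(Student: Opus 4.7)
The plan is to leverage Corollary \ref{cor:fibers} to express the cohomology of $K$ twisted by $\CCrho$ as the fiber at $\rho$ of a single coherent complex on $\Char(A) = \Spec \CLambda$, and then use standard semicontinuity for complexes of matrices.

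Concretely, set $C^{\bullet} = \derR \pl(K \tensor_{\CC} \shLC) \in \Dbcoh(\CLambda)$. Since $\Lambda \simeq \ZZ^{2g}$, the ring $\CLambda$ is (isomorphic to) the Laurent polynomial ring $\CC\lbrack x_1^{\pm 1}, \dotsc, x_{2g}^{\pm 1} \rbrack$, hence regular Noetherian of finite global dimension. Any object of $\Dbcoh(\CLambda)$ therefore admits a finite resolution by finitely generated free $\CLambda$-modules, so we may replace $C^{\bullet}$ by a bounded complex
\[
	F^{\bullet} = \bigl\lbrack \dotsb \to F^{k-1} \xrightarrow{d^{k-1}} F^k
		\xrightarrow{d^k} F^{k+1} \to \dotsb \bigr\rbrack
\]
of free $\CLambda$-modules of finite rank. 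Each differential $d^k$ is represented by a matrix $A^k$ with entries in $\CLambda$.

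By Corollary \ref{cor:fibers}, we have, for every $\rho \in \Char(A)$,
\[
	\derH^k \bigl( A, K \tensor_{\CC} \CCrho \bigr) \simeq
		H^k \bigl( F^{\bullet} \tensor_{\CLambda} \CLambda / \mmrho \bigr),
\]
so its dimension over $\CC$ equals
\[
	\rk_{\CC} F^k - \rk A^k(\rho) - \rk A^{k-1}(\rho),
\]
where $A^k(\rho)$ denotes the complex matrix obtained by evaluating each entry of $A^k$ at $\rho$. Consequently, the condition $\dim \derH^k(A, K \tensor_{\CC} \CCrho) \geq m$ is equivalent to
\[
	\rk A^k(\rho) + \rk A^{k-1}(\rho) \leq \rk_{\CC} F^k - m,
\]
which is a simultaneous rank drop condition on the two matrices $A^k$ and $A^{k-1}$. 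Such conditions are well known to cut out closed algebraic subsets: the locus where $\rk A^k(\rho) \leq r$ is defined by the vanishing of the $(r+1) \times (r+1)$ minors of $A^k$, which are polynomial functions on $\Spec \CLambda$. Intersecting over the possible distributions of ranks summing to at most $\rk_{\CC} F^k - m$, we obtain $S_m^k(A, K)$ as a finite union of Zariski closed subsets of $\Char(A)$, hence a closed algebraic subset.

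The only nontrivial input is the existence of a bounded free resolution of $C^{\bullet}$; this is where the regularity of $\CLambda$ enters, and once it is in hand the rest is just the standard semicontinuity argument for cohomology of a complex of finite free modules on an affine scheme.
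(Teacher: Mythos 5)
Your proof is correct and follows essentially the same route as the paper: represent $\derR \pl(K \tensor_{\CC} \shLC)$ by a bounded complex of finite free $\CLambda$-modules, use \corollaryref{cor:fibers} to identify its derived fibers with the twisted hypercohomology, and cut out the locus by vanishing of minors (the paper packages this last step as \lemmaref{lem:perfect}, proved by exactly your rank-counting argument, localizing instead of invoking freeness of projectives over $\CLambda$). The only cosmetic slip is the phrase ``intersecting over the possible distributions of ranks'': the locus is a finite \emph{union} over rank distributions of intersections of two determinantal loci, as your final sentence makes clear.
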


\begin{proof}
Recall that $\Char(A)$ is the complex manifold associated to the complex algebraic
variety $\Spec \CLambda$. Thus $\derR \pl \bigl( K \tensor_{\CC} \shLC
\bigr) \in \Dbcoh \bigl( \CLambda \bigr)$ determines an object in the bounded derived
category of coherent algebraic sheaves on $\Char(A)$, whose fiber at any closed point
$\rho$ computes the hypercohomology of $K \tensor_{\CC} \CCrho$, according
to \corollaryref{cor:fibers}.  We conclude that
\[
	S_m^k(A, K) = \Menge{\rho \in \Char(A)}{\dim \derH^k \Bigl( \derR \pl 		
		\bigl( K \tensor_{\CC} \shLC \bigr) \Ltensor_{\CLambda} 
			\CLambda/\mmrho \Bigr) \geq m},
\]
and by \lemmaref{lem:perfect} below, this description implies that $S_m^k(A, K)$ is an
algebraic subset of $\Char(A)$.
\end{proof}

For the convenience of the reader, we include the following elementary lemma.

\begin{lemma} \label{lem:perfect}
Let $E$ be a perfect complex on a scheme $X$. For each $k,m \in \ZZ$, 
\[
	\menge{x \in X}{\dim \derH^k \bigl( E \Ltensor_{\OX} \shO_{X,x}/\mm_x \bigr) \geq m}
\]
is the set of closed points of a closed subscheme of $X$.
\end{lemma}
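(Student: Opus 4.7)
The plan is to reduce to the case of a bounded complex of finite free modules on an affine open, and then exhibit the locus as cut out by explicit determinantal ideals of the differentials.

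First I would work locally. Being the underlying set of a closed subscheme is a local property on $X$, and since $E$ is perfect, every point of $X$ has an affine open neighborhood $U = \Spec R \subseteq X$ on which $E$ is quasi-isomorphic to a bounded complex $F^{\bullet}$ of finitely generated free $\shO_U$-modules. Because each $F^i$ is flat over $\shO_U$, the derived tensor product $E \Ltensor_{\OX} \kappa(x)$ at a point $x \in U$ is represented by the ordinary complex $F^{\bullet} \tensor_R \kappa(x)$ of finite-dimensional $\kappa(x)$-vector spaces.

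Next I would write the differentials of $F^{\bullet}$ as matrices $d^i$ with entries in $R$, and set $r_i = \rk F^i$. By rank-nullity,
\[
	\dim H^k \bigl( F^{\bullet} \tensor_R \kappa(x) \bigr)
		= r_k - \rk d^k(x) - \rk d^{k-1}(x),
\]
so the condition $\dim H^k \geq m$ is equivalent to $\rk d^k(x) + \rk d^{k-1}(x) \leq r_k - m$. This is a finite union, taken over nonnegative integer pairs $(s,t)$ with $s + t \leq r_k - m$, of the simultaneous conditions $\rk d^k(x) \leq s$ and $\rk d^{k-1}(x) \leq t$. Each such condition is defined by the vanishing of the $(s+1) \times (s+1)$ minors of $d^k$ together with the $(t+1) \times (t+1)$ minors of $d^{k-1}$, and therefore cuts out a closed subscheme of $U$; the union of these finitely many closed subschemes is a closed subscheme of $U$ whose closed points are precisely the prescribed set.

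Finally, the underlying closed subset constructed this way depends only on $E$ (not on the auxiliary resolution $F^{\bullet}$), so the local constructions glue across an affine cover of $X$ to a closed subset of $X$ that carries a natural closed subscheme structure. There is no real obstacle in the argument; the only subtlety is that different choices of $F^{\bullet}$ may endow the set with different nilpotent thickenings, but this is irrelevant for the statement, which only asserts the existence of some closed subscheme structure with the given underlying point set.
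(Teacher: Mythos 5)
Your proof is correct and follows essentially the same route as the paper's: localize to a bounded complex of finite free modules, translate $\dim \derH^k \geq m$ into a finite union of upper bounds on the ranks of the two adjacent differentials, and cut out each such condition by the vanishing of minors. The only cosmetic differences are that you parametrize the union by pairs $(s,t)$ where the paper uses a single index $i$, and that you add a (harmless, correct) remark about gluing and the non-uniqueness of the scheme structure.
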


\begin{proof}
Since the statement is local, it suffices to consider the case of a complex
\[
\begin{tikzcd}[column sep=scriptsize]
	E^{k-1} \arrow{r}{f} & E^k \arrow{r}{g} & E^{k+1}
\end{tikzcd}
\]
of free $R$-modules of finite rank. For any prime ideal $P \subseteq R$, we shall use
the notation 
\[
\begin{tikzcd}[column sep=scriptsize]
	E^{k-1} \tensor_R R/P \arrow{r}{f_P} & E^k \tensor_R R/P \arrow{r}{g_P} & 
		E^{k+1} \tensor_R R/P
\end{tikzcd}
\]
for the tensor product of the above complex by $R/P$. Now $\rk(\ker
g_P/\im f_P) \geq m$ is clearly equivalent to having $\rk(\ker g_P) \geq m+i$ and
$\rk(\im f_P) \leq i$ for some $i \geq 0$. Consequently, the set of prime ideals $P
\in \Spec R$ for which $\rk(\ker g_P/\im f_P) \geq m$ is equal to
\[
	\bigcup_{i \geq 0} \menge{P}{\rk(\ker g_P) \geq m+i}
				\cap \menge{P}{\rk(\ker f_P) \geq \rk(E^{k-1}) - i}
\]
But this set can be defined by the vanishing of certain minors of $f$ and $g$,
and is therefore naturally a closed subscheme of $\Spec R$.
\end{proof}

For the more arithmetic questions in \subsecref{subsec:geometric}, we make the
following observation about fields of definition.

\begin{proposition} \label{prop:subfield}
Let $k$ be any subfield of $\CC$. If $K \in \Dbc(k_A)$ is a constructible complex of
sheaves of $k$-vector spaces, then $\derR \pl \bigl( K \tensor_k \shLC \bigr)$ is
defined over $k$.
\end{proposition}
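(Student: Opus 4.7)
The plan is to construct a canonical $k$-model explicitly and then check that it base-changes to the given complex. Let $\shLk$ denote the local system of $\kLambda$-modules on $A$ determined by the left regular representation of $\Lambda = \pi_1(A,0)$ on $\kLambda$; equivalently, $\shLk \simeq \pi_!\, k_V$, where $\pi \colon V \to A$ is the universal cover. Because $\CLambda = \kLambda \tensor_k \CC$ as $\kLambda$-algebras, we get a canonical identification $\shLC \simeq \shLk \tensor_{\kLambda} \CLambda$ of local systems on $A$, and therefore a natural isomorphism
\[
  K \tensor_k \shLC \simeq (K \tensor_k \shLk) \tensor_{\kLambda} \CLambda.
\]

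Since $K$ is constructible, $K \tensor_k \shLk$ is a constructible complex of sheaves of $\kLambda$-modules; and since $\kLambda$ is Noetherian (a Laurent polynomial ring in $2g$ variables over $k$), the argument of \cite{Dimca}*{Corollary~4.1.6} carries over verbatim to show that
\[
  \widetilde{K} \defeq \derR \pl \bigl( K \tensor_k \shLk \bigr)
\]
lies in $\Dbcoh(\kLambda)$. This $\widetilde{K}$ is the desired $k$-model, and it remains to exhibit a natural isomorphism $\widetilde{K} \tensor_{\kLambda} \CLambda \simeq \derR \pl(K \tensor_k \shLC)$ in $\Dbcoh(\CLambda)$. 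I would prove this by mirroring the proof of \propositionref{prop:iso-c}: choose a bounded resolution $\shF^{\bullet}$ of $K$ by flabby sheaves of $k$-vector spaces (e.g.\ the Godement construction), so that both sides are computed by the complex $H^0(A, \shF^{\bullet} \tensor_k \shL_{\argbl})$ in its two interpretations. The functor $M \mapsto H^0(A, \shF^i \tensor_k \shL_M)$ from $\kLambda$-modules to $\kLambda$-modules is exact and preserves arbitrary direct sums, so the Eilenberg--Watts theorem \cite{Watts}*{Theorem~1} yields a natural isomorphism
\[
  H^0(A, \shF^i \tensor_k \shL_M) \simeq H^0(A, \shF^i \tensor_k \shLk) \tensor_{\kLambda} M,
\]
functorial in $M$. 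Setting $M = \CLambda$ and applying this termwise gives the claim; note that flatness of $\CC$ over $k$ makes $\CLambda$ flat over $\kLambda$, so the ordinary tensor product computes the correct derived operation.

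The main (and rather mild) obstacle is simply verifying that \cite{Dimca}*{Corollary~4.1.6}, \lemmaref{lem:functor}, and \propositionref{prop:iso-c} all remain valid with the coefficient ring $\kLambda$ in place of $\CLambda$. This is essentially automatic, since the constructibility statement depends only on Noetherianity of the base ring, and the Eilenberg--Watts / flabby-resolution arguments are insensitive to which subfield $k \subseteq \CC$ one uses as the coefficient field.
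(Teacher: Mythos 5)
Your proof is correct and is essentially the paper's own argument, which simply writes $\shLC = \shLk \tensor_k \CC$ and observes that $\derR \pl \bigl( K \tensor_k \shLC \bigr) \simeq \derR \pl \bigl( K \tensor_k \shLk \bigr) \tensor_k \CC$ with $\derR \pl \bigl( K \tensor_k \shLk \bigr) \in \Dbcoh(\kLambda)$; you have merely spelled out the flat base-change isomorphism that the paper takes for granted. Your substitution of Godement resolutions for the fine resolutions of \lemmaref{lem:functor} is the right adjustment, since partitions of unity are unavailable for sheaves of $k$-vector spaces over a general subfield $k \subseteq \CC$.
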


\begin{proof}
Indeed, we have $\shLC = \shLk \tensor_k \CC$, and therefore
\[
	\derR \pl \bigl( K \tensor_k \shLC \bigr) \simeq 	
		\derR \pl \bigl( K \tensor_k \shLk \bigr) \tensor_k \CC
\]
is obtained by extension of scalars from an object of $\Dbcoh \bigl( \kLambda
\bigr)$.
\end{proof}

\subsection{Comparison theorems}

Recall that we have a biholomorphic mapping 
\[
	\Phi \colon \Ash \to \Char(A)
\]
that takes a line bundle with integrable connection to the corresponding local system
of rank one. In this section, we relate the Fourier-Mukai transform of a holonomic 
complex $\Mmod \in \Dbh(\Dmod_A)$ and the transform of the constructible complex
$\DR_A(\Mmod)$. Our first result is purely set-theoretic, and concerns the
relationship between the cohomology support loci of $\Mmod$ and $\DR_A(\Mmod)$.

\begin{theorem} \label{thm:relationship}
Let $\Mmod \in \Dbh(\Dmod_A)$ be a holonomic complex on $A$. Then 
\[
	\Phi \bigl( A, S_m^k(A, \Mmod) \bigr) = S_m^k \bigl( A, \DR_A(\Mmod) \bigr).
\]
for every $k,m \in \ZZ$.
\end{theorem}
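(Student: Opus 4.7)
The plan is to reduce the statement to a projection-formula-type isomorphism at the level of de Rham complexes. Specifically, I would prove that for any $(L, \nabla) \in \Ash$ and any holonomic complex $\Mmod \in \Dbh(\Dmod_A)$, there is a natural isomorphism of constructible complexes
\[
	\DR_A \bigl( \Mmod \tensor_{\OA} (L, \nabla) \bigr)
		\simeq \DR_A(\Mmod) \tensor_{\CC} \CCrho
\]
in $\Dbc(\CC_A)$, where $\rho = \Phi(L, \nabla)$. Taking hypercohomology on both sides will then give the equality of dimensions that defines the cohomology support loci, so that $(L, \nabla) \in S_m^k(A, \Mmod)$ if and only if $\rho \in S_m^k(A, \DR_A(\Mmod))$.

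To construct the map, first recall that by the definition of $\Phi$, the sheaf $\CCrho$ of horizontal sections of $(L, \nabla)$ sits inside $L$ as a local system of rank one; in particular, multiplication by a local horizontal section gives a local isomorphism $(\OA, d) \to (L, \nabla)$. I would define a morphism of complexes
\[
	\DR_A(\Mmod) \tensor_{\CC} \CCrho \to
		\DR_A \bigl( \Mmod \tensor_{\OA} (L, \nabla) \bigr)
\]
locally by $\omega \tensor m \tensor s \mapsto \omega \tensor (m \tensor s)$, where $s$ is a local section of $\CCrho \subseteq L$. The routine verification that this commutes with the differential uses that the connection on the tensor product acts as $\nabla_{\!\Mmod} \tensor \id + \id \tensor \nabla$, and the second term vanishes precisely because $s$ is horizontal; this is where one must check the sign conventions (there is a uniform factor $(-1)^g$ on both sides).

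To see this map is a quasi-isomorphism, I would work locally. On a sufficiently small open set $U$, $(L,\nabla)|_U$ admits a horizontal trivialization, so $\CCrho|_U \simeq \CC_U$, and multiplication by the trivializing section identifies both complexes with $\DR_A(\Mmod)|_U$. Thus the map is an isomorphism in $\Dbc(\CC_A)$. Applying $\derR p_\ast$ and using the flatness of the local system $\CCrho$ over $\CC$ then yields
\[
	\derH^k \bigl( A, \DR_A(\Mmod \tensor_{\OA} (L,\nabla)) \bigr) \simeq
		\derH^k \bigl( A, \DR_A(\Mmod) \tensor_{\CC} \CCrho \bigr),
\]
which proves the theorem.

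The main obstacle, though a mild one, is verifying the quasi-isomorphism with the paper's sign conventions for $\DR_A$ and the double complex structure used to define $\DR_A$ of a complex of $\Dmod_A$-modules; one should also take a little care since $\Mmod$ is a complex rather than a single module, but this is handled by viewing both sides as totalizations of the evident double complexes and reducing to the case of a single holonomic module by a spectral sequence (or simply by functoriality of the comparison map in $\Mmod$).
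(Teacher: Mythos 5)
Your proposal is correct and follows essentially the same route as the paper: both construct the natural comparison map from $\DR_A(\Mmod) \tensor_{\CC} \CCrho$ (with $\CCrho = \ker\nabla \subseteq L$) to $\DR_A\bigl(\Mmod \tensor_{\OA} (L,\nabla)\bigr)$, observe that horizontality of sections of $\CCrho$ makes it commute with the differentials of the two double complexes, and use $(\ker\nabla)\tensor_{\CC}\OA = L$ (your local horizontal trivialization) to see it is in fact an isomorphism of complexes, whence the equality of hypercohomology dimensions.
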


\begin{proof}
Let $\bigl( \cMmod, d \bigr)$ be the given holonomic complex. For any line bundle
with integrable connection $(L, \nabla)$, the associated local system $\ker \nabla$
is a subsheaf of $L$, and we have $(\ker \nabla) \tensor_{\CC} \OA = L$. This means
that the natural morphism of sheaves
\[
	\bigl( \OmA^{g+i} \tensor_{\OA} \Mmod^j \bigr) \tensor_{\CC} \ker \nabla
		\to \OmA^{g+i} \tensor_{\OA} \bigl( \Mmod^j \tensor_{\OX} L \bigr) 
\]
is an isomorphism for every $i,j \in \ZZ$. Since it is compatible with the
differentials $d_1 = (-1)^g \nabla_{\! \Mmod^j}$ and $d_2 = \id \tensor d$, we obtain
an isomorphism of complexes
\[
	\DR_A(\cMmod) \tensor_{\CC} \ker \nabla \to 
		\DR_A \bigl( \cMmod \tensor_{\OA} (L, \nabla) \bigr),
\]
and therefore the desired relation between their hypercohomology groups.
\end{proof}

The second result is much stronger, and directly relates the two complexes 
$\FM_A(\Mmod)$ and $\derR \pl \bigl( \DR_A(\Mmod) \tensor_{\CC} \shLC \bigr)$. This
only makes sense on the level of coherent analytic sheaves, because $\Phi$ is not
an algebraic morphism.

\begin{theorem} \label{thm:relationship-FM}
Let $\Mmod \in \Dbh(\Dmod_A)$ be a holonomic complex on $A$. Then the complex of
coherent analytic sheaves $\derR \Phi_{\ast} \FM_A(\Mmod)$ is quasi-isomorphic to the
complex of coherent analytic sheaves determined by $\derR \pl \bigl( \DR_A(\Mmod)
\tensor_{\CC} \shLC \bigr)$.
\end{theorem}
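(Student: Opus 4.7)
The plan is to move both sides to the product $A \times \Ash$ and realize them as the derived direct image of a single complex. The key input is that $(\Psh, \nablash)$ is a family of flat line bundles along $A$, so its relative kernel
\[
	\mathcal{U} = \ker \bigl( \nablash \colon \Psh \to \Omega_{A \times \Ash / \Ash}^1 \tensor \Psh \bigr)
\]
is a rank-one local system of $\CC$-vector spaces on the analytic space $A \times \Ash$, naturally endowed with the structure of a sheaf of $p_2^{-1} \OAsh$-modules; its restriction to $A \times \{(L,\nabla)\}$ is the monodromy local system $\Hol(L,\nabla) = \Phi(L,\nabla)$.

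First I would generalize the pointwise computation from the proof of \theoremref{thm:relationship} to a relative Poincar\'e-type quasi-isomorphism
\[
	\DR_{A \times \Ash / \Ash} \Bigl( p_1^{\ast} \Mmod \tensor (\Psh, \nablash) \Bigr)
		\simeq p_1^{-1} \DR_A(\Mmod) \tensor_{\CC} \mathcal{U}
\]
on $A \times \Ash$. Any local flat section $s$ of $\mathcal{U}$ trivializes $\Psh$, and since $\nablash s = 0$, the relative differential of the left-hand side reduces to $\nabla_{\!\Mmod}(\omega \tensor m) \tensor s$, so the two complexes agree locally and therefore globally. Applying $\derR (p_2)_{\ast}$ then gives
\[
	\FM_A(\Mmod) \simeq \derR (p_2)_{\ast} \bigl( p_1^{-1} \DR_A(\Mmod) \tensor_{\CC} \mathcal{U} \bigr)
\]
as complexes of coherent analytic sheaves on $\Ash$.

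Second, I would identify $\mathcal{U}$ in terms of the algebraic universal local system $\shLC$. The biholomorphism $\Phi$ corresponds to a ring homomorphism $\CLambda \to \Gamma(\Ash, \OAsh)$ sending $e_{\lambda}$ to the holomorphic function $(L,\nabla) \mapsto \Hol(L,\nabla)(\lambda)$, and with respect to this map both $\mathcal{U}$ and $p_1^{-1} \shLC \tensor_{p_1^{-1} \CLambda} p_2^{-1} \OAsh$ are rank-one local systems of $p_2^{-1} \OAsh$-modules realizing the tautological character $\Lambda \to \Gamma(\Ash, \OAsh^{\ast})$. Their universal property produces a canonical isomorphism between them. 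Substituting back and invoking the projection formula yields
\[
	\FM_A(\Mmod) \simeq \bigl( \derR \pl \bigl( \DR_A(\Mmod) \tensor_{\CC} \shLC \bigr) \bigr)
		\Ltensor_{\CLambda} \OAsh,
\]
and pushing forward along $\Phi$ produces the desired identification on $\Char(A)$.

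The hard part is the matching of $\mathcal{U}$ with the tensor product involving $\shLC$ in the second step: $\mathcal{U}$ is defined through the moduli-theoretic data $(\Psh, \nablash)$, whereas $\shLC$ is a purely topological object built from $\pi_1(A) = \Lambda$. Verifying that the two universal characters $\Lambda \to \Gamma(\Ash, \OAsh^{\ast})$ agree amounts to tracing through the explicit construction of $\nablash$ in \cite{MM} and confirming its compatibility with the holonomy map $\Phi$ of \eqref{eq:Phi}.
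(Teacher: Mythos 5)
Your proposal follows essentially the same route as the paper's proof: trivialize the relative de Rham complex by the sheaf of relative flat sections $\ker \nablash$ of the Poincar\'e bundle, identify that relative local system with the universal topological local system $\shLC$ transported by $\Phi$, and conclude by base change; the paper carries out the identification by pushing $\ker \nablash$ forward to $A \times \Char(A)$ and comparing with an auxiliary sheaf $\shP$ whose direct image is $\shLCc$ (\lemmaref{lem:nablash} and \lemmaref{lem:shP}), which is the mirror image of your pullback of $\shLC$ to $A \times \Ash$, and the holonomy verification you flag as the hard part is exactly the explicit $e^{-f(v)}$ computation in \lemmaref{lem:nablash}. The one point to phrase carefully is that the ring map $\CLambda \to \Gamma(\Ash, \shO)$ exists only into \emph{holomorphic} functions (algebraically $\Ash$ has no nonconstant global functions), so the base change must go through the convergent Laurent series ring $\CcLambda$, with the conclusion using that $\Char(A)$ is Stein and the flatness supplied by the fine-resolution argument of \lemmaref{lem:functor} --- exactly as in the paper.
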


The algebra of global holomorphic functions on the Stein manifold $\Char(A)$ is
naturally isomorphic to $\CcLambda$, the algebra of convergent Laurent series in the
variables $e_{\lambda}$; recall that $e_{\lambda}$ is the function taking a character
$\rho$ to $\rho(\lambda)$. Since $\CcLambda$ is in an evident manner a module over
$\CLambda$, it determines a local system $\shLCc$ on the abelian variety $A$.
From \propositionref{prop:iso-c}, we know that
\[
	\derR \pl \bigl( \DR_A(\Mmod) \tensor_{\CC} \shLC \bigr) 
		\tensor_{\CLambda} \CcLambda
		\simeq \derR \pl \bigl( \DR_A(\Mmod) \tensor_{\CC} \shLCc \bigr) 
\]
The idea of the proof of \theoremref{thm:relationship-FM} is to relate this
complex of finitely generated $\CcLambda$-modules to the complex of global sections
of $\derR \Phi_{\ast} \FM_A(\Mmod)$. 

We begin by recalling the analytic construction of the Poincar\'e bundle on $A \times
\Ash$. Since $\Lambda \tensor_{\ZZ} \RR = V$, the canonical isomorphism
\[
	\Hom_{\RR}(V, \CC) = \Hom_{\ZZ}(\Lambda, \CC)
\]
allows us to identify a group homomorphism $f \colon \Lambda \to \CC$ with the induced
$\RR$-linear functional $f \colon V \to \CC$. Now any such $f \in \Hom_{\RR}(V, \CC)$ 
gives rise to the translation-invariant complex-valued one-form $df$ on the abelian
variety $A$, and therefore determines a line bundle with integrable connection on $A$:
the underlying smooth line bundle is $A \times \CC$; the complex structure is given
by $\dbar - (df)^{0,1}$, and the connection by $\del - (df)^{1,0}$. Briefly, we say
that the line bundle with integrable connection is given by the operator $d - df$;
the local system of its flat sections corresponds to the character $\rho_f \colon
\Lambda \to \CCst$, $\rho_f(\lambda) = e^{f(\lambda)}$. We thus have a commutative
diagram:
\[
\begin{tikzcd}[column sep=large,row sep=tiny]
	& \Ash \arrow{dd}{\Phi} \\
	\Hom_{\RR}(V, \CC) 
		\arrow[bend left=10]{ur}[pos=0.8]{f \mapsto d-df} 
		\arrow[bend right=10]{dr}[pos=0.35]{f \mapsto e^f} & \\
	& \Char(A)
\end{tikzcd}
\]
Both horizontal arrows are surjective homomorphisms of complex Lie groups; their
kernels are equal to the subspace $\Hom_{\ZZ} \bigl( \Lambda, \ZZ(1)
\bigr)$, where $\ZZ(1) = 2 \pi i \cdot \ZZ \subseteq \CC$. Note that if $f(\Lambda)
\subseteq \ZZ(1)$, then $e^f$ descends to a nowhere vanishing real analytic function
on $A$. On the product $V \times \Hom_{\RR}(V, \CC)$, one has the tautological function
\[
	F \colon V \times \Hom_{\RR}(V, \CC) \to \CC, \quad F(v, f) = f(v).
\]

Now we can describe the Poincar\'e bundle $\Psh$ and the universal relative
connection $\nablash \colon \Psh \to \Omega_{A \times \Ash/\Ash}^1 \tensor \Psh$ on
$A \times \Ash$. The discrete group $\Lambda \times \Hom_{\ZZ} \bigl( \Lambda, \ZZ(1)
\bigr)$ acts smoothly on the trivial bundle $V \times \Hom_{\RR}(V, \CC)
\times \CC$ by the formula
\[
	(\lambda, \phi) \cdot (v, f, z) 
		= \left( v + \lambda, f + \phi, e^{-\phi(v)} z \right), 
\]
and the quotient is a smooth line bundle on $A \times \Ash$. The original bundle
has both a complex structure and a relative integrable connection, defined by the
operator
\[
	\bigl( d_V - d_V F \bigr) + \dbar_{\Hom_{\RR}(V, \CC)};
\]
one can show that the operator descends to the quotient, and endows the smooth line
bundle from above with a complex structure and a connection relative to $\Ash$. This
gives a useful model for the  pair $(\Psh, \nablash)$.

Our first concern is to describe the subsheaf $\ker \nablash$ of $\Psh$. To that end,
we introduce an auxiliary sheaf $\shP$ on $A \times \Char(A)$ by defining
\[
	\shP(U) = 
		\mengge{s \in \shO_{V \times \Char(A)} \bigl( (\pi \times \id)^{-1}(U) \bigr)}{%
			\parbox[c]{.415\textwidth}{%
				$s(v+\lambda, \rho) = \rho(\lambda) s(v, \rho)$ for 
				$\lambda \in \Lambda$ \\ and $s(v, \rho)$ is locally constant in $v$}}.
\]

\begin{lemma} \label{lem:shP}
One has $R^i (p_1)_{\ast} \shP = 0$ for $i > 0$, and the sheaf $(p_1)_{\ast} \shP$ is
canonically isomorphic to $\shLCc$.
\end{lemma}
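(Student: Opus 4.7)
The plan is to work locally on $A$ and exploit the fact that $\Char(A) \simeq (\CCst)^{2g}$ is a Stein manifold.

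First I would fix a point $a \in A$ and a contractible open neighborhood $U \subseteq A$ over which the universal cover $\pi \colon V \to A$ splits, so that $\pi^{-1}(U) = \bigsqcup_{\lambda \in \Lambda}(\tilde U + \lambda)$ for some chosen lift $\tilde U \subseteq V$. A section $s$ of $\shP$ over $U \times \Char(A)$ is a holomorphic function on $\pi^{-1}(U) \times \Char(A)$ locally constant in $v$; by the equivariance formula it is determined by its restriction to $\tilde U \times \Char(A)$, and since $\tilde U$ is connected that restriction is simply a holomorphic function on $\Char(A)$, i.e.\ an element of $\shO(\Char(A)) = \CcLambda$. This yields an isomorphism of sheaves on $U \times \Char(A)$
\[
	\shP \restr{U \times \Char(A)} \simeq p_{2}^{-1} \shO_{\Char(A)},
\]
which depends on the choice of lift $\tilde U$.

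From this trivialization both assertions should fall out at once. Since $U$ is contractible and $\Char(A)$ is Stein, a Leray spectral sequence for the projection $U \times \Char(A) \to \Char(A)$ gives $H^{i}(U \times \Char(A), \shP) \simeq H^{i}(\Char(A), \shO_{\Char(A)})$, which by Cartan's Theorem B vanishes for $i > 0$ and equals $\CcLambda$ for $i = 0$. Running $U$ through a cofinal system of such neighborhoods at each point of $A$, I obtain $R^{i}(p_{1})_{\ast}\shP = 0$ for $i > 0$ and $(p_{1})_{\ast}\shP$ locally constant with stalk $\CcLambda$.

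Finally I would identify this locally constant sheaf with $\shLCc$ by tracking transition functions. If two lifts $\tilde U$ and $\tilde U' = \tilde U + \lambda$ are available on an overlap, then the equivariance $s(v+\lambda, \rho) = \rho(\lambda) s(v, \rho)$ says that a section represented by $s_{0} \in \CcLambda$ in the first trivialization is represented by $e_{\lambda} \cdot s_{0}$ in the second, where $e_{\lambda}$ is viewed as the global holomorphic function $\rho \mapsto \rho(\lambda)$ on $\Char(A)$. The transition function is therefore multiplication by $e_{\lambda} \in \CcLambda$, which is exactly the $\Lambda$-monodromy that defines $\shLCc$ as the local system of the $\CLambda$-module $\CcLambda$. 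The main thing demanding care is this last bookkeeping step: one must check that sign conventions in the equivariance rule line up with those in the definition of $\shL_{M}$ given earlier, so that the answer really is $\shLCc$ and not its dual. Beyond that, every step is standard.
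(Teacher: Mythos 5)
Your proof is correct and follows essentially the same route as the paper's: identify $\shP$ locally over $A$ as the pullback of $\shO_{\Char(A)}$ (the paper does this by noting $\shP(D\times U)\simeq\shO_{\Char(A)}(U)$ for a small polydisk $D$, rather than via an explicit trivialization and transition functions), deduce the vanishing of $R^i(p_1)_*\shP$ from the Steinness of $\Char(A)$, and read off the $\Lambda$-equivariance $s(v+\lambda,\rho)=\rho(\lambda)s(v,\rho)$ as exactly the monodromy $e_\lambda$ defining $\shLCc$. The bookkeeping point you flag at the end is handled in the paper by directly matching sections of $(p_1)_*\shP$ with the description of $\shLCc(U)$ as locally constant functions $\ell\colon\pi^{-1}(U)\to\CcLambda$ with $\ell(v+\lambda)=e_\lambda\ell(v)$, and your convention agrees with it.
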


\begin{proof}
Let $\pi \colon V \to A$ denote the quotient mapping. As with any representation of
$\Lambda$, the sections of the local system $\shLCc$ over an open set $U \subseteq A$
are given by 
\[
	\shLCc(U) = \mengge{\ell \colon \pi^{-1}(U) \to \CcLambda}{%
		\parbox[c]{.33\textwidth}{%
		$\ell$ is locally constant, and \\
		$\ell(v + \lambda) = e_{\lambda} \ell(v)$ for $\lambda \in \Lambda$}}.
\]
On the other hand, a section of $(p_1)_{\ast} \shP$ over $U$ is a holomorphic function 
\[
	s \in \shO_{V \times \Char(A)} \bigl( \pi^{-1}(U) \times \Char(A) \bigr)
\]
that satisfies $s(v + \lambda, \rho) = \rho(\lambda) s(v, \rho)$ and is
locally constant in its first argument. Consequently, $\ell(v) = f(v, \argbl)$ is a
holomorphic function on $\Char(A)$ and therefore an element of the ring $\CcLambda$.
The resulting function $\ell \colon \pi^{-1}(U) \to \CcLambda$ is locally constant and
satisfies $\ell(v + \lambda) = e_{\lambda} \ell(v)$, which means that $\ell \in
\shLCc(U)$. In this way, we obtain an isomorphism of sheaves between $(p_1)_{\ast}
\shP$ and $\shLCc$.

To prove the vanishing of the higher direct image sheaves of $\shP$, it suffices to
show that $H^i \bigl( D \times \Char(A), \shP \bigr) = 0$ whenever $D \subseteq A$ is
a sufficiently small polydisk. Since $D$ is connected, one has
\[
	\shP(D \times U) \simeq \shO_{\Char(A)}(U),
\]
and because every open covering of $D \times \Char(A)$ can be refined into an open
covering by sets of this type, the desired vanishing follows from the fact that
$\Char(A)$ is a Stein manifold.
\end{proof}

The sheaf $\shP$ is related to $\ker \nablash$ in the following simple way.

\begin{lemma} \label{lem:nablash}
The subsheaf $\ker \nablash \subseteq \Psh$ has the following properties:
\begin{aenumerate}
\item \label{en:nablash-a}
$(\ker \nablash) \tensor_{\CC} \shO_{A \times \Ash} = \Psh$.
\item \label{en:nablash-b}
Its pushforward via $\id \times \Phi \colon A \times \Ash \to A \times \Char(A)$ is
canonically isomorphic to the sheaf $\shP$.
\end{aenumerate}
\end{lemma}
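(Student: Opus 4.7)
The plan is to verify both assertions directly from the analytic model of $(\Psh, \nablash)$ given just above the statement.

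For part (a), the claim is the standard fact that a holomorphic line bundle with a holomorphic integrable relative connection is locally trivialized by horizontal sections. By the relative Cauchy--Kovalevskaya theorem (equivalently, Frobenius integrability for $\nablash$), any prescribed nonzero initial value at a point of $A \times \Ash$ extends to a unique local section of $\ker \nablash$ on a neighborhood, and such a section is nowhere vanishing and so trivializes $\Psh$ locally. Hence the natural evaluation morphism $(\ker \nablash) \tensor_{\CC} \shO_{A \times \Ash} \to \Psh$ is a local isomorphism, hence an isomorphism. In the model, the equation $(d_V - d_V F) s = 0$ has local solutions of the explicit form $s(v, f) = c(f) \exp(F(v, f))$ with $c$ holomorphic in $f$, which are manifestly nonvanishing.

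For part (b), I first observe that both sheaves restrict on every slice $A \times \{\rho\}$ (for $\rho \in \Char(A)$) to the rank-one local system $\CC_{\rho}$ with the convention $s(v + \lambda) = \rho(\lambda) s(v)$. For $\shP$ this is immediate from the definition. For $(\id \times \Phi)_{\ast} \ker \nablash$, the restriction of $(\Psh, \nablash)$ to $A \times \{(L, \nabla)\}$ is simply $(L, \nabla)$ itself, so its sheaf of horizontal sections is the local system corresponding to the character $\Phi(L, \nabla) = \rho$.

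To promote this fiberwise agreement to a canonical sheaf isomorphism on $A \times \Char(A)$, I would lift to the universal cover $V \times \Hom_{\RR}(V, \CC)$, where by part (a) the pullback of $\Psh$ is trivialized by the explicit flat sections $c(f) \exp(F(v, f))$. Performing the gauge transformation $\tilde z = \exp(-F(v, f)) z$ converts the relative connection $d_V - d_V F$ into the literal differential $d_V$, so that horizontal sections in the new coordinate are precisely the functions locally constant in $v$. Rewriting the deck-transformation action of $\Lambda \times \Hom_{\ZZ}(\Lambda, \ZZ(1))$ in the $\tilde z$-trivialization and then descending via $\Hom_{\RR}(V, \CC) / \Hom_{\ZZ}(\Lambda, \ZZ(1)) \simeq \Ash \overset{\Phi}{\simeq} \Char(A)$, one reads off on $\pi^{-1}(U) \times W$ a function $\hat s$ that is holomorphic in $\rho$, locally constant in $v$, and satisfies the equivariance $\hat s(v + \lambda, \rho) = \rho(\lambda) \hat s(v, \rho)$ --- that is, a section of $\shP$. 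The assignment is manifestly functorial and yields the desired canonical isomorphism; the inverse runs the construction backwards.

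The main obstacle is bookkeeping across the several complex structures and group actions: one must check that the gauge transformation $\tilde z = \exp(-F) z$ correctly intertwines ``holomorphic on $A \times \Ash$'' with ``holomorphic in $\rho$'' (using that $f \mapsto f(v)$ is $\CC$-linear with respect to the natural complex structure on $\Hom_{\RR}(V, \CC) = T_0 \Ash$), and that the equivariance forced by the $\Lambda$-part of the deck group matches the factor $\rho(\lambda) = \exp(f(\lambda))$ prescribed in the definition of $\shP$ --- exactly the reason the gauge change $\tilde z = \exp(-F) z$ is the correct normalization.
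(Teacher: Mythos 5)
Your proposal is correct and follows essentially the same route as the paper: part (a) via the explicit nowhere-vanishing flat section $e^{F}$ on the universal cover $V \times \Hom_{\RR}(V, \CC)$, and part (b) via the gauge change by $e^{\pm F}$, which is exactly the paper's formula $\sigma(f,v) = e^{-f(v)} s(v, e^{f})$ together with the check of equivariance under the deck group $\Lambda \times \Hom_{\ZZ}\bigl(\Lambda, \ZZ(1)\bigr)$.
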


\begin{proof}
We use the analytic description of the Poincar\'e bundle above. It suffices to prove
\ref{en:nablash-a} after pulling back to the universal covering space $V \times
\Hom_{\RR}(V, \CC)$; but there, it becomes obvious, because the function $e^F$, which
takes $(v, f)$ to $e^{f(v)}$, is a nowhere vanishing global section of the
pullback of $\ker \nablash$.

To prove \ref{en:nablash-b}, we shall exhibit a canonical isomorphism of sheaves
\[
	\shP \to (\id \times \Phi)_{\ast}(\ker \nablash).
\]
Fix an open subset $U \subseteq A \times \Char(A)$. By definition, a section of
$\shP$ over $U$ is a holomorphic function $s \colon (\pi \times \id)^{-1}(U) \to
\CC$ that is locally constant in its first argument and satisfies $s(v + \lambda,
\rho) = \rho(\lambda) s(v, \rho)$ for every $\lambda \in \Lambda$. We claim that the
function $\sigma(f, v) = e^{-f(v)} s(v, e^f)$ is a section of $\ker \nablash$ over
$(\id \times \Phi)^{-1}(U)$. This claim is easily verified: indeed, the properties of $s$
imply that 
\[
	\bigl( d_V - d_V F + \dbar_{\Hom_{\RR}(V, \CC)} \bigr) \sigma = 0;
\]
they also imply that, for every $(\lambda, \phi) \in \Lambda \times \Hom_{\ZZ} \bigl(
\Lambda, \ZZ(1) \bigr)$, 
\begin{align*}
	\sigma(v + \lambda, f + \phi) 
		&= e^{-(f + \phi)(v + \lambda)} s(v + \lambda, e^f) \\
		&= e^{-(f + \phi)(v + \lambda)} e^{f(\lambda)} s(v, e^f)
		= e^{-\phi(v)} \sigma(v, f),
\end{align*}
because $e^{\phi}$ restricts to the trivial character on $\Lambda$. The construction
is obviously reversible, and so we obtain an isomorphism
\[
	\shP(U) \to (\ker \nablash) \bigl( (\id \times \Phi)^{-1}(U) \bigr),
\]
which induces the desired isomorphism of sheaves.
\end{proof}

We can now complete the proof of \theoremref{thm:relationship-FM} by imitating
the argument from the set-theoretic result about cohomology support loci.

\begin{proof}[Proof of \theoremref{thm:relationship-FM}]
Let $(\cMmod, d)$ be the given holonomic complex; to shorten the notation, set $K =
\DR_A(\cMmod)$. Let $p_1 \colon A \times \Ash \to A$ denote the first projection. From
\lemmaref{lem:nablash}, we obtain canonical isomorphisms
\[
	p_1^{\ast} \bigl( \OmA^{g+i} \tensor \Mmod^j \bigr) \tensor_{\CC} \ker \nablash
		\simeq \Omega_{A \times \Ash/\Ash}^{g+i} \tensor p_1^{\ast} \Mmod^j \tensor \Psh.
\]
They are compatible with the differentials in both double complexes, and therefore
induce an isomorphism of complexes
\[
	p_1^{\ast} \DR_A(\cMmod) \tensor_{\CC} \ker \nablash \simeq 
		\DR_{A \times \Ash / \Ash} \bigl( \cMmod \tensor_{\shO_{A \times \Ash}} 
		(\Psh, \nablash) \bigr).
\]
As a complex of coherent analytic sheaves, $\derR \Phi_{\ast} \FM_A(\cMmod)$ is
thus isomorphic to
\begin{align*}
	\derR \Phi_{\ast} \derR (p_2)_{\ast} \bigl( p_1^{\ast} K
		\tensor_{\CC} \ker \nablash \bigr)
	&\simeq \derR (p_2)_{\ast} \derR (\id \times \Phi)_{\ast} \bigl( p_1^{\ast} K
		\tensor_{\CC} \ker \nablash \bigr) \\
	&\simeq \derR (p_2)_{\ast} \bigl( p_1^{\ast} K
		\tensor_{\CC} \derR (\id \times \Phi)_{\ast}(\ker \nablash) \bigr) \\
	&\simeq \derR (p_2)_{\ast} \bigl( p_1^{\ast} K
		\tensor_{\CC} \shP \bigr).
\end{align*}
If we now push forward to a point, and invoke the result of \lemmaref{lem:shP}, we
obtain a complex of $\CcLambda$-modules quasi-isomorphic to
\[
	\derR \pl \bigl( K \tensor_{\CC} \shLCc \bigr) \simeq
		\derR \pl \bigl( K \tensor_{\CC} \shLC \bigr) \tensor_{\CLambda} \CcLambda,
\]
where $p \colon \Char(A) \to \pt$ denotes the morphism to a point. Because
$\Char(A)$ is a Stein manifold, this suffices to conclude the proof.
\end{proof}

\subsection{Structure theorem}
\label{subsec:structure}

The goal of this section is to prove \theoremref{thm:h-linear} and
\theoremref{thm:c-linear}, which together describe the structure of cohomology
support loci for holonomic complexes of $\Dmod_A$-modules and for constructible complexes
of sheaves of $\CC$-vector spaces.

Our proof is based on the observation that $\Phi \colon \Ash \to \Char(A)$ is an
isomorphism of complex Lie groups, but not of complex algebraic varieties. (In fact,
$\Ash$ does not have any non-constant global algebraic functions
\cite{Laumon}*{Th\'eor\`eme~2.4.1}, whereas $\Char(A)$ is affine.) Linear
subvarieties are clearly algebraic in both models, because
\[
	\Phi \Bigl( (L, \nabla) \tensor 
		\im \bigl( \fsh \colon \Bsh \to \Ash \bigr) \Bigr) = 
		 \Phi(L, \nabla) \cdot \im \bigl( \Char(f) \colon \Char(B) \to \Char(A) \bigr).
\]
Finite unions of linear subvarieties are the only closed subsets
with this property; this is the content of the following result by Simpson
\cite{Simpson}*{Theorem~3.1}.

\begin{theorem}[Simpson] \label{thm:simpson}
Let $Z$ be a closed algebraic subset of $\Ash$. If $\Phi(Z)$ is again a closed
algebraic subset of $\Char(A)$, then $Z$ is a finite union of linear subvarieties of
$\Ash$, and $\Phi(Z)$ is a finite union of linear subvarieties of $\Char(A)$.
\end{theorem}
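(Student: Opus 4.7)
The plan is to exploit the fact that $\Ash$ and $\Char(A)$ are biholomorphic via $\Phi$ but have genuinely different algebraic structures: $\Char(A) = \Spec \CLambda$ is affine, whereas $\Ash$ is an affine bundle over the projective variety $\Ah$ and carries no non-constant global regular functions. Linear subvarieties are algebraic in both structures, so the content of the theorem is that they are the only such subsets. First I would reduce to the irreducible case and pick a smooth point $z_0 \in Z$; it suffices to show that the component through $z_0$ is a linear subvariety.

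Next I would extract tangent-space data. The algebraic structure on $\Ash$ is controlled by the Hodge filtration $F^1 = H^0(A, \OmA^1) \subset H^1(A, \CC) = T_{z_0} \Ash$ (since $F^1$ is the vertical tangent to $\pi \colon \Ash \to \Ah$, and the horizontal direction is identified with $H^1(A, \OA)$ in a canonical way). The algebraic structure on $\Char(A)$, by contrast, is controlled by the integer lattice $\Lambda \simeq H^1(A, \ZZ) \subset H^1(A, \RR)$ through the exponentials $e_\lambda$. Differentiating the two algebraicity conditions at $z_0$ should force $T_{z_0} Z$ to be a complex subspace $W \subseteq H^1(A, \CC)$ that is simultaneously (i) compatible with the Hodge filtration (from algebraicity on $\Ash$), and (ii) defined over the rational structure $H^1(A, \QQ)$ (from algebraicity on $\Char(A)$, since the equations there are Laurent polynomial in the $e_\lambda$).

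The key structural step is that such a $W$ must arise from a sub-Hodge structure of $H^1(A, \CC)$. By Poincar\'e reducibility, sub-Hodge structures of $H^1(A, \CC)$ correspond to abelian subvarieties, giving, dually, a surjective morphism $f \colon A \to B$ of abelian varieties with connected fibers whose induced tangent map $T_0 \Bsh \to T_0 \Ash$ (translated to $z_0$) has image exactly $W$. Thus $z_0 \otimes \im(\fsh \colon \Bsh \to \Ash)$ is a candidate linear subvariety tangent to $Z$ at $z_0$. To finish, I would show that $Z$ coincides with this subvariety near $z_0$; since both are irreducible analytic sets of the same dimension with the same tangent space at a smooth point, they agree locally, and then algebraicity plus closedness propagates the equality globally across $Z$.

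The main obstacle will be the passage from tangent-space compatibility to an honest geometric equality: the analytic-algebraic mismatch between $\Ash$ and $\Char(A)$ mediated by $\Phi$ is transcendental (it involves the exponential of $\RR$-linear functionals on $V$), so controlling subvarieties by their tangent spaces needs a rigidity input. One clean way to supply it is via an Ax--Schanuel / Bloch--Ochiai style argument applied to the universal cover $H^1(A, \CC) \to \Ash$, which shows that an irreducible analytic subset whose image in $\Char(A)$ is algebraic and whose pullback is analytic in $H^1(A, \CC)$ with lattice-periodic defining equations must in fact be cut out by $\CC$-linear equations on the universal cover -- exactly the characterization of linear subvarieties. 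Combined with the tangent-space step above, this yields Simpson's theorem.
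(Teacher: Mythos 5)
This theorem is not proved in the paper at all: it is quoted verbatim from \cite{Simpson}*{Theorem~3.1}, so there is no internal argument to compare against. Judged on its own, your sketch has the right global shape --- linear subvarieties are exactly the bi-algebraic subsets for the two structures on the same complex Lie group, and some functional-transcendence input of Ax type is indeed the natural engine --- but two of its steps are genuinely broken. The first is the tangent-space step. You claim that algebraicity of $Z$ in $\Ash$ forces $T_{z_0}Z$ to be compatible with the Hodge filtration, and that algebraicity of $\Phi(Z)$ in $\Char(A)$ forces $T_{z_0}Z$ to be defined over $H^1(A,\QQ)$. Neither implication holds: a generic algebraic curve in the quasi-projective variety $\Ash$, or in the torus $(\CCst)^{2g} \simeq \Char(A)$, has tangent lines in completely general position at its smooth points, and being cut out by Laurent polynomials in the $e_{\lambda}$ imposes no rationality on a tangent space at a single point. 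Any constraint on $T_{z_0}Z$ can only come from the \emph{conjunction} of the two conditions, mediated by the transcendental map $\Phi$, and extracting that constraint is essentially the whole content of the theorem; it cannot be obtained by differentiating each algebraicity condition separately.

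The second gap is that the entire transcendence content is delegated to an unproved ``Ax--Schanuel / Bloch--Ochiai style argument.'' Bloch--Ochiai concerns entire curves in abelian varieties and is not the relevant tool; what you would need is an Ax--Lindemann-type statement for the identity map $\Ash \to \Char(A)$ viewed on the common universal cover $H^1(A,\CC)$, characterizing the irreducible analytic sets that are algebraic for both quotient structures. Even granting such a statement, the passage from it to the global conclusion (that $Z$ is a \emph{finite union} of translates of images $\im \fsh$, with the translating point and the quotient $B$ produced uniformly over all of $Z$) requires a genuine argument --- typically a dimension/defect count together with Noetherian induction over components --- none of which is supplied, and which cannot be replaced by ``two irreducible sets with the same tangent space at one smooth point agree,'' since equality of tangent spaces at a single point does not imply local coincidence. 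Simpson's own proof takes a different and substantially more involved route (it exploits the third, Dolbeault, incarnation of the moduli space and its $\CCst$-action to produce invariance under a subtorus; the two-structure version quoted here is his strengthening added in proof). As it stands, your proposal is a plausible roadmap for a modern proof, not a proof.
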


In \theoremref{thm:alg-c}, we have already seen that cohomology support loci are
algebraic subsets of $\Char(A)$. With the help of the Fourier-Mukai transform, it is
easy to show that they are also algebraic subsets of $\Ash$.

\begin{proposition} \label{prop:alg-h}
If $\Mmod \in \Dbcoh(\Dmod_A)$, then each cohomology support locus $S_m^k(A, \Mmod)$ is
an algebraic subset of $\Ash$.
\end{proposition}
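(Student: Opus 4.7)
The plan is to transfer the question from $\Mmod$ to its Fourier--Mukai transform $\FM_A(\Mmod) \in \Dbcoh(\OAsh)$, whose existence and properties are guaranteed by \theoremref{thm:Laumon-Rothstein} and \theoremref{thm:Laumon}, and then invoke \lemmaref{lem:perfect}. The key observation is that, because $\Ash$ is smooth (being a torsor for the vector bundle $\Ah \times H^0(A, \OmA^1)$ over $\Ah$), every bounded complex of coherent sheaves on $\Ash$ is locally quasi-isomorphic to a bounded complex of locally free sheaves of finite rank; in particular, $\FM_A(\Mmod)$ is a perfect complex.

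The crucial computation is that the derived fiber of $\FM_A(\Mmod)$ at a closed point $(L, \nabla) \in \Ash$ coincides with the hypercohomology of the twisted de Rham complex, that is,
\[
	\FM_A(\Mmod) \Ltensor_{\OAsh} \OAsh/\mm_{(L, \nabla)} \simeq
		\derR \Gamma \Bigl( A, \DR_A \bigl( \Mmod \tensor_{\OA} (L, \nabla) \bigr) \Bigr).
\]
To see this, I would first use property \ref{en:Laumon1} of \theoremref{thm:Laumon} to rewrite
$\FM_A \bigl( \Mmod \tensor_{\OA} (L, \nabla) \bigr) \simeq \derL t_{(L, \nabla)}^{\ast} \FM_A(\Mmod)$,
so that taking the derived fiber at $(L, \nabla)$ reduces to taking the derived fiber of $\FM_A \bigl( \Mmod \tensor_{\OA} (L, \nabla) \bigr)$ at the unit element $(\OA, d) \in \Ash$. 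At the unit element, the restriction of $(\Psh, \nablash)$ is simply $(\OA, d)$, so the defining formula \eqref{eqn:laumon} together with base change for $p_2 \colon A \times \Ash \to \Ash$ identifies this fiber with $\derR \Gamma \bigl( A, \DR_A(\Mmod \tensor (L, \nabla)) \bigr)$.

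Granting the base change identification, the proof concludes immediately: the set $S_m^k(A, \Mmod)$ is precisely the locus of closed points $(L, \nabla) \in \Ash$ at which
\[
	\dim \derH^k \Bigl( \FM_A(\Mmod) \Ltensor_{\OAsh} \OAsh/\mm_{(L, \nabla)} \Bigr) \geq m,
\]
and \lemmaref{lem:perfect}, applied to the perfect complex $\FM_A(\Mmod)$ on the scheme $\Ash$, shows that this is the set of closed points of a closed subscheme. Hence $S_m^k(A, \Mmod)$ is a (Zariski-closed) algebraic subset of $\Ash$.

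The only genuine work is in the base-change step; everything else is packaging. Thus the main obstacle — if one can call it that — is checking carefully that the derived tensor product $\FM_A(\Mmod) \Ltensor_{\OAsh} \OAsh/\mm_{(L,\nabla)}$ really agrees with the hypercohomology of the twisted de Rham complex, which amounts to verifying that the relative de Rham complex of $p_1^{\ast} \Mmod \tensor (\Psh, \nablash)$ is flat enough (or can be resolved suitably) for base change along the closed immersion $\{(L, \nabla)\} \into \Ash$ to apply. Once this standard verification is in hand, the proposition follows directly.
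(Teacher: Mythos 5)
Your proposal is correct and follows essentially the same route as the paper: represent $\FM_A(\Mmod)$ by a perfect complex on the smooth quasi-projective variety $\Ash$, identify its derived fiber at $(L,\nabla)$ with the hypercohomology of the twisted de Rham complex via base change, and conclude with \lemmaref{lem:perfect}. Your extra step of reducing to the fiber at the unit element via the translation property \ref{en:Laumon1} is a harmless elaboration of the base-change computation the paper performs directly.
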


\begin{proof}
Since $\Ash$ is a quasi-projective algebraic variety, we may represent $\FM_A(\Mmod)$
by a bounded complex $(\shE^{\bullet}, d)$ of locally free sheaves on $\Ash$. Now let
$(L, \nabla)$ be a line bundle with integrable connection, and let $i_{(L, \nabla)}$
denote the inclusion morphism. By the base change theorem,
\[
	\derR i_{(L, \nabla)}^{\ast} \FM_A(\Mmod) 
		\simeq \DR_A \bigl( \Mmod \tensor_{\OA} (L, \nabla) \bigr),
\]
and so we have
\[
	S_m^k(A, \Mmod) = \Menge{(L, \nabla) \in \Ash}%
		{\dim \derH^k \bigl( i_{(L, \nabla)}^{\ast} (\shE^{\bullet}, d) \bigr) \geq m}.
\]
Because of \lemmaref{lem:perfect}, this description shows that $S_m^k(A, \Mmod)$ is an
algebraic subset of $\Ash$, as claimed.
\end{proof}

We can now prove the two structure theorems from the introduction.

\begin{proof}[Proof of \theoremref{thm:h-linear}]
Let $\Mmod \in \Dbh(\Dmod_A)$ be a holonomic complex. Then $\DR_A(\Mmod)$ is
constructible, and we have
\[
	\Phi \bigl( S_m^k(A, \Mmod) \bigr) = S_m^k \bigl( A, \DR_A(\Mmod) \bigr)
\]
by \theoremref{thm:relationship}. \propositionref{prop:alg-h} shows that $S_m^k(A, \Mmod)$
is an algebraic subset of $\Ash$; \theoremref{thm:alg-c} shows that $S_m^k \bigl(
\DR_A(\Mmod) \bigr)$ is an algebraic subset of $\Char(A)$. We conclude from Simpson's
\theoremref{thm:simpson} that both must be finite unions of linear subvarieties of
$\Ash$ and $\Char(A)$, respectively. The assertion about objects of geometric origin
is proved in \subsecref{subsec:geometric} below.
\end{proof}

\begin{proof}[Proof of \theoremref{thm:FM-linear}]
This follows from \theoremref{thm:relationship-FM} and Simpson's results by the
same argument as the one just given.
\end{proof}

\subsection{Objects of geometric origin}
\label{subsec:geometric}

In this section, we study cohomology support loci for semisimple regular holonomic
$\Dmod_A$-modules of geometric origin, as defined in \cite{BBD}*{6.2.4}. To begin
with, recall the following definition for mixed Hodge modules, due to Saito
\cite{Saito-MM}*{Definition~2.6}.

\begin{definition}
A mixed Hodge module is said to be \define{of geometric origin} if it is 
obtained by applying several of the standard cohomological functors $H^i \fl$,
$H^i f_!$, $H^i \fu$, $H^i f^!$, $\psi_g$, $\phi_{g, 1}$, $\mathbf{D}$, $\boxtimes$,
$\oplus$, $\otimes$, and $\shHom$ to the trivial Hodge structure $\QQ^H$ of weight
zero, and then taking subquotients in the category of mixed Hodge modules.
\end{definition}

One of the results of Saito's theory is that any semisimple perverse sheaf of
geometric origin, in the sense of \cite{BBD}*{6.2.4}, is a direct summand of a
perverse sheaf underlying a mixed Hodge module of geometric origin.
Consequently, any semisimple regular holonomic $\Dmod$-module of geometric origin is
a direct summand of a $\Dmod$-module that underlies a polarizable Hodge module of
geometric origin.

\begin{theorem} \label{thm:geom-origin}
Let $\Mmod$ be a semisimple regular holonomic $\Dmod_A$-module of geometric origin.
Then each cohomology support locus $S_m^k(A, \Mmod)$ is a finite union of arithmetic
linear subvarieties of $\Ash$.
\end{theorem}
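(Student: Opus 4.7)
The plan is to upgrade the first part of \theoremref{thm:h-linear} from ``linear'' to ``arithmetic linear'' by invoking Simpson's theorem that every absolute closed subset of $\Ash$ is a finite union of arithmetic linear subvarieties (see \cite{Simpson}*{Theorem~3.1 and Proposition~3.4}). Since the structure theorem already gives us finitely many linear subvarieties, it suffices to show that $S_m^k(A, \Mmod)$ is stable under the action of $\Aut(\CC/\QQ)$ on $\Ash$ in both its algebraic structure and (via $\Phi$) the algebraic structure of $\Char(A)$.

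First I would unravel the hypothesis of geometric origin. Being a direct summand of the $\Dmod$-module underlying a polarizable Hodge module, $\Mmod$ is built from the trivial Hodge structure $\QQ^H_{\pt}$ by finitely many applications of standard cohomological functors along morphisms of complex algebraic varieties. All of this data can be spread out over a finitely generated $\QQ$-subalgebra $R \subseteq \CC$. Fix $\sigma \in \Aut(\CC/\QQ)$; conjugation of the defining diagram produces another abelian variety $\Asig$, an isomorphism $\csig \colon \Ash \to \Asigsh$ induced by $\sigma$, and a semisimple regular holonomic $\Dmod_{\Asig}$-module $\Mmodsig$ that is again of geometric origin, built by the same sequence of functors from the $\sigma$-conjugated morphisms.

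Second, the formation of cohomology support loci is functorial under this conjugation, so
\[
	\csig \bigl( S_m^k(A, \Mmod) \bigr) = S_m^k(\Asig, \Mmodsig).
\]
Passing through the Riemann--Hilbert correspondence and combining \theoremref{thm:relationship} with \propositionref{prop:subfield}, one obtains the analogous compatibility on the Betti side: the complex $\derR \pl \bigl( \DR_A(\Mmod) \tensor_\CC \shLC \bigr)$ is defined over a number field whenever $\Mmod$ is, so $S_m^k(A, \DR_A(\Mmod))$ is defined over a number field as a subscheme of $\Char(A)$. Thus $S_m^k(A, \Mmod)$ is simultaneously an algebraic subset of $\Ash$ and, via $\Phi$, an algebraic subset of $\Char(A)$ that is stable under $\Aut(\CC/\QQ)$ for both algebraic structures. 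This is exactly Simpson's notion of an absolute closed subset, and his theorem then forces each irreducible component to be an arithmetic linear subvariety.

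The main obstacle is making the Galois-equivariance precise. One must verify that in a single spreading-out, the abelian variety $A$, the moduli space $\Ash$, the universal pair $(\Psh, \nablash)$, and the morphisms entering the geometric-origin construction of $\Mmod$ are all defined over a common number field $K$, and that conjugation by $\sigma \in \Aut(\CC/K)$ commutes with the formation of de Rham and direct image complexes entering the definition of $S_m^k$. The subtlety is the one already visible in \definitionref{def:linear}: a point of $\Ash$ and its image in $\Char(A)$ are algebraic in two inequivalent ways, and it is exactly the torsion points that are defined over a number field with respect to both structures. Only the geometric-origin hypothesis ensures that the support locus respects both structures simultaneously, which is what allows Simpson's absolute-closed theorem to deliver the arithmetic refinement.
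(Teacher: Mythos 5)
Your proposal follows essentially the same route as the paper: show that $S_m^k(A,\Mmod)$ is absolute closed in the sense of \definitionref{def:absolute} -- Galois-equivariance of the support loci coming from the geometric-origin hypothesis, plus $\QQb$-rationality on the Betti side via \propositionref{prop:subfield} and the $\QQ$-structure of the underlying Hodge modules -- and then invoke Simpson's theorem on absolute closed subsets.

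One step you leave implicit deserves to be made explicit, because it is the only place where semisimplicity is really used. You pass from ``$\Mmod$ is a direct summand of a $\Dmod$-module underlying a mixed Hodge module'' to ``$\derR \pl \bigl( \DR_A(\Mmod) \tensor_\CC \shLC \bigr)$ is defined over a number field.'' The ambient Hodge module has a $\QQ$-Betti structure, but a priori the splitting into summands need not be defined over $\QQb$, so the summand $\DR_A(\Mmod)$ (and likewise each conjugate $\DR_{\Asig}(\Mmodsig)$, which is what \definitionref{def:absolute} actually requires for every $\sigma$) does not automatically inherit a $\QQb$-structure. The paper closes this with \lemmaref{lem:subquotient}: by the classification of simple perverse sheaves and the fact that $\QQb$ is algebraically closed, every simple subquotient of $K \tensor_{\QQb} \CC$ is the complexification of a simple subquotient of $K$; hence the semisimple object $\DR_{\Asig}(\Mmodsig)$ is indeed the complexification of a perverse sheaf with $\QQb$-coefficients, and \lemmaref{lem:MHM} applies. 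With that lemma added, your argument is complete and coincides with the paper's.
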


We introduce some notation that will be used during the proof. For any field
automorphism $\sigma \in \Aut(\CC/\QQ)$, we obtain from $A$ a new complex abelian
variety $\Asig$. Likewise, an algebraic line bundle $(L, \nabla)$ with integrable
connection on $A$ gives rise to $(\Lsig, \nablasig)$ on $\Asig$, and so we have a
natural isomorphism of abelian groups
\[
	\csig \colon \Ash \to \Asigsh.
\]
Now recall the following notion, due in a slightly different form to Simpson, who
modeled it on Deligne's definition of absolute Hodge classes.

\begin{definition} \label{def:absolute}
A closed subset $Z \subseteq \Ash$ is said to be \define{absolute closed} if, for
every field automorphism $\sigma \in \Aut(\CC/\QQ)$, the set
\[
	\Phi \bigl( \csig(Z) \bigr) \subseteq \Char(\Asig)
\]
is closed and defined over $\QQb$.
\end{definition}

The following theorem about absolute closed subsets is also due to Simpson.

\begin{theorem}[Simpson]
An absolute closed subset of $\Ash$ is a finite union of arithmetic linear
subvarieties.
\end{theorem}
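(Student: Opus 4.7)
The plan is to reduce to finite unions of linear subvarieties by means of \theoremref{thm:simpson}, and then to use the full strength of the absolute-closedness hypothesis to force the translation points to be torsion.

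Specializing \definitionref{def:absolute} at $\sigma = \id$ shows that $Z \subseteq \Ash$ is closed algebraic while $\Phi(Z) \subseteq \Char(A)$ is closed algebraic and defined over $\QQb$. Simpson's \theoremref{thm:simpson} then yields a decomposition $Z = \bigcup_{i=1}^r (L_i, \nabla_i) \tensor \im(\fsh_i)$, and correspondingly $\Phi(Z) = \bigcup_i \rho_i T_i$ where each $T_i = \Phi(\im \fsh_i) \subseteq \Char(A) = \Spec \ZZ[\pi_1(A,0)] \tensor \CC$ is an algebraic subtorus defined over $\QQ$ (being determined by the sublattice $\ker(\pi_1(A,0) \to \pi_1(B_i,0))$). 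Hence the translation classes $[\rho_i] \in (\Char(A)/T_i)(\CC)$ are already $\QQb$-points.

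To promote $[\rho_i]$ from $\QQb$-point to torsion, I apply the same analysis to every twist $\csig(Z)$, $\sigma \in \Aut(\CC/\QQ)$. Absolute closedness guarantees that each class $[\Phi(\csig(L_i, \nabla_i))]$ is again a $\QQb$-point of $\Char(A)/T_i$ and moreover lies in the fixed finite set of translations for the decomposition of $\csig(Z)$. On the $\Ash$-side, $\csig$ is governed by the $\sigma$-action on the periods of the polarized Hodge structure on $H^1(A,\CC)$ that parametrizes $\Ash$, so as $\sigma$ ranges over $\Aut(\CC/\QQ)$ the assignment $\sigma \mapsto [\Phi(\csig(L_i, \nabla_i))]$ probes $[\rho_i]$ at every archimedean place of its field of definition. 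The condition that all such embeddings land in a prescribed finite subset of $\QQb$-points forces the absolute values of the coordinates of $[\rho_i]$ to be uniformly bounded by $1$ at every complex place, and Kronecker's theorem on algebraic integers of modulus one then concludes that $[\rho_i]$ is a torsion point of the torus $\Char(A)/T_i$. Lifting torsion through the Lie group isomorphism $\Phi \colon \Ash \to \Char(A)$ yields a torsion element of $\Ash$ in the same coset as $(L_i, \nabla_i)$, so each $Z_i$ is an arithmetic linear subvariety.

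The main obstacle I foresee is the transcendence-theoretic step in which the constraint ``$\Phi(\csig(L_i, \nabla_i))$ is a $\QQb$-point for every $\sigma$'' is converted into a uniform archimedean bound on the coordinates of $[\rho_i]$; this requires a precise comparison between the twist $\csig$ on $\Ash$ and the fixed $\QQ$-structure of $\Char(A)$, which is precisely the delicate content of Simpson's analysis of the two different algebraic structures on the Lie group $\Ash \simeq \Char(A)$ in \cite{Simpson}. Once that comparison is in place, the reduction to Kronecker's theorem is immediate.
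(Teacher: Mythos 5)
The paper does not reprove this statement: its ``proof'' consists of citing Simpson's Theorem~6.1, together with the observation that \definitionref{def:absolute} is a weakening of Simpson's original notion of absolute closed set (which imposes extra conditions involving the Dolbeault moduli space), and that the strengthened form of \cite{Simpson}*{Theorem~3.1} added in proof makes those extra conditions unnecessary, so that Simpson's argument goes through verbatim. Your first step --- specializing the absolute-closedness condition at $\sigma = \id$, invoking \theoremref{thm:simpson} to write $Z$ as a finite union of linear subvarieties, and observing that only the torsion statement for the translation classes $[\rho_i]$ remains --- is correct and is indeed how Simpson's proof begins.

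The gap is that the torsion statement, which is the entire content of the theorem, is asserted rather than proved. Two concrete problems. First, the claim that varying $\sigma \in \Aut(\CC/\QQ)$ ``probes $[\rho_i]$ at every archimedean place'' tacitly treats $\Phi \circ \csig$ as if it were $\sigma \circ \Phi$; but these differ (that failure is exactly why the absolute condition is nontrivial), and converting ``$\Phi(\csig(L_i,\nabla_i))$ is a $\QQb$-point for every $\sigma$'' into a bound on the archimedean absolute values of the coordinates of $\rho_i$ is the hard transcendence-theoretic core of Simpson's Section~6 --- you flag it as an ``obstacle'' but supply no argument for it. Second, even granting modulus one at every archimedean place, Kronecker's theorem applies only to algebraic \emph{integers}: the number $(3+4i)/5$ and all of its conjugates have absolute value one, yet it is not a root of unity. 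So you would additionally have to control the valuations of $\rho_i(\lambda)$ at the non-archimedean places, and nothing in the sketch addresses this. As written, the proposal reduces the theorem to its hardest step and then declares that step ``immediate.''
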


\begin{proof}
Simpson's definition \cite{Simpson}*{p.~376} of absolute closed sets actually
contains several additional conditions (related to the space of Higgs bundles); but
as he explains, a strengthening of \cite{Simpson}*{Theorem~3.1}, added in proof,
makes these conditions unnecessary. In fact, the proof of
\cite{Simpson}*{Theorem~6.1} goes through unchanged with only the
assumptions in \definitionref{def:absolute}.
\end{proof}

With the help of Simpson's result, the proof of \theoremref{thm:geom-origin} is
straightforward. We first establish the following lemma.

\begin{lemma} \label{lem:MHM}
Let $M \in \MHM(A)$ be a mixed Hodge module, with underlying filtered
$\Dmod_A$-module $(\Mmod, F)$. Then the cohomology support loci of the
perverse sheaf $\DR_A(\Mmod)$ are algebraic subsets of $\Char(A)$ that are defined
over $\QQb$.
\end{lemma}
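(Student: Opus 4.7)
The plan is to exploit the $\QQ$-structure built into mixed Hodge modules. By Saito's theory, $M$ carries an underlying perverse sheaf of $\QQ$-vector spaces $K_\QQ = \rat(M) \in \Perv(\QQ_A)$, together with a canonical comparison isomorphism $K_\QQ \tensor_\QQ \CC \simeq \DR_A(\Mmod)$; in particular $K_\QQ \in \Dbc(\QQ_A)$.

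I would then apply \propositionref{prop:subfield} with $k = \QQ$ to produce the complex
$$C_\QQ := \derR \pl \bigl( K_\QQ \tensor_\QQ \shLQ \bigr) \in \Dbcoh(\QLambda),$$
which enjoys the property that $C_\QQ \tensor_{\QLambda} \CLambda$ is isomorphic to $\derR \pl \bigl( \DR_A(\Mmod) \tensor_\CC \shLC \bigr)$. By \corollaryref{cor:fibers}, the fiber of this latter complex at a closed point $\rho \in \Char(A)$ computes the hypercohomology $\derH^k \bigl( A, \DR_A(\Mmod) \tensor_\CC \CCrho \bigr)$ in every degree.

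Mimicking the proof of \theoremref{thm:alg-c}, the locus $S_m^k(A, \DR_A(\Mmod))$ is thus precisely the set of characters $\rho$ satisfying $\dim \derH^k \bigl( C_\QQ \tensor_{\QLambda} \CLambda/\mmrho \bigr) \geq m$. Applying \lemmaref{lem:perfect} to a local presentation of $C_\QQ$ by bounded complexes of free $\QLambda$-modules, the set is cut out by the vanishing of minors of matrices whose entries already lie in $\QLambda$. Consequently, $S_m^k(A, \DR_A(\Mmod))$ is the base change to $\Spec \CLambda$ of a closed subscheme of $\Spec \QLambda$, and so is defined over $\QQ$, hence \emph{a fortiori} over $\QQb$.

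The only delicate point is the compatibility of the $\QQ$-structure with the construction $K \mapsto \derR \pl(K \tensor \shL_{k[\Lambda]})$ under enlargement of $k$; but this is already built into \propositionref{prop:subfield}, since $\shLC = \shLQ \tensor_\QQ \CC$ arises by scalar extension of the coefficient group ring. The main conceptual input, beyond this formal bookkeeping, is Saito's theorem that every mixed Hodge module has a rational perverse structure.
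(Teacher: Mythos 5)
Your proof is correct and follows essentially the same route as the paper: take the rational perverse sheaf $\rat M$ provided by Saito's theory, apply \propositionref{prop:subfield} with $k=\QQ$ to realize $\derR \pl \bigl( \DR_A(\Mmod) \tensor_{\CC} \shLC \bigr)$ by extension of scalars from an object of $\Dbcoh(\QLambda)$, and conclude via \corollaryref{cor:fibers} and \lemmaref{lem:perfect} that the loci are cut out over $\QQ$. The paper's proof is just a terser statement of the same argument.
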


\begin{proof}
By definition, a mixed Hodge module has an underlying perverse sheaf $\rat M$ with
coefficients in $\QQ$, and $\DR_A(\Mmod) \simeq (\rat M) \tensor_{\QQ} \CC$. By
\propositionref{prop:subfield}, it follows that $\derR \pl \bigl( \DR_A(\Mmod)
\tensor_{\CC} \shL_R \bigr) \in \Dbcoh(R)$ is obtained by extension of scalars from
an object of $\Dbcoh \bigl( \QQ \lbrack \Lambda \rbrack \bigr)$.
The assertion about cohomology support loci now follows easily from
\corollaryref{cor:fibers}.
\end{proof}

The same result is true for any holonomic $\Dmod_A$-module with
$\QQb$-structure; that is to say, for any holonomic $\Dmod_A$-module whose de Rham
complex is the complexification of a perverse sheaf with coefficients in $\QQb$.
This is what Mochizuki calls a ``pre-Betti structure'' in \cite{Mochizuki-B}.

\begin{lemma} \label{lem:subquotient}
Let $K \in \Dbc(\QQb_A)$ be a perverse sheaf with coefficients in $\QQb$. Any
simple subquotient of $K \tensor_{\QQb} \CC$ is the complexification of a simple
subquotient of $K$.
\end{lemma}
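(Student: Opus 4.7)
The plan is to reduce the statement to a Jacobson density argument. Since $\argbl \tensor_{\QQb} \CC$ is exact on perverse sheaves, a composition series $K = K_0 \supsetneq K_1 \supsetneq \dotsb \supsetneq K_n = 0$ in $\Perv(A, \QQb)$ is carried to a filtration of $K \tensor_{\QQb} \CC$ whose successive quotients are $(K_i / K_{i+1}) \tensor_{\QQb} \CC$ with $K_i/K_{i+1}$ simple over $\QQb$. By Jordan--Hölder, every simple subquotient of $K \tensor \CC$ arises as a simple subquotient of some such $M \tensor_{\QQb} \CC$ with $M$ simple over $\QQb$. Thus the lemma will follow once I establish that $M \tensor_{\QQb} \CC$ is itself simple: its unique simple subquotient is then $M \tensor \CC$, which is manifestly the complexification of the simple subquotient $M$ of $K$.

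For this, I would use the Beilinson--Bernstein--Deligne description of simple perverse sheaves to write $M \simeq \jlsl(\shL \decal{\dim X^{\circ}})$, where $j \colon X^{\circ} \into A$ is a smooth locally closed immersion onto an open dense subset of the support of $M$, and $\shL$ is a simple $\QQb$-local system on $X^{\circ}$. Since $\argbl \tensor_{\QQb} \CC$ is a flat, exact tensor functor, it commutes with the six operations and in particular with $j_!$ and $j_*$; it also commutes with the perverse truncation functors, because perversity is formulated via conditions on $\dim \Supp \shH^i$, and $\shH^i$ and support are insensitive to flat base change. It therefore commutes with the intermediate extension $\jlsl$, giving $M \tensor \CC \simeq \jlsl \bigl( (\shL \tensor \CC) \decal{\dim X^{\circ}} \bigr)$. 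The problem reduces to showing that $\shL \tensor_{\QQb} \CC$ is a simple $\CC$-local system on $X^{\circ}$.

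Via the monodromy equivalence, this becomes a purely algebraic statement about representations of $\pi_1(X^{\circ}, \star)$: if $V$ is a finite-dimensional simple module over $\QQb \lbrack \pi_1(X^{\circ}, \star) \rbrack$, then $V \tensor_{\QQb} \CC$ is simple over $\CC \lbrack \pi_1(X^{\circ}, \star) \rbrack$. Since $\QQb$ is algebraically closed and $\dim_{\QQb} V < \infty$, Schur's lemma gives $\End(V) = \QQb$, and the Jacobson density theorem then forces the natural map $\QQb \lbrack \pi_1(X^{\circ}) \rbrack \to \End_{\QQb}(V)$ to be surjective. Flatness of $\CC$ over $\QQb$ preserves this surjection, so $\CC \lbrack \pi_1(X^{\circ}) \rbrack \to \End_{\CC}(V \tensor_{\QQb} \CC)$ is also surjective, which implies that $V \tensor_{\QQb} \CC$ is simple.

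The only point requiring care — rather than genuine difficulty — is verifying that extension of scalars commutes with the intermediate extension $\jlsl$; everything else is a direct application of Jordan--Hölder and of classical Jacobson density for finite-dimensional simple modules over an algebraically closed field.
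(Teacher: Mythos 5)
Your proposal is correct and follows essentially the same route as the paper: reduce via Jordan--H\"older to showing that a simple perverse sheaf stays simple after $\tensor_{\QQb} \CC$, invoke the classification of simple perverse sheaves as intermediate extensions of irreducible local systems, and use that an irreducible representation over the algebraically closed field $\QQb$ remains irreducible over $\CC$. You simply supply more detail than the paper does at the two points it leaves implicit (the Jacobson density argument for irreducibility over $\CC$, and the compatibility of extension of scalars with $\jlsl$), and both of those verifications are sound.
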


\begin{proof}
We only have to show that if $K \in \Dbc(\QQb_A)$ is a simple perverse sheaf, then $K
\tensor_{\QQb} \CC \in \Dbc(\CC_A)$ is also simple. By the classification of simple
perverse sheaves, there is an irreducible locally closed subvariety $X \subseteq A$,
and an irreducible representation $\rho \colon \pi_1(X) \to \GL_n(\QQb)$, such that
$K$ is the intermediate extension of the local system associated to $\rho$. Since
$\QQb$ is algebraically closed, $\rho$ remains irreducible over $\CC$,
proving that $K \tensor_{\QQb} \CC$ is still simple.
\end{proof}

\begin{proof}[Proof of \theoremref{thm:geom-origin}]
We first show that this holds when $\Mmod$ underlies a mixed Hodge module $M$
obtained by iterating the standard cohomological functors (but without taking
subquotients). Fix two integers $k, m$, and set $Z = S_m^k(A, \Mmod)$. In light of
\lemmaref{lem:MHM}, it suffices to prove that each set $\csig(Z)$ is equal to
$S_m^k(\Asig, \Mmodsig)$ for some polarizable Hodge module $\Msig \in \MHM(\Asig)$. But
since $M$ is of geometric origin, this is obviously the case; indeed, we can obtain
$\Msig$ by simply applying $\sigma$ to the finitely many algebraic varieties and
morphisms involved in the construction of $M$. 

Now suppose that $\Mmod$ is an arbitrary semisimple regular holonomic
$\Dmod_A$-module of geometric origin. Then $\Mmod$ is a direct sum of simple
subquotients of $\Dmod_A$-modules underlying mixed Hodge modules of geometric origin.
By the same argument as before, it suffices to show that $\Mmodsig$ is defined over
$\QQb$ for any $\sigma \in \Aut(\CC/\QQ)$. Now the perverse sheaf $\DR_A(\Mmodsig)$
is again a direct sum of simple subquotients of perverse sheaves underlying mixed Hodge
modules; by \lemmaref{lem:subquotient}, it is therefore the complexification of a
perverse sheaf with coefficients in $\QQb$. We then conclude the proof as above.
\end{proof}

\section{Codimension bounds and perverse coherent sheaves}

After a brief review of perverse coherent sheaves, we study the problem of how the
Fourier-Mukai transform for holonomic complexes interacts with various t-structures.

\subsection{Perverse coherent sheaves}
\label{subsec:perverse}

Let $X$ be a smooth complex algebraic variety. In this section, we recall the
construction of perverse t-structures on the bounded derived category $\Dbcoh(\OX)$
of coherent algebraic sheaves, following \cite{Kashiwara}.  For a (possibly
non-closed) point $x$ of the scheme $X$, we denote the residue field at the point by
$\kappa(x)$, the inclusion morphism by $i_x \colon \Spec \kappa(x) \into X$,
and the codimension of the closed subvariety $\overline{ \{x\} }$ by $\codim(x) =
\dim \shO_{X,x}$.

\begin{definition} 
A \define{supporting function} on $X$ is a function $p \colon X \to \ZZ$ from the
underlying topological space of the scheme $X$ to the set of integers, with the property
that $p(y) \geq p(x)$ whenever $y \in \overline{ \{x\} }$. 
\end{definition}
Given a supporting function, Kashiwara defines two families of subcategories
\begin{align*}
	\pDtcoh{p}{\leq k}(\OX) &= 
		\menge{F \in \Dbcoh(\OX)}%
			{\text{$\derL i_x^{\ast} F \in \Dtcoh{\leq k+p(x)} %
			\bigl( \kappa(x) \bigr)$ for all $x \in X$}}, \\
	\pDtcoh{p}{\geq k}(\OX) &= 
		\menge{F \in \Dbcoh(\OX)}%
			{\text{$\derR i_x^! F \in \Dtcoh{\geq k+p(x)} %
			\bigl( \kappa(x) \bigr)$ for all $x \in X$}}.
\end{align*}
The following fundamental result is proved in \cite{Kashiwara}*{Theorem~5.9} and, by
a different method, in \cite{AB}*{Theorem 3.10}.

\begin{theorem}[Kashiwara, Arinkin-Bezrukavnikov] \label{thm:kashiwara}
The above subcategories define a bounded t-structure on $\Dbcoh(\OX)$ if, and only
if, the supporting function has the property that 
\[	
	p(y) - p(x) \leq \codim(y) - \codim(x)
\]
for every pair of (possibly non-closed) points $x,y \in X$ with $y \in \overline{
\{x\} }$.
\end{theorem}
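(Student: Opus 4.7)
The plan is to prove both directions of the biconditional. For necessity, assume the subcategories form a bounded t-structure, and consider a specialization $y \in \overline{\{x\}}$. Setting $Z = \overline{\{x\}}$ with inclusion $\iota \colon Z \into X$ (replacing $Z$ by a resolution of singularities if necessary), the coherent sheaf $\iota_{\ast} \shO_Z$ must sit in both $\pDtcoh{p}{\leq k}(\OX)$ and $\pDtcoh{p}{\geq k}(\OX)$ for a uniquely determined $k$. Computing $\derL i_x^{\ast}(\iota_{\ast} \shO_Z) = \kappa(x)$ at the generic point pins down $k$ in terms of $p(x)$; on the other hand, the lowest nonzero cohomology of $\derR i_y^!(\iota_{\ast} \shO_Z)$ sits in degree $\codim(y) - \codim(x)$ by local duality on the regular locus of $Z$. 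Forcing this to be consistent with membership in $\pDtcoh{p}{\geq k}(\OX)$ yields the required inequality $p(y) - p(x) \leq \codim(y) - \codim(x)$.

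For sufficiency, one verifies the three axioms of a t-structure. The shift inclusions $\pDtcoh{p}{\leq k-1}(\OX) \subseteq \pDtcoh{p}{\leq k}(\OX)$, and dually for $\geq$, are immediate from the definitions. For orthogonality $\Hom(F, G) = 0$ with $F \in \pDtcoh{p}{\leq 0}(\OX)$ and $G \in \pDtcoh{p}{\geq 1}(\OX)$, one filters by the codimension strata and uses Grothendieck's local duality to express $\Hom(F, G)$ as a sum of contributions associated to each point $x \in X$. Each such contribution vanishes by combining
\[
	\derL i_x^{\ast} F \in \Dtcoh{\leq p(x)}(\kappa(x)) \quad \text{with} \quad
	\derR i_x^! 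G \in \Dtcoh{\geq 1 + p(x)}(\kappa(x)).
\]

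The main work, and the main obstacle, is constructing the truncation triangles $F_{\leq 0} \to F \to F_{\geq 1} \to F_{\leq 0}\decal{1}$ for every $F \in \Dbcoh(\OX)$. The idea is to induct on the codimension filtration $X \supseteq X_1 \supseteq X_2 \supseteq \dotsb$, where $X_k = \{x \in X : \codim(x) \geq k\}$, using the excision triangles for $\derR \Gamma_{X_k}(F)$ to reduce to a sequence of local truncations, one per stratum. At a point $x$ of codimension $k$, the local truncation is the standard $\tau_{\leq p(x)}$ applied to the stalk, and the codimension inequality is precisely what guarantees that these stalk-level truncations are compatible across specializations, and hence can be glued into a coherent complex on $X$ that lies in $\pDtcoh{p}{\leq 0}$ (with cone in $\pDtcoh{p}{\geq 1}$).

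The technical obstacle is that the strata $X_k \setminus X_{k+1}$ are not schemes of finite type but merely unions of points of fixed codimension, so the gluing a priori takes place in a larger category of ind- or pro-coherent sheaves, and one must verify that the final output is genuinely coherent and bounded. Kashiwara resolves this by exploiting the Cousin resolution to produce the truncations directly, whereas Arinkin--Bezrukavnikov carry out a transfinite induction using the general formalism of glueing t-structures along open-closed decompositions. Either route completes the proof of sufficiency, and boundedness of the resulting t-structure follows at once from the boundedness of the standard one together with finiteness of $\codim$.
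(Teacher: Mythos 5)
First, a point of comparison: the paper does not prove this statement at all --- it is imported verbatim from Kashiwara (Theorem~5.9) and Arinkin--Bezrukavnikov (Theorem~3.10), so there is no internal argument to measure yours against. Judged on its own, your sufficiency sketch follows the standard outline of those references: orthogonality is essentially formal for any supporting function, and the real content is the existence of \emph{coherent} truncations, which you correctly identify as the hard step and defer to the cited sources. That part is an acceptable summary rather than a proof, but it is not wrong.

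The genuine gap is in your necessity argument. You assert that the pushforward $\iota_{\ast}\shO_Z$, for $Z = \overline{\{x\}}$, ``must sit in both $\pDtcoh{p}{\leq k}(\OX)$ and $\pDtcoh{p}{\geq k}(\OX)$ for a uniquely determined $k$.'' No axiom of a t-structure forces a given object to lie in the heart of some shift; a general object only admits a truncation filtration, and this particular object need not lie in any shifted heart. For instance, on $X = \AA^2$ take $p = 0$ at the generic point, $p = 1$ on codimension-one points, and $p = 3$ at closed points (a supporting function violating the inequality). Since $\derL i_z^{\ast}\shO_X = \kappa(z)$ sits in degree $0$ while $\derR i_z^{!}\shO_X$ is concentrated in degree $\codim(z)$, one finds $\shO_X \in \pDtcoh{p}{\leq k}$ exactly for $k \geq 0$ and $\shO_X \in \pDtcoh{p}{\geq k}$ exactly for $k \leq -1$; the object lies in no shifted heart, and your computation produces no contradiction. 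Even when the t-structure does exist, you give no reason why $\iota_{\ast}\shO_Z$ should lie in a shifted heart --- and supplying such a reason is essentially the whole content of the necessity direction. (The aside about replacing $Z$ by a resolution also does not help: the derived pushforward from a non-finite resolution is a different object with different local invariants $\derR i_y^{!}$.) The actual necessity proof must show that when the inequality fails, some explicit object --- $\shO_X$ itself, or $i_{Z\ast}\shO_Z$ --- admits no truncation triangle with both pieces in $\Dbcoh(\OX)$, because the forced truncation has non-coherent or unbounded cohomology; as written, your argument does not establish the inequality.
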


For example, the function $p=0$ corresponds to the standard t-structure on
$\Dbcoh(\OX)$. An equivalent way of putting the condition in
\theoremref{thm:kashiwara} is that the dual function $\hat{p}(x) = \codim(x) - p(x)$
should again be a supporting function. If that is the case, one has the identities
\begin{align*}
	\pDtcoh{\hat{p}}{\leq k}(\OX) &=
		 \derR \shHom \Bigl( \pDtcoh{p}{\geq -k}(\OX), \OX \Bigr) \\
	\pDtcoh{\hat{p}}{\geq k}(\OX) &=
		 \derR \shHom \Bigl( \pDtcoh{p}{\leq -k}(\OX), \OX \Bigr),
\end{align*}
which means that the duality functor $\derR \shHom(\argbl, \OX)$ exchanges the
two perverse t-structures defined by $p$ and $\hat{p}$. 

\begin{definition}
The heart of the t-structure defined by $p$ is denoted
\[
	\pCoh{p}(\OX) = \pDtcoh{p}{\leq 0}(\OX) \cap \pDtcoh{p}{\geq 0}(\OX),
\]
and is called the abelian category of \define{$p$-perverse coherent sheaves}. 
\end{definition}

We are interested in a special case of \theoremref{thm:kashiwara}, namely the set of
objects $F \in \Dbcoh(\OX)$ with $\codim \Supp \shH^i(F) \geq 2i$ for all $i \geq 0$.
Define a function
\[
	m \colon X \to \ZZ, \quad 
		m(x) = \left\lfloor \tfrac{1}{2} \codim(x) \right\rfloor.
\]
It is easily verified that both $m$ and the dual function
\[
	\mh \colon X \to \ZZ, \quad
		\mh(x) = \left\lceil \tfrac{1}{2} \codim(x) \right\rceil
\]
are supporting functions. As a consequence of \theoremref{thm:kashiwara}, $m$ defines a
bounded t-structure on $\Dbcoh(\OX)$; objects in the heart $\mCoh(\OX)$ will be
called \define{$m$-perverse coherent sheaves}.%
\footnote{%
We use this letter because $m$ and $\mh$ are as close as one can get to ``middle
perversity''. There is of course no actual middle perversity for coherent sheaves,
because the equality $p = \hat{p}$ cannot hold unless $X$ is a point.%
}

The next lemma follows easily from \cite{Kashiwara}*{Lemma~5.5}.
\begin{lemma} \label{lem:m-structure}
The perverse t-structures defined by $m$ and $\mh$ satisfy
\begin{align*}
	\mDtcoh{\leq k}(\OX) &= \menge{F \in \Dbcoh(X)}%
		{\text{$\codim \Supp \shH^i(F) \geq 2(i-k)$ for all $i \in \ZZ$}} \\
	\mhDtcoh{\leq k}(\OX) &= \menge{F \in \Dbcoh(X)}%
		{\text{$\codim \Supp \shH^i(F) \geq 2(i-k)-1$ for all $i \in \ZZ$}}.
\end{align*}
By duality, this also describes the subcategories with $\geq k$.
\end{lemma}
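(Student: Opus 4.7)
The plan is to reduce the lemma to Kashiwara's general characterization of the perverse t-structures, \cite{Kashiwara}*{Lemma~5.5}, which states that for any supporting function $p$, a complex $F \in \Dbcoh(\OX)$ belongs to $\pDtcoh{p}{\leq k}(\OX)$ if and only if
\[
	\Supp \shH^i(F) \subseteq \menge{x \in X}{p(x) \geq i - k}
\]
for every $i \in \ZZ$. Because a supporting function is non-decreasing along specialization, the right-hand side is closed in $X$, and the containment is equivalent to requiring $p(x) \geq i-k$ at every generic point $x$ of $\Supp \shH^i(F)$. Taking the infimum of $\codim(x)$ over such points converts this pointwise condition into a lower bound on $\codim \Supp \shH^i(F)$ expressed purely in terms of $p$.

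I would then specialize $p$ to the two supporting functions at hand. For $p = m$, the elementary identity $\lfloor n/2 \rfloor \geq j \Longleftrightarrow n \geq 2j$, valid for all integers $n$ and $j$, shows that the pointwise condition $m(x) \geq i-k$ coincides with $\codim(x) \geq 2(i-k)$, which yields the first formula. For $p = \mh$, the analogous identity $\lceil n/2 \rceil \geq j \Longleftrightarrow n \geq 2j-1$ yields the second formula.

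The descriptions of $\mDtcoh{\geq k}(\OX)$ and $\mhDtcoh{\geq k}(\OX)$ asserted by the final sentence follow directly from the duality formulas displayed immediately before the lemma, combined with the identity $\hat{m} = \mh$, so no additional work is required. The main obstacle is not really technical at all: it amounts to quoting Kashiwara's lemma in precisely the form stated above and verifying the two floor/ceiling identities, after which the whole argument is formal arithmetic.
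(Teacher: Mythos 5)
Your proof is correct and follows the same route the paper intends: the paper gives no argument beyond the remark that the lemma "follows easily from \cite{Kashiwara}*{Lemma~5.5}", and your reduction to that lemma plus the two floor/ceiling identities (using that $m$ and $\mh$ are non-decreasing under specialization, so the pointwise bound need only be checked at generic points of the supports) is exactly the intended verification. The closing appeal to the duality formulas together with $\hat{m} = \mh$ likewise matches the paper's "by duality" remark.
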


Consequently, an object $F \in \Dbcoh(\OX)$ is an $m$-perverse coherent sheaf
precisely when $\codim \Supp \shH^i(F) \geq 2i$ and $\codim \Supp R^i \shHom(F, \OX)
\geq 2i-1$ for every integer $i \geq 0$. This shows one more time that the category
of $m$-perverse coherent sheaves is not preserved by the duality functor $\derR
\shHom(\argbl, \OX)$.

\begin{lemma} \label{lem:m-geq}
If $F \in \mDtcoh{\geq 0}(\OX)$ or $F \in \mhDtcoh{\geq 0}(\OX)$, then $F \in
\Dtcoh{\geq 0}(\OX)$.
\end{lemma}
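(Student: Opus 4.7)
The plan is to argue entirely from the definitions in \theoremref{thm:kashiwara}, with no computation beyond a pointwise inequality of supporting functions. Recall that by definition $F \in \pDtcoh{p}{\geq 0}(\OX)$ means $\derR i_x^! F \in \Dtcoh{\geq p(x)}(\kappa(x))$ for every (not necessarily closed) point $x \in X$, and that the zero supporting function recovers the standard t-structure on $\Dbcoh(\OX)$, as noted immediately after \theoremref{thm:kashiwara}.

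The key observation is simply that both supporting functions appearing in the statement, $m(x) = \lfloor \tfrac{1}{2}\codim(x) \rfloor$ and $\mh(x) = \lceil \tfrac{1}{2}\codim(x) \rceil$, take values in $\ZZ_{\geq 0}$. Consequently, if $F$ lies in $\mDtcoh{\geq 0}(\OX)$ (respectively $\mhDtcoh{\geq 0}(\OX)$), then for every $x \in X$ one has
\[
	\derR i_x^! F \in \Dtcoh{\geq m(x)}(\kappa(x)) \subseteq \Dtcoh{\geq 0}(\kappa(x)),
\]
and similarly with $\mh(x)$ in place of $m(x)$. Imposed at every $x$, this is precisely the condition defining the $\geq 0$ part of the standard t-structure, so $F \in \Dtcoh{\geq 0}(\OX)$ in both cases.

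I do not foresee a genuine obstacle: the whole content is the monotonicity principle that enlarging the supporting function shrinks $\pDtcoh{p}{\geq 0}$, applied to the trivial inequalities $0 \leq m \leq \mh$. In particular, no appeal to local duality, to the $\derR \shHom(\argbl, \OX)$ interchange of $m$ and $\mh$, to \lemmaref{lem:m-structure}, or to any induction on $\dim X$ is required.
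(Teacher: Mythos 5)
Your argument is correct and is exactly the paper's proof: the paper dispatches the lemma with the one-line remark that it is obvious from $\mh \geq m \geq 0$, which is precisely the monotonicity-of-supporting-functions observation you spell out via the definition of $\pDtcoh{p}{\geq 0}$ and the fact that $p=0$ gives the standard t-structure.
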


\begin{proof}
This is obvious from the fact that $\mh \geq m \geq 0$. 
\end{proof}

When it happens that both $F$ and $\derR \shHom(F, \OX)$ are $m$-perverse coherent
sheaves, $F$ has surprisingly good properties.

\begin{proposition} \label{prop:surprise}
If $F \in \mDtcoh{\leq 0}(\OX)$ satisfies $\derR \shHom(F, \OX) \in \mDtcoh{\leq
0}(\OX)$, then it has the following properties:
\begin{renumerate}
\item Both $F$ and $\derR \shHom(F, \OX)$ belong to $\mCoh(\OX)$.
\item Let $r \geq 0$ be the least integer with $\shH^r(F) \neq 0$; then every
irreducible component of $\Supp \shH^r(F)$ has codimension $2r$.
\end{renumerate}
\end{proposition}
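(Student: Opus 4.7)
For part (i), I will exploit that the duality functor $\derR\shHom(\argbl, \OX)$ exchanges the t-structures attached to $m$ and $\mh$, as recorded earlier in the section: explicitly, $\derR\shHom(\mDtcoh{\leq 0}(\OX), \OX) = \mhDtcoh{\geq 0}(\OX)$. The pointwise inequality $\mh \geq m$ yields the containment $\mhDtcoh{\geq 0}(\OX) \subseteq \mDtcoh{\geq 0}(\OX)$. The hypothesis $F \in \mDtcoh{\leq 0}(\OX)$ therefore gives $\derR\shHom(F, \OX) \in \mhDtcoh{\geq 0}(\OX) \subseteq \mDtcoh{\geq 0}(\OX)$; combined with the second hypothesis $\derR\shHom(F, \OX) \in \mDtcoh{\leq 0}(\OX)$, this places $\derR\shHom(F, \OX)$ in the heart $\mCoh(\OX)$. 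Running the same argument on $\derR\shHom(F, \OX)$, and using the biduality identity $\derR\shHom(\derR\shHom(\argbl, \OX), \OX) \simeq \id$ on $\Dbcoh(\OX)$ (valid because $X$ is smooth, so $\OX$ has finite injective dimension), yields $F \in \mCoh(\OX)$.

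For part (ii), the inequality $\codim Z \geq 2r$ for every irreducible component $Z$ of $\Supp \shH^r F$ is already given by \lemmaref{lem:m-structure}, so the task is to exclude strict inequality. Suppose, for contradiction, that some component $Z$ has $\codim Z = c > 2r$; let $x$ be its generic point and set $R = \OX_{,x}$, which is a regular local ring of dimension $c$. Then $\shH^r(F)_x$ is a nonzero finitely generated $R$-module supported at the maximal ideal, hence of finite length, so by Auslander-Buchsbaum it has projective dimension exactly $c$; in particular $\shExt^c_R(\shH^r(F)_x, R) \neq 0$, and $\shExt^i_R(N, R) = 0$ for every $i > c$ and every finitely generated $R$-module $N$.

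Because $r$ is the smallest index with nonzero cohomology, truncation supplies a distinguished triangle
\[
	\shH^r(F)_x \decal{-r} \to F_x \to \tau_{>r} F_x \to \shH^r(F)_x \decal{-r+1}.
\]
Filtering $\tau_{>r} F_x$ by its cohomology sheaves (each placed in some degree $s \geq r+1$, contributing to $\derR\Hom_R(\argbl, R)$ in degrees at most $c-s \leq c-r-1$) shows that $\derR\Hom_R(\tau_{>r} F_x, R)$ has cohomology only in degrees $\leq c-r-1$. The long exact sequence in degree $c-r$ therefore exhibits the natural map
\[
	\shH^{c-r}(\derR\Hom_R(F_x, R)) \to \shH^{c-r}(\derR\Hom_R(\shH^r(F)_x\decal{-r}, R)) = \shExt^c_R(\shH^r(F)_x, R)
\]
as a surjection onto a nonzero group. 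Consequently $x \in \Supp \shExt^{c-r}(F, \OX)$, and the hypothesis $\derR\shHom(F, \OX) \in \mDtcoh{\leq 0}(\OX)$ then forces $c = \codim(x) \geq 2(c-r)$, i.e.\ $c \leq 2r$, contradicting the choice of $Z$.

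The main obstacle is the degree bookkeeping in the last paragraph, in particular verifying the bound $\shH^{c-r+1}(\derR\Hom_R(\tau_{>r} F_x, R)) = 0$ that powers the surjectivity; the rest follows mechanically from the defining conditions of the $m$- and $\mh$-perverse t-structures recalled in \lemmaref{lem:m-structure}.
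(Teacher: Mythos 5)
Your proof is correct, and part (ii) follows a genuinely different route from the paper's. Part (i) is essentially the paper's own (one-line) argument: duality exchanges the $m$- and $\mh$-t-structures, and $\mh \geq m$ gives the containment $\mhDtcoh{\geq 0}(\OX) \subseteq \mDtcoh{\geq 0}(\OX)$ that places both complexes in the heart. For part (ii), the paper avoids localizing and computing $\Ext$ groups: it reduces to showing that if $\shH^i(F)=0$ for $i<r$ and $\codim \Supp \shH^r(F) > 2r$, then $\shH^r(F)=0$, and gets this from the estimate $\codim \Supp \shH^i(F) \geq \max(2i,2r+1) \geq i+r+1$, which by \cite{Kashiwara}*{Proposition~4.3} puts $\derR\shHom(F,\OX)$ in $\Dtcoh{\geq r+1}(\OX)$; applying the same estimate to the dual complex (which lies in $\mDtcoh{\leq 0}(\OX)$ by hypothesis and now starts in degree $\geq r+1$) and dualizing back forces $F \in \Dtcoh{\geq r+1}(\OX)$, contradicting $\shH^r(F)\neq 0$. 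You instead produce an explicit witness: Auslander--Buchsbaum gives $\shExt^c_R(\shH^r(F)_x,R)\neq 0$ at the generic point $x$ of an offending component (the finite-length claim is legitimate precisely because $x$ is the generic point of an irreducible component of the support), the amplitude bound on $\derR\Hom_R(\tau_{>r}F_x,R)$ transfers this nonvanishing to $\shExt^{c-r}(F,\OX)_x$, and the codimension bound on the dual complex yields $c \leq 2(r)$, a contradiction. Both arguments are sound; yours is self-contained where the paper leans on the cited proposition, at the cost of the projective-dimension and degree bookkeeping, which you carried out correctly.
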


\begin{proof}
The first assertion follows directly from \lemmaref{lem:m-structure}. To prove the
second assertion, note that we have $\codim \Supp \shH^r(F) \geq 2r$. After
restricting to an open neighborhood of the generic point of any given irreducible
component, it therefore suffices to show that if $\shH^i(F) = 0$ for $i < r$,
and $\codim \Supp \shH^r(F) > 2r$, then $\shH^r(F) = 0$. Under these assumptions, we
have
\[
	\codim \Supp \shH^i(F) \geq \max(2i,2r+1) \geq i+r+1, 
\]
and therefore $\derR \shHom(F, \OX) \in \Dtcoh{\geq r+1}(\OX)$ by
\cite{Kashiwara}*{Proposition~4.3}. The same argument, applied to $\derR \shHom(F,
\OX)$, now shows that $F \in \Dtcoh{\geq r+1}(\OX)$.
\end{proof}

\subsection{Description of the t-structure}
\label{subsec:codimension}

In this section, we show that the standard t-structure on $\Dbh(\Dmod_A)$
corresponds, under the Fourier-Mukai transform $\FM_A$, to the $m$-perverse t-structure.

\begin{theorem} \label{thm:t-structure}
Let $\Mmod \in \Dbh(\Dmod_A)$ be a holonomic complex on $A$. Then one has
\begin{align*}
	\Mmod \in \Dth{\leq k}(\Dmod_A) \quad &\Longleftrightarrow \quad 
		\FM_A(\Mmod) \in \mDtcoh{\leq k}(\OAsh), \\
	\Mmod \in \Dth{\geq k}(\Dmod_A) \quad &\Longleftrightarrow \quad 
		\FM_A(\Mmod) \in \mDtcoh{\geq k}(\OAsh).
\end{align*}
\end{theorem}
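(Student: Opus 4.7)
The plan is to establish the single forward implication $\Mmod \in \Dth{\leq 0}(\Dmod_A) \Rightarrow \FM_A(\Mmod) \in \mDtcoh{\leq 0}(\OAsh)$ and to deduce the remaining three implications formally. Since $\FM_A$ is an equivalence by \theoremref{thm:Laumon-Rothstein}, once we have both $\FM_A(\Dth{\leq 0}) \subseteq \mDtcoh{\leq 0}$ and $\FM_A(\Dth{\geq 0}) \subseteq \mDtcoh{\geq 0}$, the converse implications follow by applying $\FM_A$ to the standard truncation triangle $\tau^{\leq k}\Mmod \to \Mmod \to \tau^{>k}\Mmod$, which becomes the (necessarily unique) $m$-perverse truncation of $\FM_A(\Mmod)$. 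The $\geq 0$ inclusion reduces to the $\leq 0$ one via the duality formula of Theorem~\ref{thm:Laumon}\ref{en:Laumon4}: if $\Mmod \in \Dth{\geq 0}$ then $\DA\Mmod \in \Dth{\leq 0}$, so $\FM_A(\DA\Mmod) \in \mDtcoh{\leq 0} \subseteq \mhDtcoh{\leq 0}$; and since $\derR\shHom(\FM_A(\Mmod),\OAsh) \simeq \langle -1_{\Ash}\rangle^{\ast} \FM_A(\DA\Mmod)$, the duality pairing between the $m$- and $\mh$-perverse t-structures from \parref{subsec:perverse} shows this is precisely $\FM_A(\Mmod) \in \mDtcoh{\geq 0}$. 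A further truncation in $\Dbh(\Dmod_A)$ then reduces the whole theorem to the concrete claim: for every holonomic $\Dmod_A$-module $\Mmod$, one has $\codim \Supp \shH^i \FM_A(\Mmod) \geq 2i$ for all $i \geq 1$.

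The core step is a generic vanishing theorem: $\shH^i \FM_A(\Mmod)$ is a torsion sheaf for each $i \geq 1$. I would choose a good filtration $F_{\bullet}\Mmod$, form the Rees module $R_F\Mmod$, and use the extended transform $\FMt_A(R_F\Mmod) \in \Dbcoh(\shO_{E(A)})$ of \definitionref{def:FM-R}. By \propositionref{prop:compatible} this complex is $\CCst$-equivariant on the vector bundle $E(A) \to \CC$, restricts to $\FM_A(\Mmod)$ at $\lambda = 1$, and restricts at $\lambda = 0$ to the coherent complex
\[
G := \derR(p_{23})_{\ast} \bigl( p_{12}^{\ast} P \tensor p_1^{\ast} \OmA^g \tensor p_{13}^{\ast} \gr^F\!\! \Mmod \bigr)
\]
on $\Ah \times H^0(A, \OmA^1)$. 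Holonomicity forces $\dim \Ch(\Mmod) = \dim A$, so the second projection $\Ch(\Mmod) \to H^0(A, \OmA^1)$ is generically finite; for generic $\omega$, $\gr^F\!\!\Mmod|_{A \times \{\omega\}}$ has finite support and its classical Fourier-Mukai transform $\derR\Phi_P$ is a vector bundle on $\Ah$ concentrated in cohomological degree zero. Hence $\shH^i G$ for $i \geq 1$ is supported over a proper closed subset of $H^0(A, \OmA^1)$. The $\CCst$-equivariance of $\FMt_A(R_F\Mmod)$, combined with cohomology and base change along $\lambda\colon E(A) \to \CC$, then propagates this torsion statement from the fiber at $\lambda = 0$ to the fiber at $\lambda = 1$, delivering the claim for $\FM_A(\Mmod)$.

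Generic vanishing only yields $\codim \geq 1$, so I would upgrade it using \theoremref{thm:FM-linear}, which forces $\Supp \shH^i \FM_A(\Mmod)$ to be a finite union of linear subvarieties $(L_j,\nabla_j) \tensor \im \fsh_j$; each has even codimension $2(\dim A - \dim B_j)$, so any proper one has codimension at least $2$, settling the case $i = 1$. For $i \geq 2$ I induct on $\dim A$, the base case $\dim A = 0$ being trivial. Suppose some irreducible component $Z = (L,\nabla)\tensor \im\fsh$ of $\Supp \shH^i \FM_A(\Mmod)$ has codimension $2r < 2i$, with $f\colon A \to B$ surjective of relative dimension $r$. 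Replacing $\Mmod$ by $\Mmod \tensor (L,\nabla)^{-1}$ using Theorem~\ref{thm:Laumon}\ref{en:Laumon1} reduces to $Z = \im\fsh$, and Theorem~\ref{thm:Laumon}\ref{en:Laumon2} identifies $\derL(\fsh)^{\ast} \FM_A(\Mmod) \simeq \FM_B(\fp\Mmod)$. The D-module projection formula together with base change shows that the derived fiber of $\FM_B(\fp\Mmod)$ at a generic $y \in \Bsh$ computes $\derH^{\bullet}(A, \DR_A(\Mmod \tensor \fsh(y)))$; its $i$-th cohomology is nonzero since $\fsh(y) \in Z$ is generic in the support of $\shH^i \FM_A(\Mmod)$, and vanishing of Tor at the generic point of the smooth variety $\Bsh$ upgrades this to $\Supp \shH^i \FM_B(\fp\Mmod) = \Bsh$. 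On the other hand, smoothness of $f$ of relative dimension $r$ forces $\fp\Mmod$ into cohomological degrees $-r, \ldots, r$, so $\fp\Mmod \in \Dth{\leq r}(\Dmod_B)$; the inductive hypothesis applied to $B$ then gives $\FM_B(\fp\Mmod) \in \mDtcoh{\leq r}(\OBsh)$ and in particular $\codim \Supp \shH^i \FM_B(\fp\Mmod) \geq 2(i-r) > 0$, a contradiction. Thus $r \geq i$, completing the induction.

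The main obstacle is the generic vanishing in the second paragraph: converting the geometric input $\dim \Ch(\Mmod) = \dim A$ at $\lambda = 0$ into a torsion statement for the fiber at $\lambda = 1$ requires careful cohomology-and-base-change bookkeeping on the non-compact vector bundle $E(A) \to \CC$, together with the $\CCst$-action used to move between fibers.
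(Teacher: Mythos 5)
Your argument is correct, and its substantive core — the generic vanishing statement obtained from the Rees module and the degeneration to $\lambda^{-1}(0)$, followed by induction on $\dim A$ using the linearity of supports and the identity $\derL(\fsh)^{\ast}\circ\FM_A\simeq\FM_B\circ\fp$, with $\fp\Mmod\in\Dth{\leq r}(\Dmod_B)$ and the contradiction $2(i-r)>0$ — is exactly the paper's proof (\propositionref{prop:KW} and \lemmaref{lem:t-structure1}). Where you genuinely diverge is in the converse implications. The paper proves $\FM_A(\Mmod)\in\Dtcoh{\leq k}(\OAsh)\Rightarrow\Mmod\in\Dth{\leq k}(\Dmod_A)$ directly (\lemmaref{lem:t-structure2}), by running Laumon's inversion formula, using that $\pi\colon\Ash\to\Ah$ is affine to push the computation down to $A\times\Ah$ and estimating the codimension of $\Supp\pil\shH^{\ell}\FM_A(\Mmod)$. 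You instead observe that once \emph{both} forward inclusions are known (the $\geq$ one deduced from the $\leq$ one via the duality formula of \theoremref{thm:Laumon}, as in the paper), the equivalence of categories sends the standard truncation triangle of $\Mmod$ to the (unique) $m$-perverse truncation triangle of $\FM_A(\Mmod)$, and conservativity finishes the job. This is a clean, purely formal replacement for \lemmaref{lem:t-structure2}; what the paper's direct argument buys in exchange is the marginally stronger fact that membership in the \emph{standard} $\Dtcoh{\leq k}(\OAsh)$, not just in $\mDtcoh{\leq k}(\OAsh)$, already forces $\Mmod\in\Dth{\leq k}(\Dmod_A)$. One point you share with the paper and should make explicit in a final write-up: in the inductive step, to guarantee that $\shH^i$ of the derived restriction to $Z$ is nonzero at the generic point, take $i$ to be the \emph{largest} index for which the generic point of $Z$ lies in $\Supp\shH^i\FM_A(\Mmod)$, so that Nakayama applies without interference from higher Tor terms; the desired conclusion $\codim Z\geq 2i$ only becomes stronger under this replacement.
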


The first step of the proof consists in the following ``generic vanishing theorem''
for holonomic $\Dmod_A$-modules. In the regular case, this result
is due to Kr\"amer and Weissauer \cite{KW}*{Theorem~2}, whose argument relies on the
(difficult) recent proof of Kashiwara's conjecture for semisimple perverse sheaves. By
contrast, our argument is completely elementary.

\begin{proposition} \label{prop:KW}
Let $\Mmod$ be a holonomic $\Dmod_A$-module. Then for every $i > 0$, the
support of the coherent sheaf $\shH^i \FM_A(\Mmod)$ is a proper subset of $\Ash$.
\end{proposition}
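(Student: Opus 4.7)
The plan is to use the extended Fourier–Mukai transform on $E(A)$ to interpolate between $\FM_A(\Mmod)$ (at $\lambda=1$) and a cotangent-bundle computation (at $\lambda=0$). Pick a good filtration $F_{\bullet} \Mmod$, form the Rees module $R_F \Mmod$, and set $G = \FMt_A(R_F \Mmod) \in \Dbcoh(\shO_{E(A)})$. By \propositionref{prop:compatible}, $G$ is $\CC^{\ast}$-equivariant for the natural action on the vector bundle $E(A) \to \Ah$, and its derived restrictions to the two special fibers of $\lambda \colon E(A) \to \CC$ satisfy
\[
\derL i_1^{\ast} G \simeq \FM_A(\Mmod), \qquad
\derL i_0^{\ast} G \simeq H,
\]
where $H = \derR (p_{23})_{\ast} \bigl( p_{12}^{\ast} P \tensor p_1^{\ast} \OmA^g \tensor p_{13}^{\ast} \gr^F \!\! \Mmod \bigr)$.

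First I would handle the Higgs side. Holonomicity forces $\dim \Ch(\Mmod) = g$, so the projection $\Ch(\Mmod) \to H^0(A, \OmA^1)$ is generically finite; consequently there is a dense open $U \subseteq \Ah \times H^0(A, \OmA^1)$ over which $p_{23}$ restricted to $p_{13}^{-1}(\Ch(\Mmod))$ is a finite morphism. Since the sheaf $p_{12}^{\ast} P \tensor p_1^{\ast} \OmA^g \tensor p_{13}^{\ast} \gr^F \!\! \Mmod$ is supported inside $p_{13}^{-1}(\Ch(\Mmod))$, its higher direct images under $p_{23}$ vanish on $U$, so $\Supp \shH^i H \subseteq \lambda^{-1}(0) \setminus U$ is a proper closed subset of $\lambda^{-1}(0)$ for every $i > 0$.

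Next I would transfer this vanishing to $\FM_A(\Mmod)$. The Koszul resolution $0 \to \shO_{E(A)} \xrightarrow{\lambda-\mu} \shO_{E(A)} \to \shO_{\lambda^{-1}(\mu)} \to 0$ produces, for each $j \in \ZZ$ and each $\mu \in \CC$, a short exact sequence
\[
0 \to \shH^j G / (\lambda-\mu)\shH^j G
\to \shH^j(\derL i_\mu^{\ast} G)
\to (\lambda-\mu)\text{-tors}\bigl(\shH^{j+1} G\bigr) \to 0.
\]
Suppose for contradiction that $\Supp \shH^{i_0} \FM_A(\Mmod) = \Ash$ for some $i_0 > 0$. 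Taking $\mu=1$ shows that the irreducible divisor $\lambda^{-1}(1) = \Ash$ is contained in $\Supp \shH^{i_0} G \cup \Supp \shH^{i_0+1} G$, so one of these closed subsets, call it $Z_j = \Supp \shH^j G$ with $j \in \{i_0,i_0+1\}$, already contains $\lambda^{-1}(1)$. Being $\CC^{\ast}$-invariant, $Z_j$ then contains $\lambda^{-1}(\CC^{\ast})$, and by density $Z_j = E(A)$. Applying the exact sequence at $\mu=0$, the left-hand term $\shH^j G / \lambda \shH^j G$ has support $Z_j \cap \lambda^{-1}(0) = \lambda^{-1}(0)$ and injects into $\shH^j H$, forcing $\Supp \shH^j H = \lambda^{-1}(0)$ and contradicting the Higgs-side vanishing since $j > 0$.

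I expect the main obstacle to be the bookkeeping in the interpolation step: one must allow for the possible contribution of the $(\lambda-\mu)$-torsion of $\shH^{j+1} G$, which is what forces the argument to track the two consecutive indices $j \in \{i_0, i_0+1\}$ rather than just $j = i_0$. The Higgs-side vanishing itself is painless once the generic finiteness of $\Ch(\Mmod) \to H^0(A, \OmA^1)$ and \propositionref{prop:compatible} are in hand.
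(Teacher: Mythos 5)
Your proposal is correct and follows essentially the same route as the paper: interpolate via the extended Fourier--Mukai transform $\FMt_A(R_F\Mmod)$ on $E(A)$, use $\CC^{\ast}$-equivariance and \propositionref{prop:compatible} to reduce to the fiber $\lambda^{-1}(0)$, and conclude there from the fact that $\Ch(\Mmod)$ has dimension $g$, so that $p_{23}$ is finite over a dense open set of $\Ah \times H^0(A,\OmA^1)$. The only difference is that you spell out the specialization step (the Koszul/torsion exact sequences and the bookkeeping over $j \in \{i_0, i_0+1\}$) which the paper compresses into ``it is therefore sufficient''; this is a correct and welcome elaboration, not a different argument.
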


\begin{proof}
Let $F_{\bullet} \Mmod$ be a good filtration by $\OA$-coherent subsheaves; this
exists by \cite{HTT}*{Theorem~2.3.1}. As in \subsecref{subsec:compatible}, we consider the
associated coherent $\Rmod_A$-module $R_F \Mmod$ defined by the Rees construction,
and its Fourier-Mukai transform
\[
	\FMt_A(R_F \Mmod) \in \Dbcoh \bigl( \shO_{E(A)} \bigr).
\]
By \propositionref{prop:compatible}, $\FMt_A(R_F \Mmod)$ is equivariant for the
$\CC^{\ast}$-action on $E(A)$, and for any $z \neq 0$, its restriction to
$\lambda^{-1}(z) \simeq \Ash$ is isomorphic to $\FM_A(\Mmod)$. It is therefore
sufficient to prove that the restriction of $\FMt_A(R_F \Mmod)$ to $\lambda^{-1}(0) =
A \times H^0(A, \Omega_A^1)$ has the asserted property. By
\propositionref{prop:compatible}, this restriction is isomorphic to 
\begin{equation} \label{eq:formula}
	\derR (p_{23})_{\ast} \Bigl( p_{12}^{\ast} P \tensor 
		p_{13}^{\ast} \gr^F \!\! \Mmod \tensor p_1^{\ast} \Omega_A^g \Bigr),
\end{equation}
where the notation is as in \eqref{eq:diag-cotangent}. Now $\Mmod$ is holonomic, and
so each irreducible component of $\Supp(\gr^F \!\! \Mmod)$ has dimension $g$. The
support of $p_{13}^{\ast} \gr^F \!\! \Mmod$ therefore has the same dimension
as $\Ah \times H^0(A, \Omega_A^1)$, which implies that the support of the higher
direct image sheaves in \eqref{eq:formula} must be a proper subset of $\Ah \times
H^0(A, \Omega_A^1)$.
\end{proof}

Together with the structure theory for cohomology support loci and basic properties
of the Fourier-Mukai transform, this result now allows us to prove the first
equivalence asserted in \theoremref{thm:t-structure}.

\begin{lemma} \label{lem:t-structure1}
For any $\Mmod \in \Dth{\leq k}(\Dmod_A)$, one has $\FM_A(\Mmod) \in \mDtcoh{\leq k}(\OAsh)$.
\end{lemma}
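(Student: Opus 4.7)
The plan is a double induction: outer induction on $\dim A$, and, within the inductive step, downward induction on $i$ to obtain the codimension bound for $\shH^i \FM_A(N)$. Two preliminary reductions simplify the setup. First, shifting reduces to $k = 0$. Second, the truncation triangle $\tau_{<j} \Mmod \to \tau_{\leq j} \Mmod \to \shH^j \Mmod \decal{-j} \to$ together with exactness of $\FM_A$ bounds $\Supp \shH^i \FM_A(\Mmod)$ by the union of the supports on the two outer terms; since $j \leq 0$ gives $2(i-j) \geq 2i$, the codimension bound for a shifted single cohomology module suffices. It therefore suffices to prove that for every holonomic $\Dmod_A$-module $N$ and every $i \geq 0$, $\codim \Supp \shH^i \FM_A(N) \geq 2i$.

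Now I would induct on $\dim A$ (the case $\dim A = 0$ being trivial) and, for the codimension bound at a given $i \geq 1$, downward induct on $i$, so that one may assume the analogous bound already holds for all $i' > i$. Let $Z$ be an irreducible component of $\Supp \shH^i \FM_A(N)$. By \theoremref{thm:FM-linear} one can write $Z = (L, \nabla) \tensor \fsh(\Bsh)$ for some surjection $f \colon A \to B$ with connected fibers, and after replacing $N$ by $N \tensor_{\OA}(L^{-1},-\nabla)$ via \theoremref{thm:Laumon}\ref{en:Laumon1} we may assume $Z = \fsh(\Bsh)$, of codimension $2r$ where $r = \dim A - \dim B$. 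If $r = 0$ then $f$ is an isomorphism, $Z = \Ash$, and \propositionref{prop:KW} forces $i = 0$; so we may suppose $r \geq 1$, giving $\dim B < \dim A$.

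The comparison with $B$ uses $\derL (\fsh)^{\ast} \FM_A(N) = \FM_B(\fp N)$ from \theoremref{thm:Laumon}\ref{en:Laumon2}. The formula $\fp = \derR \fl \circ \DR_{A/B}$, combined with the standard degree bound for proper pushforward of coherent sheaves along a morphism of relative dimension $r$, yields $\fp N \in \Dth{\leq r}(\Dmod_B)$, so the outer induction applied to $B$ gives $\FM_B(\fp N) \in \mDtcoh{\leq r}(\OBsh)$, i.e.\ $\codim \Supp \shH^j \FM_B(\fp N) \geq 2(j - r)$ for all $j$. The next step is to show $\Bsh \subseteq \Supp \shH^i \FM_B(\fp N)$ by localizing at the generic point $\eta'$ of $\fsh(\Bsh) \subseteq \Ash$: by the downward induction hypothesis each component of $\Supp \shH^{i'} \FM_A(N)$ with $i' > i$ has codimension $\geq 2i' > 2r$ and therefore cannot contain $\eta'$, so the stalk $\FM_A(N)_{\eta'}$ is concentrated in cohomological degrees $\leq i$ with $(\shH^i \FM_A(N))_{\eta'} \neq 0$. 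The derived restriction to the residue field $\kappa(\eta')$, which computes the stalk of $\FM_B(\fp N)$ at the generic point of $\Bsh$, has nonzero top cohomology in degree $i$ by Nakayama's lemma; hence $\Bsh \subseteq \Supp \shH^i \FM_B(\fp N)$. The induction on $B$ then forces $0 \geq 2(i-r)$, so $r \geq i$ and $\codim Z = 2r \geq 2i$, completing the argument.

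The main obstacle is the stalk computation at the generic point: one needs the higher cohomology sheaves $\shH^{i'} \FM_A(N)$ with $i' > i$ to vanish at $\eta'$ before derived pullback collapses $\FM_A(N)_{\eta'}$ onto its top term and Nakayama detects it after tensoring with the residue field. The downward induction on $i$ is exactly what makes this vanishing available, which is why the two inductions must be intertwined. A secondary technical point is the check that $\fp$ shifts the standard $t$-structure by at most the relative dimension $r$, which follows directly from the definition $\fp = \derR \fl \circ \DR_{A/B}$ and the usual bounds on higher direct images.
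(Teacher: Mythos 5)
Your proof is correct and follows essentially the same route as the paper's: induction on $\dim A$, reduction to $k=0$, identification of each irreducible component $Z$ of $\Supp \shH^i \FM_A(N)$ as a linear subvariety $\im \fsh$ via \theoremref{thm:FM-linear} together with the properness statement of \propositionref{prop:KW}, restriction to $Z$ via Laumon's compatibility $\derL(\fsh)^{\ast} \circ \FM_A = \FM_B \circ \fp$, the amplitude bound $\fp N \in \Dth{\leq r}(\Dmod_B)$, and the inductive hypothesis applied to $B$. Your generic-point/Nakayama argument is a welcome elaboration of the step the paper states tersely (``it follows that $\ell \leq r$''). One small point to fix: the asserted inequality $2i' > 2r$ for all $i' > i$ presupposes $i \geq r$, which you have not established at that stage; this is harmless, since if $r > i$ the desired bound $\codim Z = 2r \geq 2i$ holds trivially, but the case split should be stated (or, alternatively, replace the downward induction by taking $j$ maximal with $(\shH^j \FM_A(N))_{\eta'} \neq 0$ and concluding $i \leq j \leq r$ directly).
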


\begin{proof}
The proof is by induction on $\dim A$, the statement being obviously true when $A$ is
a point. Since $\FM_A$ is triangulated, it suffices to prove the statement for $k=0$.
According to \lemmaref{lem:m-structure}, what we then need to show is the 
following: for any holonomic complex $\Mmod \in \Dth{\leq 0}(\Dmod_A)$ concentrated in
nonpositive degrees, the Fourier-Mukai transform $\FM_A(\Mmod)$ satisfies, for every
$\ell \geq 1$, the inequality
\[
	\codim \Supp \shH^{\ell} \FM_A(\Mmod) \geq 2 \ell.
\]
Let $Z$ be an any irreducible component of $\Supp \shH^{\ell} \FM_A(\Mmod)$,
for some $\ell \geq 1$. By \theoremref{thm:FM-linear}, $Z$ is a linear subvariety of
$\Ash$, hence of the form $Z = t_{(L, \nabla)}(\im \fsh)$ for a
surjective morphism $f \colon A \to B$ and a suitable point $(L, \nabla) \in \Ash$.
Furthermore, \propositionref{prop:KW} shows that $\codim Z > 0$, and therefore $\dim B
< \dim A$. Setting $r = \dim A - \dim B > 0$, we thus have $\codim Z = 2r$.

Using the properties of the Fourier-Mukai transform listed in
\theoremref{thm:Laumon}, we find that the pullback of $\FM_A(\Mmod)$ to the
subvariety $Z$ is isomorphic to
\[
	\derL (\fsh)^{\ast} \derL t_{(L, \nabla)}^{\ast} \FM_A(\Mmod) \\
		\simeq \FM_B \Bigl( \fp \bigl( \Mmod \tensor_{\OA} (L, \nabla) \bigr) \Bigr)
		\in \Dbcoh(\OBsh).
\]
From the definition of the direct image functor $\fp$, it is clear that $\fp \bigl(
\Mmod \tensor_{\OA} (L, \nabla) \bigr)$ belongs to the subcategory $\Dth{\leq
r}(\Dmod_B)$. The inductive assumption now allows us to
conclude that the restriction of $\FM_A(\Mmod)$ to $Z$ lies in the subcategory $\mDtcoh{\leq
r}(\shO_Z)$. But $Z$ is an irreducible component of $\Supp \shH^{\ell}
\FM_A(\Mmod)$; it follows that $\ell \leq r$, and consequently $\codim Z \geq 2\ell$,
as asserted.
\end{proof}

\begin{lemma} \label{lem:t-structure2}
Let $\Mmod \in \Dbh(\Dmod_A)$ be a holonomic complex. 
If its Fourier-Mukai transform satisfies $\FM_A(\Mmod) \in \Dtcoh{\leq k}(\OAsh)$,
then $\Mmod \in \Dth{\leq k}(\Dmod_A)$.
\end{lemma}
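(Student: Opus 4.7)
The plan is to let $\ell$ denote the largest integer with $\shH^\ell \Mmod \neq 0$, and to establish $\ell \leq k$ by producing a nonzero cohomology sheaf of $\FM_A(\Mmod)$ in degree $\ell + r$ for some $r \geq 0$. The hypothesis $\FM_A(\Mmod) \in \Dtcoh{\leq k}(\OAsh)$ then forces $\ell + r \leq k$, and since $r \geq 0$ we conclude $\ell \leq k$.

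First I would analyze $\FM_A(\Nmod)$ for the single holonomic $\Dmod_A$-module $\Nmod = \shH^\ell \Mmod$. Since $\DA \Nmod$ is again a holonomic module in degree $0$, applying \lemmaref{lem:t-structure1} to both $\Nmod$ and $\DA \Nmod$ gives $\FM_A(\Nmod), \FM_A(\DA \Nmod) \in \mDtcoh{\leq 0}(\OAsh)$. Laumon's duality identity \theoremref{thm:Laumon}\ref{en:Laumon4} identifies $\FM_A(\DA \Nmod)$ with $\langle -1_{\Ash}\rangle^{\ast} \derR \shHom(\FM_A(\Nmod), \OAsh)$, so the hypotheses of \propositionref{prop:surprise} are satisfied for $F = \FM_A(\Nmod)$. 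That proposition then yields $\FM_A(\Nmod) \in \mCoh(\OAsh) \subseteq \Dtcoh{\geq 0}(\OAsh)$, and, crucially, that if $r$ denotes the smallest integer with $\shH^r \FM_A(\Nmod) \neq 0$, then $r \geq 0$ and every irreducible component of $\Supp \shH^r \FM_A(\Nmod)$ has codimension exactly $2r$.

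Next I would exploit the truncation triangle $\tau^{<\ell}\Mmod \to \Mmod \to \Nmod \decal{-\ell} \to$ in $\Dbh(\Dmod_A)$. By \lemmaref{lem:t-structure1} applied to $\tau^{<\ell}\Mmod \in \Dth{\leq \ell - 1}(\Dmod_A)$, we have $\FM_A(\tau^{<\ell}\Mmod) \in \mDtcoh{\leq \ell - 1}(\OAsh)$, so $\codim \Supp \shH^j \FM_A(\tau^{<\ell}\Mmod) \geq 2(j - \ell + 1)$ for every $j$. Choose an irreducible component $Z$ of $\Supp \shH^r \FM_A(\Nmod)$, so that $\codim Z = 2r$. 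In the long exact sequence of cohomology coming from the triangle, both $\shH^{\ell + r}\FM_A(\tau^{<\ell}\Mmod)$ and $\shH^{\ell + r + 1}\FM_A(\tau^{<\ell}\Mmod)$ vanish at the generic point of $Z$, since their supports have codimension at least $2r + 2$ and $2r + 4$ respectively, both strictly larger than $2r$; combined with $\shH^{r-1}\FM_A(\Nmod) = 0$ from the choice of $r$, this produces an isomorphism $\shH^{\ell + r}\FM_A(\Mmod) \simeq \shH^r \FM_A(\Nmod)$ at the generic point of $Z$. In particular $\shH^{\ell + r}\FM_A(\Mmod)$ is a nonzero coherent sheaf on $\Ash$, and the conclusion follows as announced.

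The main obstacle is pinning down the \emph{exact} codimension $2r$ of the smallest nonzero cohomology sheaf of $\FM_A(\Nmod)$: with only the inequality $\codim \geq 2r$ that \lemmaref{lem:t-structure1} provides directly, one cannot guarantee that the neighbouring $\FM_A(\tau^{<\ell}\Mmod)$ terms vanish at a generic point of $Z$, and the long-exact-sequence argument collapses. This exact equality is supplied by \propositionref{prop:surprise}, which is why it is essential to pass through \theoremref{thm:Laumon}\ref{en:Laumon4} and to apply \lemmaref{lem:t-structure1} to the dual module $\DA \Nmod$ as well.
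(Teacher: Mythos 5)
Your proof is correct, but it takes a genuinely different route from the paper's. The paper argues by inversion: it recovers $\Mmod$ as $\langle -1_A \rangle^{\ast} \derR (p_1)_{\ast} \bigl( \Psh \tensor p_2^{\ast} \FM_A(\Mmod) \bigr) \decal{g}$, pushes everything down to the $g$-dimensional variety $\Ah$ along the affine morphism $\pi$, and then bounds the cohomological amplitude of $\derR (p_1)_{\ast}$ using the dimensions of the supports of the cohomology sheaves; this yields $\shH^i \Mmod = 0$ for $i > k$ directly, without ever invoking the forward implication. You instead run a detection argument entirely on the Fourier side: writing $\ell$ for the top nonvanishing degree of $\Mmod$ and $\Nmod = \shH^{\ell} \Mmod$, you apply \lemmaref{lem:t-structure1} to $\Nmod$, to $\DA \Nmod$ (transported through Laumon's duality isomorphism in \theoremref{thm:Laumon}\ref{en:Laumon4}), and to $\tau^{<\ell}\Mmod$, and then use \propositionref{prop:surprise} to produce an irreducible component $Z$ of $\Supp \shH^r \FM_A(\Nmod)$ of codimension \emph{exactly} $2r$, at whose generic point $\shH^{\ell+r} \FM_A(\Mmod)$ must survive because the neighbouring terms of the long exact sequence are supported in codimension $\geq 2r+2$. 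All the ingredients you use are available without circularity (\lemmaref{lem:t-structure1} and \propositionref{prop:surprise} are established before this lemma and do not depend on it), and each step checks out, including the point you correctly flag as essential — that the inequality $\codim \geq 2r$ alone would not suffice and the exact equality from \propositionref{prop:surprise} is what makes the generic-point comparison work. The trade-off: the paper's proof is more self-contained and elementary, while yours leans on the already-proved half of the equivalence but isolates a purity phenomenon (the lowest nonzero cohomology sheaf of the transform sits in codimension exactly twice its degree) that the paper itself only exploits later, in the proof of \theoremref{thm:simple-support}.
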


\begin{proof}
It again suffices to prove this for $k=0$. By \cite{Laumon}*{Th\'eor\`eme~3.2.1}, we 
can recover $\Mmod$ -- up to canonical isomorphism -- from its Fourier-Mukai transform as
\[
	\Mmod = \langle -1_A \rangle^{\ast} \derR (p_1)_{\ast} \Bigl(
		\Psh \tensor_{\shO_{A \times \Ash}} p_2^{\ast} \FM_A(\Mmod) \Bigr) \decal{g},
\]
where $p_1 \colon A \times \Ash \to A$ and $p_2 \colon A \times \Ash \to \Ash$ are
the two projections. If we forget about the $\Dmod_A$-module structure and only
consider $\Mmod$ as a complex of quasi-coherent sheaves of $\OA$-modules, we can use
the fact that $\pi \colon \Ash \to A$ is affine to obtain
\[
	\Mmod = \langle -1_A \rangle^{\ast} \derR (p_1)_{\ast} \Bigl(
		 P \tensor_{\shO_{A \times \Ah}} p_2^{\ast} \, \pil \FM_A(\Mmod) \Bigr) \decal{g},
\]
where now $p_1 \colon A \times \Ah \to A$ and $p_2 \colon A \times \Ah \to \Ah$.
By virtue of \theoremref{thm:FM-linear}, every irreducible
component of $\Supp \shH^{\ell} \FM_A(\Mmod)$ is contained in a linear subvariety of
codimension at least $2\ell$; consequently, every irreducible component of $\Supp \pil
\shH^{\ell} \FM_A(\Mmod)$ still has codimension at least $\ell$. From this, it is
easy to see that $\shH^i \Mmod = 0$ for $i > 0$, and hence that $\Mmod \in \Dth{\leq
0}(\Dmod_A)$.
\end{proof}

\begin{proof}[Proof of \theoremref{thm:t-structure}]
The first equivalence is proved in \lemmaref{lem:t-structure1} and
Lemma \ref{lem:t-structure2} above. The second equivalence follows from this by duality,
using the compatibility of the Fourier-Mukai transform with the duality functors for
$\Dmod_A$-modules and $\shO_{\Ash}$-modules (see \theoremref{thm:Laumon}).
\end{proof}

\subsection{Stability under truncation}
\label{subsec:truncation}

If the proposal in \subsecref{subsec:intro-conjecture} has merit, and Fourier-Mukai
transforms of holonomic complexes are indeed ``hyperk\"ahler constructible
complexes'', then a complex of coherent sheaves on $\Ash$ should belong to the
subcategory
\[
	\FM_A \bigl( \Dbh(\Dmod_A) \bigr) \subseteq \Dbcoh(\OAsh)
\]
if and only if all of its cohomology sheaves do. This is because ``constructibility''
of a complex should be defined in terms of the cohomology sheaves. 

In this section, we prove that this is indeed the case.  The result is that the
subcategory $\FM_A \bigl( \Dbh(\Dmod_A) \bigr)$ is closed under the truncation
functors $\tau_{\leq n}$ and $\tau_{\geq n}$ for the \emph{standard} t-structure on
$\Dbcoh(\OAsh)$. This is of course equivalent to the statement about cohomology
sheaves, but more convenient for doing induction.

\begin{theorem} \label{thm:truncation}
Let $F = \FM_A(\Mmod)$ for some $\Mmod \in \Dbh(\Dmod_A)$. Then for every $n \in
\ZZ$, the truncations $\tau_{\leq n} F$ and $\tau_{\geq n} F$ are again Fourier-Mukai
transforms of holonomic complexes.
\end{theorem}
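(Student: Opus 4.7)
The strategy is to show that each standard cohomology sheaf $\shH^i F$ of $F = \FM_A(\Mmod)$ already lies in $\FM_A \bigl( \Dbh(\Dmod_A) \bigr)$; the truncation statement then follows by iterating the triangle $\tau_{\leq n-1} F \to \tau_{\leq n} F \to \shH^n F[-n] \to$ and using that $\FM_A \bigl( \Dbh \bigr)$ is a triangulated subcategory (hence closed under cones). The dual case for $\tau_{\geq n} F$ follows either from the complementary truncation triangle, or alternatively from closure of $\FM_A \bigl( \Dbh \bigr)$ under $\derR \shHom(\argbl, \OAsh)$, a consequence of Theorem~\ref{thm:Laumon}\ref{en:Laumon4} and preservation of holonomicity by $\DA$.

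To establish $\shH^i F \in \FM_A \bigl( \Dbh \bigr)$, I argue by induction on $g = \dim A$; the base case $g = 0$ is trivial. For the inductive step, Theorem~\ref{thm:FM-linear} shows that $\Supp \shH^i F$ is a finite union of linear subvarieties $Z_\alpha = t_{(L_\alpha, \nabla_\alpha)} \fsh_\alpha(\Bsh_\alpha)$, and Theorem~\ref{thm:t-structure} ensures that components of the form $Z_\alpha = \Ash$ can only appear for $i$ in the $m$-perverse range of $F$. When all $Z_\alpha$ are proper (so $\dim B_\alpha < g$), I filter $\shH^i F$ by powers of the ideal sheaves $\shI_{Z_\alpha}$ so that each graded piece is a scheme-theoretic pushforward of a coherent sheaf on $\Bsh_\alpha$. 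Theorem~\ref{thm:Laumon}\ref{en:Laumon1},\ref{en:Laumon2} identifies the derived pullback of $F$ to each $Z_\alpha$ with $\FM_{B_\alpha}$ of a holonomic complex on $B_\alpha$, so the inductive hypothesis applied to $B_\alpha$ places each graded piece in $\FM_{B_\alpha} \bigl( \Dbh \bigr)$, and the direct image formula $\derR (\fsh_\alpha)_{\ast} \circ \FM_{B_\alpha} = \FM_A \circ \fsi_\alpha$ lifts these back to $\FM_A \bigl( \Dbh \bigr)$; closure under extensions then gives $\shH^i F \in \FM_A \bigl( \Dbh \bigr)$.

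The full-support cases are handled by first reducing, via Theorem~\ref{thm:t-structure} and the $m$-perverse truncation triangles, to the case where $\Mmod$ is a single holonomic module and $F \in \mCoh(\OAsh)$; then Corollary~\ref{cor:FM-summary} confines any full-support cohomology to $\shH^0 F$, which is handled by the triangle $\shH^0 F \to F \to \tau_{\geq 1} F \to$, since both $F$ and (by the proper-support argument above) $\tau_{\geq 1} F$ already lie in $\FM_A \bigl( \Dbh \bigr)$.

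The main obstacle is the construction of the filtration in the proper-support case: the set-theoretic inclusion $\Supp \shH^i F \subseteq \bigcup_\alpha Z_\alpha$ does not directly decompose $\shH^i F$ into pushforwards, since nilpotent neighborhoods of each $Z_\alpha$ and possible intersections of distinct components must be accommodated. Producing a finite filtration by powers of the $\shI_{Z_\alpha}$ whose successive quotients are genuine pushforwards from single smaller abelian varieties, each matching a coherent sheaf in $\FM_{B_\alpha} \bigl( \Dbh \bigr)$ via the inductive hypothesis, is the core technical step; an additional subtle point is the compatibility of the $m$-perverse reduction with the long exact sequences of standard cohomology, which amounts to a closure-under-subquotients question for $\FM_A \bigl( \Dbh \bigr) \cap \Coh(\OAsh)$.
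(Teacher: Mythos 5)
Your overall reduction is sound as far as it goes (induction on $\dim A$, closure of $\FM_A\bigl(\Dbh(\Dmod_A)\bigr)$ under cones and under $\derR\shHom(\argbl,\OAsh)$, restriction to linear subvarieties via \theoremref{thm:Laumon}, and confining full-support cohomology to $\shH^0 F$ via the generic vanishing results), but the step you yourself flag as the core technical point is a genuine gap, and the mechanism you propose for it does not work. Filtering $\shH^i F$ by powers of $\shI_{Z_\alpha}$ does produce graded pieces of the form $(\fsh_\alpha)_{\ast}\shG_j$ for coherent sheaves $\shG_j$ on $\Bsh_\alpha$, but you then need each $\shG_j$ to lie in $\FM_{B_\alpha}\bigl(\Dbh(\Dmod_{B_\alpha})\bigr)$, and the inductive hypothesis gives you no handle on this: it only says that truncations of $\FM_{B_\alpha}$ of holonomic complexes are again of that form, whereas the $\shG_j$ are subquotients of $\shH^i F$ produced by pure sheaf theory, with no identification as cohomology sheaves of $\derL(\fsh_\alpha)^{\ast} F \simeq \FM_{B_\alpha}(\fp_\alpha(\cdots))$ (the two are related only through a spectral sequence mixing all the $\shH^q F$). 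Since $\FM_A\bigl(\Dbh(\Dmod_A)\bigr)\cap\Coh(\OAsh)$ is not known to be closed under subquotients, there is no way to recognize the $\shG_j$, and the argument stalls exactly where the real work begins.

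The paper closes this gap by working on the $\Dmod$-module side rather than decomposing $\shH^i F$ as a sheaf. Reducing to $\Mmod\in\Dth{\leq 0}(\Dmod_A)$, one picks a maximal-dimensional component $Z=\im\fsh$ of $\bigcup_{i\geq 1}\Supp\shH^i F$, the largest $k$ with $Z\subseteq\Supp\shH^k F$, sets $\Nmod=\tau_{\geq k}\fp\Mmod$, and uses the adjunction morphism $\Mmod\to\fsi\fp\Mmod\to\fsi\Nmod$ to form a triangle $\Mmod'\to\Mmod\to\fsi\Nmod$. Because $\FM_A(\fsi\Nmod)\simeq\derR\il\FM_B(\Nmod)$ agrees with $\shH^k F$ at the generic point of $Z$ in degree $k$, a Nakayama-type multiplicity lemma shows that passing to $\Mmod''=\tau_{\leq 0}\Mmod'$ strictly decreases the multiplicity of $\shH^k F$ along $Z$ while leaving $\tau_{\leq 0}F$ unchanged; finitely many iterations kill all the torsion cohomology. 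The essential idea you are missing is precisely this use of the adjunction morphism to manufacture, inside $\Dbh(\Dmod_A)$ itself, the correction terms whose Fourier--Mukai transforms account for the higher cohomology sheaves; no a priori decomposition of $\shH^i F$ is needed. If you want to pursue your route, you would have to replace the ideal-power filtration by some such construction linking the graded pieces back to holonomic complexes on $B_\alpha$.
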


\begin{proof}
It suffices to show the assertion for $\tau_{\leq 0} F$. Since $\FM_A(\tau_{\geq 1}
\Mmod) \in \Dtcoh{\geq 1}(\OAsh)$ by \lemmaref{lem:m-geq}, we may replace $\Mmod$ by
$\tau_{\leq 0} \Mmod$ and assume without loss of generality that $\Mmod \in \Dth{\leq
0}(\Dmod_A)$. Since $\codim \Supp \shH^i F \geq 2i$, each $\shH^i F$ with $i \geq 1$
is then a torsion sheaf, supported in a finite union of linear subvarieties of
$\Ash$. We shall argue that, by suitably modifying $\Mmod$, it is possible to remove
these torsion sheaves from the picture, leaving us with $\tau_{\leq 0} F$. 

To measure the difference between $F$ and $\tau_{\leq 0} F$, we introduce the set 
\[
	N(F) = \bigcup_{i \geq 1} \Supp \shH^i F, 
\] 
which is again a finite union of linear subvarieties of $\Ash$. Our goal is to reduce
the size of the set $N(F)$, because
$N(F)$ is empty if and only if $\tau_{\leq 0} F \simeq F$. Let $i \colon Z \into
\Ash$ be an irreducible component of $N(F)$ of maximal dimension, let $k \geq 1$ be
the biggest integer such that $Z$ is an irreducible component of $\Supp \shH^k F$,
and let $m \geq 1$ be the multiplicity of $\shH^k F$ along $Z$. We
shall now describe how to modify $\Mmod$ in a way that reduces the value of $m$ (and
eventually also of $k$) but leaves the set $N(F) \cup \{Z\}$ invariant. After repeating
this construction a number of times, we can remove $Z$ from the set $N(F)$, possibly
adding linear subvarieties of $Z$ of lower dimension in the process. After finitely
many steps, we thus arrive at $N(F) = \emptyset$, which is the desired outcome.

Since $Z$ is a linear subvariety, we have $Z = t_{(L, \nabla)}(\im \fsh)$,
where $f \colon A \to B$ is a morphism of abelian varieties with connected fibers,
and $(L, \nabla) \in \Ash$; to simplify the notation, we shall assume that $(L,
\nabla) = (\OA, d)$. If we set $r = \dim A - \dim B$, then $\codim Z = 2r$; moreover,
the fact that $Z$ is an irreducible component of $\Supp \shH^k F$ implies that $2r =
\codim Z \geq 2k$, and hence that $r \geq k$. 

Now consider the pullback $\derL \iu F$ to the subvariety $Z$. By construction, the
$k$-th cohomology sheaf of $\derL \iu F$ is supported on all of $Z$, while all higher
cohomology sheaves are torsion. By \theoremref{thm:Laumon}, we have $\derL \iu F
\simeq \FM_B(\fp \Mmod)$, and therefore $\shH^k \FM_B(\fp \Mmod) \neq 0$. If we now
define
\[
	\Nmod = \tau_{\geq k} \fp \Mmod \in \Dth{\lbrack k, r \rbrack}(\Dmod_B),
\]
we obtain a distinguished triangle
\[
	\FM_B(\tau_{\leq k-1} \fp \Mmod) \to \FM_B(\fp \Mmod) \to \FM_B(\Nmod)
		\to \FM_B(\tau_{\leq k-1} \fp \Mmod) \decal{1};
\]
we conclude that $\shH^k \FM_B(\Nmod)$ and $\iu \shH^k F$ are isomorphic at the
generic point of $Z$, and that $\shH^i \FM_B(\Nmod)$ is a torsion sheaf for $i >
k$. 

The adjunction morphism $\Mmod \to \fsi \fp \Mmod$ induces a morphism $\Mmod \to \fsi
\Nmod$. We choose $\Mmod' \in \Dbh(\Dmod_A)$ so as to have a distinguished triangle
\[
	\Mmod' \to \Mmod \to \fsi \Nmod \to \Mmod' \decal{1}.
\]
Since $f$ is smooth of relative dimension $r$, we have $\fsi \Nmod \in \Dth{\lbrack
k-r, 0 \rbrack}(\Dmod_A)$; consequently, $\Mmod' \in \Dth{\leq 1}(\Dmod_A)$. Let $F'
= \FM_A(\Mmod')$; we claim that $N(F') \subseteq N(F)$, and that the multiplicity of
$\shH^k F'$ along $Z$ is strictly smaller than that of $\shH^k F$.

The first part is obvious: by \theoremref{thm:Laumon}, we have $\FM_A(\fsi \Nmod)
\simeq \derR \il \FM_B(\Nmod)$, and so the support of the Fourier-Mukai transform of
$\fsi \Nmod$ is entirely contained in $Z$. Moreover, $\FM_A(\fsi \Nmod)$ belongs to
$\Dtcoh{\geq k}(\OAsh)$; the support of $\shH^k \FM_A(\fsi \Nmod)$ is equal to
$Z$, while all higher cohomology sheaves are supported in proper subvarieties of $Z$.
In particular, $\tau_{\leq 0} F' \simeq \tau_{\leq 0} F$.

To prove the assertion about the multiplicity, observe that we have an exact sequence
\[
	0 \to \shH^k F' \to \shH^k F \to \shH^k \FM_A(\fsi \Nmod).
\]
Now $\shH^k \FM_A(\fsi \Nmod) \simeq \il \shH^k \FM_B(\Nmod)$ is isomorphic to
$\iu \shH^k F$ at the generic point of $Z$. After localizing at the generic point of
$Z$, we can apply \lemmaref{lem:multiplicity} below, which implies that the
multiplicity of $\shH^k F'$ along $Z$ is strictly less than that of $\shH^k F$.

To conclude the reduction step, we now set $\Mmod'' = \tau_{\leq 0} \Mmod'$ and $F''
= \FM_A(\Mmod'')$. Then $\Mmod'' \in \Dth{\leq 0}(\Dmod_A)$, we have $\tau_{\leq 0}
F'' \simeq \tau_{\leq 0} F$, and while $N(F'') \subseteq N(F)$, the multiplicity of
$\shH^k F''$ along $Z$ is strictly smaller than that of $\shH^k F$. As explained
above, this suffices to conclude the proof.
\end{proof}

\begin{lemma} \label{lem:multiplicity}
Let $(R,\mm)$ be a local ring, and let $M$ a nonzero finitely generated $R$-module
with $\mm^n M = 0$ for some $n \geq 1$. If we set $M' = \ker(M \to M/\mm M)$,
then 
\[
	\dim_{R/\mm} M' = \dim_{R/\mm} M - \dim_{R/\mm} M/\mm M < \dim_{R/\mm} M.
\]
In particular, the multiplicity of $M'$ is strictly smaller than that of $M$.
\end{lemma}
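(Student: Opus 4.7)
The plan is straightforward: apply the short exact sequence
\[
0 \to M' \to M \to M/\mm M \to 0
\]
and use additivity of length. The notation $\dim_{R/\mm}$ in the statement is best read as the length $\ell_R$; this makes sense here because $\mm^n M = 0$ forces $M$ to have finite length, so a composition series for $M$ has every factor equal to $R/\mm$ and ``$\dim_{R/\mm}$'' is really counting those factors. For the quotient $M/\mm M$, which is an actual $R/\mm$-vector space, the length and the $R/\mm$-dimension coincide on the nose. Additivity of $\ell_R$ across the sequence above then immediately gives
\[
\ell_R(M') = \ell_R(M) - \dim_{R/\mm}(M/\mm M).
\]

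For the strict inequality, the only remaining input is $M/\mm M \neq 0$, and this is precisely Nakayama's lemma applied to the nonzero finitely generated module $M$ over the local ring $(R, \mm)$: one has $\mm M \neq M$, so $\dim_{R/\mm}(M/\mm M) \geq 1$.

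The final sentence about multiplicity is a matter of unwinding definitions in the intended application inside the proof of \theoremref{thm:truncation}. There $R$ is the stalk $\shO_{\Ash, \eta}$ at the generic point $\eta$ of the linear subvariety $Z$, and $M$ is the stalk of $\shH^k F$ at $\eta$; the hypothesis $\mm^n M = 0$ holds because $Z$ is already an irreducible component of $\Supp \shH^k F$, so the stalk is $\mm$-primary. By definition, the multiplicity of a coherent sheaf along $Z$ is the length of this stalk, so the strict drop in $\ell_R$ is exactly a strict drop in multiplicity. No real obstacle is present; the whole argument is just additivity of length plus Nakayama.
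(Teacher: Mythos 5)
Your proof is correct and matches the paper's approach: the paper's own proof consists solely of the remark that Nakayama's lemma gives $M/\mm M \neq 0$, leaving the additivity of $\dim_{R/\mm}$ (i.e.\ length) over the exact sequence $0 \to M' \to M \to M/\mm M \to 0$ implicit. You have simply spelled out that implicit step, together with the interpretation of multiplicity in the application, which is exactly what the lemma intends.
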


\begin{proof}
Nakayama's lemma implies that $M/\mm M \neq 0$.
\end{proof}

Here are several immediate consequences of \theoremref{thm:truncation}.

\begin{corollary}
A complex of coherent sheaves on $\Ash$ is the Fourier-Mukai transform of a holonomic
complex if and only all of its cohomology sheaves are.
\end{corollary}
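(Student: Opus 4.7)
The plan is to combine Theorem \ref{thm:truncation} (stability under truncation) with the fact that $\Dbh(\Dmod_A)$ is a triangulated subcategory of $\Dbcoh(\Dmod_A)$ and that $\FM_A$ is an equivalence of triangulated categories.

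For the forward direction, suppose $F = \FM_A(\Mmod)$ with $\Mmod \in \Dbh(\Dmod_A)$. Applying Theorem \ref{thm:truncation} twice, first with $\tau_{\geq i}$ and then with $\tau_{\leq i}$, produces the object $\tau_{\leq i} \tau_{\geq i} F \simeq \shH^i F \decal{-i}$, which is therefore the Fourier-Mukai transform of a holonomic complex. Since $\Dbh(\Dmod_A)$ is closed under cohomological shifts, translating by $\decal{i}$ shows that each $\shH^i F$ itself lies in $\FM_A(\Dbh(\Dmod_A))$.

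For the reverse direction, suppose each cohomology sheaf $\shH^i F$ is the Fourier-Mukai transform of a holonomic complex. Because $F \in \Dbcoh(\OAsh)$, only finitely many $\shH^i F$ are nonzero, so I would induct on this number. The base case (a single cohomology sheaf) is handled by the hypothesis together with closure of $\Dbh(\Dmod_A)$ under shifts. For the inductive step, pick the smallest $n$ with $\shH^n F \neq 0$; the canonical triangle
\[
	\shH^n F \decal{-n} \to \tau_{\leq n} F \to \tau_{\leq n-1} F \decal{1} \to \shH^n F \decal{-n+1}
\]
(or equivalently the truncation triangle $\tau_{\leq n} F \to F \to \tau_{\geq n+1} F \to \tau_{\leq n} F \decal{1}$) has outer vertices that, by induction, are Fourier-Mukai transforms of holonomic complexes $\Nmod$ and $\Mmod'$. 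Applying the equivalence $\FM_A^{-1}$ of Theorem \ref{thm:Laumon-Rothstein} to the connecting morphism gives a morphism in $\Dbh(\Dmod_A)$ whose cone is holonomic (since $\Dbh(\Dmod_A)$ is a triangulated subcategory of $\Dbcoh(\Dmod_A)$); applying $\FM_A$ identifies this cone's image with $F$ up to shift, whence $F \in \FM_A(\Dbh(\Dmod_A))$.

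The only real content is the forward direction, which is the nontrivial input from Theorem \ref{thm:truncation}; the reverse direction is a formal consequence of $\FM_A$ being a triangulated equivalence and $\Dbh \subseteq \Dbcoh$ being a triangulated subcategory. There should be no significant obstacle once Theorem \ref{thm:truncation} is in hand.
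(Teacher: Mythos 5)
Your argument is correct and matches the paper's proof, which likewise deduces the forward direction from \theoremref{thm:truncation} and the reverse direction from the fact that $\Dbh(\Dmod_A)$ is a thick (triangulated) subcategory of $\Dbcoh(\Dmod_A)$; you merely unpack the latter via the standard induction on the number of nonzero cohomology sheaves. (The first triangle you display in the inductive step is garbled and in fact degenerate for the smallest $n$, but the parenthetical truncation triangle $\tau_{\leq n} F \to F \to \tau_{\geq n+1} F$ that you substitute is the right one and carries the argument.)
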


\begin{proof}
One direction is obvious because of \theoremref{thm:truncation}; the other follows
from the fact that $\Dbh(\Dmod_A)$ is a thick subcategory of $\Dbcoh(\Dmod_A)$.
\end{proof}

\begin{corollary} 
Suppose that $\shF \in \Coh(\OAsh)$ is the Fourier-Mukai transform of a holonomic
complex. Then the same is true for its reflexive hull.
\end{corollary}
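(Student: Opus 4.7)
The plan is to use Laumon's compatibility of $\FM_A$ with the two duality functors (\theoremref{thm:Laumon}\ref{en:Laumon4}) to reduce the assertion to two applications of the previous corollary on cohomology sheaves. Write $\shF = \FM_A(\Mmod)$ for some $\Mmod \in \Dbh(\Dmod_A)$. By \theoremref{thm:Laumon}\ref{en:Laumon4}, we have
\[
	\derR \shHom(\shF, \OAsh) \simeq \langle -1_{\Ash} \rangle^{\ast} \FM_A(\DA \Mmod),
\]
and using \theoremref{thm:Laumon}\ref{en:Laumon2} for the isomorphism $\langle -1_A \rangle \colon A \to A$ (whose induced map on moduli spaces of line bundles with connection is precisely $\langle -1_{\Ash} \rangle$), we may rewrite the right-hand side as $\FM_A(\langle -1_A \rangle_+ \DA \Mmod)$. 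Since both $\DA$ and $\langle -1_A \rangle_+$ preserve $\Dbh(\Dmod_A)$, the complex $\derR \shHom(\shF, \OAsh)$ is itself a Fourier-Mukai transform of a holonomic complex.

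The next step is to invoke the previous corollary, which asserts that the subcategory $\FM_A \bigl( \Dbh(\Dmod_A) \bigr) \subseteq \Dbcoh(\OAsh)$ is closed under passage to cohomology sheaves. Applied to $\derR \shHom(\shF, \OAsh)$, this yields in particular that
\[
	\shF^{\vee} = \shHom(\shF, \OAsh) = \shH^0 \derR \shHom(\shF, \OAsh)
\]
is the Fourier-Mukai transform of a holonomic complex.

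Finally, we simply repeat the argument with $\shF$ replaced by $\shF^{\vee}$: by the same reasoning, $\shF^{\vee \vee} = \shHom(\shF^{\vee}, \OAsh)$ is again a Fourier-Mukai transform of a holonomic complex, and this is precisely the reflexive hull of $\shF$. There is no real obstacle here, since all the ingredients, namely the duality isomorphism, the preservation of holonomicity by $\DA$ and by pushforward along the automorphism $\langle -1_A \rangle$, and the stability under truncation established in \theoremref{thm:truncation}, are already in place; the only thing to keep track of is the cosmetic $\langle -1_{\Ash} \rangle^{\ast}$ factor from \theoremref{thm:Laumon}\ref{en:Laumon4}, which is harmless because it is absorbed into a holonomic-preserving operation on the $\Dmod_A$-module side.
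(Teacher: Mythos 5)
Your argument is correct and is essentially the paper's own proof (which simply says the reflexive hull is obtained from $\shF$ by dualizing and truncating twice, both operations preserving the subcategory $\FM_A\bigl(\Dbh(\Dmod_A)\bigr)$); you have merely spelled out the details, including the harmless $\langle -1_{\Ash}\rangle^{\ast}$ factor and the identification $\shF^{\vee} = \shH^0\,\derR\shHom(\shF,\OAsh)$.
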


\begin{proof}
The reflexive hull $\shHom \bigl( \shHom(\shF, \shO), \shO \bigr)$ is obtained
from $\shF$ by dualizing and truncating twice; both operations preserve the property
of being the Fourier-Mukai transforms of a holonomic complex.
\end{proof}

\begin{corollary}
Suppose that $\shF \in \Coh(\OAsh)$ is the Fourier-Mukai transform
of a holonomic complex. If $\shF$ is reflexive, then there is a holonomic
$\Dmod_A$-module $\Mmod$ such that $\shF \simeq \shH^0 \FM_A(\Mmod)$.
\end{corollary}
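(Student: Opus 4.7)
The plan is to construct $\Mmod$ as the inverse Fourier-Mukai transform of an \emph{intersection complex}: an $m$-perverse coherent sheaf on $\Ash$ whose zeroth cohomology sheaf is $\shF$, built from $\shF$ by iteratively applying $\derR \shHom(\argbl, \OAsh)$ and successive truncations, exactly as in the formula at the end of \subsecref{subsec:intro-simple}.

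Since $\shF$ is reflexive on the smooth variety $\Ash$, its non-locally-free locus has codimension at least three; let $j \colon U \into \Ash$ denote the complementary open set, and set $\shE = j^{\ast} \shF$, a locally free sheaf on $U$. The $S_2$-property of reflexive sheaves yields $j_{\ast} \shE \simeq \shF$, and the usual Auslander--Buchsbaum depth estimate gives $\codim \Supp \shExt^i(\shF, \OAsh) \geq i + 2$ for $i \geq 1$. I would then consider the complex
\[
	C = \tau_{\leq \ell(\Ash) - 1} \circ \derR \shHom(\argbl, \OAsh) \circ \dotsb \circ \tau_{\leq 1} \circ \derR \shHom(\argbl, \OAsh) \bigl( \shF \bigr),
\]
where $\ell(\Ash)$ is the smallest odd integer $\geq \dim \Ash$. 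The complex $C$ remains a Fourier-Mukai transform of a holonomic complex, because each operation in the composition preserves that class: truncation by \theoremref{thm:truncation}, and $\derR \shHom(\argbl, \OAsh)$ by the compatibility of $\FM_A$ with duality from \theoremref{thm:Laumon}\ref{en:Laumon4} together with the preservation of holonomicity under $\DA$. A spectral-sequence argument using the codimension estimates on successive Ext sheaves then shows that $\shH^0 C \simeq \shF$: after each pair of $\derR \shHom$ applications, the zeroth cohomology sheaf returns to $\shF$ (the ``correction'' $\shExt^1(\shExt^1(\shF, \OAsh), \OAsh)$ and its higher analogues vanish because their support is forced into too-small codimension), and the total number $\ell(\Ash) - 1$ of dualizations is even.

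Granting that the truncation degrees $1, 2, \ldots, \ell(\Ash) - 1$ are precisely calibrated so that $C$ lies in the heart $\mCoh(\OAsh)$ of the $m$-perverse t-structure --- this is the content of \subsecref{subsec:IC} --- \theoremref{thm:h-t-FM} then furnishes a unique holonomic $\Dmod_A$-module $\Mmod$ with $\FM_A(\Mmod) \simeq C$, and hence $\shH^0 \FM_A(\Mmod) \simeq \shF$. The main obstacle is the verification of $m$-perversity of $C$: the naive alternative of taking $\Mmod = \shH^0 \FM_A^{-1}(\shF)$ only yields $\shH^0 \FM_A(\Mmod) \simeq \shF \oplus T$ with $T$ a torsion sheaf of codimension $\geq 4$, and although the canonical map to the reflexive hull produces a splitting, reflexivity alone does not force $T$ to vanish; one genuinely needs the iterated intersection-complex construction to kill off the torsion of every codimension systematically.
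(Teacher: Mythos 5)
Your construction works, but it is a genuinely different (and much heavier) route than the one the paper takes. The paper's proof is three lines: pick $\Nmod$ with $\FM_A(\Nmod) \simeq \shF$; since $\shF$ sits in degree zero, \theoremref{thm:t-structure} puts $\Nmod$ in $\Dth{\leq 0}(\Dmod_A)$, and one sets $\Mmod = \shH^0 \Nmod$. The triangle $\FM_A(\tau_{\leq -1} \Nmod) \to \shF \to \FM_A(\Mmod) \to \decal{1}$ together with $\FM_A(\tau_{\leq -1}\Nmod) \in \mDtcoh{\leq -1}(\OAsh)$ gives a map $\shF \to \shH^0 \FM_A(\Mmod)$ whose kernel is torsion (hence zero, as $\shF$ is reflexive) and whose cokernel is supported in codimension $\geq 4$; since $\shH^0\FM_A(\Mmod)$ is also reflexive, the map is an isomorphism. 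Your route instead builds $\IC(\shF)$ by iterated dualizing and truncating, checks it stays in $\FM_A\bigl(\Dbh(\Dmod_A)\bigr)$ via \theoremref{thm:truncation} and \theoremref{thm:Laumon}\ref{en:Laumon4}, invokes the fact that $\IC(\shF)$ is $m$-perverse with $\shH^0 \IC(\shF) \simeq \shF$, and inverts. All of these ingredients are indeed established (the $m$-perversity and the identification of $\shH^0$ are exactly \propositionref{prop:IC-mCoh}, whose proof is a nontrivial induction, not just a codimension count on $\shExt$ sheaves as your sketch suggests), so your argument is valid; what it buys is a canonical, explicitly described $\Mmod$, at the cost of importing the entire intersection-complex machinery for a statement that only needs the t-structure comparison.

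One correction to your closing remark: the ``naive alternative'' you dismiss is precisely the paper's proof, and your reason for rejecting it is mistaken. The long exact sequence does not produce a splitting $\shF \oplus T$; it produces a short exact sequence $0 \to \shF \to \shH^0\FM_A(\Mmod) \to T \to 0$ with $T$ supported in codimension $\geq 4$. Since \emph{both} $\shF$ and $\shH^0\FM_A(\Mmod)$ are reflexive, hence normal, restricting to the complement $j \colon U \into \Ash$ of $\Supp T$ and using $\shF \simeq \jl\ju\shF$ and $\shH^0\FM_A(\Mmod) \simeq \jl\ju\shH^0\FM_A(\Mmod)$ forces $T = 0$. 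Reflexivity of the target (not just of $\shF$) is the point you overlooked.
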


\begin{proof}
Let $\Nmod \in \Dth{\leq 0}(\Dmod_A)$ be such that $\shF = \FM_A(\Nmod)$. If we
now define $\Mmod = \shH^0 \Nmod$, then the distinguished triangle
\[
	\FM_A(\tau_{\leq -1} \Nmod) \to \shF \to \FM_A(\Mmod) 
		\to \FM_A(\tau_{\leq -1} \Nmod) \decal{1}
\]
gives us a morphism $\shF \to \shH^0 \FM_A(\Mmod)$. The usual reasoning with
t-structures shows that it is an isomorphism outside a subset of codimension at least
four; since $\shH^0 \FM_A(\Mmod)$ is easily seen to be reflexive, we conclude that
$\shF \simeq \shH^0 \FM_A(\Mmod)$. 
\end{proof}

\section{Simple holonomic D-modules}

This chapter is devoted to a more careful study of Fourier-Mukai transforms of simple
holonomic $\Dmod_A$-modules. In particular, we shall discover that they are
intersection complexes for the $m$-perverse coherent t-structure, in a sense made
precise below.

\subsection{Classification by support}

In this section, we prove a structure theorem for the Fourier-Mukai transform of a
simple holonomic $\Dmod_A$-module. The idea is that, in the case of a simple
holonomic $\Dmod$-module, the support of the Fourier-Mukai transform must be a
single linear subvariety; and if that linear subvariety is not equal to $\Ash$, then
the $\Dmod$-module in question is -- up to tensoring by a line bundle -- pulled back
from a lower-dimensional abelian variety.

\begin{theorem} \label{thm:simple-support}
Let $\Mmod$ be a simple holonomic $\Dmod_A$-module, and let $r \geq 0$ be the least
integer such that $\shH^r \bigl( \FM_A(\Mmod) \bigr) \neq 0$. Then there is an abelian
variety $B$ of dimension $\dim B = \dim A - r$, a surjective morphism $f \colon A \to
B$ with connected fibers, and a simple holonomic $\Dmod_B$-module $\Nmod$, such that
\[	
	\Mmod \tensor_{\OA} (L, \nabla) \simeq \fu \Nmod 
\]
for a suitable point $(L, \nabla) \in \Ash$. Moreover, we have $\Supp \shH^0 \bigl(
\FM_B(\Nmod) \bigr) = \Bsh$ and 
\[
	\Supp \FM_A(\Mmod) = (L, \nabla) \tensor 
		\im \bigl( \fsh \colon \Bsh \to \Ash \bigr).
\]
\end{theorem}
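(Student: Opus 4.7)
The plan is to use the structure results on the Fourier-Mukai transform to locate a linear subvariety along which to restrict $\FM_A(\Mmod)$, and then to reconstruct $\Mmod$ up to a twist from its restriction via adjointness. By \corollaryref{cor:FM-summary}, the least $r \geq 0$ with $\shH^r \FM_A(\Mmod) \neq 0$ satisfies $\codim \Supp \shH^r \FM_A(\Mmod) = 2r$; by \theoremref{thm:FM-linear}, every irreducible component of this support is a linear subvariety of $\Ash$. Fix one such component $Z = t_{(L, \nabla)}(\im \fsh)$, arising from a surjection $f \colon A \to B$ with connected fibers, so $\dim B = \dim A - r$. Replace $\Mmod$ by $\Mmod \tensor_{\OA} (L, \nabla)^{-1}$ (still simple); by \theoremref{thm:Laumon}\ref{en:Laumon1}, this merely translates $\FM_A(\Mmod)$, so we may assume that $\im \fsh$ is itself an irreducible component of $\Supp \shH^r \FM_A(\Mmod)$.

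Restricting along the closed immersion $\fsh$, we obtain by \theoremref{thm:Laumon}\ref{en:Laumon2} the identification $\derL(\fsh)^* \FM_A(\Mmod) \simeq \FM_B(\fp \Mmod)$. The lowest contribution $(\fsh)^* \shH^r \FM_A(\Mmod)$ is nonzero and supported on all of $\Bsh$, so $\shH^r \FM_B(\fp \Mmod)$ has full support. Combining the hypercohomology spectral sequence relating $\shH^i \FM_B(\shH^j \fp \Mmod)$ to $\shH^{i+j} \FM_B(\fp \Mmod)$ with the codimension bound of \corollaryref{cor:FM-summary} (which forces a full-support cohomology to arise at $i = 0$) shows that $\shH^0 \FM_B(\shH^r \fp \Mmod)$ has support $\Bsh$. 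Generic-rank additivity across a composition series of the holonomic module $\shH^r \fp \Mmod$ then yields a simple Jordan--H\"older factor $\Nmod$ with $\Supp \shH^0 \FM_B(\Nmod) = \Bsh$.

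The $(\fsi, \fp)$-adjunction, together with $\fsi = \fu \decal{r}$, gives $\Hom(\fu \Nmod, \Mmod) \simeq \Ext^r_{\Dmod_B}(\Nmod, \fp \Mmod)$; a nonzero embedding $\Nmod \into \shH^r \fp \Mmod$ yields via the edge map of the standard spectral sequence a nonzero element of this $\Ext^r$, hence a nonzero morphism $\fu \Nmod \to \Mmod$. Because $f$ is a smooth surjection of abelian varieties with connected fibers, $\fu \Nmod$ is itself a simple holonomic $\Dmod_A$-module (its characteristic variety is the inverse image of the irreducible Lagrangian $\Char(\Nmod)$), so the simplicity of $\Mmod$ promotes the morphism to an isomorphism $\Mmod \simeq \fu \Nmod$. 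The identity $\FM_A(\fu \Nmod) \simeq \derR (\fsh)_* \FM_B(\Nmod) \decal{-r}$, which follows from \theoremref{thm:Laumon}\ref{en:Laumon2} together with $\fsi = \fu \decal{r}$, combined with the evident containment $\Supp \FM_B(\Nmod) \subseteq \Bsh$, now gives $\Supp \FM_A(\Mmod) = \im \fsh$.

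The main obstacle will be the cohomological bookkeeping in the middle step: $\fp \Mmod$ is a bounded complex living in several degrees, and identifying the correct degree $q = r$ in which the relevant simple subquotient $\Nmod$ sits---while sorting out the various $\shTor$ contributions in the spectral sequence for $\derL(\fsh)^*$---requires care. One must then align the shift in the $(\fsi, \fp)$-adjunction with the degree of the embedding $\Nmod \into \shH^r \fp \Mmod$ in order to produce an honest morphism of $\Dmod_A$-modules rather than merely a derived-category morphism between shifted objects.
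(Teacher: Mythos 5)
Your overall strategy---locate a codimension-$2r$ linear component of $\Supp \shH^r \FM_A(\Mmod)$, pass to $B$, find a simple $\Nmod$ with full Fourier--Mukai support, and recover $\Mmod$ up to twist by adjunction and simplicity---is the right one, and the first half (the restriction $\derL(\fsh)^{\ast}\FM_A(\Mmod) \simeq \FM_B(\fp\Mmod)$, the full support of $\shH^r \FM_B(\fp\Mmod)$, and the spectral-sequence/Euler-characteristic argument producing a Jordan--H\"older factor with $\Supp \shH^0 \FM_B(\Nmod) = \Bsh$) is sound. The gap is in the reconstruction step. For a smooth proper $f$ of relative dimension $r$, the two adjunctions read, on single modules, $\Hom(\Mmod, \fu\Nmod) \cong \Hom_{\Dmod_B}\bigl(\shH^r \fp\Mmod, \Nmod\bigr)$ and $\Hom(\fu\Nmod, \Mmod) \cong \Hom_{\Dmod_B}\bigl(\Nmod, \shH^{-r}\fp\Mmod\bigr)$: morphisms \emph{into} $\Mmod$ are controlled by the \emph{lowest} cohomology of $\fp\Mmod$, not the highest. (Check this with $B = \pt$: $\Hom(\OA, \Mmod) = \derH^{-g}(A, \DR_A \Mmod)$.) So your identity $\Hom(\fu\Nmod, \Mmod) \simeq \Ext^r_{\Dmod_B}(\Nmod, \fp\Mmod)$ is off by a shift, and an embedding $\Nmod \into \shH^r \fp\Mmod$ does not produce a morphism $\fu\Nmod \to \Mmod$. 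Even setting the shift aside, the ``edge map'' from $\Hom(\Nmod, \shH^r\fp\Mmod)$ into $\Hom_{D^b}(\Nmod, \fp\Mmod\decal{r})$ goes the wrong way: the edge homomorphism of the hyperext spectral sequence maps the abutment \emph{onto} a subquotient of $E_2^{0,r}$, and lifting a class of $E_2^{0,r}$ back to the abutment is obstructed by the differentials $d_i \colon E_i^{0,r} \to E_i^{i,r-i+1}$. Finally, the Jordan--H\"older factor you isolate via generic-rank additivity is only a subquotient of $\shH^r\fp\Mmod$; it need not be a submodule, and it need not coincide with the factor through which any natural morphism actually passes.

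The paper's proof works with the other adjunction: the unit $\alpha \colon \Mmod \to \fsi\fp\Mmod$ composed with the projection $\fp\Mmod \to \shH^r(\fp\Mmod)\decal{-r}$ gives a morphism $\Mmod \to \fu\shH^r(\fp\Mmod)$, whose nonvanishing is established on the Fourier--Mukai side by a Nakayama-type argument at the generic point of $\im\fsh$ (if it vanished, the adjunction morphism $F \to \derR\fsh_{\ast}\derL(\fsh)^{\ast}F$ would factor through a complex whose $r$-th cohomology is supported in codimension $\geq 2$ inside $\im\fsh$). One then takes $\Nmod$ to be the simple \emph{quotient}-type factor of $\shH^r(\fp\Mmod)$ through which this nonzero morphism passes, and deduces $\Supp\shH^0\FM_B(\Nmod) = \Bsh$ only at the end, from $\Mmod \simeq \fu\Nmod$. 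To repair your argument you would need to either switch to this direction, or else identify a simple \emph{submodule} of $\shH^{-r}\fp\Mmod$ and run your construction there. One more point: your parenthetical justification that $\fu\Nmod$ is simple because ``its characteristic variety is the inverse image of the irreducible Lagrangian $\Ch(\Nmod)$'' is not valid---the characteristic variety of a simple holonomic module is in general reducible, and irreducibility of $\Ch$ is not equivalent to simplicity. The correct argument realizes $\Nmod$ as the minimal extension of an irreducible integrable connection on a locally closed $X \subseteq B$, notes that $\pi_1(f^{-1}(X)) \to \pi_1(X)$ is surjective because $f$ has connected fibers, and checks by adjunction and duality that $\fu\Nmod$ has no sub- or quotient modules supported outside $f^{-1}(X)$.
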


This result clearly implies \theoremref{thm:h-simple} from the introduction. Here is
the proof of the corollary about simple holonomic $\Dmod_A$-modules with Euler
characteristic zero.

\begin{proof}[Proof of \corollaryref{cor:h-simple}]
Let $(L, \nabla) \in \Ash$ be a generic point. Because
\[
	0 = \chi(A, \Mmod) = \chi \bigl( A, \Mmod \tensor_{\OA} (L, \nabla) \bigr)
		= \dim \derH^0 \Bigl( A, \DR_A \bigl( \Mmod \tensor_{\OA} (L, \nabla) \bigr) \Bigr),
\]
we find that the support of $\shH^0 \FM_A(\Mmod)$ is a proper subset of $\Ash$. 
Both $\FM_A(\Mmod)$ and the dual complex belong to $\mDtcoh{\leq 0}(\OAsh)$ by
\theoremref{thm:t-structure}, and so we conclude from \propositionref{prop:surprise} that
$\shH^0 \FM_A(\Mmod) = 0$. Now it only remains to apply
\theoremref{thm:simple-support}.
\end{proof}

For the proof of \theoremref{thm:simple-support}, we need two small lemmas. The
first describes the inverse image of a simple holonomic $\Dmod$-module.

\begin{lemma} \label{lem:inverse-simple}
Let $f \colon A \to B$ be a surjective morphism of abelian varieties, with connected
fibers. If $\Nmod$ is a simple holonomic $\Dmod_B$-module, then $\fu \Nmod$ is a
simple holonomic $\Dmod_A$-module.
\end{lemma}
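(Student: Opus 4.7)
My plan is to use the description of simple holonomic $\Dmod$-modules as minimal extensions. The morphism $f \colon A \to B$ is smooth and proper with connected fibers, since these are translates of the connected abelian subvariety $(\ker f)^{0}$. Smooth pullback $\fu$ is exact on $\Dmod$-modules and preserves holonomicity---indeed, $\fsi = \fu \decal{\dim A - \dim B}$ preserves holonomicity by \cite{HTT}*{Theorem~3.2.3}---so $\fu \Nmod$ is a single holonomic $\Dmod_A$-module.

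Next, choose a smooth locally closed subvariety $j \colon Y \into B$ on which $\Nmod$ is $\shO_Y$-coherent, so that $\mathcal{L} = \Nmod \restr{Y}$ is a vector bundle with integrable connection and $\Nmod \simeq j_{!\ast} \mathcal{L}$; simplicity of $\Nmod$ translates to simplicity of $\mathcal{L}$ as a $\Dmod_Y$-module. Let $Y' = f^{-1}(Y)$, with inclusion $j' \colon Y' \into A$ and base change $f' \colon Y' \to Y$, which is again smooth surjective with connected fibers. The unshifted smooth pullback is exact for the standard $\Dmod$-module $t$-structure and commutes with $j_{!}$ and $j_{+}$ by base change, hence with the minimal extension: $\fu \Nmod \simeq j'_{!\ast} \bigl( (f')^{\ast} \mathcal{L} \bigr)$. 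It therefore suffices to prove that $(f')^{\ast} \mathcal{L}$ is simple on $Y'$.

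Suppose $\mathcal{K} \subseteq (f')^{\ast} \mathcal{L}$ is a nonzero sub-$\Dmod_{Y'}$-module. Since $(f')^{\ast} \mathcal{L}$ is $\shO_{Y'}$-coherent, so is $\mathcal{K}$, which is then a sub-bundle with an induced integrable connection. Restricting to a fiber $F$ of $f'$, the connection on $(f')^{\ast} \mathcal{L} \restr{F}$ is trivial on a trivial bundle; because $F$ is a connected abelian variety, any flat sub-bundle of a trivial bundle is itself trivial, corresponding to a fixed subspace of its fiber at any point. Hence $\mathcal{K} \restr{F}$ is a constant subspace of $\mathcal{L}_{y}$, where $y = f'(F) \in Y$, and these subspaces assemble into a sub-bundle with connection $\mathcal{K}_{0} \subseteq \mathcal{L}$ satisfying $\mathcal{K} = (f')^{\ast} \mathcal{K}_{0}$. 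Simplicity of $\mathcal{L}$ then forces $\mathcal{K}_{0} = 0$ or $\mathcal{K}_{0} = \mathcal{L}$, hence $\mathcal{K} = 0$ or $\mathcal{K} = (f')^{\ast} \mathcal{L}$, as required.

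The main technical obstacle is the descent step---verifying that the constant subspaces on the fibers really assemble into an algebraic sub-bundle of $\mathcal{L}$. One way to see this: since $f'$ is smooth, it admits local sections $\sigma \colon U \to Y'$ over a Zariski cover of $Y$, and we may define $\mathcal{K}_{0} \restr{U} = \sigma^{\ast} \mathcal{K}$, with independence of the choice of section following from the constancy of $\mathcal{K}$ along the fibers. The secondary point, namely the commutativity of unshifted smooth pullback with $j_{!\ast}$, follows from the presentation of $j_{!\ast}$ as the image of $j_{!} \to j_{+}$ in the abelian category of holonomic $\Dmod$-modules, together with exactness and base change for the smooth pullback.
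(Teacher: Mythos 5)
Your proof is correct in substance and follows the same overall strategy as the paper's: reduce, via the classification of simple holonomic modules, to a simple integrable connection $\mathcal{L}$ on a locally closed $Y \subseteq B$, and treat the open stratum and the extension separately. Both halves, however, are implemented differently. For the open stratum, the paper argues that $\pi_1\bigl(f^{-1}(Y)\bigr) \to \pi_1(Y)$ is surjective because the fibers are connected, so the pulled-back monodromy representation stays irreducible; you instead restrict a coherent sub-$\Dmod$-module of $(f')^{\ast}\mathcal{L}$ to the proper connected fibers, observe that there it is a constant subspace of a trivial flat bundle, and descend. Your variant has the advantage of never invoking the monodromy representation of $\mathcal{L}$ itself, so it applies verbatim to irregular connections, where the paper's phrasing in terms of a representation $\rho \colon \pi_1(X) \to \GL(V)$ is strictly accurate only in the regular case. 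For the extension step, the paper checks directly, via the adjunction between $\fp$ and $\fsi$, that $\fu \Nmod$ has no subobjects or quotients supported on the boundary of $f^{-1}(Y)$; you instead invoke the commutation of unshifted smooth pullback with $\jlsl$, which is an equivalent and standard fact (exactness of $\fu$ plus smooth base change for $j_{!}$ and $j_{+}$).

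One assertion does need repair: a smooth morphism admits sections only \'etale-locally on the base, not Zariski-locally, and for $f' \colon f^{-1}(Y) \to Y$ -- a torsor under the abelian variety $\ker f$ -- Zariski-local sections genuinely may fail to exist. The descent goes through anyway and more cleanly without sections: since $f'$ is proper with connected reduced fibers, one has $(f')_{\ast}\shO = \shO_Y$, so one may set $\mathcal{K}_0 = (f')_{\ast}\mathcal{K} \subseteq (f')_{\ast}(f')^{\ast}\mathcal{L} = \mathcal{L}$. Your fiberwise computation shows $\mathcal{K}\restr{F} = W_y \tensor \shO_F$ with $\dim W_y$ equal to the (constant) rank of $\mathcal{K}$, so cohomology and base change identifies the counit $(f')^{\ast}(f')_{\ast}\mathcal{K} \to \mathcal{K}$ fiberwise with the identity of $W_y \tensor \shO_F$, hence it is an isomorphism; preservation of $\mathcal{K}_0$ by the connection of $\mathcal{L}$ then follows as in your argument. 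With this substitution the proof is complete.
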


\begin{proof}
Since $f$ is smooth, $\fu \Nmod = \OA \tensor_{f^{-1} \shO_B} f^{-1} \Nmod$ is a
holonomic $\Dmod_A$-module. According to the classification of simple holonomic
$\Dmod$-modules \cite{HTT}*{Theorem~3.4.2}, there is a locally closed subvariety $X
\subseteq B$, and an irreducible representation $\rho \colon \pi_1(X) \to \GL(V)$,
such that $\Nmod$ is the minimal extension of the integrable connection on $X$
associated to $\rho$.  Now $f$ has connected fibers, and so the map on fundamental
groups
\[
	\fl \colon \pi_1 \bigl( f^{-1}(X) \bigr) \to \pi_1(X)
\]
is surjective. Clearly, the pullback $\fu \Nmod$ is equal, over $f^{-1}(X)$, to the
integrable connection associated to the representation $\rho \circ \fl \colon \pi_1
\bigl( f^{-1}(X) \bigr) \to \pi_1(X) \to \GL(V)$. This representation is still 
irreducible because $\fl$ is surjective; to conclude the proof, we shall argue that
$\fu \Nmod$ is the minimal extension.

By \cite{HTT}*{Theorem~3.4.2}, it suffices to show that $\fu \Nmod$ has no
submodules or quotient modules that are supported outside of $f^{-1}(X)$. Suppose
that $\Mmod \into \fu \Nmod$ is such a submodule. We have $\fsi \Nmod = \fu \Nmod
\decal{r}$, where $r = \dim A - \dim B$; by adjunction, the morphism $\Mmod \into \fu
\Nmod$ corresponds to a morphism $\fp \Mmod \decal{r} \to \Nmod$, which factors
uniquely as
\[
	\fp \Mmod \decal{r} \to \shH^r \fp \Mmod \to \Nmod.
\]
Since $\shH^r \fp \Mmod$ is supported outside of $X$, this morphism must be zero;
consequently, $\Mmod = 0$. A similar result for quotient modules can be derived by
applying the duality functor, using \cite{HTT}*{Theorem~2.7.1}. This shows that $\fu
\Nmod$ is the minimal extension of a simple integrable connection, and hence simple.
\end{proof}

The second lemma deals with restriction to an irreducible component of the support of
a complex.

\begin{lemma} \label{lem:support}
Let $X$ be a scheme, and let $F \in \Dbcoh(\OX)$. Suppose that $Z$ is an irreducible
component of the support of $\shH^r(F)$, but not of any $\shH^i(F)$ with $i > r$. Let
$i \colon Z \into X$ be the inclusion. Then the morphism
\[
	\shH^r(F) \to \shH^r \bigl( \derR \il \derL \iu F \bigr)
\]
induced by adjunction is nonzero at the generic point of $Z$.
\end{lemma}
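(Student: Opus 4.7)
The plan is to localize the statement at the generic point $\eta$ of $Z$, reducing it to a commutative algebra assertion. Set $R = \shO_{X,\eta}$, a local ring with maximal ideal $\mm$ and residue field $k = \shO_{Z,\eta}$, and let $M^\bullet = F_\eta$, a bounded complex with finitely generated cohomology over $R$. Because $\il$ is exact for a closed immersion and its stalk at $\eta$ agrees with the stalk on $Z$, the unit of adjunction $F \to \derR \il \derL \iu F$ localizes at $\eta$ to the natural map
\[
	M^\bullet \to M^\bullet \Ltensor_R k, \qquad a \mapsto a \tensor 1,
\]
and the task is to show that the induced map on $\shH^r$ is nonzero.

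The hypothesis provides $\shH^r(F)_\eta \neq 0$ since $\eta \in \Supp \shH^r(F)$. Moreover, for each $i > r$, the assumption that $Z$ is not an irreducible component of $\Supp \shH^i(F)$ combines with the fact (used in the settings where the lemma is applied, in which $Z$ is chosen as a maximal-dimensional irreducible component) that any irreducible closed subset of $\Supp \shH^i(F)$ containing $\eta$ must coincide with $Z = \overline{\{\eta\}}$; the latter then forces $\eta \notin \Supp \shH^i(F)$, that is, $\shH^i(F)_\eta = 0$ for $i > r$.

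Next I would choose a bounded-above complex $P^\bullet$ of finitely generated free $R$-modules quasi-isomorphic to $M^\bullet$, and exploit the vanishing above to truncate so that $P^i = 0$ for $i > r$. Then $M^\bullet \Ltensor_R k$ is represented by $P^\bullet \tensor_R k$, a complex concentrated in degrees $\leq r$. Since $-\tensor_R k$ is right exact and preserves the top cohomology of a complex bounded above by $r$, I obtain
\[
	\shH^r \bigl( P^\bullet \tensor_R k \bigr)
		= \coker \bigl( P^{r-1} \tensor_R k \to P^r \tensor_R k \bigr)
		= \shH^r(P^\bullet) \tensor_R k = \shH^r(F)_\eta \tensor_R k,
\]
and under this identification the map on $\shH^r$ induced by $M^\bullet \to M^\bullet \Ltensor_R k$ is precisely the natural surjection $\shH^r(F)_\eta \to \shH^r(F)_\eta / \mm \shH^r(F)_\eta$.

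The proof then concludes by Nakayama's lemma: because $\shH^r(F)_\eta$ is a nonzero finitely generated module over the local ring $R$, the quotient $\shH^r(F)_\eta / \mm \shH^r(F)_\eta$ is nonzero, so the map is nonzero as claimed. No substantial obstacle arises; the only delicate point is extracting the stalk vanishing $\shH^i(F)_\eta = 0$ for $i > r$ from the hypothesis, but this is automatic in the settings of interest, where the $m$-perverse structure from \theoremref{thm:h-t-FM} controls the codimensions of the higher cohomology supports and forces $Z$ to be disjoint from them near its generic point.
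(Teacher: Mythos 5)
Your proof is correct and follows essentially the same route as the paper's: localize at the generic point $\eta$ of $Z$, represent $F$ there by a complex of finitely generated free modules terminating in degree $r$, identify the induced map on $\shH^r$ with $M \to M/\mm M$ for $M = \shH^r(F)_\eta$, and conclude by Nakayama. The delicate point you flag — that the hypothesis must yield the stalk vanishing $\shH^i(F)_\eta = 0$ for $i > r$ — is used implicitly in the paper's proof as well (it is what allows the free resolution to stop in degree $r$), and it does hold in the one place the lemma is applied, where the higher cohomology supports have strictly larger codimension than $Z$.
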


\begin{proof}
After localizing at the generic point of $Z$, we may assume that $X = \Spec R$ for a
local ring $(R, \mm)$, and that $F \in \Dbcoh(R)$ is represented by a complex
\[
\begin{tikzcd}[column sep=small]
\dotsb \rar & F^{r-2} \rar{d} & F^{r-1} \rar{d} & F^r
\end{tikzcd}
\]
of finitely generated free $R$-modules. Set $M = \shH^r(F) = F^r / d F^{r-1}$, which
is a finitely generated $R$-module with $M \neq 0$. Then $\shH^r \bigl( \derR \il \derL
\iu F \bigr) \simeq M / \mm M$, and the morphism $M \to M / \mm M$ is
nonzero by Nakayama's lemma.
\end{proof}

We can now prove our structure theorem for simple holonomic $\Dmod_A$-modules.%
\footnote{%
The proof can be simplified by using the fact that semi-simplicity is preserved under
projective direct images. This is a very hard theorem by Sabbah and Mochizuki -- see
\cite{Sabbah} for the history of this result -- and so it seemed better to give a
more elementary proof.%
}

\begin{proof}[Proof of \theoremref{thm:simple-support}] 
Let $F = \FM_A(\Mmod) \in \Dbcoh(\OAsh)$. \theoremref{thm:t-structure} shows that $F
\in \mCoh(\OAsh)$; by duality, it follows that $\derR \shHom(F, \OAsh) \in
\mCoh(\OAsh)$, too. According to \propositionref{prop:surprise}, the codimension of
the support of $\shH^r(F)$ is therefore equal to $2r$; moreover, each irreducible
component of $\Supp \shH^r(F)$ is a linear subvariety by \theoremref{thm:FM-linear}.
After tensoring $\Mmod$ by a suitable line bundle with integrable connection, we may
therefore assume that one irreducible component of the support of $\shH^r(F)$ is
equal to $\im \fsh$, for a surjective morphism of abelian varieties $f \colon A \to
B$ with connected fibers and $\dim B = \dim A - r$.

To produce the required simple $\Dmod_B$-module, consider the direct image $\fp
\Mmod$, which belongs to $\Dth{\leq r}(\Dmod_B)$. We have a distinguished triangle
\[
	\tau_{\leq r-1}(\fp \Mmod) \to \fp \Mmod \to \shH^r(\fp \Mmod) \decal{-r} \to 
		\dotsb
\]
in $\Dbh(\Dmod_B)$, and hence also a distinguished triangle
\begin{equation} \label{eq:triangle}
	\fsi \tau_{\leq r-1}(\fp \Mmod) \to \fsi \fp \Mmod \to 
		\fsi \shH^r(\fp \Mmod) \decal{-r} \to \dotsb
\end{equation}
in $\Dbh(\Dmod_A)$. Since $f$ is smooth, $\fsi \shH^r(\fp \Mmod) \decal{-r} = \fu
\shH^r(\fp \Mmod)$ is a single holonomic $\Dmod_A$-module. Let $\alpha \colon \Mmod
\to \fsi \fp \Mmod$ be the adjunction morphism. 

Now we observe that the induced morphism $\Mmod \to \fu \shH^r(\fp \Mmod)$ must be 
nonzero. Indeed, suppose to the contrary that the morphism was zero.  Then $\alpha$
factors as
\[
	\Mmod \to \fsi \tau_{\leq r-1}(\fp \Mmod) \to \fsi \fp \Mmod.
\]
If we apply the Fourier-Mukai transform to this factorization, and use the properties
in \theoremref{thm:Laumon}, we obtain
\[
	F \to \derR \fsh_{\ast} \FM_B \bigl( \tau_{\leq r-1}(\fp \Mmod) \bigr) \to 
		\derR \fsh_{\ast} \derL(\fsh)^{\ast} F,
\]
which is a factorization of the adjunction morphism for the closed embedding $\fsh$.
In particular, we then have
\[
	\shH^r(F) \to \shH^r \Bigl( \derR \fsh_{\ast} \FM_B \bigl( \tau_{\leq r-1}(\fp
		\Mmod) \bigr) \Bigr) 
		\to \shH^r \bigl( \derR \fsh_{\ast} \derL(\fsh)^{\ast} F \bigr);
\]
but because the coherent sheaf in the middle is supported in a subset of $\im \fsh$ of
codimension at least two, this contradicts \lemmaref{lem:support}. Therefore, $\Mmod
\to \fu \shH^r(\fp \Mmod)$ is indeed nonzero. 

Being a holonomic $\Dmod_B$-module, $\shH^r(\fp \Mmod)$ admits a finite filtration
with simple quotients; consequently, we can find a simple holonomic $\Dmod_B$-module
$\Nmod$ and a nonzero morphism $\Mmod \to \fu \Nmod$. Since $\Mmod$ is simple, and
$\fu \Nmod$ is also simple by \lemmaref{lem:inverse-simple}, the morphism must be an
isomorphism, and so $\Mmod \simeq \fu \Nmod$.

To prove the final assertion, note that $\fu \Nmod = \fsi \Nmod \decal{-r}$; on 
account of \theoremref{thm:Laumon}, we therefore have
\[
	\FM_A(\Mmod) \simeq \FM_A(\fu \Nmod) \simeq 
		\derR \fsh_{\ast} \FM_B(\Nmod) \decal{-r} .
\]
Since $\im \fsh$ is an irreducible component of the support of $\shH^r \bigl(
\FM_A(\Mmod) \bigr)$, it follows that $\Supp \shH^0 \bigl( \FM_B(\Nmod) \bigr) =
\Bsh$, as claimed.
\end{proof}

The proof also gives the following surprising improvement of
\theoremref{thm:t-structure} for simple holonomic $\Dmod$-modules.

\begin{corollary} \label{cor:strong}
Let $\Mmod$ be a simple holonomic $\Dmod_A$-module with $\shH^0 \FM_A(\Mmod) \neq 0$.
Then for every $k > 0$, we have $\codim \Supp \shH^k \FM_A(\Mmod) \geq 2k+2$.
\end{corollary}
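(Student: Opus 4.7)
The plan is to assume for contradiction that $\codim \Supp \shH^k \FM_A(\Mmod) = 2k$ for some $k > 0$ (the inequality $\geq 2k$ is free from \theoremref{thm:t-structure}), and then to run a variant of the argument proving \theoremref{thm:simple-support} to force $\shH^0 \FM_A(\Mmod) = 0$. Pick an irreducible component $Z$ of $\Supp \shH^k \FM_A(\Mmod)$ of codimension exactly $2k$. By \theoremref{thm:FM-linear}, $Z$ is a linear subvariety, so $Z = t_{(L,\nabla)}(\im \fsh)$ for some surjective morphism $f \colon A \to B$ with connected fibers and $\dim B = \dim A - k$. Replacing $\Mmod$ by $\Mmod \tensor_{\OA} (L,\nabla)^{-1}$, which only translates $\FM_A(\Mmod)$ by \theoremref{thm:Laumon}\ref{en:Laumon1} and therefore preserves the hypothesis $\shH^0 \FM_A(\Mmod) \neq 0$ (the nonvanishing locus is merely translated in $\Ash$), I may assume $Z = \im \fsh$.

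The crucial observation is that $Z$ is an irreducible component of $\Supp \shH^k \FM_A(\Mmod)$, but not of $\Supp \shH^i \FM_A(\Mmod)$ for any $i > k$; indeed, \theoremref{thm:t-structure} forces those supports to have codimension $\geq 2i > 2k$. This is exactly the input needed to run the argument of \theoremref{thm:simple-support}, with $r$ there played by our $k$: from the triangle obtained by truncating $\fp \Mmod$ at degree $k-1$, combined with \lemmaref{lem:support} applied to $F = \FM_A(\Mmod)$ at $Z$, the adjunction morphism $\Mmod \to \fu \shH^k(\fp \Mmod)$ must be nonzero. Then simplicity of $\Mmod$ and \lemmaref{lem:inverse-simple} produce, by the same filtration argument, a simple holonomic $\Dmod_B$-module $\Nmod$ with $\Mmod \simeq \fu \Nmod$.

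Now I exploit the hypothesis $\shH^0 \FM_A(\Mmod) \neq 0$ to derive a contradiction. By \theoremref{thm:Laumon}\ref{en:Laumon2},
\[
    \FM_A(\Mmod) \simeq \FM_A(\fu \Nmod) \simeq \derR \fsh_{\ast} \FM_B(\Nmod) \decal{-k}.
\]
Since $\fsh$ is a closed immersion, $\derR \fsh_{\ast}$ is exact, and thus $\shH^0 \FM_A(\Mmod) \simeq \fsh_{\ast} \shH^{-k} \FM_B(\Nmod)$. However, $\Nmod$ is a single holonomic $\Dmod_B$-module, so \theoremref{thm:t-structure} gives $\FM_B(\Nmod) \in \mDtcoh{\geq 0}(\OBsh)$, and \lemmaref{lem:m-geq} then places it in $\Dtcoh{\geq 0}(\OBsh)$. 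Since $k > 0$, $\shH^{-k} \FM_B(\Nmod) = 0$, forcing $\shH^0 \FM_A(\Mmod) = 0$, contrary to assumption.

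This rules out $\codim = 2k$, so $\codim \Supp \shH^k \FM_A(\Mmod) \geq 2k+1$. The final step is purely parity: by \theoremref{thm:FM-linear} the support is a finite union of linear subvarieties, and every linear subvariety $t_{(L,\nabla)}(\im \fsh)$ has codimension $2(\dim A - \dim B)$, always even. Hence the codimension jumps from $\geq 2k+1$ to $\geq 2k+2$, completing the proof. The only delicate point is verifying that the proof of \theoremref{thm:simple-support} is really driven by the combinatorial role of $r$ as ``an index where $Z$ first appears as a top-dimensional support,'' rather than by $r$ being the \emph{minimal} nonvanishing index; once that is clear, everything else is bookkeeping.
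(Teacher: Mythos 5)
Your proposal is correct and follows exactly the route of the paper's own (very terse) proof: assume a component of $\Supp \shH^k \FM_A(\Mmod)$ has codimension exactly $2k$, rerun the argument of \theoremref{thm:simple-support} with $k$ in place of the minimal index $r$ to conclude $\Mmod \tensor (L,\nabla) \simeq \fu\Nmod$, and observe that this forces $\shH^0\FM_A(\Mmod)=0$, after which evenness of codimensions of linear subvarieties upgrades $\geq 2k+1$ to $\geq 2k+2$. The details you supply — that \lemmaref{lem:support} only needs $Z$ to be a component of $\Supp\shH^k F$ and of no higher $\Supp\shH^i F$ (automatic since $\codim\Supp\shH^i F \geq 2i > 2k$), and the explicit parity step — are precisely what the paper leaves implicit, and they check out.
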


\begin{proof}
If we had $\codim \Supp \shH^k \FM_A(\Mmod) = 2k$ for some $k > 0$, then the same proof as
above would show that $\Mmod$ is pulled back from an abelian variety of dimension
$g-k$. This possibility is ruled out by our assumption that $\shH^0 \FM_A(\Mmod) \neq
0$.
\end{proof}

\subsection{Rigidity of the Fourier-Mukai transform}

\corollaryref{cor:strong} has a very interesting consequence, namely that the
Fourier-Mukai transform of a simple holonomic $\Dmod_A$-module is completely
determined by a single locally free sheaf: the restriction of the $0$-th cohomology
sheaf to the open subset where it is locally free.

\begin{proposition} \label{prop:IC}
Let $\Mmod$ be a simple holonomic $\Dmod_A$-module, and suppose that the support of
$F = \FM_A(\Mmod)$ is equal to $\Ash$. Then $\shH^0 F$ is a reflexive coherent sheaf,
locally free on the complement of a finite union of linear subvarieties of
codimension $\geq 4$; and $F$ is uniquely determined by the restriction of
$\shH^0 F$ to that open subset.
\end{proposition}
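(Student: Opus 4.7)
The plan is to exploit the duality
\[
\FM_A(\DA \Mmod) \simeq \langle -1_{\Ash} \rangle^{\ast} \derR \shHom(F, \OAsh)
\]
from \theoremref{thm:Laumon}\ref{en:Laumon4}. Since $\DA \Mmod$ is again a simple holonomic $\Dmod_A$-module, and since the hypothesis $\Supp F = \Ash$ forces $\shH^0 F$ to have positive generic rank (so that $\shHom(\shH^0 F, \OAsh) = \shH^0 \derR \shHom(F, \OAsh)$ is likewise nonzero), \corollaryref{cor:strong} applies simultaneously to $\Mmod$ and to $\DA \Mmod$ and gives
\[
\codim \Supp \shH^k F \geq 2k+2 \quad\text{and}\quad \codim \Supp \shExt^k(F, \OAsh) \geq 2k+2
\]
for every $k \geq 1$.

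For the reflexivity of $G = \shH^0 F$, I would combine these bounds via the distinguished triangle $G \to F \to \tau_{\geq 1} F \to G \decal{1}$. A hypercohomology spectral sequence argument, fed by $\codim \Supp \shH^i F \geq 2i+2$, shows that $\shExt^k(\tau_{\geq 1} F, \OAsh) = 0$ for $k \leq 2$ and $\codim \Supp \shExt^k(\tau_{\geq 1} F, \OAsh) \geq k+1$ for $k \geq 3$. The associated long exact sequence of Ext-sheaves then yields $\shExt^1(G, \OAsh) \simeq \shExt^1(F, \OAsh)$ and, more generally, $\codim \Supp \shExt^k(G, \OAsh) \geq k+2$ for every $k \geq 1$. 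On a smooth variety this is precisely the Auslander--Buchsbaum characterization of reflexivity. The non-locally-free locus $\bigcup_{k \geq 1} \Supp \shExt^k(G, \OAsh)$ is definable in terms of $F$ in the sense of \theoremref{thm:FM-linear}, hence a finite union of linear subvarieties; the estimates above force codimension $\geq 4$ throughout, with the critical case $k=1$ controlled by the isomorphism with $\shExt^1(F, \OAsh)$.

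For uniqueness, let $j \colon U \into \Ash$ denote the inclusion of the locally free locus. Reflexivity of $G$ together with $\codim(\Ash \setminus U) \geq 4$ yields $G \simeq \jl \ju G$, so $G$ is already recovered from $\ju G$. To recover $F$ from $G$, I would apply the iterated dualize-and-truncate functor
\[
\tau_{\leq \ell(A)-1} \circ \derR \shHom(\argbl, \OAsh) \circ \dotsb \circ \tau_{\leq 2} \circ \derR \shHom(\argbl, \OAsh) \circ \tau_{\leq 1} \circ \derR \shHom(\argbl, \OAsh)
\]
announced in \subsecref{subsec:intro-simple}. The idea is that at each step the truncated output matches either $F$ or $\derR \shHom(F, \OAsh)$ up to the chosen threshold, a fact one reads off from the codimension estimates above together with their dual counterparts. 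The main obstacle is the inductive bookkeeping: one must verify that at round $r$ the threshold $\tau_{\leq r}$ is large enough to capture every cohomology sheaf just transferred across by the preceding $\derR \shHom$, yet small enough to discard contributions from components whose codimension has not yet caught up. Once this is granted, the procedure stabilizes after $\lceil \dim A / 2 \rceil$ iterations and terminates at $F$ itself, simultaneously proving that $F$ is determined by $\ju G$ and furnishing the explicit reconstruction.
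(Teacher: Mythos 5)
Your proposal is correct and follows essentially the same route as the paper's proof: reflexivity of $\shH^0 F$ via the triangle $\shH^0 F \to F \to \tau_{\geq 1} F$ combined with \corollaryref{cor:strong} applied to both $\Mmod$ and $\DA \Mmod$, the codimension-$\geq 4$ bound on the non-locally-free locus from the dual side, and reconstruction by iterated dualize-and-truncate. The ``inductive bookkeeping'' you flag as the main obstacle is settled exactly as you anticipate: since $\codim \Supp \shH^i F \geq 2i+2 \geq i+n+3$ for $i \geq n+1$, \cite{Kashiwara}*{Proposition~4.3} gives $\derR \shHom(\tau_{\geq n+1}F, \OAsh) \in \Dtcoh{\geq n+3}(\OAsh)$, whence $\tau_{\leq n+1}\derR\shHom(F,\OAsh) \simeq \tau_{\leq n+1}\derR\shHom(\tau_{\leq n}F,\OAsh)$, and applying the same estimate to the dual complex shows that $\tau_{\leq n}F$ determines $\tau_{\leq n+2}F$.
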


\begin{proof}
We shall use the symbols $\tau_{\leq n}$ and $\tau_{\geq n}$ for the truncation
functors with respect to the standard t-structure on $\Dbcoh(\OAsh)$. 

Let us first show that $\shH^0 F$ is reflexive. We have $F \in \mCoh(\OAsh)$, and
therefore $F \in \Dtcoh{\geq 0}(\OAsh)$, according to \theoremref{thm:t-structure}
and \lemmaref{lem:m-geq}. This means that we can write down a distinguished triangle
\[
	\shH^0 F \to F \to \tau_{\geq 1} F \to (\shH^0 F) \decal{1}, 
\]
and after dualizing, an exact sequence
\[
	R^i \shHom(F, \shO) \to R^i \shHom \bigl( \shH^0 F, \shO \bigr) 
		\to R^{i+1} \shHom \bigl( \tau_{\geq 1} F, \shO \bigr).
\]
Since $\derR \shHom(F, \shO)$ is isomorphic to the Fourier-Mukai transform of the
simple holonomic $\Dmod_A$-module $\langle -1_A \rangle^{\ast} \DA \Mmod$, we get $\codim
\Supp R^i \shHom(F, \shO) \geq 2i+2$ for all $i \geq 1$; on the other hand, we have
$\tau_{\geq 1} F \in \Dtcoh{\geq 1}(\OAsh)$, and \cite{Kashiwara}*{Proposition~4.3} shows that 
\[
	\codim \Supp R^{i+1} \shHom \bigl( \tau_{\geq 1} F, \shO \bigr) \geq i+2.
\]
Together, this proves that $\codim \Supp R^i \shHom \bigl( \shH^0 F, \shO \bigr) \geq
i+2$ for every $i \geq 1$; because $\Ash$ is nonsingular, these inequalities guarantee
that $\shH^0 F$ is reflexive.

Now let $j \colon U \into \Ash$ be the maximal open subset where $\shH^0 F$ is
locally free. Observe that the complement
\[
	\Ash \setminus U = \bigcup_{k \geq 1} \Supp R^k \shHom(F, \OAsh)
		= \bigcup_{k \geq 1} \langle -1_{\Ash} \rangle 
			\Supp \shH^k \FM_A \bigl( \DA \Mmod \bigr)
\]
is a finite union of linear subvarieties of $\Ash$, of codimension at least four.
Because $\shH^0 F$ is reflexive, it is uniquely determined by $\ju \shH^0 F$; in
fact, we have
\[
	\shH^0 F \simeq \jl \ju \shH^0 F.
\]

To prove the remaining assertion, it will be enough to show that, for each $n
\geq 0$, the truncation $\tau_{\leq n} F$ can be reconstructed starting from $\shH^0
F$. We shall argue by induction on $n \geq 0$, using that $\tau_{\leq 0} F = \shH^0
F$. Consider the distinguished triangle
\[
	\tau_{\leq n} F \to F \to \tau_{\geq n+1} F \to (\tau_{\leq n} F) \decal{1}.
\]
According to \corollaryref{cor:strong}, we have $\codim \Supp \shH^i(F) \geq 2i+2
\geq i+n+3$ for every $i \geq n+1$; in combination with \cite{Kashiwara}*{Proposition~4.3},
this implies that $\derR \shHom(\tau_{\geq n+1} F, \shO)$ belongs to $\Dtcoh{\geq
n+3}(\OAsh)$. After dualizing, we conclude that
\[
	\tau_{\leq n+1} \derR \shHom(F, \shO) 
		\simeq \tau_{\leq n+1} \derR \shHom \bigl( \tau_{\leq n} F, \shO \bigr).
\]
The same argument, applied to the dual complex $\derR \shHom(F, \shO)$, shows that
\[
	\tau_{\leq n+2} F \simeq \tau_{\leq n+2} 
		\derR \shHom \Bigl( \tau_{\leq n+1} \derR \shHom(F, \shO), \shO \Bigr).
\]
Together, this says that we always have
\begin{equation} \label{eq:reconstruction-n}
	\tau_{\leq n+2} F \simeq \tau_{\leq n+2} \derR \shHom \Bigl( \tau_{\leq n+1}
		\derR \shHom(\tau_{\leq n} F, \shO), \shO \Bigr),
\end{equation}
thereby concluding the proof.
\end{proof}

The procedure used during the proof leads to the following striking result.

\begin{corollary} \label{cor:IC}
Under the same assumptions as above, $\FM_A(\Mmod)$ can be reconstructed, up to
isomorphism, by applying the functor
\[
	\tau_{\leq \ell-1} \circ \derR \shHom(\argbl, \shO) \circ \dotsb \circ
		\tau_{\leq 2} \circ \derR \shHom(\argbl, \shO) \circ \tau_{\leq 1} \circ 
		\derR \shHom(\argbl, \shO) \circ \jl 
\]
to the locally free sheaf $\ju \shH^0 \FM_A(\Mmod)$; here $\ell$ is any odd
integer with $\ell \geq \dim A$.  
\end{corollary}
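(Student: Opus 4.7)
My plan is to unfold the composition of functors in the statement by iterating the isomorphism
\[
	\tau_{\leq n+2} F \simeq \tau_{\leq n+2} \derR \shHom \Bigl( \tau_{\leq n+1}
		\derR \shHom(\tau_{\leq n} F, \shO), \shO \Bigr)
\]
that was established during the proof of \propositionref{prop:IC} for $F = \FM_A(\Mmod)$ and every $n \geq 0$. The starting point of the iteration is the identification $\shH^0 F \simeq \jl \ju \shH^0 F$, which holds because $\shH^0 F$ was shown in \propositionref{prop:IC} to be reflexive and locally free on $U$, hence determined by its restriction to $U$.

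I would apply the composition to $\ju \shH^0 F$ step by step, reading from right to left. The first operation, $\jl$, produces $\tau_{\leq 0} F$. The next four operations $\derR \shHom(\argbl, \shO)$, $\tau_{\leq 1}$, $\derR \shHom(\argbl, \shO)$, $\tau_{\leq 2}$ constitute one application of the displayed formula with $n = 0$, and therefore produce $\tau_{\leq 2} F$. Inductively, each successive group of four operations ending in $\tau_{\leq 2k+2}$ is an instance of the formula with $n = 2k$, sending $\tau_{\leq 2k} F$ to $\tau_{\leq 2k+2} F$. Since $\ell - 1$ is even (because $\ell$ is odd), the chain of truncations terminates cleanly at $\tau_{\leq \ell-1} F$.

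To finish, I must check that $\tau_{\leq \ell-1} F \simeq F$. Because $\Supp F = \Ash$ and $\codim \Supp \shH^i F \geq 2i$, we must have $\shH^0 F \neq 0$, so \corollaryref{cor:strong} gives $\codim \Supp \shH^k F \geq 2k+2$ for every $k \geq 1$; combined with $\dim \Ash = 2 \dim A$, this forces $\shH^k F = 0$ whenever $k \geq \dim A$, so $\ell - 1 \geq \dim A - 1$ suffices. The main---and really the only---point requiring care is the parity of $\ell$: it must be odd so that the iteration lands on an even truncation level at which \propositionref{prop:IC}'s formula applies directly; were $\ell$ even, the final step would yield only $\tau_{\leq \ell-1} \derR \shHom(\tau_{\leq \ell-2} F, \shO)$ rather than $\tau_{\leq \ell-1} F$ itself. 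All substantive content has already been provided by \propositionref{prop:IC} and \corollaryref{cor:strong}, so I anticipate no serious obstacle beyond this bookkeeping.
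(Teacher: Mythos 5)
Your proposal is correct and follows essentially the same route as the paper, which simply cites the iteration formula \eqref{eq:reconstruction-n} together with the fact that $F \in \Dtcoh{\leq g-1}(\OAsh)$; you have merely made explicit the bookkeeping (grouping the operations into instances of \eqref{eq:reconstruction-n} with $n = 0, 2, \dotsc, \ell-3$, the role of the parity of $\ell$, and the derivation of $F \in \Dtcoh{\leq \dim A - 1}(\OAsh)$ from \corollaryref{cor:strong}) that the paper leaves implicit.
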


\begin{proof}
This follows \eqref{eq:reconstruction-n} and the fact that $F \in \Dtcoh{\leq
g-1}(\OAsh)$.
\end{proof}

\subsection{Intersection complexes}
\label{subsec:IC}

The formula in \corollaryref{cor:IC} for the Fourier-Mukai transform of a simple
holonomic $\Dmod_A$-module is similar to Deligne's formula for the intersection
complex of a local system. The purpose of this
section is to turn that analogy into a rigorous statement.%
\footnote{%
In \cite{AB}*{Section~4}, Arinkin and Bezrukavnikov also define a notion of
``coherent IC-sheaves''. Unfortunately, I do not know how their definition relates to
\definitionref{def:IC}.
}

We shall use the assumptions and notations of \subsecref{subsec:perverse}; in
particular, $X$ will be a smooth complex algebraic variety. We write
$\tau_{\leq n}$ and $\tau_{\geq n}$ for the truncation functors with respect to the
standard t-structure on $\Dbcoh(\OX)$. To simplify the notation, let us introduce the
symbol
\[
	\Delta = \derR \shHom(\argbl, \OX) \colon \Dbcoh(\OX) \to \Dbcoh(\OX)^{\opp}
\]
for the naive duality functor. Define
\[
	\ell(X) = 2 \left\lceil \frac{\dim X + 1}{4} \right\rceil - 1,
\]
which is the smallest odd integer such that $2 \ell + 1 \geq \dim X$.

\begin{definition} \label{def:IC}
Let $\shF$ be a reflexive coherent sheaf. The complex 
\[
	\IC(\shF) = 
	\bigl( \tau_{\leq \ell(X)-1} \circ \Delta \circ \tau_{\leq \ell(X)-2} \circ \Delta \circ 
		\dotsb \circ \tau_{\leq 2} \circ \Delta \circ 
		\tau_{\leq 1} \circ \Delta \bigr) \shF 
\]
will be called the \define{intersection complex} of $\shF$.
\end{definition}

In the definition, $\ell(X)$ may be replaced by any odd integer $\ell$ with the property
that $2 \ell + 1 \geq \dim X$; will see below that this does not change the resulting
complex of coherent sheaves (up to isomorphism). 

When the coherent sheaf $\shF$ is locally free, we of course have $\IC(\shF) \simeq \shF$.
The first result is that $\IC(\shF)$ is always an $m$-perverse coherent sheaf.

\begin{proposition} \label{prop:IC-mCoh}
The complex $\IC(\shF)$ is an $m$-perverse coherent sheaf. Its $0$-th cohomology
sheaf is isomorphic to $\shF$, and 
\[
	\codim \Supp \shH^i \IC(\shF) \geq 2i+1 \quad \text{for every $i \geq 1$.}
\]
The dual complex $\Delta \IC(\shF)$ has the same properties, but $\shH^0 \Delta
\IC(\shF) \simeq \shHom(\shF, \OX)$.
\end{proposition}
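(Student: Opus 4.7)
\textit{Plan.} I'll prove all four claims simultaneously by induction on the iterative construction. Define $L_0 = \shF$ and $L_k = \tau_{\leq k} \Delta L_{k-1}$ for $1 \leq k \leq \ell(X) - 1$, so that $\IC(\shF) = L_{\ell(X) - 1}$. Two standing facts are essential: on a smooth variety every object of $\Dbcoh(\OX)$ is perfect, so $\Delta$ is an anti-involution with $\Delta \Delta K \simeq K$; and the reflexivity of $\shF$, via the depth characterization together with Auslander-Buchsbaum, gives $\codim \Supp \Ext^i(\shF, \OX) \geq i+2$ for every $i \geq 1$, which explicitly describes $\Delta \shF$ and therefore also $L_1 = \tau_{\leq 1} \Delta \shF$.

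The inductive claim (tracking $L_k$ and $\Delta L_k$ simultaneously) is that $L_k$ has cohomology concentrated in degrees $\lbrack 0, k \rbrack$, with $\shH^0 L_k \simeq \shF$ when $k$ is even and $\shH^0 L_k \simeq \shHom(\shF, \OX)$ when $k$ is odd; that the codimension of $\Supp \shH^i L_k$ for $1 \leq i \leq k$ strengthens at each step and reaches $\geq 2i + 1$ precisely when $k = \ell(X) - 1$; and that $\Delta L_k$ satisfies the analogous statements with the parity roles reversed. In the step from $L_{k-1}$ to $L_k$, I would use the distinguished triangle $L_k \to \Delta L_{k-1} \to \tau_{\geq k+1} \Delta L_{k-1} \to L_k[1]$; dualizing and invoking $\Delta \Delta L_{k-1} \simeq L_{k-1}$ rotates this to $\Delta \tau_{\geq k+1} \Delta L_{k-1} \to L_{k-1} \to \Delta L_k$, and Kashiwara's codimension estimate (\cite{Kashiwara}*{Proposition~4.3}) applied to $\tau_{\geq k+1} \Delta L_{k-1}$ forces the leftmost term into sufficiently high cohomological degrees with large support codimension. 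Consequently, the cohomology of $\Delta L_k$ in low degree is inherited directly from $L_{k-1}$, while in high degree it is controlled by the correction term; truncating by $\tau_{\leq k+1}$ then produces $L_{k+1}$ and propagates the bounds.

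Setting $k = \ell(X) - 1$ yields the stated properties of both $\IC(\shF)$ and $\Delta \IC(\shF)$. Because $\ell(X)$ is odd, $\ell(X) - 1$ is even, so $\shH^0 \IC(\shF) \simeq \shF$ and $\shH^0 \Delta \IC(\shF) \simeq \shHom(\shF, \OX)$; the codimension bounds give $\IC(\shF) \in \mDtcoh{\leq 0}(\OX)$ by \lemmaref{lem:m-structure}, and the corresponding bound on $\Delta \IC(\shF)$ gives $\IC(\shF) \in \mDtcoh{\geq 0}(\OX)$ via the duality between $\mDtcoh{\geq 0}$ and $\mhDtcoh{\leq 0}$ (\subsecref{subsec:perverse}), so $\IC(\shF) \in \mCoh(\OX)$. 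The principal obstacle is the bookkeeping in the inductive step: the codimension bounds on $\shH^i L_k$ evolve in an intricate way, since they can be inherited from $L_{k-1}$ or arise from the correction term $\Delta \tau_{\geq k+1} \Delta L_{k-1}$, and verifying that they combine to yield exactly $\geq 2i+1$ at $k = \ell(X) - 1$ (and thus that $\ell(X)$ is the correct number of iterations) will require a careful case analysis by the parity of $k$.
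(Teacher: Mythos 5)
Your overall architecture --- iterate the construction, dualize the truncation triangle $L_k \to \Delta L_{k-1} \to \tau_{\geq k+1}\Delta L_{k-1}$, and control the correction term $\Delta\tau_{\geq k+1}\Delta L_{k-1}$ with Kashiwara's Proposition~4.3 --- is the same as the paper's, and your base case via Auslander--Buchsbaum is a reasonable substitute for the paper's. But your inductive hypothesis is not what actually happens: the codimension bound does not ``strengthen at each step and reach $2i+1$ precisely when $k=\ell(X)-1$.'' The correct invariant, which is what propagates, is that $\codim \Supp \shH^i L_k \geq 2i+1$ for all $i\geq 1$ holds uniformly at \emph{every} stage $k$: the dualized triangle gives $\codim \Supp \shH^i \Delta L_k \geq \min(2i+1,\, i+k+2)$, which equals $2i+1$ exactly in the range $i \leq k+1$ that survives the truncation $\tau_{\leq k+1}$.

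The genuine gap is in the claim about $\Delta\IC(\shF)$, which does not follow from this bookkeeping alone. In degrees $i > k+1$ the dual $\Delta L_k$ only satisfies the degraded bound $i+k+2$, so for $\Delta\IC(\shF) = \Delta L_{\ell(X)-1}$ you only obtain $\codim \Supp \shH^i \geq i+\ell(X)+1$ when $i > \ell(X)$; this is weaker than the asserted $2i+1$, and weaker even than the $2i-1$ needed to place $\Delta\IC(\shF)$ in $\mhDtcoh{\leq 0}(\OX)$, equivalently $\IC(\shF)$ in $\mDtcoh{\geq 0}(\OX)$. Since $\ell(X) \approx \dim X /2$, this problematic range of degrees is nonempty as soon as $\dim X \geq 5$. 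The missing ingredient is a separate amplitude estimate, proved by its own induction: $\shH^i \Delta L_k = 0$ once $i > \lceil \dim X/2 \rceil - 1$ and $i > \dim X - k - 2$. This forces the correction terms $\tau_{\geq k+1}\Delta L_k$ to vanish as soon as $2k+1 \geq \dim X$, so that the sequence stabilizes into the two-periodic pattern $\IC(\shF), \Delta\IC(\shF), \IC(\shF), \dotsc$; only then is $\Delta\IC(\shF)$ itself a member of the sequence and hence subject to the same uniform bound $2i+1$ in all degrees. This stabilization --- not a gradual improvement of codimension bounds --- is the reason $\ell(X)$ is the right number of iterations, and your proposal does not supply it.
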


\begin{proof}
Our main task is to show that both $\IC(\shF)$ and $\Delta \IC(\shF)$ belong to the
subcategory $\mDtcoh{\leq 0}(\OX)$. We recursively define a sequence of
complexes by setting
\[
	F_n = \tau_{\leq n-1} \Delta F_{n-1},
\]
starting from $F_0 = \shHom(\shF, \OX)$. Observe that $F_1 \simeq \shF$, because
we are assuming that $\shF$ is reflexive. We shall first prove by induction on $n \geq 0$
that
\begin{equation} \label{eq:Fn}
	\text{$F_n \in \Dtcoh{\geq 0}(\OX)$, and 
		$\codim \Supp \shH^i F_n \geq 2i+1$ for every $i \geq 1$.}
\end{equation}
To get started, note that \eqref{eq:Fn} is obviously true for $n = 0$ and $n = 1$:
indeed, both $F_0 = \shHom(\shF, \OX)$ and $F_1 \simeq \shF$ are sheaves. 

Suppose now that we already know the result for all integers between $0$ and $n$.
From the definition of $F_n$, we obtain a distinguished triangle
\[
	F_n \to \Delta F_{n-1} \to \tau_{\geq n} \Delta F_{n-1} \to F_n \decal{1};
\]
after dualizing again, this becomes
\begin{equation} \label{eq:Fn-induction}
	\Delta \tau_{\geq n} \Delta F_{n-1} \to F_{n-1} \to \Delta F_n \to 
		\Bigl( \Delta \tau_{\geq n} \Delta F_{n-1} \Bigr) \decal{1}.
\end{equation}
From \eqref{eq:Fn}, we get $\Delta F_n \in \mhDtcoh{\geq 0}(\OX)$; together with
\lemmaref{lem:m-geq}, this implies that both $\Delta F_n$ and $F_{n+1} = \tau_{\leq
n} \Delta F_n$ are objects of $\Dtcoh{\geq 0}(\OX)$. 

It remains to show that $F_{n+1}$ also lies in the subcategory $\mDtcoh{\leq 0}(\OX)$.
According to \cite{Kashiwara}*{Proposition~4.3}, which we have already used several
times, 
\[
	\codim \Supp \Bigl( \shH^i \Delta \tau_{\geq n} \Delta F_{n-1} \Bigr)
		\geq i+n;
\]
from \eqref{eq:Fn}, we also know that $\codim \Supp \shH^i F_{n-1} \geq 2i+1$ for $i
\geq 1$. The distinguished triangle in \eqref{eq:Fn-induction} gives an exact sequence
\begin{equation} \label{eq:Fn-exact}
	\shH^i F_{n-1} \to \shH^i \Delta F_n 
		\to \shH^{i+1} \Delta \tau_{\geq n} \Delta F_{n-1},
\end{equation}
from which it follows that $\codim \Supp \shH^i \Delta F_n \geq \min(2i+1, i+n+1)
\geq 2i+1$ as long as $1 \leq i \leq n$. This means that $F_{n+1} = \tau_{\leq n}
\Delta F_n$ also satisfies \eqref{eq:Fn}. We have thus proved that \eqref{eq:Fn}
is true for all $n \geq 0$. A useful consequence is that
\[
	\shH^i F_n  = 0 
		\quad \text{once $i > b(X) = \left\lceil \frac{\dim X}{2} \right\rceil - 1$,}
\]
which means that $F_n \in \Dtcoh{\leq b(X)}(\OX)$ for every $n \geq 0$.
	
Next, let us show that the sequence of complexes $F_n$ eventually settles into the
pattern $F, \Delta F, F, \Delta F, \dotsc$. Here we need a bound on the amplitude of
$\Delta F_n$:
\begin{equation} \label{eq:DeltaFn}
	\shH^i \Delta F_n = 0 \quad \text{if $i > b(X)$ and $i > \dim X - n - 1$}
\end{equation}
The proof is again by induction on $n \geq 0$: the statement is obviously true for $n
= 0$ and $n = 1$; for the inductive step, one uses \eqref{eq:Fn-exact}.
Looking back at \eqref{eq:Fn-induction}, we can then say that $\tau_{\geq n}
\Delta F_{n-1} = 0$ as soon as $n > b(X)$ and $n > \dim X - (n-1) - 1$; this
translates into the condition that
\[
	2n-1 \geq \dim X.
\]
If that is the case, we get
\[
	F_{n-1} \simeq \Delta F_n \quad \text{and} \quad
		F_{n+1} = \tau_{\leq n} \Delta F_n \simeq F_{n-1},
\]
and so from that point on, the sequence of complexes alternates between
$F_{n-1}$ and $\Delta F_{n-1}$. Consequently, the intersection complex of $\shF$
satisfies
\[
	\IC(\shF) \simeq F_{\ell} \quad \text{and} \quad \Delta \IC(\shF) \simeq F_{\ell+1}
\]
where $\ell$ is any odd integer such that $2 \ell + 1 \geq \dim X$. Since it is obvious
from the definition that $\shH^0 \IC(\shF)$ is isomorphic to $\shF$, we have
now proved everything that was asserted.
\end{proof}

As in the case of Fourier-Mukai transforms, the fact that $\shF$ is
reflexive means that $\IC(\shF)$ is already determined by the restriction of $\shF$
to an open subset.

\begin{corollary}
Let $j \colon U \into X$ denote the maximal open subset where $\shF$ is locally free.
Then the intersection complex of $\shF$ satisfies
\[
	\IC(\shF) \simeq \IC(\jl \ju \shF),
\]
and is therefore uniquely determined by the locally free sheaf $\ju \shF$.
\end{corollary}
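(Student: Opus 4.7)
The plan is to reduce the identity $\IC(\shF) \simeq \IC(\jl \ju \shF)$ to the single assertion $\shF \simeq \jl \ju \shF$, after which the conclusion is immediate from \definitionref{def:IC}, since the intersection complex is a functor of its (reflexive coherent sheaf) input.

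First, I would verify that $U$ is ``large'' enough to recover $\shF$. Because $X$ is smooth and $\shF$ is reflexive, the non-locally-free locus $X \setminus U$ has codimension at least two; this is a standard property of reflexive sheaves on smooth varieties, and it is the only part of the argument that uses smoothness of $X$. Next, since $\shF$ is reflexive on the normal variety $X$, it satisfies Serre's condition $S_2$, which means exactly that for any open subset $V \subseteq X$ whose complement has codimension at least two, the restriction map $\shF \to \jl \ju \shF$ is an isomorphism (equivalently, $\shF \simeq \shHom(\shHom(\shF, \OX), \OX)$ is determined by its behaviour in codimension one). Applying this to $j \colon U \into X$ yields the canonical isomorphism $\shF \simeq \jl \ju \shF$.

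It remains to observe that $\jl \ju \shF$ is itself reflexive, so that $\IC(\jl \ju \shF)$ is well-defined in the sense of \definitionref{def:IC}; but this is automatic, since the pushforward of a locally free sheaf from an open subset with complement of codimension at least two on a normal variety is always reflexive. Given the isomorphism of the previous step, \definitionref{def:IC} (which is strictly functorial in its argument) yields $\IC(\shF) \simeq \IC(\jl \ju \shF)$, and since the latter manifestly depends only on the locally free sheaf $\ju \shF$, this also gives the final claim.

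I do not expect any serious obstacle here: both ingredients (the codimension bound for the non-locally-free locus and the reconstruction of an $S_2$-sheaf from an open subset of codimension-two complement) are standard, and the rest is formal. The only minor subtlety worth flagging is that $U$ is merely open (not necessarily affine) and $X$ need not be projective, but the reconstruction statement $\shF \simeq \jl \ju \shF$ is purely local and applies without such hypotheses.
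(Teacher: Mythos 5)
Your argument is correct and is essentially the paper's proof, just spelled out in more detail: the paper likewise observes that $\codim(X \setminus U) \geq 2$ and that reflexivity then gives $\shF \simeq \jl \ju \shF$, from which the claim follows since $\IC$ depends only on its input sheaf. The extra checks you flag (that $\jl \ju \shF$ is again reflexive, and that the locality of the $S_2$ reconstruction makes projectivity irrelevant) are sound and harmless elaborations.
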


\begin{proof}
The codimension of $X \setminus U$ is $\geq 2$, and so $\jl \ju \shF$ is
isomorphic to $\shF$.
\end{proof}

We can therefore extend the definition of the intersection complex to locally free
sheaves that are defined on the complement of a subset of codimension at least two.

\begin{definition}
Let $j \colon U \into X$ be an open subset with $\codim(X \setminus U) \geq 2$, and
let $\shE$ be a locally free coherent sheaf on $U$. Then the complex
\[
	\IC_X(\shE) = \IC(\jl \shE)
\]
will be called the \define{intersection complex} of $\shE$ (with respect to $X$).
\end{definition}

Our next result is that $\IC_X(\shE)$ actually behaves like an intersection complex:
it does not have nontrivial subobjects or quotient objects whose support is properly
contained in $X$. For technical reasons, the statement is not symmetric.

\begin{proposition}
Let $\shF$ be a torsion-free coherent sheaf on $X$.
\begin{aenumerate}
\item \label{en:IC-a}
If a subobject of $\IC_X(\shE)$ in the abelian category $\mCoh(\OX)$ is supported on a
proper subset of $X$, then that subobject must be zero.
\item \label{en:IC-b}
If a quotient object of $\IC_X(\shE)$ in the abelian category $\mhCoh(\OX)$ is
supported on a proper subset of $X$, then that quotient object must be zero.
\end{aenumerate}
\end{proposition}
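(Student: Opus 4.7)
The plan is to deduce both parts from a single key lemma, reducing Part~(b) to Part~(a) by the duality functor, and then to prove that key lemma by a spectral-sequence bootstrap. By \propositionref{prop:IC-mCoh}, $\IC_X(\shE) \in \mCoh(\OX)$ has torsion-free $\shH^0 = \jl \shE$ and satisfies $\codim \Supp \shH^i \IC_X(\shE) \geq 2i+1$ for $i \geq 1$, and its dual $\Delta \IC_X(\shE) = \derR \shHom(\IC_X(\shE), \OX)$ satisfies the same list of properties (in particular, $\IC_X(\shE) \in \mhCoh(\OX)$, so Part~(b) even makes sense). For Part~(a) I work directly with a mono $G \hookrightarrow \IC_X(\shE)$ in $\mCoh$ whose support is proper; for Part~(b), applying the contravariant equivalence $\Delta\colon \mCoh(\OX) \to \mhCoh(\OX)$ to the epi $\IC_X(\shE) \twoheadrightarrow Q$ produces a mono $\Delta Q \hookrightarrow \Delta \IC_X(\shE)$ in $\mCoh$ whose support is still proper. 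Both situations are therefore special cases of the following key lemma: \emph{if $F \in \mCoh$ has torsion-free $\shH^0 F$, satisfies $\codim \Supp \shH^i F \geq 2i+1$ for $i \geq 1$, and $\Delta F$ has the same two properties, then every mono $G \hookrightarrow F$ in $\mCoh$ with $\Supp G \subsetneq X$ must be zero.}

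Embed the mono in a distinguished triangle $G \to F \to Q \to G[1]$ with $Q \in \mCoh$. Since $\mCoh \subseteq \Dtcoh{\geq 0}$ by \lemmaref{lem:m-geq}, the standard-cohomology long exact sequence yields $\shH^0 G \hookrightarrow \shH^0 F$; the source is torsion and the target torsion-free, forcing $\shH^0 G = 0$. Next, using the Grothendieck spectral sequence $\shExt^p(\shH^{-q} G, \OX) \Rightarrow \shH^{p+q} \Delta G$, combined with the $\mCoh$ bound $\codim \Supp \shH^{-q} G \geq -2q$ for $-q \geq 1$ and the Ext-vanishing $\shExt^p(\shF, \OX) = 0$ for $p < \codim \Supp \shF$, one forces $\shH^0 \Delta G = 0$ as well. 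Dualizing the original triangle yields $\Delta Q \to \Delta F \to \Delta G \to \Delta Q[1]$ in $\mhCoh$; feeding the hypothesis $\codim \Supp \shH^i \Delta F \geq 2i+1$ together with the automatic $\mhCoh$ bound $\codim \Supp \shH^i \Delta Q \geq 2i-1$ into its long exact sequence produces the refined bound $\codim \Supp \shH^i \Delta G \geq 2i+1$ for every $i \geq 1$.

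The bootstrap then alternates two spectral-sequence implications. A short direct calculation shows that if $\shH^i G = 0$ for every $i \leq r$, then the spectral sequence $\shExt^p(\shH^{-q} G, \OX) \Rightarrow \shH^{p+q} \Delta G$ combined with the $\mCoh$ bound on $G$ forces $\shH^i \Delta G = 0$ for $i \leq r$; conversely, if $\shH^i \Delta G = 0$ for $i \leq s$, then $\shExt^p(\shH^{-q} \Delta G, \OX) \Rightarrow \shH^{p+q} G$ \emph{together with the refined bound $\geq 2i+1$} forces $\shH^i G = 0$ for $i \leq s+1$. Starting from the base case $r_0 = 0$, each complete turn of the bootstrap gains exactly one degree, so $\shH^i G = 0$ for every $i$ and $G = 0$ by boundedness. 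The principal technical obstacle is precisely this one-degree gain, which is where the IC-structure really matters: the $\mhCoh$-automatic bound $\geq 2i-1$ alone would give merely $\shH^i G = 0$ for $i \leq s-1$, causing the iteration to regress rather than advance; the refined bound $\geq 2i+1$ on $\Delta G$, extracted in the previous paragraph from the IC hypothesis on $\Delta F$, is what turns that stalemate into the desired telescoping.
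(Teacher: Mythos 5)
Your argument is correct and follows essentially the same route as the paper: reduce \ref{en:IC-b} to \ref{en:IC-a} via the duality equivalence $\mCoh(\OX) \leftrightarrow \mhCoh(\OX)$, embed the subobject in a triangle, dualize, and combine the automatic bound $\codim \Supp \shH^i \geq 2i-1$ on the dual of the cokernel with the bound $\geq 2i+1$ from \propositionref{prop:IC-mCoh} to get $\codim \Supp \shH^i \Delta G \geq 2i+1$. The only real divergence is the endgame: where the paper notes that this (together with properness of the support in degree zero) puts both $G$ and $\Delta G$ in $\mDtcoh{\leq 0}(\OX)$ and then invokes \propositionref{prop:surprise}, you re-derive that conclusion by hand with the $\shExt$--codimension spectral-sequence bootstrap (and use torsion-freeness of $\shH^0$ rather than properness of support to kill degree zero) — a valid but strictly longer path, since your bounds already meet the hypotheses of \propositionref{prop:surprise} verbatim.
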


\begin{proof}
Since $\Delta$ interchanges the two abelian categories $\mCoh(\OX)$ and
$\mhCoh(\OX)$, it suffices to prove \ref{en:IC-a} for both $\IC_X(\shE)$ and $\Delta
\IC_X(\shE)$. Set $\shF = \jl \shE$. We shall only deal with $\IC(\shF)$; the
argument for the other one is exactly the same.

Suppose then that we have a subobject $E$ of $\IC(\shF)$, with $\Supp E \neq X$; we
need to show that $E = 0$. Because $\mCoh(\OX)$ is defined as the heart of a
t-structure, $E$ being a subobject means that we have a distinguished triangle
\[
	E \to \IC(\shF) \to F \to E \decal{1}
\]
in which $E$ and $F$ are objects of $\mCoh(\OX)$. After dualizing, this becomes
\[
	\Delta F \to \Delta \IC(\shF) \to \Delta E \to \Delta F \decal{1}.
\]
Now $\Delta F \in \mhCoh(\OX)$ has the property that $\codim \Supp
\shH^i \Delta F \geq 2i-1$; we also know from \propositionref{prop:IC-mCoh} that $\codim
\Supp \shH^i \Delta \IC(\shF) \geq 2i+1$ for $i \geq 1$. Looking at the second
distinguished triangle, we find that
\[
	\codim \Supp \shH^i \Delta E \geq 2i+1
\]
for $i \geq 1$; in fact, this also holds for $i = 0$ because $\Supp E \neq X$. This
clearly means that $\Delta E \in \mDtcoh{\geq 0}(\OX)$, too. Now apply
\propositionref{prop:surprise} to get $\Delta E = 0$.
\end{proof}

Note that the distinction between $\mCoh(\OX)$ and $\mhCoh(\OX)$ does not matter if
we only consider objects for which the support of every cohomology sheaf has even
dimension. This is the case for Fourier-Mukai transforms of holonomic complexes. 

\begin{proposition}
Suppose that a reflexive coherent sheaf $\shF$ on $\Ash$ is the Fourier-Mukai
transform of a holonomic complex. Then the same is true for $\IC(\shF)$.
\end{proposition}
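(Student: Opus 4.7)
The plan is to show that the subcategory $\FM_A \bigl( \Dbh(\Dmod_A) \bigr) \subseteq \Dbcoh(\OAsh)$ is stable under every operation used in the construction of $\IC(\shF)$. Since $\IC(\shF)$ is built by iterating two functors on $\shF$, namely the naive dual $\Delta = \derR \shHom(\argbl, \OAsh)$ and the standard truncations $\tau_{\leq n}$, it suffices to check that both preserve the subcategory in question.

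For the truncation functors, this is exactly the content of \theoremref{thm:truncation}. For the duality $\Delta$, I would appeal to \theoremref{thm:Laumon}\ref{en:Laumon4}, which gives the canonical identification
\[
	\derR \shHom \bigl( \FM_A(\Mmod), \OAsh \bigr) \simeq
		\langle -1_{\Ash} \rangle^{\ast} \FM_A \bigl( \DA \Mmod \bigr).
\]
Since the duality functor $\DA \colon \Dbcoh(\Dmod_A) \to \Dbcoh(\Dmod_A)^{\opp}$ sends $\Dbh(\Dmod_A)$ to itself, and since $\langle -1_{\Ash} \rangle^{\ast}$ is an isomorphism of categories, applying $\Delta$ to an object of the form $\FM_A(\Mmod)$ with $\Mmod \in \Dbh(\Dmod_A)$ again yields a Fourier-Mukai transform of a holonomic complex (indeed, by \theoremref{thm:Laumon}\ref{en:Laumon2} applied to the morphism $-1_A \colon A \to A$, pullback by $\langle -1_{\Ash} \rangle$ corresponds on the $\Dmod$-side to the direct image under $-1_A$, which trivially preserves holonomicity).

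With both stability statements in hand, the conclusion is immediate: starting from $\shF$, which by hypothesis lies in $\FM_A \bigl( \Dbh(\Dmod_A) \bigr)$, each of the finitely many steps $\Delta$, $\tau_{\leq 1}$, $\Delta$, $\tau_{\leq 2}$, \dots, $\Delta$, $\tau_{\leq \ell(\Ash)-1}$ appearing in \definitionref{def:IC} keeps us inside this subcategory. Therefore $\IC(\shF) \in \FM_A \bigl( \Dbh(\Dmod_A) \bigr)$, which is exactly the assertion. There is no real obstacle here; the entire proof is a short bookkeeping argument once one recognizes that the two building blocks of $\IC(\argbl)$ have been shown (in \subsecref{subsec:truncation} and in Laumon's list of compatibilities) to be compatible with the Fourier-Mukai transform for holonomic complexes.
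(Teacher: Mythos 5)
Your proposal is correct and follows essentially the same route as the paper: the paper's own proof simply observes that $\IC(\shF)$ is obtained by repeatedly dualizing and truncating, and cites \theoremref{thm:Laumon} and \theoremref{thm:truncation} for the stability of $\FM_A \bigl( \Dbh(\Dmod_A) \bigr)$ under these two operations. Your additional remark explaining why the twist by $\langle -1_{\Ash} \rangle^{\ast}$ is harmless is a fine (and correct) elaboration of a point the paper leaves implicit.
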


\begin{proof}
A look at the formula for the intersection complex in \definitionref{def:IC} shows
that it is obtained by repeatedly dualizing and truncating. By \theoremref{thm:Laumon}
and \theoremref{thm:truncation}, both operations preserve the subcategory $\FM_A
\bigl( \Dbh(\Dmod_A) \bigr)$.
\end{proof}

As an object of the abelian category $\FM_A \bigl( \Dbh(\Dmod_A) \bigr)$, the
intersection complex of $\shF$ now has neither subobjects nor quotient objects that
are supported in a proper subset of $\Ash$ (except for the zero object). This shows
that the analogy with the intersection complex in \cite{BBD} is meaningful.

\section{Miscellaneous results}

This concluding chapter contains a small number of additional results about
Fourier-Mukai transforms of holonomic complexes.

\subsection{A criterion for holonomicity}

In this section, we give a necessary and sufficient condition for a complex of
coherent sheaves on $\Ash$ to be of the form $\FM_A(\Mmod)$ for a holonomic complex
$\Mmod$. Unfortunately, this condition is not very useful in practice: on the one
hand, it is hard to verify; on the other hand, it does not directly imply any of the
properties of Fourier-Mukai transforms of holonomic complexes that we already know.%
\footnote{In a sense, this answers the question raised in
\subsecref{subsec:intro-conjecture} -- but not in a way that is particularly useful.
The problem is that the condition is \propositionref{prop:criterion} is a
\emph{global} one, whereas we are really asking for a \define{local} characterization
of Fourier-Mukai transforms of holonomic complexes.}

\begin{proposition} \label{prop:criterion}
The following conditions on $F \in \Dbcoh(\OAsh)$ are equivalent:
\begin{aenumerate}
\item $F$ is the Fourier-Mukai transform of a holonomic complex.
\item For every $L \in \Pic^0(\Ah)$, the cohomology groups
\[
	\derH^k \bigl( \Ash, F \tensor \piu L \bigr)
\]
are finite-dimensional for every $k \in \ZZ$.
\end{aenumerate}
\end{proposition}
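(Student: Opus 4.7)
The plan is to invoke the Laumon-Rothstein equivalence (\theoremref{thm:Laumon-Rothstein}) to write $F = \FM_A(\Mmod)$ for a unique $\Mmod \in \Dbcoh(\Dmod_A)$, and then translate both conditions into a single statement about the $\OA$-derived fibers of $\Mmod$ at closed points of $A$.

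Under the canonical identification $\Pic^0(\Ah) \cong A$, each $L \in \Pic^0(\Ah)$ corresponds to a point $a \in A$ with $L = L_a = P \vert_{\{a\} \times \Ah}$. The first step is to establish the see-saw identity
\[
	\FM_A(\Mmod) \tensor_{\OAsh} \piu L_a \simeq \FM_A(t_{-a}^{\ast} \Mmod),
\]
starting from the Poincar\'e see-saw $P \tensor p_{\Ah}^{\ast} L_a \simeq (t_a \times \id_{\Ah})^{\ast} P$, pulling back along $\id \times \pi \colon A \times \Ash \to A \times \Ah$, matching relative connections via the universal property of $\nablash$, and using $p_2 \circ (t_a \times \id_{\Ash}) = p_2$ inside the defining formula \eqref{eqn:laumon} of $\FM_A$. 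Next, I would take the derived $\OA$-fiber at $0 \in A$ of Laumon's inverse formula
\[
	\Nmod \simeq \langle -1_A \rangle^{\ast} \derR (p_1)_{\ast} \bigl( \Psh \tensor p_2^{\ast} \FM_A(\Nmod) \bigr) \decal{g},
\]
and apply base change along the smooth projection $p_1 \colon A \times \Ash \to A$; since $\Psh$ restricts to $\OAsh$ along $\{0\} \times \Ash$, this yields
\[
	\Nmod \vert_{0}^{\derL} \simeq \derR \Gamma \bigl( \Ash, \FM_A(\Nmod) \bigr) \decal{g}.
\]
Applying this with $\Nmod = t_{-a}^{\ast} \Mmod$ and combining with the see-saw, condition (b) becomes equivalent to $\Mmod \vert_{a}^{\derL}$ being a finite-dimensional complex for every closed point $a \in A$.

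The proposition now reduces to the characterization \emph{a coherent $\Dmod_A$-module $\Mmod$ is holonomic if and only if $\Mmod \vert_{a}^{\derL}$ is finite-dimensional for every closed $a \in A$}. For the forward implication, Kashiwara's constructibility theorem ensures that $\DR_A(\Mmod)$ has constructible cohomology with finite-dimensional stalks, and a Spencer-Koszul duality argument converts the finite-dimensionality of these de Rham stalks into that of the Koszul-type derived fibers $\Mmod \vert_{a}^{\derL}$. For the converse, I would choose a good filtration $F_{\bullet} \Mmod$ and use the $\CC^{\ast}$-equivariant extended Fourier-Mukai transform $\FMt_A(R_F \Mmod) \in \Dbcoh(\shO_{E(A)})$ from \propositionref{prop:compatible}, which interpolates between $\FM_A(\Mmod)$ at $\lambda = 1$ and the standard Fourier-Mukai transform of $\gr^F \!\! \Mmod$ at $\lambda = 0$; the finite-dimensional derived fiber condition on $\Mmod$ together with $\CC^{\ast}$-equivariance would propagate to each fiber of $\gr^F \!\! \Mmod$ along $T^{\ast} A \to A$, forcing $\dim \Ch(\Mmod) \leq \dim A$, whence holonomicity by Bernstein's inequality.

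The main obstacle is this converse direction. Condition (b) is a global cohomological constraint on the non-proper variety $\Ash$, while holonomicity is a geometric condition on the characteristic variety in $T^{\ast} A$; bridging them requires a careful semicontinuity argument for derived fiber dimension across the $\CC^{\ast}$-orbit in $E(A)$, together with a justification of the base change for $\derR (p_1)_{\ast}$ in the non-proper setting.
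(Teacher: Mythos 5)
Your reduction is exactly the one the paper makes: identify $F$ with $\FM_A(\Mmod)$ via the Laumon--Rothstein equivalence, use the translation/see-saw compatibility together with the functoriality of $\FM_A$ under the inclusion of a point (both contained in \theoremref{thm:Laumon}) to identify $\derH^k\bigl(\Ash, F \tensor \piu P_a^{-1}\bigr)$ with the cohomology of the derived restriction $\isi_a \Mmod$, and thereby reduce the proposition to the statement that a coherent complex $\Mmod \in \Dbcoh(\Dmod_A)$ is holonomic if and only if $\isi_a \Mmod \in \Dbcoh(\CC)$ for every closed point $a \in A$. (Your worry about base change for $\derR(p_1)_\ast$ in the non-proper setting is unfounded: $p_1$ is flat and quasi-compact, so derived base change along $\{a\} \into A$ is automatic; the paper avoids even this by quoting the adjoint functoriality $\derR \fsh_{\ast} \circ \FM_B = \FM_A \circ \fsi$ directly.)

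The genuine gap is in your final paragraph. The step you single out as ``the main obstacle'' --- that finite-dimensionality of all derived fibers forces holonomicity --- is not something to be proved here: it is precisely the standard criterion \cite{HTT}*{Theorem~3.3.1}, which is all the paper cites at this point, and the proof ends there. That is fortunate, because the argument you sketch for it would not go through as written. Finite-dimensionality of $\CC_a \Ltensor_{\OA} \Mmod$ is not a condition that ``propagates'' along the $\CC^{\ast}$-orbit in $E(A)$ to control the fibers of $\gr^F\!\!\Mmod$ along $T^{\ast}A \to A$: semicontinuity of fiber dimension goes the wrong way for such a degeneration argument, and the fibers of $\gr^F\!\!\Mmod$ are governed by the graded pieces $F_k\Mmod/F_{k-1}\Mmod$ at $a$, not by the Koszul complex of $\mm_a$ on $\Mmod$. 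Likewise, in the forward direction the stalks of $\DR_A(\Mmod)$ are not literally the derived $\OA$-fibers of $\Mmod$, so the ``Spencer--Koszul'' conversion also needs care; it too is subsumed in the cited theorem. Replace your last paragraph by that citation and your proof coincides with the paper's.
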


\begin{proof}
Since $\FM_A \colon \Dbcoh(\Dmod_A) \to \Dbcoh(\OAsh)$ is an equivalence of
categories, there is a (essentially unique) complex of $\Dmod$-modules $\Mmod \in
\Dbcoh(\Dmod_A)$ with $F \simeq \FM_A(\Mmod)$. According to
\cite{HTT}*{Theorem~3.3.1}, $\Mmod$ is a holonomic complex if and only if
\[
	\isi_a \Mmod \in \Dbcoh(\CC)
\]
for every point $a \in A$. Let $P_a \in \Pic^0(\Ah)$ denote the line bundle on $\Ah$
corresponding to the point $a \in A$. If $p \colon \Ash \to \pt$ is the morphism to a
point, we have
\[
	\isi_a \Mmod = \derR \pl \bigl( \FM_A(\Mmod) \tensor \piu P_a^{-1} \bigr)
\]
by \theoremref{thm:Laumon}; this immediately implies the asserted equivalence.
\end{proof}

\subsection{Chern characters}

The purpose of this section is to compute the algebraic Chern character of
$\FM_A(\Mmod)$, for $\Mmod$ a holonomic $\Dmod_A$-module. 

For a smooth algebraic variety $X$, we denote by $\CH(X)$ the algebraic Chow ring of
$X$. To begin with, observe that since $\pi \colon \Ash \to \Ah$ is an affine bundle
in the Zariski topology, the pullback map $\pi^{\ast} \colon \CH(\Ah) \to \CH(\Ash)$
is an isomorphism.

\begin{proposition} \label{prop:Chern}
Let $\Mmod$ be a holonomic $\Dmod_A$-module. Then the algebraic Chern character of
the Fourier-Mukai transform $\FM_A(\Mmod)$ lies in the subring of $\CH(\Ah)$
generated by $\CH_1^1(\Ah) = \Pic^0(\Ah)$.
\end{proposition}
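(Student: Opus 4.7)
Plan: The strategy is to use the Rees construction to deform $\Mmod$ to the associated graded $\gr^F \!\! \Mmod$ for a good filtration, and then reduce the problem to a computation involving the classical Fourier--Mukai transform on the Chow ring of an abelian variety. Fix a good filtration $F_{\bullet} \Mmod$ and consider $\FMt_A(R_F \Mmod) \in \Dbcoh(\shO_{E(A)})$. By \propositionref{prop:compatible}, this is a $\CCst$-equivariant complex on the vector bundle $\pi \colon E(A) \to \Ah$: its restriction to $\lambda^{-1}(1) = \Ash$ recovers $\FM_A(\Mmod)$, while its restriction to $\lambda^{-1}(0) = \Ah \times H^0(A, \OmA^1)$ is canonically isomorphic to $\derR (p_{23})_{\ast} \bigl( p_{12}^{\ast} P \tensor p_{13}^{\ast} \gr^F \!\! \Mmod \bigr)$, up to the trivial twist by $p_1^{\ast} \OmA^g$.

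Next, I would compare Chern characters across this deformation. Since $\pi \colon E(A) \to \Ah$ is a vector bundle and both $\Ash \to \Ah$ and $\Ah \times H^0(A, \OmA^1) \to \Ah$ are affine bundles, pullback induces canonical isomorphisms $\CH(\Ah)_{\QQ} \simeq \CH(E(A))_{\QQ} \simeq \CH(\Ash)_{\QQ} \simeq \CH(\Ah \times H^0(A, \OmA^1))_{\QQ}$. The fibers $\lambda^{-1}(0)$ and $\lambda^{-1}(1)$ are linearly equivalent principal divisors with trivial normal bundle, so the Chern characters of the two restrictions of $\FMt_A(R_F \Mmod)$ agree as elements of $\CH(\Ah)_{\QQ}$. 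Applying Grothendieck--Riemann--Roch to $p_{23}$ (whose relative tangent bundle is trivial because its fibers are abelian varieties, so $\td(p_{23}) = 1$), together with base change along the trivial bundle $\Ah \times H^0(A, \OmA^1) \to \Ah$, this identity reduces to the classical Fourier--Mukai formula
\[
\ch \bigl( \FM_A(\Mmod) \bigr) = (p_{\Ah})_{\ast} \bigl( \exp(c_1(P)) \cdot p_A^{\ast} \alpha \bigr) \in \CH(\Ah)_{\QQ},
\]
where $\alpha \in \CH(A)_{\QQ}$ corresponds to $\ch(\gr^F \!\! \Mmod)$ under the affine-bundle isomorphism $\CH(A \times H^0(A, \OmA^1))_{\QQ} \simeq \CH(A)_{\QQ}$.

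The holonomicity hypothesis now enters decisively: the characteristic variety $\Ch(\Mmod) = \Supp(\gr^F \!\! \Mmod)$ has codimension $g$ in $A \times H^0(A, \OmA^1)$, so $\ch(\gr^F \!\! \Mmod)$ is concentrated in codimension at least $g$. Since $\CH^k(A)_{\QQ} = 0$ for $k > g$, the class $\alpha$ lies entirely in $\CH^g(A)_{\QQ}$. To conclude, I would invoke Beauville's theorem on the Fourier transform of the Chow ring of an abelian variety: with respect to the Beauville decomposition $\CH^p(A)_{\QQ} = \bigoplus_s \CH^p_s(A)_{\QQ}$, one has $\FM(\CH^g_s(A)_{\QQ}) = \CH^s_s(\Ah)_{\QQ}$, so the Chern character of $\FM_A(\Mmod)$ lies in $\bigoplus_s \CH^s_s(\Ah)_{\QQ}$, which one identifies with the subring of $\CH(\Ah)$ generated by $\CH_1^1(\Ah) = \Pic^0(\Ah)$.

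The main obstacle is the final identification. One inclusion is immediate from the eigenvalue computation $[n]^{\ast} L = n L$ for $L \in \Pic^0(\Ah)$, which forces the $k$-th graded piece of the cup-product subring to sit inside $\CH^k_k(\Ah)$. The reverse inclusion -- that every element of $\CH^s_s(\Ah)_{\QQ}$ is a sum of $s$-fold cup products of elements of $\Pic^0(\Ah)$ -- relies on Beauville's structure result for the Chow ring, which under the Fourier transform is equivalent to the statement that $\CH^g_s(A)_{\QQ}$ is spanned by $s$-fold Pontryagin products of elements of $\CH^g_1(A)_{\QQ}$.
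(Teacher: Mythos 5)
Your proposal is correct and follows essentially the same route as the paper: deform $\Mmod$ to $\gr^F\!\!\Mmod$ via the extended Fourier--Mukai transform of the Rees module, use the affine-bundle isomorphisms $\CH(\Ash)_{\QQ} \simeq \CH\bigl(E(A)\bigr)_{\QQ} \simeq \CH\bigl(\lambda^{-1}(0)\bigr)_{\QQ} \simeq \CH(\Ah)_{\QQ}$ to transfer the Chern character, and exploit holonomicity through the fact that $\Supp(\gr^F\!\!\Mmod)$ has pure dimension $g$. The only divergence is in the endgame: the paper restricts $\gr^F\!\!\Mmod$ to a general slice $A \times \{\omega\}$, where it becomes a sheaf with zero-dimensional support whose Mukai transform visibly has Chern character in the subring generated by $\Pic^0(\Ah)$, whereas you run Grothendieck--Riemann--Roch and appeal to Beauville's structure theorem for $\CH_s^s(\Ah)_{\QQ}$ --- correct, but more than is needed, since your class $\alpha$ lies in $\CH^g(A)_{\QQ}$, which is already spanned by point classes $[a]$ whose Fourier transforms are $\exp\bigl(c_1(P_a)\bigr)$ with $P_a \in \Pic^0(\Ah)$.
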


\begin{proof}
Since $\pi \colon E(A) \to \Ah$ is an algebraic vector bundle containing $\Ash =
\lambda^{-1}(1)$, pullback of cycles induces isomorphisms
\[
	\CH(\Ah) \simeq \CH \bigl( E(A) \bigr) \simeq \CH(\Ash).
\]
As in the proof of \propositionref{prop:KW}, choose a good filtration $F_{\bullet}
\Mmod$ and consider the Fourier-Mukai transform $\FMt_A(R_F \Mmod)$ of the associated
Rees module. Its restriction to $\Ash$ is isomorphic to $\FM_A(\Mmod)$, and so it
suffices to show that the Chern character of $\FMt_A(R_F \Mmod)$ is contained in the
subring generated by $\Pic^0(\Ah)$. Since $\lambda^{-1}(0) = \Ah \times H^0(A,
\Omega_A^1)$, we only need to prove this after restricting to $\Ah \times \{\omega\} \subseteq
\lambda^{-1}(0)$, for any choice of $\omega \in H^0(A, \Omega_A^1)$.

By \propositionref{prop:compatible}, the restriction of $\FMt_A(R_F \Mmod)$ to
$\lambda^{-1}(0)$ is isomorphic to 
\begin{equation} \label{eq:FM-temp}
	\derR (p_{23})_{\ast} \Bigl( p_{12}^{\ast} P \tensor p_1^{\ast} \Omega_A^g
			\tensor p_{13}^{\ast} \gr^F \!\! \Mmod \Bigr).
\end{equation}
Since $\Mmod$ is holonomic, the support of $\gr^F \!\! \Mmod$ is of pure dimension $g$.
Now choose $\omega \in H^0(A, \Omega_A^1)$ general enough that the restriction of
$\gr^F \!\! \Mmod$ to $A \times \{\omega\}$ is a coherent sheaf with zero-dimensional
support. The restriction of \eqref{eq:FM-temp} to $\Ah \times \{\omega\}$ is then
the Fourier-Mukai transform of a coherent sheaf on $A$ with zero-dimensional support;
its algebraic Chern character must therefore be contained in the subring of
$\CH(\Ah)$ generated by $A = \Pic^0(\Ah)$.
\end{proof}

\begin{corollary} \label{cor:Chern}
Let $\Mmod$ be a holonomic $\Dmod_A$-module. Then all the Chern classes of
$\FM_A(\Mmod)$ are zero in the singular cohomology ring of $\Ash$.
\end{corollary}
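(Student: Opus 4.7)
The plan is to combine Proposition~\ref{prop:Chern} with the elementary fact that elements of $\Pic^0$ have vanishing first Chern class in singular cohomology. Since $\pi \colon \Ash \to \Ah$ is a torsor under the trivial vector bundle $\Ah \times H^0(A, \OmA^1)$, it is in particular an $\AA^g$-bundle in the Zariski topology, so the pullback maps
\[
	\pi^{\ast} \colon \CH(\Ah) \to \CH(\Ash) \quad \text{and} \quad
	\pi^{\ast} \colon H^{\ast}(\Ah, \QQ) \to H^{\ast}(\Ash, \QQ)
\]
are both isomorphisms of graded rings. It therefore suffices to prove the vanishing after pulling everything back to $\Ah$.

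By Proposition~\ref{prop:Chern}, the algebraic Chern character of $\FM_A(\Mmod)$ lies in the subring $R \subseteq \CH(\Ah)_{\QQ}$ generated by the image of $\Pic^0(\Ah)$ in $\CH^1(\Ah)_{\QQ}$. The next step is to recall that the cycle class map
\[
	\ch_{\mathit{top}} \colon \CH^{\ast}(\Ah)_{\QQ} \to H^{2 \ast}(\Ah, \QQ)
\]
is a ring homomorphism, and that its restriction to $\CH^1(\Ah)$ is exactly the first-Chern-class map $\Pic(\Ah) \to H^2(\Ah, \ZZ)$, which kills $\Pic^0(\Ah)$ by definition (a topologically trivial line bundle has trivial first Chern class). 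By multiplicativity, the image of $R$ under the cycle class map is contained in the degree-$0$ component of $H^{\ast}(\Ah, \QQ)$.

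Consequently, viewed in $H^{\ast}(\Ash, \QQ)$, the Chern character $\ch \FM_A(\Mmod)$ is concentrated in degree zero (where it equals the rank of the complex). By the standard polynomial identities expressing Chern classes in terms of the components of the Chern character, this is equivalent to $c_i\bigl( \FM_A(\Mmod) \bigr) = 0$ in $H^{2i}(\Ash, \QQ)$ for every $i \geq 1$. Since $\Ash$ is homotopy equivalent to the abelian variety $\Ah$, its integral cohomology is torsion-free, so the vanishing of the rational Chern classes implies the vanishing of the integral Chern classes. There is no real obstacle here: the content of the corollary lies entirely in Proposition~\ref{prop:Chern}, and what remains is just the passage from algebraic to topological Chern data.
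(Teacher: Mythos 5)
Your argument is correct and is exactly the deduction the paper intends: the corollary is stated without proof as an immediate consequence of Proposition~\ref{prop:Chern}, the point being that the cycle class map kills $\Pic^0(\Ah)$ and hence the whole positive-degree part of the subring it generates. Your filling-in of the details (the two pullback isomorphisms along $\pi$, Newton's identities to pass from the Chern character to the Chern classes, and torsion-freeness of $H^{\ast}(\Ash,\ZZ)$ to get the integral statement) is accurate and complete.
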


\subsection{Ampleness results}

The following result says that, in most cases, the singular locus of a holonomic
$\Dmod$-module is an ample divisor.

\begin{theorem}
Let $\Mmod$ be a holonomic $\Dmod$-module with support equal to $A$. Let
$D(\Mmod)$ be the image in $A$ of the projectivized characteristic variety of
$\Mmod$. If $\chi(A, \Mmod) \neq 0$, then $D(\Mmod)$ is an ample divisor.
\end{theorem}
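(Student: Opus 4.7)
The plan is to compute $\chi(A, \Mmod)$ as a microlocal intersection number on the cotangent bundle $T^{\ast} A$, exploiting the fact that, for $A$ an abelian variety, $T^{\ast} A \simeq A \times V^{\ast}$ with $V^{\ast} = H^{0}(A, \Omega_{A}^{1})$. First I would establish a Dubson--Kashiwara-type index formula: for a general translation-invariant covector $\omega \in V^{\ast}$, the constant section $s_{\omega} \colon A \to T^{\ast} A$, $a \mapsto (a, \omega)$, is non-characteristic for $\Mmod$ outside a finite set of points, and
\[
	\chi(A, \Mmod) \;=\; [CC(\Mmod)] \cdot [s_{\omega}(A)]
		\;=\; \sum_{\alpha} n_{\alpha} \cdot \deg \bigl( p_{V^{\ast}} \vert_{Z_{\alpha}} \bigr),
\]
where $CC(\Mmod) = \sum n_{\alpha} [Z_{\alpha}]$ decomposes the characteristic cycle into conormal Lagrangians $Z_{\alpha} = \overline{T^{\ast}_{X_{\alpha}}\! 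A}$ with $X_{\alpha} \subseteq A$ irreducible, and $p_{V^{\ast}} \colon T^{\ast} A \to V^{\ast}$ is the projection. Equivalently, via \propositionref{prop:compatible}, this count is the generic length of $\gr^{F} \!\!\Mmod$ along a cotangent fiber $A \times \{\omega\}$, and it reads off the generic rank of the restriction of $\FMt_{A}(R_{F} \Mmod)$ to $\lambda^{-1}(0)$.

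The zero-section component ($X_{\alpha} = A$) contributes zero when $\omega \neq 0$, so $\chi(A, \Mmod) \neq 0$ forces some non-zero-section component $Z_{\alpha}$ to have $p_{V^{\ast}}\vert_{Z_{\alpha}}$ dominant. Before exploiting this, I would first verify that $D(\Mmod)$ is actually a divisor of pure codimension one: under the hypothesis that $\Mmod$ has support $A$, if $D(\Mmod)$ had codimension at least two, then $\Mmod$ would be a vector bundle with integrable connection on the complement of a codimension-two subvariety, and a Hartogs-type extension would force $\Mmod$ to be a flat bundle on all of $A$, hence $\chi(A, \Mmod) = 0$. Thus $D(\Mmod)$ has a codimension-one component.

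With $D(\Mmod)$ a divisor, ampleness follows from classical positivity. For any smooth divisorial component $D \subseteq D(\Mmod)$, the Gauss map $\gamma_{D} \colon D \to \mathbb{P}(V^{\ast})$ satisfies $\gamma_{D}^{\ast} \shO(1) \simeq \shO_{D}(D)$ and $\deg(\gamma_{D}) = D^{g}$, so dominance of $p_{V^{\ast}}$ on the corresponding component $\overline{T^{\ast}_{D}\! A}$ is equivalent to $D^{g} > 0$; on an abelian variety, this positivity together with effectiveness yields ampleness by Nakai--Moishezon. The main obstacle is to carry out the bookkeeping that combines contributions from all components of $CC(\Mmod)$, including those supported over higher-codimension strata of $D(\Mmod)$ (such as singular points of the divisor), into the single statement $(D(\Mmod))^{g} > 0$. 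I expect this to rest on a Kashiwara-style index theorem applied to the stratification of $D(\Mmod)$, illustrated by the test case where $D(\Mmod) = D_{1} \cup D_{2}$ is a nodal union of two elliptic curves in an abelian surface: although each $D_{i}^{2} = 0$, the microlocal index at the point stratum at the node accounts for $(D_{1} + D_{2})^{2} = 2 > 0$ and so recovers ampleness.
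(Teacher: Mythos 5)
Your microlocal setup is fine as far as it goes: the Dubson--Kashiwara index formula, deformed to a generic constant section of $T^{\ast}A = A \times V^{\ast}$, is exactly the Franecki--Kapranov computation, and its identification with the generic rank of the restriction of $\FMt_A(R_F \Mmod)$ to $\lambda^{-1}(0)$ matches \propositionref{prop:compatible} and the proof of \propositionref{prop:KW}. But there are two genuine gaps, and the second is fatal to the strategy rather than to a single step. First, you never reduce to the case of a \emph{simple} $\Mmod$, and several of your steps are false without that reduction: for $\Mmod = \OA \oplus \delta_0$ (structure sheaf plus the delta module at the origin) one has $\Supp \Mmod = A$, $\chi(A,\Mmod) = 1$, and $D(\Mmod) = \{0\}$, so your Hartogs step --- ``flat outside codimension two, hence flat everywhere, hence $\chi = 0$'' --- breaks down; a holonomic module that is a flat bundle outside codimension two can perfectly well have subquotients supported on the small locus. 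The reduction to simple modules (via nonnegativity and additivity of $\chi$ over composition series) is where the paper's proof starts, and it is needed before any of your later steps can be run.

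Second, and more seriously: the ``bookkeeping'' you defer to a Kashiwara-style index theorem is the entire content of the theorem, and no index formula can supply it, because the characteristic cycle does not see simplicity. The dangerous configuration is not your nodal example but the following: every divisorial component of $D(\Mmod)$ is invariant under translation by a positive-dimensional abelian subvariety $K$ (so their sum is non-ample and has vanishing top self-intersection), while some higher-codimension component of $\Ch(\Mmod)$ --- say the conormal of an isolated point --- dominates $V^{\ast}$ and contributes positively to $\chi$. The module $\fu \Nmod \oplus \delta_0$, with $f \colon A \to A/K$, realizes exactly this characteristic-cycle shape, and nothing in intersection theory distinguishes it from the characteristic cycle a simple module might a priori have. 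What excludes the configuration in the simple case is the structure theory of simple holonomic modules: one restricts to a general fiber of $f$, where $\Mmod$ is flat outside codimension two in the fiber, extends the flat bundle across, and runs the adjunction argument of \theoremref{thm:simple-support} to get $\Mmod \tensor_{\OA} (L,\nabla) \simeq \fu \Nmod$; then $\Ch(\Mmod)$ projects into the proper subspace $f^{\ast} H^0(B, \Omega_B^1) \subsetneq V^{\ast}$ and $\chi = 0$. That is the paper's route, and I see no way around it. Note also that there is no identity between $\chi(A,\Mmod)$ and $(D(\Mmod))^{g}$ to be ``recovered'': already in your own test case the index is $1$ while $(D_1+D_2)^2 = 2$, and in general the conormal multiplicities depend on local monodromy data that the divisor class does not determine.
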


\begin{proof}
Since the Euler characteristic of any holonomic $\Dmod_A$-module is nonnegative, it
suffices to prove the assertion when $\Mmod$ is simple. If $D(\Mmod)$ is not an ample
divisor, then all of its codimension one components are fibered in a fixed abelian
subvariety, and so there is a morphism $f \colon A \to B$, with $\dim B = \dim A -
r$, such that no codimension one component of $D(\Mmod)$ dominates $B$. On $A
\setminus D(\Mmod)$, we have a vector bundle with integrable connection; its restriction to a
general fiber of $f$ extends to a vector bundle with integrable connection on the
entire fiber. We can then argue as in the proof of \theoremref{thm:simple-support} to show
that $\Mmod \tensor_{\OA} (L, \nabla) \simeq \fu \Nmod$ for a simple holonomic
$\Dmod_B$-module $\Nmod$. But this implies that $\chi(A, \Mmod) = 0$.
\end{proof}

The following result is inspired by the application of Hodge modules and their
Fourier-Mukai transforms to varieties of general type in \cite{PS-Kodaira}. 

\begin{theorem}
Let $\Mmod$ be a holonomic $\Dmod$-module on $A$, and suppose that for some ample
line bundle $L$, there is a nonzero morphism of $\OA$-modules $L \to \Mmod$. 
\begin{aenumerate}
\item If $\Supp \Mmod = A$, then the singular locus $D(\Mmod)$ is an ample divisor.
\item The projection $\Ch(\Mmod) \to H^0(A, \OmA^1)$ is surjective.
\end{aenumerate}
\end{theorem}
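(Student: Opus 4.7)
My plan for part (a) is to argue by contradiction, following the strategy of the preceding theorem. Suppose $D(\Mmod)$ is not an ample divisor. Then by the classical theory of divisors on abelian varieties, the codimension-one part of $D(\Mmod)$ is stabilized by translation in some positive-dimensional abelian subvariety $A' \subseteq A$; letting $f \colon A \to B := A/A'$ be the quotient, this part of $D(\Mmod)$ equals $f^{-1}(D')$ for some divisor $D' \subseteq B$, and on $A \setminus D(\Mmod)$ the module $\Mmod$ is a vector bundle with integrable connection whose restriction to a general fiber $F_b = f^{-1}(b)$ (for $b \in B \setminus D'$) extends to a flat vector bundle on the entire abelian variety $F_b$. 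Such flat vector bundles are polystable of slope zero. Restricting $L \to \Mmod$ to $F_b$ therefore gives a map from the ample $L|_{F_b}$ into a polystable bundle of slope zero; its twist by $L^{-1}|_{F_b}$ is polystable of strictly negative slope on an abelian variety and so has no nonzero global sections, forcing the map to vanish on $F_b$. A spreading-out argument (after passing to the torsion-free quotient $\Mmod/\Mmod_{\mathrm{tors}}$) then shows the original global morphism $L \to \Mmod$ must be zero, contradicting the hypothesis.

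For part (b), my plan is to reduce to a simple holonomic $\Mmod$ and combine Theorem 5.1.1 with the $\CCst$-equivariance
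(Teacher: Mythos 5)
Your argument for part (a) follows the geometric strategy of the preceding theorem rather than the paper's route, and it runs into a genuine gap at the last step. The fiberwise slope argument (which is essentially sound, though ``polystable'' should be ``semistable'': a flat bundle on an abelian variety is an iterated extension of elements of $\Pic^0$, hence slope-semistable of slope zero, which is all you need to kill $\Hom(L|_{F_b}, \bar{E})$) only shows that $L \to \Mmod$ vanishes on the open set $A \setminus D(\Mmod)$. That means the morphism factors through the $\Dmod_A$-submodule of sections supported on $D(\Mmod)$ -- it does \emph{not} mean the morphism is zero, because $\Mmod$ (even with $\Supp \Mmod = A$) can contain large $\OA$-torsion submodules supported on proper subvarieties, and an ample line bundle can admit nonzero $\OA$-linear maps into such a submodule. ``Passing to the torsion-free quotient'' does not repair this: the hypothesized nonzero map $L \to \Mmod$ may become zero in $\Mmod/\Mmod_{\mathrm{tors}}$, at which point you have no hypothesis left to contradict. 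Some additional argument is needed to rule out the map landing in the torsion part, and that is where the real content lies.

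Part (b) is not actually proved: the proposal breaks off after announcing a reduction to simple $\Mmod$, and that reduction is itself problematic, since the hypothesis ``there exists a nonzero $\OA$-linear map $L \to \Mmod$'' does not pass to composition factors (note also that (b) carries no assumption $\Supp \Mmod = A$). For comparison, the paper proves both parts simultaneously by a short Fourier--Mukai argument: the given map induces a nonzero morphism $\Dmod_A \tensor_{\OA} L \to \Mmod$, hence a nonzero morphism $\piu \shE_L \to \FM_A(\Mmod)$ with $\piu \shE_L$ a locally free sheaf in degree zero; since $\FM_A(\Mmod) \in \Dtcoh{\geq 0}(\OAsh)$, this forces $\shH^0 \FM_A(\Mmod) \neq 0$, hence $\Supp \FM_A(\Mmod) = \Ash$, from which (a) follows via $\chi(A,\Mmod) > 0$ and the preceding ampleness theorem, and (b) follows via the degeneration to $\gr^F \!\! \Mmod$ and $\CCst$-equivariance. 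That argument is insensitive to whether the map lands in a torsion submodule, which is exactly the case your direct approach cannot handle.
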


\begin{proof}
Both assertions will be proved if we manage to show that $\shH^0 \FM_A(\Mmod) \neq
0$. The given morphism induces a nonzero morphism of $\Dmod_A$-modules
\[
	\Dmod_A \tensor_{\OA} L \to \Mmod,
\]
and therefore a nonzero morphism
\[
	\FM_A \bigl( \Dmod_A \tensor_{\OA} L \bigr) \to \FM_A(\Mmod)
\]
between their Fourier-Mukai transforms. By \theoremref{thm:Laumon}, we have 
\[
	\FM_A \bigl( \Dmod_A \tensor L \bigr) = 
		\derL \piu \derR \Phi_P(L) = \piu \shE_L,
\]
where $\shE_L = (p_2)_{\ast}(P \tensor p_1^{\ast} L)$ is a locally free sheaf of rank
$H^0(A, L)$ on $\Ah$. Because $\FM_A(\Mmod) \in \Dtcoh{\geq 0}(\OAsh)$, the morphism
factors through $\shH^0 \FM_A(\Mmod)$, which is only possible if the
latter is not zero.
\end{proof}

\begingroup
\parindent 0pt
\theendnotes
\endgroup

\section*{References}

\begin{biblist}
\bib{Arapura}{article}{
	author={Arapura, Donu},
	title={Higgs line bundles, Green-Lazarsfeld sets, and maps of K\"ahler manifolds
		to curves},
	journal={Bull. Amer. Math. Soc. (N.S.)},
	volume={26},
	date={1992},	
	number={2},
	pages={310--314},
}
\bib{AB}{article}{
	author={Arinkin, Dima},
	author={Bezrukavnikov, Roman},
	title={Perverse coherent sheaves},
	journal={Mosc. Math. J.},
	volume={10},
	date={2010}, 
	number={1},
	pages={3--29},
}
\bib{BS}{article}{
   author={Bando, Shigetoshi},
   author={Siu, Yum-Tong},
   title={Stable sheaves and Einstein-Hermitian metrics},
   conference={
      title={Geometry and analysis on complex manifolds},
   },
   book={
      publisher={World Sci. Publ., River Edge, NJ},
   },
   date={1994},
   pages={39--50},
}
\bib{BBD}{article}{
   author={Be{\u\i}linson, A. A.},
   author={Bernstein, J.},
   author={Deligne, P.},
   title={Faisceaux pervers},
   language={French},
   conference={
      title={Analysis and topology on singular spaces, I},
      address={Luminy},
      date={1981},
   },
   book={
      series={Ast\'erisque},
      volume={100},
      publisher={Soc. Math. France},
      place={Paris},
   },
   date={1982},
   pages={5--171},
}
\bib{Bonsdorff}{article}{
   author={Bonsdorff, Juhani},
   title={Autodual connection in the Fourier transform of a Higgs bundle},
   journal={Asian J. Math.},
   volume={14},
   date={2010},
   number={2},
   pages={153--173},
}
\bib{Dimca}{book}{
   author={Dimca, Alexandru},
   title={Sheaves in topology},
   series={Universitext},
   publisher={Springer-Verlag},
   place={Berlin},
   date={2004},
   pages={xvi+236},
}
\bib{FK}{article}{
   author={Franecki, J.},
   author={Kapranov, M.},
   title={The Gauss map and a noncompact Riemann-Roch formula for
   constructible sheaves on semiabelian varieties},
   journal={Duke Math. J.},
   volume={104},
   date={2000},
   number={1},
   pages={171--180},
}
\bib{GL1}{article}{
   author={Green, Mark},
   author={Lazarsfeld, Robert},
   title={Deformation theory, generic vanishing theorems, and some
   conjectures of Enriques, Catanese and Beauville},
   journal={Invent. Math.},
   volume={90},
   date={1987},
   number={2},
   pages={389--407},
}
\bib{GL2}{article}{
   author={Green, Mark},
   author={Lazarsfeld, Robert},
   title={Higher obstructions to deforming cohomology groups of line bundles},
   journal={J. Amer. Math. Soc.},
   volume={1},
   date={1991},
   number={4},
   pages={87--103},
}
\bib{Hacon}{article}{
	author={Hacon, Christopher},
	title={A derived category approach to generic vanishing},
	journal={J. Reine Angew. Math.},
	volume={575},
	date={2004},	
	pages={173--187},
}
\bib{HTT}{book}{
   author={Hotta, Ryoshi},
   author={Takeuchi, Kiyoshi},
   author={Tanisaki, Toshiyuki},
   title={$D$-modules, perverse sheaves, and representation theory},
   series={Progress in Mathematics},
   volume={236},
   publisher={Birkh\"auser Boston Inc.},
   place={Boston, MA},
   date={2008},
   pages={xii+407},
}
\bib{Jardim}{article}{
   author={Jardim, Marcos},
   title={Nahm transform and spectral curves for doubly-periodic instantons},
   journal={Comm. Math. Phys.},
   volume={225},
   date={2002},
   number={3},
   pages={639--668},
}
\bib{Kashiwara}{article}{
   author={Kashiwara, Masaki},
	title={t-structures on the derived categories of holonomic $\scr D$-modules and
		coherent $\scr O$-modules},
   journal={Moscow Math. J.},
   volume={4},
   date={2004},
   number={4},
   pages={847--868},
}
\bib{KW}{article}{
	author={Kr\"amer, Thomas},
	author={Weissauer, Rainer},
	title={Vanishing theorems for constructible sheaves on abelian varieties},
	date={2011},
	eprint={arXiv:1111.4947v3},
}
\bib{Laumon}{article}{
   author={Laumon, G\'erard},
   title={Transformation de Fourier g\'en\'eralis\'ee},
   eprint={arXiv:alg-geom/9603004},
	 date={1996}
}
\bib{Malgrange}{article}{
   author={Malgrange, Bernard},
   title={On irregular holonomic $\scr D$-modules},
   conference={
      title={\'El\'ements de la th\'eorie des syst\`emes diff\'erentiels
      g\'eom\'etriques},
   },
   book={
      series={S\'emin. Congr.},
      volume={8},
      publisher={Soc. Math. France},
      place={Paris},
   },
   date={2004},
   pages={391--410},
}
\bib{MM}{book}{
	author={Mazur, Barry},
	author={Messing, William},
	title={Universal Extensions and One Dimensional Crystalline Cohomology},
	series={Lecture Notes in Math.},
   volume={370},
   publisher={Springer-Verlag},
   place={Berlin},
   date={1974},
}
\bib{Mochizuki-B}{article}{
	author={Mochizuki, Takuro},
	title={Holonomic D-module with Betti structure},
	eprint={arXiv:1001.2336v3},
	date={2010},
}
\bib{Mochizuki-wild}{article}{
   author={Mochizuki, Takuro},
   title={Wild harmonic bundles and wild pure twistor $D$-modules},
   journal={Ast\'erisque},
   number={340},
   date={2011},
}
\bib{Mochizuki-N}{article}{
	author={Mochizuki, Takuro},
	title={Asymptotic behaviour and the Nahm transform of doubly periodic instantons
		with square integrable curvature},
	eprint={arXiv:1303.2394},
	date={2013},
}
\bib{Mukai}{article}{
	author={Mukai, Shigeru},
	title={Duality between $D (X)$ and $D(\hat{X})$ with its application to Picard sheaves},
	journal={Nagoya Math. J.},
	volume={81},
	date={1981},
	pages={153--175},
}
\bib{PR}{article}{
   author={Polishchuk, A.},
   author={Rothstein, M.},
   title={Fourier transform for $D$-algebras. I},
   journal={Duke Math. J.},
   volume={109},
   date={2001},
   number={1},
   pages={123--146},
}
\bib{Popa}{article}{
	author={Popa, Mihnea},
	title={Generic vanishing filtrations and perverse objects in derived categories 
		of coherent sheaves},
   conference={
      title={Derived categories in algebraic geometry},
      address={Tokyo},
      date={2011},
   },
   book={
      series={EMS Ser. Congr. Rep.},
      volume={8},
      publisher={Eur. Math. Soc.},
      place={Z\"urich},
	},
	date={2012},	
	pages={251--278},
}
\bib{PS-Kodaira}{article}{
	author={Popa, Mihnea},
	author={Schnell, Christian},
	title={Kodaira dimension and zeros of holomorphic one-forms},
	eprint={arXiv:1212.5714},
	date={2012},
	status={submitted},
}
\bib{PS}{article}{
	author={Popa, Mihnea},
	author={Schnell, Christian},
	title={Generic vanishing theory via mixed Hodge modules},
	journal={Forum of Mathematics, Sigma},
	volume={1},	
	pages={e1},
	year={2013},
	doi={10.1017/fms.2013.1},
}
\bib{Rothstein}{article}{
   author={Rothstein, Mitchell},
   title={Sheaves with connection on abelian varieties},
   journal={Duke Math. J.},
   volume={84},
   date={1996},
   number={3},
   pages={565--598}
}
\bib{Sabbah}{article}{
   author={Sabbah, Claude},
   title={Th\'eorie de Hodge et correspondance de Kobayashi-Hitchin sauvages
		(d'apr\`es T.~Mochizuki)}
   note={S\'eminaire Bourbaki. Vol. 2011/2012.}, 
   date={2013},
   pages={Exp. No. 1050},
}
\bib{Saito-HM}{article}{
   author={Saito, Morihiko},
   title={Modules de Hodge polarisables},
	journal={Publ. Res. Inst. Math. Sci.},
   volume={24},
   date={1988},
   number={6},
   pages={849--995},
}
\bib{Saito-MHM}{article}{
   author={Saito, Morihiko},
   title={Mixed Hodge modules},
	journal={Publ. Res. Inst. Math. Sci.},
   volume={26},
   date={1990},
   number={2},
   pages={221--333},
}
\bib{Saito-MM}{article}{
   author={Saito, Morihiko},
   title={Hodge conjecture and mixed motives. I},
   conference={
      title={Complex geometry and Lie theory},
      address={Sundance, UT},
      date={1989},
   },
   book={
      series={Proc. Sympos. Pure Math.},
      volume={53},
      publisher={Amer. Math. Soc.},
      place={Providence, RI},
   },
   date={1991},
   pages={283--303},
}
\bib{Schnell}{article}{
	author={Schnell, Christian},
	title={A remark about Simpson's standard conjecture},
	date={2013},
	status={submitted},
}
\bib{Simpson}{article}{
   author={Simpson, Carlos},
   title={Subspaces of moduli spaces of rank one local systems},
   journal={Ann. Sci. ENS},
   volume={26},
   date={1993},
   pages={361--401},
}
\bib{Watts}{article}{
	author={Watts, Charles E.},
	title={Intrinsic characterizations of some additive functors}, 
	journal={Proc. Amer. Math. Soc.},
	volume={11},
	date={1960},
	pages={5--8},
}
\bib{Weissauer}{article}{
	author={Weissauer, Rainer},
	title={Degenerate Perverse Sheaves on Abelian Varieties},
	date={2012},
	eprint={arXiv:1204.2247},
}
\end{biblist}

\end{document}